\documentclass[english, 11pt]{amsart}

\usepackage{listings}  
\usepackage{xcolor}    

\usepackage{tikz}
\usepackage{aliascnt}
\usepackage{mathrsfs}
\usepackage[all,poly,knot]{xy}
\usepackage{hyperref}
\usepackage{comment}  
\usepackage{csquotes}   
\usepackage{amssymb,amsbsy,amsmath,amsfonts,amssymb,amscd,
	graphics,color,footmisc,fancyhdr,multicol,fancybox,
	graphicx,mathrsfs,rotating,ifthen,wasysym}

\usepackage{geometry} 
\usepackage{times}
\usepackage{pdfpages}
\usepackage{multirow}
\usepackage{nccrules}
\usepackage{textcomp}
\usepackage{comment}
\usepackage{framed}
\usepackage[all]{xy}
\usepackage{graphicx}
\usepackage{pgf,tikz}
\usepackage{mathrsfs}
\usetikzlibrary{arrows}
\usepackage{lipsum}
\usepackage{mathtools}

\DeclareMathOperator{\pic}{Pic}

\DeclareMathOperator{\Pic}{Pic}

\newcommand{\CC}{\mathbb{C}}

\renewcommand{\P}{\mathbb{P}}

\newcommand{\style}[1]{{\sf #1}}
\newcommand{\function}{\style{function}}

\newcommand{\Twist}{\style{Twist}}

\newcommand{\Eq}{\style{Eq}}
\newcommand{\HEq}{\style{HEq}}

\def\log{\mathrm{log}\,}

\usepackage{mathtools}
\newtagform{EngelLie}[\scriptstyle]{$}{$}
\makeatletter\newcommand{\leqnomode}{\tagsleft@true}
\newcommand{\reqnomode}{\tagsleft@false}\makeatother

\newtheorem{Observation}[equation]{Observation}

\theoremstyle{plain}

\newtheorem{thm}{Theorem}[section]  

\newtheorem{cor}[thm]{{Corollary}} 

\newtheorem{lem}[thm]{{Lemma}}

\newtheorem{pro}[thm]{Proposition}

\newtheorem{ques}[thm]{{Question}}
\newtheorem{rem}[thm]{Remark}

\theoremstyle{remark}

\theoremstyle{definition}
\newtheorem{Question}[equation]{Question}
\newtheorem{Convention}[equation]{Convention}

\numberwithin{equation}{section}

\usepackage[hyperpageref]{backref}

\usepackage{xcolor}
\hypersetup{
	colorlinks,
	linkcolor={red!50!black},
	citecolor={blue!62!black},
	urlcolor={blue!80!black}
}
\theoremstyle{plain}
\newcommand{\thistheoremname}{}
\newtheorem*{genericthm*}{\thistheoremname}
\newenvironment{namedthm*}[1]{\renewcommand{\thistheoremname}{#1}%
	\begin{genericthm*}}
	{\end{genericthm*}}

\newtheoremstyle{named}{}{}{\itshape}{}{\bfseries}{.}{.5em}{\thmnote{#3's }#1}
\theoremstyle{named}

\makeatletter
\newcommand\thankssymb[1]{\textsuperscript{\@fnsymbol{#1}}}
\makeatother
\begin{document} 
	\title[Improved Hyperbolicity Degree Bounds in Dimension Two]{\textbf{Vanishing of Invariant 2-Jet Differentials and } \\ \textbf{Improved Hyperbolicity Degree Bounds in Dimension Two}}

	\subjclass[2020]{32Q45, 32H25, 14J70, 14C20, 14Q20}
	\keywords{Kobayashi hyperbolicity, entire curves, jet differentials,   slanted vector fields, Demailly–Semple tower, Second Main Theorem}
	
	\author{Lei Hou}
	\address{Academy of Mathematics and Systems Science, Chinese Academy of Sciences, Beijing 100190, China}
	\email{houlei@amss.ac.cn}
	
	\author{Dinh Tuan Huynh}
	
	\address{Department of Mathematics, University of Education, Hue University, 34 Le Loi St., Hue City, Vietnam}
	\email{dinhtuanhuynh@hueuni.edu.vn}
	
	\author{Joël  Merker}
	\address{Laboratoire de Math\'{e}matiques d’Orsay, Universit\'{e} Paris-Saclay, 91405 Orsay
Cedex, France}
\email{joel.merker@universite-paris-saclay.fr}
	
	\author{Song-Yan Xie}
	\address{State Key Laboratory of Mathematical Sciences, Academy of Mathematics and Systems Science, Chinese Academy of Sciences, Beijing 100190, China;  School of Mathematical Sciences, University of Chinese Academy of Sciences, Beijing 100049, China}
	\email{xiesongyan@amss.ac.cn}
	
\begin{abstract}
 This paper establishes new degree bounds for Kobayashi hyperbolicity in dimension two. Our main results are:

\begin{itemize}
    \item A {\em very generic} surface in $\mathbb{P}^3$ of degree at least $17$ is Kobayashi hyperbolic.
    \item The complement of a {\em generic} curve in $\mathbb{P}^2$ of degree at least $12$ is Kobayashi hyperbolic.
\end{itemize}

These bounds improve the long-standing records in the field, lowering the threshold from $18$ to $17$ for surfaces (P\u{a}un) and from $14$ to $12$ for complements (Rousseau).

Central to the proofs are new vanishing results for certain negatively twisted invariant $2$-jet differentials, obtained through a novel combination of algebraic reduction and computer algebra. Since Demailly's Santa Cruz lectures in 1995, the thresholds for the existence of such differentials---and consequently the limits of what $2$-jet techniques can accomplish toward the Kobayashi conjecture in dimension two---have been recognized as $d = 15$ in the compact case and $d = 11$ in the logarithmic case. While previous approaches were unable to reach these targets, the present work provides both the theoretical foundations and the algorithmic framework required to access them, and has already improved the known bounds to $d = 17$ and $d = 12, 13$, respectively.

As an unexpected byproduct, our computational method reveals the existence of nonzero negatively twisted invariant $2$-jet differentials with weighted degree $3$ for hyperelliptic-type equations of degree at least $11$ in the logarithmic case and degree at least $15$ in the compact case.

Moreover, in the logarithmic setting, we establish an effective quantitative refinement via a Second Main Theorem in Nevanlinna theory. Specifically, for every generic smooth curve $\mathcal{C} \subset \mathbb{P}^2$ of degree $d \geqslant 12$ and any nonconstant holomorphic entire curve $f \colon \mathbb{C} \to \mathbb{P}^2$, we prove the inequality
\[
T_f(r) \leqslant C_d \, N_f^{[1]}(r, \mathcal{C}) + o\big(T_f(r)\big) \quad \parallel,
\]
where $C_d$ is an explicit constant depending only on $d$. The notation ``$\parallel$'' signifies that the estimate holds for all $r>1$ outside a set of finite Lebesgue measure.
\end{abstract}

\date{\today}

	\maketitle
	
	\tableofcontents

\section{\bf Introduction}
\subsection{Motivation}
In 1967, Shoshichi Kobayashi introduced an intrinsically defined pseudo-distance for every connected complex manifold $X$. The infinitesimal version of this pseudo-distance was later characterized by Royden~\cite{Royden1971}. It is given by
\[
F_p(v) := \inf\Big\{ \lambda > 0 \ \Big\vert\ 
\exists\, f\colon \{|z|<1\} \to X \text{ holomorphic},\ 
f(0)=p,\ f'(0)=v/{\lambda} \Big\}
\quad (p\in X,\ v\in T_pX).
\]
If this Finsler pseudometric is nondegenerate everywhere, $X$ is called \emph{Kobayashi hyperbolic}. Throughout, an \emph{entire curve} in $X$ means a nonconstant holomorphic map $f\colon\mathbb{C}\to X$. For compact $X$, Brody's criterion~\cite{brody1978} states that Kobayashi hyperbolicity is equivalent to the absence of entire curves.

In dimension one, the Uniformization Theorem and Liouville's Theorem imply that all Riemann surfaces of genus $\geqslant 2$ are hyperbolic. It is natural to expect that, in higher dimensions, hyperbolic complex manifolds should likewise represent the ``most'' case (see the preface of~\cite{Kobayashi1998}). This leads to the following celebrated conjecture:

\begin{namedthm*}{Kobayashi Hyperbolicity Conjecture}
A generic hypersurface $H$ in the projective $n$-space $\mathbb{P}^n$ of degree $d\geqslant 2n-1$ is 
hyperbolic, for every $n\geqslant 3$.
\end{namedthm*}

Here and throughout the paper, we work over the complex number field $\mathbb{C}$. The term \emph{generic} means that the corresponding property holds outside a proper algebraic subvariety of the relevant parameter space.

\medskip
On the other hand, the Little Picard theorem may be viewed as stating that the complement of $3$ distinct points in the Riemann sphere $\mathbb{P}^1$ is hyperbolic. This leads naturally to the following conjecture in higher dimensions:

\begin{namedthm*}{Logarithmic Kobayashi Hyperbolicity Conjecture}
	The complement of a generic hypersurface $H\subset\mathbb{P}^n$ of degree $d\geqslant 2n+1$ is hyperbolic, for every $n\geqslant 2$.
\end{namedthm*}

The expected optimal degree bounds $2n-1$ in the compact case and $2n+1$ in the logarithmic case were suggested by Zaidenberg~\cite{Zaidenberg1987}. 
Fundamental work of Clemens~\cite{Clemens1986}, Ein~\cite{Ein1988,Ein1991}, and Voisin~\cite{Voisin1996} shows that, for very generic hypersurfaces of sufficiently high degree, all subvarieties are of general type; see also the logarithmic counterpart due to Pacienza and Rousseau~\cite{PacienzaRousseau2007}.   Here, a \emph{very generic} hypersurface means one lying in the complement of a countable union of proper Zariski-closed subsets of the parameter space.
Assuming the Green--Griffiths--Lang conjecture and its logarithmic analogue, these results would imply the above Kobayashi hyperbolicity conjectures in the expected optimal degree range.

\smallskip

Since then, the study of hyperbolicity has attracted considerable attention, in part due to its deep connections with Diophantine geometry. 
In particular, Lang \cite{Lang1986} conjectured that an algebraic variety $V_{\mathbb{K}}$ defined over a number field $\mathbb{K}$ contains only finitely many $\mathbb{K}$-rational points provided that, after a base change $\mathbb{K}\hookrightarrow\mathbb{C}$, the complex variety $V_{\mathbb{C}}$ is hyperbolic.

Substantial progress has been made over the past decades toward the above conjectures, especially in the regime where the degree of the hypersurface is sufficiently large compared with the dimension (see~\cite{Demailly2020} for an overview).

The Kobayashi hyperbolicity conjecture for surfaces in $\mathbb{P}^3$ was first verified by McQuillan~\cite{Mcquillan1999} for degrees $d\geqslant 36$, and subsequently improved by Demailly--El Goul~\cite{Demailly-Elgoul2000} to $d\geqslant 21$, and by P\u{a}un~\cite{mihaipaun2008} to $d\geqslant 18$.
In the logarithmic setting, hyperbolicity of the complement of a generic plane algebraic curve was established by Siu-Yeung~\cite{siu_yeung1996} for sufficiently high degree, then improved by El Goul \cite{ElGoul2003} to $d\geq 15$, and later refined by Rousseau~\cite{Rousseau2009} to $d\geqslant 14$ using Siu's slanted vector fields and P\u{a}un's strategy (see also~\cite{ElGoul2003}).

In higher dimensions, the Kobayashi conjectures are known to hold for generic hypersurfaces of sufficiently large degree. We refer the reader to several key milestones~\cite{MR1989197, DMR2010, Siu2015, Brotbek2017, Riedl-Yang22, Berczi-Kirwan2024} and the extensive references therein for further background on the topic.

From a different perspective,  constructions of hyperbolic hypersurfaces were developed. For general $n$, the existence of such hypersurfaces in $\mathbb{P}^n$ was first established by Masuda and Noguchi~\cite{Masuda_Noguchi_1996}, who also gave examples for $n=3,4$ (with verification by ``Mathematica'' code) and considered the complement case. Subsequently, quadratic degree bounds were obtained by Siu and Yeung~\cite{siu_yeung1997} ($d(n)\geqslant 16(n-1)^2$) and later improved by Shiffman and Zaidenberg~\cite{shiffman_zaidenberg2002_pn} ($d(n)\geqslant 4(n-1)^2$). The current best asymptotic bound is $d\geqslant \big(\frac{n+2}{2}\big)^2$, due to Huynh~\cite{Huynh2016}.
In low dimensions, using a deformation  method of Zaidenberg~\cite{Zaidenberg1988}, examples of hyperbolic hypersurfaces of minimal degree $d=2n$ were constructed by Duval~\cite{duval2004} for $n=3$ and by Huynh~\cite{Huynh2015} for $4\leqslant n\leqslant 6$.

\subsection{Fundamental Vanishing Theorem for Entire Curves}

Most progress in the study of hyperbolicity problems has relied heavily on the theory of jet differentials, a framework originally introduced by Andr\'e Bloch~\cite{Bloch26}. Building on Bloch's ideas, Green and Griffiths~\cite{Green-Griffiths1980} introduced, for every projective manifold \(X\), the bundle \(E_{k,m}^{GG}T_X^* \to X\) of jet polynomials of order \(k\) and weighted degree \(m\) (see Section~\ref{subsect:jet-definition}).

Focusing on properties intrinsic to the entire curve itself, Demailly~\cite{Demailly1997} further refined the Green–Griffiths construction by introducing the subbundle \(E_{k,m}T_X^* \subset E_{k,m}^{GG}T_X^*\) (see Subsection~\ref{subsection:Invariant Jet}). In this work, we will primarily use \(E_{2,m}T_X^*\) with \(k=2\) and \(m \geqslant 3\).

Every global section of such a jet differential bundle, when twisted by the dual of an ample line bundle, gives rise to an \emph{algebraic differential equation} that must be satisfied by \emph{every} nonconstant entire holomorphic curve \(f\colon \mathbb{C} \to X\).

\begin{namedthm*}{Fundamental vanishing theorem} [\cite{Green-Griffiths1980, Siu-Yeung1996MathA, Demailly1997}]
Let \(X\) be a projective complex manifold and \(\mathscr{A}\) an ample line bundle on \(X\). For any nonzero global holomorphic jet differential with negative twist
\[
0 \neq \mathscr{P} \in H^0\big( X, E_{k,m}^{GG}T_X^* \otimes \mathscr{A}^{-1} \big),
\]
every nonconstant entire curve \(f\colon \mathbb{C} \to X\) satisfies the associated differential equation
\[
\mathscr{P}\big( f^{(1)}, \dots, f^{(k)} \big) \equiv 0.
\]
\end{namedthm*}

The principal challenge in the jet differential approach to the hyperbolicity problem lies in constructing a sufficient number of algebraically independent, negatively twisted jet differentials. The goal of such a construction is to eliminate all derivatives, thereby yielding \emph{purely algebraic equations} that constrain entire curves. For a generic hypersurface \(H \subset \mathbb{P}^{n+1}\) of dimension \(n \geqslant 3\), this challenge has been addressed in the works cited above~\cite{Siu2004, DMR2010, Siu2015, Brotbek2017, Riedl-Yang22, Berczi-Kirwan2024}, albeit only under the condition that the degree of \(H\) is sufficiently large relative to its dimension.

In dimension \(2\), for a smooth surface \(X \subset \mathbb{P}^3\), a result of Sakai~\cite{Sakai1979} shows that nontrivial symmetric differentials do not exist; equivalently, \(H^0(X, E_{1,m}T_X^*) = 0\) for all \(m\geqslant 1\). Hence, one must consider jet differentials of order \(k \geqslant 2\). 

By combining standard Riemann–Roch computations with the Bogomolov vanishing theorem~\cite{Bogomolov1978}, Demailly~\cite{Demailly1997} established the existence of nonzero negatively twisted $2$-jet differentials for smooth surfaces $X \subset \mathbb{P}^3$ of degree $d \geqslant 15$, thereby identifying $d = 15$ as the threshold down to which the $2$-jet approach has the potential to address the Kobayashi conjecture in the compact case. 

In the logarithmic setting, an analogous Riemann–Roch computation (see~\cite{Demailly1997, ElGoul2003}) shows that nontrivial negatively twisted invariant logarithmic $2$-jet differentials exist for complements of smooth curves $\mathcal{C} \subset \mathbb{P}^2$ of degree $d \geqslant 11$, marking $d = 11$ as the threshold down to which the $2$-jet approach has the potential to address the logarithmic Kobayashi conjecture.

Motivated by these thresholds, the present work introduces a new method that lays the theoretical and algorithmic groundwork for reaching them. Before presenting our approach, we first recall the classical strategy developed by P\u{a}un~\cite{mihaipaun2008}, which forms the starting point of our investigation.

\subsection{P\u{a}un's Strategy}
The proof of P\u{a}un~\cite{mihaipaun2008} relies on four essential ingredients.

\medskip
Denote by $\mathbb{P}^{N_d}$ the parameter space of surfaces of degree $d$ in $\mathbb{P}^3$, where $N_d = \binom{d+3}{3} - 1$. The first key result is the following celebrated theorem.

\begin{thm}[Clemens~\cite{Clemens1986}]\label{thm:Clemens-Xu}
    A very generic surface \( X_a \subset \mathbb{P}^{3} \) of degree \( d \geqslant 5 \) contains no rational curves. Furthermore, a very generic surface \( X_a \subset \mathbb{P}^{3} \) of degree \( d \geqslant 6 \) contains no elliptic curves.
\end{thm}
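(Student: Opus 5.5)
We follow Clemens' strategy, in the form refined by Ein and Voisin and the genus bound of Xu. The idea is to study the universal family of curves on the universal surface and obtain a lower bound on the geometric genus of any curve lying on a very generic member.

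Let $\mathcal{X}\subset \mathbb{P}^3\times \mathbb{P}^{N_d}$ be the universal surface. Suppose that a very generic $X_a$ contains a curve of geometric genus $g$. Since a very generic point lies outside countably many proper closed subsets, the relative Hilbert scheme argument gives a family of such curves over a generically finite base change $U\to \mathbb{P}^{N_d}$ (étale over an open set). This yields a subvariety $\mathcal{Y}\subset \mathcal{X}_U$, of relative dimension one over $U$, whose fibres are the curves $Y_a\subset X_a$. Let $\nu\colon \widetilde{Y}_a\to Y_a$ be the normalization.

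Next we use the homogeneity of the family under $\mathrm{GL}_4$ acting on $\mathcal{X}$. Take the vector fields on the universal family lifting the translations in parameter space; equivalently, use the surjection $H^0(\mathcal{O}_{\mathbb{P}^3}(1))\otimes \mathcal{O}\to \dots$ coming from global generation of $T_{\mathcal{X}}\otimes\mathcal{O}(1)$ (Voisin's lemma, twist $1$). Restricting to the fibre and composing with the normal bundle of $Y_a$ in $X_a$, one obtains that the normal sheaf $N_{\widetilde{Y}_a/X_a}$ is generically generated by sections of $N\otimes \nu^*\mathcal{O}(1)$. Adjunction, via Clemens' argument, then gives
\[
2g-2 \;\geqslant\; \deg K_{\widetilde Y_a} \;\geqslant\; \deg \nu^*\bigl(K_{X_a}\otimes \mathcal{O}(-1)\bigr)
= (d-4-1)\deg Y_a = (d-5)\deg Y_a .
\]
More precisely, Xu's refinement gives $2g-2\geqslant (d-5)\deg Y_a + 1$, after excluding the degenerate case via the positivity of the extra section.

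Finally we read off the two conclusions. For $d\geqslant 5$, a rational curve would need $-2\geqslant (d-5)\deg Y_a\,(+1)$, which is impossible. For $d\geqslant 6$, an elliptic curve would need $0\geqslant (d-5)\deg Y_a>0$, again impossible. We expect the main obstacle to be the sharp step at the boundary degree, namely getting the strict inequality (the $+1$) when $d=5$ for rational curves. The plan there is Xu's analysis: if equality held, the generating section would produce an everywhere-nonzero section of a degree-zero bundle, and this would force $Y_a$ into a hyperplane-type degenerate configuration, which is excluded for very generic $X_a$.
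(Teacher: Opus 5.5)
The paper gives no proof of this statement. It quotes Clemens, and mentions Xu only for improving the elliptic-curve bound to $d\geqslant 5$, so there is nothing to compare against. Your route is the standard Clemens--Ein--Voisin argument: spread the curves over an \'etale cover $U$ of an open set of the parameter space, use global generation of $T_{\mathcal X}\otimes\mathrm{pr}_1^*\mathcal O_{\mathbb P^3}(1)$, and push to the normal sheaf. It does prove the statement, but the step that carries the weight should be stated precisely.
\begin{itemize}
\item Because the curves $Y_a$ move with $a$, at general points $N_{\mathcal Y/\mathcal X_U}|_{Y_a}\cong N_{Y_a/X_a}$. Hence $T_{\mathcal X_U}|_{\widetilde Y_a}\to N_\nu:=\nu^*T_{X_a}/T_{\widetilde Y_a}$ is generically surjective.
\item Global generation of $T_{\mathcal X}\otimes\mathcal O(1)$ then gives a nonzero section of the line bundle $(N_\nu/\mathrm{tors})\otimes\nu^*\mathcal O(1)$.
\item By the exact sequence $0\to T_{\widetilde Y_a}\to\nu^*T_{X_a}\to N_\nu\to 0$, this line bundle has degree $2g-2-(d-4)\deg Y_a+\deg Y_a-\mathrm{length}(\mathrm{tors})$. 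Nonnegativity gives $2g-2\geqslant(d-5)\deg Y_a$.
\end{itemize}
Two wording corrections. The relevant vector fields are lifts of vector fields on $\mathbb P^3$ together with fields tangent to the fibres of $\mathcal X\to\mathbb P^3$; the latter, not ``translations in parameter space'', cost the twist by $\mathcal O(1)$. And the first inequality in your displayed chain is an equality.

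Your diagnosis of the main obstacle is wrong. For $d=5$ and $g=0$, the unrefined bound already reads $-2\geqslant 0$, a contradiction. For $d\geqslant 6$ and $g=1$, it reads $0\geqslant(d-5)\deg Y_a>0$. So Clemens' inequality alone yields both conclusions, and Xu's $+1$ is never used. The $+1$ matters only for excluding elliptic curves on very generic quintic surfaces. That is precisely Xu's improvement mentioned in the paper, and it is not part of this statement.

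You should delete the final paragraph. As written, the sketch (an everywhere-nonzero section of a degree-zero bundle forcing a ``hyperplane-type degenerate configuration'') is unsupported, and it is not needed for the proof.
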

 The degree bound for elliptic curves was subsequently improved from \( d \geqslant 6 \) to \( d \geqslant 5 \) by Xu~\cite{Xu1994}.

\medskip
The second technique originates from McQuillan's deep work on entire curves tangent to foliations on surfaces of general type~\cite{Mcquillan1998}.

\begin{thm}[McQuillan~\cite{Mcquillan1998}]\label{thm:McQuillan}
Every parabolic leaf of an algebraic (multi-) foliation on a surface \(X\) of general type is algebraically degenerate.
\end{thm}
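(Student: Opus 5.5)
McQuillan's theorem is a deep external result, and the paper uses it as a black box. So I can only outline the structure of McQuillan's original argument, not a self-contained proof. The setting is as follows. $\mathcal{F}$ is an algebraic (multi-)foliation on a surface $X$ of general type, and $f\colon\mathbb{C}\to X$ is an entire curve tangent to $\mathcal{F}$ that is Zariski dense. The goal is to derive a contradiction.

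First I reduce to a genuine foliation. Passing to a finite (ramified) cover on which the multi-foliation becomes single-valued, I get a foliation $\mathcal{F}'$ on a surface $X'$. The cover is still of general type, and $f$ lifts to an entire curve tangent to $\mathcal{F}'$. Next I resolve the singularities of $\mathcal{F}'$ by Seidenberg's theorem, so that it has only reduced singularities. Both steps preserve Zariski density of the curve.

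The core argument is a Nevanlinna-theoretic comparison of intersection numbers with the Ahlfors current $T_f$ obtained from $f$. The current $T_f$ is a closed positive current with $T_f\cdot D\geqslant 0$ for every curve $D$, since $f$ is Zariski dense. McQuillan's tautological inequality gives $T_f\cdot K_{\mathcal{F}}\geqslant 0$, up to a correction supported at singularities. This correction is controlled for reduced singularities by his analysis of the ``ramification'' of $f$ near them. One also has the index formula $T_f\cdot T_{\mathcal{F}}\leqslant T_f\cdot T_X$-type relations, where the difference is $T_f\cdot N_{\mathcal{F}}$. Baum--Bott and Brunella's index computations give $T_f\cdot N_{\mathcal{F}}\geqslant 0$ for a Zariski dense leaf.

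Combining these gives $T_f\cdot K_X=T_f\cdot K_{\mathcal{F}}-T_f\cdot N_{\mathcal{F}}$, using $K_X=K_{\mathcal{F}}+N_{\mathcal{F}}^*$. I then use the Zariski decomposition of $K_X$, and of $K_{\mathcal{F}}$ in Brunella's classification. When $K_{\mathcal{F}}$ is not pseudo-effective, Miyaoka's theorem makes the foliation a rational fibration, so the leaf lies in a fibre, which contradicts density. When $K_{\mathcal{F}}$ is pseudo-effective, the negative part of its Zariski decomposition consists of $\mathcal{F}$-chains. I would show that $T_f$ has zero intersection with them, and deduce $T_f\cdot K_X\leqslant 0$. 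But $K_X$ is big, so $K_X=A+E$ with $A$ ample and $E$ effective, and density gives $T_f\cdot K_X>0$. This is a contradiction.

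I expect the main obstacle to be the estimate at the singular points of $\mathcal{F}$, namely the contribution of saddle-nodes and of the negative-part chains. This is exactly where McQuillan's delicate local analysis, with its blow-up and ``nef'' modifications, is required. In the paper I would simply cite~\cite{Mcquillan1998} (and Brunella's exposition) for these estimates rather than reprove them.
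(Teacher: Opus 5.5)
The paper does not prove this statement. It imports it from \cite{Mcquillan1998} and uses it as a black box, so your closing decision to cite it is exactly what the paper does. Your sketch of McQuillan's argument, however, has a genuine gap at its central step: the tautological inequality points the other way. Where $f$ avoids $\mathrm{Sing}\,\mathcal F$, $df$ is a holomorphic section of $f^*T_{\mathcal F}\otimes K_{\mathbb C}$. Jensen's formula plus the logarithmic derivative lemma then give $T_f(r,T_{\mathcal F})\geqslant -O(\log T_f(r)+\log r)$, that is, $T_f\cdot T_{\mathcal F}\geqslant 0$, equivalently $T_f\cdot K_{\mathcal F}\leqslant 0$. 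In general this holds only up to a correction from the proximity of $f$ to the singular points. As a sanity check, take a nonconstant $f\colon\mathbb C\to\mathbb P^1$ with $T_{\mathcal F}=T_{\mathbb P^1}$ and normalize $T_f\cdot\mathcal O(1)=1$. Then $T_f\cdot K_{\mathcal F}=-2$, so the inequality $T_f\cdot K_{\mathcal F}\geqslant 0$ you attribute to the tautological inequality is false in general.

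With your signs the argument cannot close. From $T_f\cdot K_{\mathcal F}\geqslant 0$ and $T_f\cdot N_{\mathcal F}\geqslant 0$, the identity $T_f\cdot K_X=T_f\cdot K_{\mathcal F}-T_f\cdot N_{\mathcal F}$ gives no upper bound on $T_f\cdot K_X$. The pseudo-effective branch does not supply one either. Miyaoka's theorem and the Zariski decomposition $K_{\mathcal F}=P+N$ only give $T_f\cdot K_{\mathcal F}=T_f\cdot P+T_f\cdot N\geqslant 0$, which holds automatically for every Zariski-dense tangent curve and so carries no information. Your claim $T_f\cdot N=0$ merely sharpens this to $T_f\cdot K_{\mathcal F}=T_f\cdot P\geqslant 0$, which is fully compatible with $T_f\cdot K_X>0$.

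The missing ingredient is the correctly oriented upper bound on $T_f\cdot K_{\mathcal F}$, or on $T_f\cdot P$ after contracting the $\mathcal F$-chains. For a nonsingular foliation it gives $T_f\cdot K_X\leqslant -T_f\cdot N_{\mathcal F}\leqslant 0$ at once, with no Zariski decomposition needed. In McQuillan's argument, the saddle-node analysis and the $\mathcal F$-chains serve to control the singular correction term in this upper bound. They cannot manufacture an upper bound from lower-bound information.
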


Consequently, the problem reduces to constructing two algebraically independent families of negatively twisted invariant $2$-jet differentials $\omega_{1,a}$ and $\omega_{2,a}$ on $X_a$, as $a$ varies over the parameter space of surfaces of degree $\geqslant 5$ in $\mathbb{P}^3$. By the fundamental vanishing theorem for entire curves, any entire curve $f \colon \mathbb{C} \to X_a$ must satisfy $f^*\omega_{1,a} \equiv 0$ and $f^*\omega_{2,a} \equiv 0$. This implies that $f$ is tangent to a foliation, and McQuillan’s Theorem then forces $f$ to be algebraically degenerate. Finally, Theorem~\ref{thm:Clemens-Xu} guarantees that a very generic surface $X_a \subset \mathbb{P}^3$ of degree $\geqslant 5$ contains no rational or elliptic curves. Combining these results yields the hyperbolicity of $X_a$ for very generic parameters $a$.

In practice, however, proving the existence of two such independent families of differentials $\omega_{1,a}$ and $\omega_{2,a}$ presents a substantial difficulty. The remaining two techniques are precisely designed to circumvent this obstacle.

\medskip
The third technique uses Miyaoka's method~\cite{Miyaoka} (see also Lu--Yau~\cite{Lu-Yau}) in the form developed by Demailly and El Goul~\cite{Demailly-Elgoul2000}. It employs a Riemann--Roch type computation on the zero locus \(\{\omega_1 = 0\}\) inside the second level \(X_2\) (here we omit the parameter $a$ for brevity) of the Demailly--Semple tower.

More precisely, let \(c_1\) and \(c_2\) denote the first and second Chern classes of \(X\), respectively. Suppose there exists a nonzero  irreducible negatively twisted invariant $2$-jet differential
\[
\omega_1 \in H^0\bigl(X,\; E_{2,m}T_X^* \otimes \mathcal{O}_X(-t(d-4))\bigr)
\]
that satisfies
\[
t \;<\; \frac{13c_1^2 - 9c_2}{12c_1^2} \, m .
\]
Then one can show that the line bundle \(\mathcal{O}_{X_2}(1)\) is big on the “nontrivial” component of the zero locus \(\{\omega_1=0\}\subset X_2\); see Proposition~\ref{prop:DEG-numerical-criterion}. By the bigness of \(\mathcal{O}_{X_2}(1)\) restricted to this component, one can produce a second nonzero negatively twisted differential \(\omega_2\) defined only there. Because the entire curve \(f\) satisfies \(f^*\omega_1\equiv 0\), its lift \(f_{[2]}\) lies entirely inside \(\{\omega_1=0\}\); consequently \(f\) also satisfies \(f^*\omega_2 \equiv 0\). Hence \(f\) is tangent to the foliation defined by the pair \((\omega_1,\omega_2)\), and McQuillan’s theorem can be applied as before.

\medskip
One therefore focuses on the remaining case where 
\[
t \;\geqslant\; \frac{13c_1^{2}-9c_2}{12c_1^{2}} \, m.
\]
For this situation, P\u{a}un developed a fourth key technique: the construction of \emph{slanted vector fields}, a method originally envisioned by Siu~\cite{Siu2004} in the spirit of Clemens~\cite{Clemens1986}. The method works effectively when the vanishing order \(t(d-4)\) of the initial nonzero irreducible negatively twisted \(2\)-jet differential \(\omega_1\) exceeds the pole order of a carefully chosen slanted vector field \(V_1\). Differentiating \(\omega_1\) by \(V_1\) then yields a second algebraically independent negatively twisted jet differential \(\omega_2\), thereby completing the proof.

When \( d \geqslant 18 \) and \( m \geqslant 6 \), it is  assumed that the vanishing order of the initial \(2\)-jet differential \(\omega_1\) satisfies
\[
t(d-4) \; \geqslant \; \frac{13c_1^2-9c_2}{12c_1^2} \, m (d-4) \; > \; 7.
\]
Thus slanted vector fields of pole order \(7\) can be applied directly, as they are abundant and (generically) globally generated.

In the complementary range \( 3 \leqslant m \leqslant 5 \) (still with \( d \geqslant 18 \)), the same assumption ensures that
\[
t(d-4) \; > \; 3,
\]
so that, at least formally, only slanted vector fields of pole order \(3\) are accessible. Their effective construction, which ultimately succeeds, rests on a key technical contribution of Păun~\cite{mihaipaun2008}. The principal difficulty lies in ensuring that the resulting differential \(\omega_2\) is algebraically independent of \(\omega_1\). Păun proceeds by contradiction: assuming otherwise leads to a certain ODE, and he shows that this ODE admits no nontrivial solution. This ODE‑based argument, however, breaks down when \( m \geqslant 6 \), leaving the extension to that case an open problem.


\medskip
In the logarithmic case, by adapting the three key techniques described above, Rousseau~\cite{Rousseau2009} proved that the complement \(\mathbb{P}^2 \setminus \mathcal{C}\) is hyperbolic for generic plane curves \(\mathcal{C}\) of degree \(d \geqslant 14\), thus improving the previous bounds of Siu--Yeung~\cite{siu_yeung1996} and El Goul~\cite{ElGoul2003}.

\subsection{Our Main Results}
In our earlier work~\cite{Hou-Huynh-Merker-Xie2025}, we introduced a method for proving the vanishing of negatively twisted logarithmic \(2\)-jet differentials, motivated by an effective Second Main Theorem in Nevanlinna theory.  
That paper treated three generic conics in \(\mathbb{P}^2\), a setting where the computations simplify considerably.  
Writing the three conics as homogeneous polynomials \(A,B,C\), one can work on a suitable large Zariski‑open set where the logarithmic \(2\)-jet differential frame is transparently generated by \(\bigl(\log\frac{A}{C}\bigr)'\!\), \(\bigl(\log\frac{B}{C}\bigr)'\!\) and their Wronskian determinant.  
This convenient semi‑global frame drastically reduces the algebraic complexity.

In the present paper we study two settings in which such a frame is no longer available:  
the complement of a single smooth curve in \(\mathbb{P}^2\), and a smooth surface \(X=\{R=0\}\) in \(\mathbb{P}^3\).  
For surfaces, the defining equation \(R=0\) itself imposes an additional relation among the jet coordinates, which must be taken into account throughout the reduction; this introduces further algebraic complications.  
In both cases, substantial extra effort is required to control the jet differentials across all affine charts and to convert the holomorphicity conditions into a tractable linear system.  
The earlier three‑conics approach was tailored to that specific configuration, but its conceptual framework is the one we adapt and significantly extend here.

Building on this framework, we develop a new method aimed at reaching the \(2\)-jet thresholds \(d=15\) in the compact case and \(d=11\) in the logarithmic case for the Kobayashi conjectures in dimension two.  
For more than two decades, a major obstacle to lowering the degree bounds below \(18\) (compact) and \(14\) (logarithmic) has been the lack of essential vanishing results for certain invariant (logarithmic) \(2\)-jet differentials.  
In this work, we make a decisive advance by proving the following two key vanishing lemmas, which allow us to lower the bounds to \(d=17\) in the compact case and to \(d=12,13\) in the logarithmic case.

\begin{lem}[Key Vanishing Lemma – Logarithmic Case]
\label{lem-1.2}
Let \(\mathcal{C} \subset \mathbb{P}^2\) be a generic curve of degree \(d\). Then:
\begin{itemize}
    \item For \(d = 13\), \(H^{0}\!\bigl(\mathbb{P}^{2},\;E_{2,m}T_{\mathbb{P}^{2}}^{*}(\log \mathcal{C}) \otimes \mathcal{O}(-t)\bigr) = 0\) for all \((m,t)\) in
    \[
    \{(3,3),\,(4,4),\,(5,5),\,(6,5),\,(6,6),\,(7,6),\,(7,7),\,(8,7)\}.
    \]
    \item For \(d = 12\), \(H^{0}\!\bigl(\mathbb{P}^{2},\;E_{2,m}T_{\mathbb{P}^{2}}^{*}(\log \mathcal{C}) \otimes \mathcal{O}(-t)\bigr) = 0\) for all \((m,t)\) in
    \[
    \{(3,2),\,(4,2),\,(5,3),\,(6,3),\,(7,4),\,(8,4),\,(9,5),\,(10,5),\,(11,6),\,(12,6),\,(13,7),\,(14,7)\}.
    \]
\end{itemize}
\end{lem}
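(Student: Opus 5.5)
The proof reduces the vanishing to a rank computation for one explicit example curve, carried out by computer algebra. First we reduce to a single curve by semicontinuity. The sheaves $E_{2,m}T_{\mathbb{P}^2}^*(\log\mathcal{C})\otimes\mathcal{O}(-t)$ form a flat family of locally free sheaves over the Zariski-open set of smooth curves of degree $d$ in the parameter space. By upper semicontinuity of $h^0$, the set of $\mathcal{C}$ where $h^0\neq 0$ is Zariski closed. It therefore suffices, for each listed pair $(m,t)$, to exhibit one smooth curve $\mathcal{C}_0=\{F=0\}$ of the given degree with vanishing $H^0$; vanishing then holds on a nonempty Zariski-open set, and we intersect the finitely many such sets. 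We also note that it is enough to check the pairs with the largest admissible twist in each $m$ only if twisting gives an inclusion in the right direction. Since $\mathcal{O}(-t-1)\subset\mathcal{O}(-t)$, vanishing for $(m,t)$ implies vanishing for $(m,t')$ with $t'\geqslant t$. So the lists effectively reduce to the minimal $t$ for each $m$, which reflects the structure of the statement.

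Next we make sections explicit. On the affine chart $\{z_0\neq 0\}$ with coordinates $(x,y)$ and $f=F(1,x,y)$, the invariant logarithmic $2$-jets of weight $m$ are generated by $x',y'$, the log-derivative $(\log f)'=(f_xx'+f_yy')/f$, and the Wronskian-type invariant $w=x'y''-y'x''$ together with its logarithmic analogue. A section is a weighted-homogeneous polynomial in these invariants whose coefficients are rational functions. We write a general candidate $\mathscr{P}=\sum_{a+b+c+3\ell=m} g_{abc\ell}\,(x')^a(y')^b((\log f)')^c w^\ell$, with polynomial coefficients of bounded degree. Near $\mathcal{C}$ we use the logarithmic frame $(\log f)',\,u'$, where $u$ is a transverse coordinate, and this forces the coefficients to lie in prescribed ideals. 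Holomorphicity on the other two charts, after the coordinate change $x=1/\tilde x$, $y=\tilde y/\tilde x$ and the transformation of jets (including the $\tilde x^{-3}$ factors coming from $w$), together with the twist $\mathcal{O}(-t)$, gives degree bounds on the $g$'s. Because $\mathcal{O}(-t)$ is a line bundle, the twist translates into requiring extra vanishing order $t$ along the line at infinity. These conditions turn $H^0$ into the kernel of an explicit finite linear system over $\mathbb{Q}$ in the coefficients of the $g$'s.

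The main work, and the main obstacle, is to organize the system so that it becomes computationally tractable and to verify that its kernel is zero for a well-chosen $\mathcal{C}_0$. Logarithmic poles of order up to $c$ along $f=0$ mix the coefficients through division by $f$, so we first clear denominators: multiply by $f^{m}$ and impose divisibility conditions modulo powers of $f$. This is an algebraic reduction that lowers the number of unknowns by eliminating the top log-degree components in triangular fashion. We would choose $\mathcal{C}_0$ with sparse equations, for example $F=z_0^d+z_1^d+z_2^d+\varepsilon\cdot(\text{perturbation})$, with random small integer coefficients so that no hidden symmetry creates sections. We would avoid hyperelliptic-type equations, since the abstract warns that these carry extra sections in weight $3$. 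The resulting linear system would then be solved over a large prime field and its rank checked to equal the number of unknowns; full rank mod $p$ implies full rank over $\mathbb{Q}$, hence over $\mathbb{C}$. The hardest part is the size of the system for $d=12$, $m=14$. If a direct solve is infeasible, we would first use a filtration by $\ell$, the power of $w$: show inductively that the leading $w$-component must vanish, which reduces the problem to symmetric-type pieces $S^{m-3\ell}$. Each piece is then handled separately with much smaller matrices.
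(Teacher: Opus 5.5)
Your overall architecture matches the paper's: semicontinuity to reduce to one explicit curve, a finite linear system coming from divisibility conditions, and a computer solve, possibly mod $p$. However, the linear system as you set it up does not compute $H^0$, because the monomials in $x', y', (\log f)', w$ are not a frame.

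\textbf{The ansatz is not a basis.} There are two readings, and both fail.
\begin{itemize}
\item If the $g_{abc\ell}$ are polynomials, the ansatz is incomplete. The logarithmic Wronskian $x'(\log f)''-x''(\log f)'$ equals $f_y\,w/f$ plus terms cubic in $x',y'$. Hence a section with leading part $h\,\bigl(x'(\log f)''-x''(\log f)'\bigr)^{\ell}$ has $w^{\ell}$-coefficient $h f_y^{\ell}/f^{\ell}$, which is not a polynomial unless $f^{\ell}\mid h$. Such sections are invisible to your system, so a zero kernel would prove nothing.
\item If you allow the prefactor $f^{-m}$, the ansatz is complete but redundant, because $f\,(\log f)'=f_x x'+f_y y'$. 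The kernel then always contains representatives of the zero differential, and ``rank $=$ number of unknowns'' can never hold.
\end{itemize}
In your setting the fix is to drop $(\log f)'$ and write $f^{m}\mathscr{P}=\sum g_{ab\ell}(x')^a(y')^b w^{\ell}$. This representation is unique, since logarithmic poles have order at most $m$ and $E_{2,m}T^*_{\mathbb{C}^2}$ is free on these monomials; its completeness even needs no general-position hypothesis. The paper instead works in the logarithmic frame $x', (\log R)', \Delta'''_{xR}$ on $\{R_y\neq 0\}$. It proves (Proposition~\ref{prop:4.2}) that every section has coefficients $F_{j,k}/R_y^{\,m-2k}$, so log-regularity along $\mathcal{C}$ is built in. Only regularity on $\{R_x\neq 0\}$ and at infinity then remains to be imposed.

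\textbf{The fallback decoupling fails.} The coefficient of the top power of $w$ transforms tensorially. Its own conditions therefore only say that it is a section of $S^{m-3\ell}T^*(\log\mathcal{C})\otimes\overline{K}^{\ell}\otimes\mathcal{O}(-t)$. That space is nonzero for every listed pair; for $m=3$ it is $H^0(\mathcal{O}(d-3-t))=H^0(\mathcal{O}(7))$. The leading component dies only through the obstruction to lifting into $E_{2,m}$, which couples all lower components. So there is no splitting into small independent matrices. The paper's recursion, in the power $i$ of $R_y$, runs on the full coupled system.

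\textbf{The choice of test curve rests on a misreading.} The extra sections of hyperelliptic-type equations occur at $(m,t)=(3,1)$, outside the lemma's range. The paper deliberately uses $R=1+x^d+y^d+x^6$ because $R_y=d\,y^{d-1}$. This turns divisibility by $R_y^{\,i}$ into the vanishing of the coefficients of $y^q$ for $q<(d-1)i$, which gives a sparse system of about $3\times 10^5$ unknowns. With a random perturbation and poles $f^{-m}$, each condition $f^k\mid N$ requires Euclidean reduction modulo $f^k$, whose $y$-degree goes up to $dm$. That produces dense constraints on several hundred thousand unknowns for $(d,m,t)=(12,14,7)$. This is exactly the tractability problem you identify as the main obstacle, and your plan does not resolve it.
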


The proof of Lemma~\ref{lem-1.2} is given in Section~\ref{sect:proof of lemma 2}. It relies on a delicate adaptation and generalization of the method introduced in our previous work~\cite{Hou-Huynh-Merker-Xie2025}, following a three-step strategy.

\smallskip
\begin{itemize}
    \item \textbf{Algebraic Reduction.} Working in affine coordinates \((x,y)\) of \(\mathbb{C}^2 \subset \mathbb{P}^2\), let \(R\) be the polynomial defining the curve \(\mathcal{C}\) in this affine chart. We first derive the most general local expression of a global section of the negatively twisted logarithmic invariant \(2\)-jet differential bundle on an open set where a given partial derivative of \(R\) is nonvanishing, say \(\{R_y \neq 0\}\). Such sections are expressed as polynomials in the jet variables divided by powers of the nonvanishing partial derivative. Imposing holomorphicity on the alternative chart \(\{R_x \neq 0\}\) yields severe restrictions on the admissible pole orders. A key innovation of the present paper, compared to our earlier work~\cite{Hou-Huynh-Merker-Xie2025}, is the discovery and systematic exploitation of the triangular structure of the transition formulas; this allows the restrictions to be determined recursively. The analysis reduces an a priori infinite-dimensional problem to a finite-dimensional linear system satisfied by the coefficients of the polynomials appearing in the local expression. Indeed, for generic \(R\), the presumed negatively twisted logarithmic invariant \(2\)-jet differentials must take a finite form (see Proposition~\ref{prop:4.2} and \eqref{best J_X widehatR inhomogeneous}). The remaining task is to determine whether this finite-dimensional linear system admits nontrivial solutions. The same algebraic reduction strategy will be applied, with the necessary modifications, to the compact case treated later in this paper. The ideas and methods developed in this step constitute a foundational contribution of our paper; they will serve as a key tool in future work on the lower degree case, in the study of higher‑order jet differentials, and more broadly in other related problems.

    \smallskip
    
\item \textbf{Computational Verification.} Solving the resulting linear systems directly for a generic polynomial \(R\) would be computationally prohibitive, as the number of variables easily exceeds ten thousand; even extracting the linear system from the geometric constraints is itself a nontrivial task. A key technical innovation lies in our choice of a special polynomial \(R\). The trick is to select an equation that is a deformation of a Fermat-type polynomial, so that the relevant partial derivative simplifies drastically to \(R_y = d y^{d-1}\). This makes the divisibility conditions particularly simple to implement: requiring divisibility by \((y^{d-1})^i\) means that after clearing denominators, all coefficients of \(y^k\) for \(k < (d-1)i\) must vanish identically. These conditions translate into explicit linear systems on the coefficients of the polynomials appearing in the local expressions. Solving them by computer algebra confirms that, for each specified \((m,t)\) with \(d = 12,13\), the only solution is the trivial one.\footnote{The Maple file, along with accompanying explanatory and output PDFs, is available at: \url{https://xiesongyan.github.io/}.} 

    \smallskip
    
    \item \textbf{Geometric Extension.} Finally, by the semicontinuity theorem in algebraic geometry, the vanishing established for this single carefully chosen example automatically extends to generic curves. This standard argument allows us to deduce the generic vanishing statements from computations performed on a specific smooth curve \(\mathcal{C}\).
\end{itemize}

\begin{lem}[Key Vanishing Lemma – Compact Case]
\label{lem-1.1}
Let \(X \subset \mathbb{P}^3\) be a generic surface of degree \(d=17\). Then \(H^{0}\!\bigl(X,\;E_{2,m}T_{X}^{*} \otimes \mathcal{O}_X (-t)\bigr) = 0\) for all \((m,t)\) in
\[
\{(3,3),\,(4,4),\,(5,5),\,(6,6),\,(7,7)\}.
\]
\end{lem}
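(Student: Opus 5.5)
The proof follows the same three-step scheme as for Lemma~\ref{lem-1.2}: algebraic reduction, computational verification, geometric extension. The additional difficulty is the relation among jet coordinates imposed by the surface equation.

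\textbf{Step 1: Algebraic reduction.} Work in the affine chart $\mathbb{C}^3\subset\mathbb{P}^3$ with coordinates $(x,y,z)$, and let $X=\{R(x,y,z)=0\}$. On the open set $\{R_z\neq 0\}\cap X$ we use $(x,y)$ as local coordinates. The jets of $z$ are then eliminated through the relations
\[
R_x x'+R_y y'+R_z z'=0
\]
and its derivative. The invariant $2$-jet differentials of weighted degree $m$ are generated, as a module, by $x',y'$ and the Wronskian $W=x'y''-y'x''$. A global section $\omega$ of $E_{2,m}T_X^*\otimes\mathcal{O}_X(-t)$ therefore restricts to an expression
\[
\omega=\sum_{a+b+3c=m} \frac{P_{abc}(x,y,z)}{R_z^{\,\ell_{abc}}}\,x'^a y'^b W^c,
\]
with $P_{abc}$ polynomials taken modulo $R$. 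The twist $-t$ bounds their degrees once one also checks behaviour at infinity, on the other charts of $\mathbb{P}^3$.

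We then write the transition to the chart $\{R_x\neq 0\}$, where $(y,z)$ are the local coordinates. There $x',x''$ become rational in $y',z',y'',z''$ with denominators powers of $R_x$. As in the logarithmic case, the transition matrix on the basis $\{x'^ay'^bW^c\}$ is triangular with respect to the $W$-degree $c$. Holomorphicity across $\{R_z=0\}$ can then be imposed recursively, starting from the top value of $c$. For generic $R$ this bounds each pole order $\ell_{abc}$ explicitly, in the spirit of Proposition~\ref{prop:4.2}. It reduces membership in $H^0$ to a finite linear system on the coefficients of the $P_{abc}$: divisibility of certain combinations by powers of $R_z$ modulo $R$.

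\textbf{Step 2: Computational verification for a special surface.} We choose a special smooth degree-$17$ surface that deforms the Fermat surface, for instance
\[
R=z^{17}+x^{17}+y^{17}+1+\varepsilon\,G(x,y),
\]
with $G$ a sparse generic perturbation not involving $z$. Then $R_z=17z^{16}$ is a monomial. Divisibility by $R_z^i$ becomes vanishing of all coefficients of $z^k$ with $k<16i$, after reducing modulo $R$ so that the $z$-degree stays below $17$. This yields explicit sparse linear systems, one for each $(m,t)\in\{(3,3),(4,4),(5,5),(6,6),(7,7)\}$. We solve each system exactly by computer algebra, preferably over $\mathbb{Q}$ or modulo a large prime as a certificate, and check that only the trivial solution exists. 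We must also verify that the chosen surface is smooth and lies outside the nongeneric locus assumed in Step 1.

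\textbf{Step 3: Extension to generic surfaces.} The bundles $E_{2,m}T_{X_a}^*\otimes\mathcal{O}(-t)$ form a flat family over the parameter space of smooth surfaces. By upper semicontinuity of $h^0$, vanishing at one smooth member gives vanishing on a Zariski open dense set, which is the generic statement.

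\textbf{Main obstacle.} The hardest part is Step 1 in the compact case. One must handle the extra relation $R=0$ correctly, so that reduction modulo $R$ commutes with the divisibility conditions. One must also establish the triangular transition structure with a pole-order bound strong enough that the resulting system is finite and small enough to solve; for $m=7$ the number of unknowns could be very large. I would first try to exploit the weighted homogeneity of the special $R$ to split the system into graded pieces, and solve these separately.
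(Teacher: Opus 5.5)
Your overall architecture matches the paper's: a finite local form with $R_z$-pole order bounded (by $m-2k$), a transition to a second chart, a Fermat-type deformation with $R_z=17z^{16}$, an exact linear-algebra check, and semicontinuity. The gap is in how Step 2 implements divisibility in $\mathfrak{R}=\mathbb{C}[x,y,z]/(R)$, which is exactly where the compact case differs from the logarithmic one. You reduce modulo $R$ so that the $z$-degree stays below $17$, then require the coefficients of $z^k$, $k<16i$, to vanish. This criterion is not equivalent to $R_z^i\mid F$ in $\mathfrak{R}$.

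Already for $i=1$ it fails. The element $z^{17}=z^{16}\cdot z$ is divisible by $z^{16}$ in $\mathfrak{R}$, but its $z$-normal form $-(1+x^{17}+y^{17}+\varepsilon G)$ has a nonzero $z^0$ coefficient. For $i\geqslant 2$ you have $16i>16$, so your condition forces the entire normal form to vanish, i.e.\ $F\equiv 0$. The underlying reason is that $z$-reduction does not commute with multiplication by $z^{16i}$: the rule $z^{17}\mapsto -(1+x^{17}+y^{17}+\varepsilon G)$ pushes $z^{16i}G$ back into low $z$-degree. Your linear system is therefore strictly stronger than the true holomorphicity conditions, and finding only the trivial solution for it says nothing about $H^0$.

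The missing idea is the paper's Lemma~\ref{Lemma-divisibility}: reduce with respect to a variable in which $R$ is monic and which does not occur in $R_z$. The paper takes $R=1+x^{17}+y^{17}+z^{17}+x^{8}$ and reduces via $y^{17}=-1-x^{17}-z^{17}-x^{8}$. This map $\operatorname{red}$ is division by $R$ as a monic polynomial in $y$ over $\mathbb{C}[x,z]$, so it is $\mathbb{C}[x,z]$-linear and $\operatorname{red}(z^{16i}G)=z^{16i}\operatorname{red}(G)$. Uniqueness of normal forms with $\deg_y<17$ then gives: $F\in z^{16i}\mathfrak{R}$ if and only if $z^{16i}$ divides $\operatorname{red}(F)$ in $\mathbb{C}[x,y,z]$. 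Only after this does ``vanishing of the coefficients of $z^k$, $k<16i$'' become a correct linear criterion.

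The same issue recurs at infinity, which your Step 1 leaves vague (``the twist bounds their degrees once one checks behaviour at infinity''). The paper encodes $\mathcal{O}_X(-t)$ as a factor $X^t$, proved using representatives with $\deg_Y<d$. It then imposes divisibility by powers of $T$ in $\mathbb{C}[T,X,Y,Z]/(\widehat{R})$, again after $Y$-reduction, which commutes with multiplication by $T$. Reducing in $T$ there would break the criterion in the same way.
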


The proof of Lemma~\ref{lem-1.1} is given in Section~\ref{sect:3}. Building upon the techniques developed for the logarithmic case, a similar argument allows us to control the structure of the presumed negatively twisted invariant $2$-jet differentials, showing that they must take a finite form; see Proposition~\ref{Prp-compact-Jyx}  and~\eqref{best J_yx}. Nevertheless, the compact case presents additional algebraic challenges. In affine coordinates \((x,y,z)\) of \(\mathbb{C}^3\subset \mathbb{P}^3\), let \(R \in \mathbb{C}[x,y,z]\) be the polynomial defining the surface \(X\). The key difference from the logarithmic case lies in the local coordinate ring of \(X\): there it is the polynomial ring \(\mathbb{C}[x,y]\), whereas here it is the quotient ring \(\mathbb{C}[x,y,z]/(R)\). To convert divisibility conditions in this quotient ring into concrete constraints in \(\mathbb{C}[x,y,z]\), we introduce a new theoretical framework that ensures effective computability (see Lemmas~\ref{Lemma-divisibility}).

\begin{itemize}
    \item \textbf{An algorithmic reduction.} 
We select the surface \(X \subset \mathbb{P}^3\) to be a deformation of the Fermat type:
    \[
    (X^d + Y^d + Z^d + T^d) + X^a T^b = 0,
    \]
    where the positive integers \(a, b\) satisfy \(a+b = d\) and are chosen so that \(\lvert a-b \rvert\) is small. The Fermat type is chosen for its simplicity, which will significantly reduce the subsequent computational complexity. We must include the extra term \(X^a T^b\) precisely because, as shown by Brotbek's Wronskian construction for \(1\)-jet differentials obtained via cohomological computations~\cite{Brotbek2016Symmetricdifferentialforms}, the pure Fermat surface itself (with \(d \geqslant 9\)) is known to carry nonvanishing negatively twisted invariant jet differentials. The observation that this construction extends to the \(2\)-jet case was noted, in a geometric way, by Xie~\cite[Proposition~6.10]{xie2015ampleness} (see also \cite{Xie2018, BD2018, Brotbek2017}).

    Working in the affine chart \(\{T \neq 0\}\), the equation dehomogenizes as
    \[
    R(x,y,z) = 1 + x^{d} + y^{d} + z^{d} + x^a,
    \]
    where \(a\) will be chosen as \(\lfloor d/2 \rfloor\). This particular form of the defining equation yields two crucial simplifications. 
    
    The first is that the relevant partial derivative simplifies to \(R_z = d z^{d-1}\), which is independent of \(x\) and \(y\). This renders the divisibility conditions—requiring an expression to be divisible by \((R_z)^i\) in the quotient ring \(\mathbb{C}[x,y,z]/(R)\)—far more tractable (see Lemma~\ref{Lemma-divisibility}).
    
    Second, since the extra term \(X^a T^b\) involves no variable \(Y\), we can use the relation
    \[
    y^{d} = -1 - x^{d} - z^{d} - x^a
    \]
    on the affine chart \(X\cap \{T\neq 0\}\) to express every element in the coordinate ring \(\mathbb{C}[x,y,z]/(R)\) uniquely and efficiently as a polynomial in \(\mathbb{C}[x,y,z]\) with \(\deg_y < d\). The reduction is achieved by directly eliminating all higher powers \(y^k\) with \(k > d\): performing Euclidean division on the exponent \(k\) by \(d\) gives \(k = d p + q\), and substituting \((y^d)^p = (-1 - x^{d} - z^{d} - x^a)^p\) reduces the term to \(y^q\) multiplied by an expression in \(x\) and \(z\). The strategic choice of \(y\)—rather than the seemingly more natural choice \(z\)—as the reduction variable is essential to our algorithm (see again Lemma~\ref{Lemma-divisibility}).

This careful choice of the defining equation enables us to rapidly set up a finite linear system, transforming a computationally prohibitive problem into a tractable one. As a further technical simplification, one may work modulo a prime --- say $p = 19$ --- since a linear system with integer coefficients that has no nontrivial solution modulo $p$ also has none over $\mathbb{Z}$ (and hence over $\mathbb{C}$). To demonstrate the effectiveness of our method, we focus here on the degree $17$ case. Finally, we implement this procedure in Maple to verify all the required vanishing results.\footnote{The Maple file, together with an explanatory PDF and a PDF of the run results, is available at: \url{https://xiesongyan.github.io/}.}
\end{itemize}

\smallskip
Equipped with the two Key Vanishing Lemmas, we are now ready to state our main results. A notable methodological departure from the earlier works of P\u{a}un~\cite{mihaipaun2008} and Rousseau~\cite{Rousseau2009} is that our proof avoids the subtle and somewhat exceptional slanted vector fields of pole order $3$. For the qualitative hyperbolicity bounds in Theorems~\ref{thm:compact-hyperbolic} and~\ref{thm:log-hyperbolic}, our proof avoids the exceptional pole-order-3 vector fields and uses only the abundant pole-order-7
fields. The pole-order-3 construction is retained only in the quantitative optimization for Theorem~\ref{SMT for curves of d geqslant 12} in the $d \geqslant 14$ case.

The later pole-lowering arguments of
\cite{CuiHouLiuXie2026,HouWangXie2026} also show that the present proof can
be reorganized with pole order $4$.  We retain pole order $7$ here because
it gives the cleanest presentation of the argument developed in this paper;
the lower pole order changes the finite Key Vanishing list but introduces no
new principle needed for the degree bounds proved below.

\begin{thm}\label{thm:GGL-bound}
Let $d \geqslant 17$. There exists a Zariski-open subset $U \subset \mathbb{P}^{N_d^3}$ in the parameter space of degree-$d$ surfaces in $\mathbb{P}^3$ with the following property: for each $a \in U$, there exists a proper subvariety $S_a$ in the first level $X_{a,1}$ of the Demailly--Semple tower for the surface $X_a\subset \mathbb{P}^3$ such that:
\begin{enumerate}
    \item The family $\{ S_a \}_{a \in U}$ depends algebraically on the parameter $a \in U$;
    \item For every entire curve $f \colon \mathbb{C} \to X_a$, its lift $f_{[1]} \colon \mathbb{C} \to X_{a,1}$ satisfies $f_{[1]}(\mathbb{C}) \subset S_a$.
\end{enumerate}
\end{thm}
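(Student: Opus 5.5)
We will follow P\u{a}un's strategy, with Lemma~\ref{lem-1.1} replacing the delicate pole-order-$3$ step. First, Demailly's Riemann--Roch and Bogomolov computation gives the setup. For $d\geqslant 15$ and $m$ large, the bundle $E_{2,m}T_{X_a}^*\otimes\mathcal{O}(-1)$ has a nonzero section for every smooth $X_a$. Semicontinuity then gives, over a Zariski-open $U_0$, a family $\omega_{1,a}$ depending algebraically on $a$. We take it irreducible, so it defines an irreducible divisor $Z_a\subset X_{a,2}$.

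Among all nonzero sections we choose one maximizing the twist ratio. Write $\omega_{1,a}\in H^0\bigl(X_a,E_{2,m}T^*_{X_a}\otimes\mathcal{O}(-t)\bigr)$ with $t/m$ maximal among irreducible sections of bounded weighted degree. This is the normalization in which the twist is measured as $t(d-4)$ in units of $\mathcal{O}(d-4)=K_X$, equivalently as $t$ in units of $\mathcal{O}(1)$. There are then two cases.

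\textbf{Case 1: the ratio is small.} Suppose $t<\frac{13c_1^2-9c_2}{12c_1^2}\,m$, in the $K_X$ normalization. Proposition~\ref{prop:DEG-numerical-criterion} says $\mathcal{O}_{X_2}(1)$ is big on the nontrivial component $Z_a$. This produces a second negatively twisted differential $\omega_{2,a}$ on $Z_a$, algebraically independent of $\omega_{1,a}$. The fundamental vanishing theorem forces $f_{[2]}(\mathbb{C})\subset Z_a\cap\{\omega_{2,a}=0\}$. This locus has dimension $\leqslant 2$ in $X_{a,2}$. Its image under $X_{a,2}\to X_{a,1}$ is either a proper subvariety of $X_{a,1}$, or it is a multi-foliation section. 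In the latter case, McQuillan's Theorem~\ref{thm:McQuillan} makes $f$ algebraically degenerate. Then $f_{[1]}$ lies in the proper subvariety given by lifts of the finitely many curves in the relevant base locus. These choices are made uniformly in $a$, so $S_a$ varies algebraically.

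\textbf{Case 2: the twist is large, which Lemma~\ref{lem-1.1} controls.} For $d=17$ the ratio $\frac{13c_1^2-9c_2}{12c_1^2}$ is explicit. Here $c_1^2=d(d-4)^2$ and $c_2=d(d^2-4d+6)$. Converting to $\mathcal{O}(1)$-twists, the condition $t(d-4)\geqslant\frac{13c_1^2-9c_2}{12c_1^2}\,m(d-4)$ forces $t\geqslant m$ for the relevant small $m$. Lemma~\ref{lem-1.1} rules out exactly $(m,t)=(m,m)$ for $3\leqslant m\leqslant 7$, and the other pairs are excluded by the same lemma via multiplication. So in the range $m\leqslant 7$ the large-twist case does not occur.

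For $m\geqslant 8$, the vanishing order satisfies $t\geqslant 8>7$. Here we differentiate $\omega_{1,a}$ along P\u{a}un's slanted vector fields of pole order $7$ on the universal family. These are globally generated on $J_2$ of the universal surface off a proper algebraic locus. We obtain $\omega_{2,a}=V\cdot\omega_{1,a}$, still negatively twisted. We need $\omega_{2,a}$ independent of $\omega_{1,a}$ for some $V$. If every derivative were proportional, the zero locus of $\omega_1$ would be invariant under the generating fields. Irreducibility then contradicts global generation, since the fields move $Z_a$ off itself. Again $f$ is tangent to a foliation, and McQuillan plus Theorem~\ref{thm:Clemens-Xu} confine $f_{[1]}$ to a proper subvariety. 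The exceptional loci of the slanted fields and of the base loci are algebraic in $a$, which yields $U$ and the family $\{S_a\}$. For $d>17$ the same argument applies, since the inequalities only improve; the needed vanishing follows from Lemma~\ref{lem-1.1} by degeneration, or one can use P\u{a}un's bound $d\geqslant 18$.

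\textbf{Main obstacle.} The delicate step is the bookkeeping in Case 2. We must check that every $(m,t)$ with $m\leqslant7$ and $t$ above the threshold really reduces to the finite list in Lemma~\ref{lem-1.1}. The key point is that vanishing at $(m,t)$ implies vanishing at $(m,t')$ for all $t'\geqslant t$. I would first tabulate the threshold $\lceil\frac{13c_1^2-9c_2}{12c_1^2}m\rceil$ for $d=17$ against the list. A second subtle point is the independence argument for the pole-order-$7$ fields, which I would take over from P\u{a}un.
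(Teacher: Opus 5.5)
Your case split is the paper's: Demailly--El~Goul bigness on $Z_a$ below the threshold, and Lemma~\ref{lem-1.1} with twist monotonicity plus pole-order-$7$ slanted fields above it. The Case~2 bookkeeping is also right, provided $t$ is read consistently as the $\mathcal{O}_X(1)$-twist: for $d=17$ the threshold is $C(17)\,m=\frac{77}{78}m$, which forces $t\geqslant m$ for $m\leqslant 7$ and $t\geqslant 8$ for $m\geqslant 8$.

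The main gap is property~(1) in Case~1. ``These choices are made uniformly in $a$'' is not an argument. Proposition~\ref{prop:DEG-numerical-criterion} assumes $\operatorname{Pic}(X)\cong\mathbb{Z}$, which holds only off the Noether--Lefschetz locus, a countable union of proper subvarieties. Even granting bigness for each $a$, it only yields, separately for each $a$, some non-canonical section of $\mathcal{O}_{X_{a,2}}(\ell)\otimes\pi_{2,0}^*\mathcal{O}_{X_a}(-1)|_{Z_a}$, with $\ell$ depending on $a$. What is missing is a spreading-out argument, which the paper supplies:
\begin{enumerate}
\item After shrinking $U$, the residual divisor $\mathcal{Z}$ in $\{\omega_1=0\}=\mathcal{Z}+b\Gamma$ on $\mathcal{X}_{2,U}$ is a relative effective Cartier divisor, hence flat over $U$.
\item Set $\mathcal{L}_\ell=(\mathcal{O}(\ell)\otimes\pi_{2,0}^*\mathcal{O}(-1))|_{\mathcal{Z}}$. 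By generic freeness and cohomology and base change, $\pi_*\mathcal{L}_\ell$ is locally free on a Zariski-open $U'$, with fibres $H^0(Z_a,\mathcal{L}_\ell|_{Z_a})$.
\item Apply the criterion at a single very generic $a_0\in U'$, extend the resulting fibre element to a local section, i.e.\ an algebraic family $\omega_{2,a}$, and put $S_a=\pi_{2,1}\bigl(Z_a\cap\{\omega_{2,a}=0\}\bigr)$.
\end{enumerate}
The same order is needed for $\omega_1$. The irreducible component must be extracted at a very generic $a_0$, where $\operatorname{Pic}\cong\mathbb{Z}$ gives the class $b_1u_1+b_2u_2-t\pi_{2,0}^*h$ and every section extends. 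Only then do you extend it and spread fibrewise irreducibility to a Zariski-open set. ``Take a family, then take it irreducible'' does not give components that are irreducible on the fibres.

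You should also delete the McQuillan and Clemens--Xu steps; they are not merely superfluous.
\begin{itemize}
\item Since $\dim X_{a,2}=4$ and $\dim X_{a,1}=3$, any locus of dimension $\leqslant 2$ containing $f_{[2]}(\mathbb{C})$ projects to a proper subvariety of $X_{a,1}$. In Case~2, take the codimension-$2$ part of $\{\omega_{1,a}=\omega_{2,a}=0\}$ modulo $\Gamma_{a,2}$ and use $f_{[2]}(\mathbb{C})\not\subset\Gamma_{a,2}$.
\item Hence your ``either proper or a multi-foliation section'' is not a dichotomy: a multi-foliation is itself a proper subvariety of $X_{a,1}$. The ``finitely many curves in the base locus'' is unjustified and unneeded.
\item Invoking Theorem~\ref{thm:Clemens-Xu} would actually break the statement, since it holds only for very generic $a$, not on a Zariski-open $U$.
\end{itemize}
Finally, for $d>17$ nothing follows from Lemma~\ref{lem-1.1} ``by degeneration'': that lemma concerns degree-$17$ surfaces. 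For $d=18$ the pole-order-$7$ route would need different vanishing statements, at $(3,5),(4,6),(5,7)$. You must instead use P\u{a}un's $d\geqslant 18$ argument, combined with the same spreading-out step to obtain algebraic dependence on $a$.
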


The formulation of Theorem~\ref{thm:GGL-bound} is intricate, as our ultimate goal is to establish the Kobayashi hyperbolicity of a {\em generic} degree-$d$ surface in $\mathbb{P}^3$ for $d \geqslant 17$ (see Question~\ref{ques:generic-absence} and Remark~\ref{rem 7.5}). Achieving this  requires  additional efforts by means of holomorphic foliation theory (see Remark~\ref{rem 7.4}), which lie beyond the scope of the present paper. Nonetheless, the theorem above can
serve as a first step in this direction.

The case $d \geqslant 18$ was essentially established by Păun~\cite{mihaipaun2008} (earlier, Demailly--El Goul~\cite{Demailly-Elgoul2000} obtained a similar conclusion for $d \geqslant 21$). Their results, together with our application of the Grauert's
direct image theorem (an idea suggested to us by Siu, see Section~\ref{subsection: 5.2}), already yield Theorem~\ref{thm:GGL-bound} for $d \geqslant 18$. Consequently, we will focus on the remaining cases $d = 17$. The proof of Theorem~\ref{thm:GGL-bound} for $d = 17$ will be presented in Sections~\ref{subsection: 5.2} and~\ref{subsection: 5.3}.

\medskip 
Theorem~\ref{thm:GGL-bound} shows that for a generic surface $X_a$ of degree $d \geqslant 17$, every entire curve $f: \mathbb{C} \to X_a$ is tangent to a multi-foliation. Combined with McQuillan's Theorem~\ref{thm:McQuillan}, this immediately yields the following  conclusion.

\begin{thm}\label{thm:GGL-generic}
For a generic surface $X \subset \mathbb{P}^3$ of degree $d \geqslant 17$, every entire curve $f: \mathbb{C} \to X$ is algebraically degenerate. \qed
\end{thm}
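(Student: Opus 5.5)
The plan is to deduce the statement from Theorem~\ref{thm:GGL-bound} combined with McQuillan's Theorem~\ref{thm:McQuillan}, following the strategy already sketched for the $d\geqslant 18$ case.

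Fix $d\geqslant 17$ and take $a$ in the Zariski-open set $U$ of Theorem~\ref{thm:GGL-bound}. After shrinking $U$ if necessary, $X=X_a$ is smooth.

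\textbf{Step 1: $X$ is of general type.} By adjunction, $K_X=\mathcal{O}_X(d-4)$, which is ample for $d\geqslant 5$. So $X$ is a minimal surface of general type.

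\textbf{Step 2: entire curves are tangent to a multi-foliation.} Let $f\colon\mathbb{C}\to X$ be an entire curve. By Theorem~\ref{thm:GGL-bound}, its lift satisfies $f_{[1]}(\mathbb{C})\subset S_a\subsetneq X_{a,1}=\mathbb{P}(T_X)$.
\begin{itemize}
\item If $f_{[1]}(\mathbb{C})$ lies in an irreducible component $S'$ of $S_a$ that projects onto a curve in $X$, then $f(\mathbb{C})$ lies in that curve. Hence $f$ is algebraically degenerate.
\item Otherwise $S'$ is a surface dominating $X$ via $\pi_{1,0}\colon X_{a,1}\to X$, and it is generically finite of some degree $r$ over $X$. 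At a general point $x\in X$, the fibre $S'\cap\pi_{1,0}^{-1}(x)$ then consists of finitely many tangent directions. Thus $S'$ defines an algebraic multi-foliation (a $r$-web) $\mathcal{F}$ on $X$.
\end{itemize}
Since $f_{[1]}(\mathbb{C})\subset S'$, the derivative $f'(t)$ points along a direction of $\mathcal{F}$ wherever $f'(t)\neq 0$. So $f$ is tangent to $\mathcal{F}$ and parametrizes a parabolic leaf. A single curve might a priori pass through the locus over which $S'$ has positive-dimensional fibres, but that locus is a proper subvariety of $X$. If $f$ lies in it, $f$ is degenerate; otherwise the tangency holds generically, which suffices.

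\textbf{Step 3: apply McQuillan.} To apply Theorem~\ref{thm:McQuillan} on the nose, I would pass to the normalization $\widetilde S'$ of $S'$ (or a resolution). On $\widetilde S'$, the tautological line field $\mathcal{O}_{X_{a,1}}(-1)$ restricts to a genuine rank-one foliation, and $f_{[1]}$ lifts to a leaf of it. Since $\widetilde S'\to X$ is generically finite and $X$ is of general type, $\widetilde S'$ is of general type as well: pulling back pluricanonical forms gives $\kappa(\widetilde S')\geqslant\kappa(X)=2$. McQuillan's theorem then shows that $f_{[1]}$ is algebraically degenerate in $\widetilde S'$. Projecting down, $f$ is algebraically degenerate in $X$.

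\textbf{Main obstacle.} The only delicate point is this bookkeeping for multi-foliations, that is, making the reduction to a genuine foliation on a surface of general type precise. Theorem~\ref{thm:McQuillan} is stated for multi-foliations exactly to cover this. All the substantive content is in Theorem~\ref{thm:GGL-bound}, so the deduction here is essentially formal.
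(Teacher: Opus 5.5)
Your proposal is correct and follows the paper's own route: Theorem~\ref{thm:GGL-bound} confines $f_{[1]}$ to the proper subvariety $S_a\subset X_{a,1}$. Hence $f$ either lies in a curve or is tangent to the multi-foliation cut out by a surface component dominating $X$, and McQuillan's Theorem~\ref{thm:McQuillan} then gives algebraic degeneracy. The paper states this deduction in one line; your component analysis, the passage to a resolution of $S'$ carrying the induced foliation $T_{S'}\cap V_1$, and the general-type check via pulled-back pluricanonical forms spell out details the paper leaves implicit.
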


For a very generic surface $X_a$ of degree $\geqslant 17$, Clemens' Theorem~\ref{thm:Clemens-Xu} asserts the absence of rational and elliptic curves. Hence, Theorem~\ref{thm:GGL-generic} yields the following hyperbolicity result.

\begin{thm}\label{thm:compact-hyperbolic}
    A very generic surface \( X \subset \mathbb{P}^{3} \) of degree \( d \geqslant 17\) is Kobayashi hyperbolic. \qed
\end{thm}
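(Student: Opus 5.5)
The plan is to deduce Theorem~\ref{thm:compact-hyperbolic} from Theorem~\ref{thm:GGL-generic} and Theorem~\ref{thm:Clemens-Xu}, with Brody's criterion closing the argument. Since $X$ is compact, Brody's criterion says that Kobayashi hyperbolicity is equivalent to the absence of entire curves. So it suffices to show that, for $a$ outside a countable union of proper Zariski-closed subsets of $\mathbb{P}^{N_d}$, the surface $X_a$ carries no nonconstant holomorphic map $f\colon\mathbb{C}\to X_a$.

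First, define the very generic locus. Let $U\subset\mathbb{P}^{N_d}$ be the Zariski-open set on which Theorem~\ref{thm:GGL-generic} holds; shrinking $U$ if necessary, we may also assume $X_a$ is smooth for $a\in U$. By Theorem~\ref{thm:Clemens-Xu}, in the form improved by Xu, and since $d\geqslant 17\geqslant 5$, there is a countable union $Z$ of proper Zariski-closed subsets such that, for $a\notin Z$, $X_a$ contains no rational and no elliptic curve. This includes singular ones, since Clemens' statement concerns the geometric genus of the normalization. We take $a\in U\setminus Z$.

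Second, suppose $f\colon\mathbb{C}\to X_a$ is an entire curve. By Theorem~\ref{thm:GGL-generic}, $f$ is algebraically degenerate, so $\overline{f(\mathbb{C})}^{\mathrm{Zar}}$ is a proper subvariety of $X_a$. It is not a point, since $f$ is nonconstant. Hence it is an irreducible algebraic curve $C\subset X_a$. Let $\nu\colon\widetilde C\to C$ be the normalization. The map $f$ lifts to $\tilde f\colon\mathbb{C}\to\widetilde C$: away from the discrete preimage of the singular points this is clear, and the lift extends holomorphically across those points because $\widetilde C$ is compact and the extension is locally a bounded meromorphic branch. The lift $\tilde f$ is nonconstant. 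By uniformization and Liouville's theorem, $\widetilde C$ must then have genus $0$ or $1$, so $C$ is a rational or elliptic curve. This contradicts $a\notin Z$.

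The main obstacle lies entirely in the input Theorem~\ref{thm:GGL-generic}, which in turn rests on Theorem~\ref{thm:GGL-bound}, McQuillan's Theorem~\ref{thm:McQuillan}, and the Key Vanishing Lemma~\ref{lem-1.1}. Taking these as given, the only delicate point is bookkeeping. The intersection of the Zariski-open $U$ with the complement of the countable union $Z$ is nonempty and indeed very generic, as required. One must also check that Clemens' result excludes singular rational and elliptic curves via their normalizations, which is exactly how it is formulated.
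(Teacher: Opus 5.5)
Your proposal is correct and is exactly the paper's argument: Theorem~\ref{thm:GGL-generic} confines every entire curve on a generic $X_a$ to an algebraic curve, and on the very generic locus Clemens' Theorem~\ref{thm:Clemens-Xu} excludes the only possible such curves, rational and elliptic ones, so Brody's criterion yields hyperbolicity. The further details you give are the standard steps the paper leaves implicit: lifting $f$ to the normalization by Riemann extension, and Liouville's theorem on the universal cover of a curve of genus at least $2$.
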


 \noindent 
This strengthens P\u{a}un's result~\cite{mihaipaun2008} by lowering the degree threshold from \( d \geqslant 18 \) to \( d = 17 \).

\medskip
In the logarithmic setting, we prove the analogous result:

\begin{thm}\label{thm:log-hyperbolic}
    Let \(\mathcal{C} \subset \mathbb{P}^{2}\) be a generic algebraic curve of degree \(d \geqslant 12\). Then the complement \(\mathbb{P}^{2} \setminus \mathcal{C}\) is Kobayashi hyperbolic and hyperbolically embedded.
\end{thm}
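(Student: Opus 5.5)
The plan follows Rousseau's logarithmic adaptation of P\u{a}un's strategy, with the Key Vanishing Lemma~\ref{lem-1.2} replacing the exceptional pole-order-$3$ slanted vector fields. Let $\mathcal{C}$ be a generic curve of degree $d\geqslant 12$ and $f\colon\mathbb{C}\to\mathbb{P}^2\setminus\mathcal{C}$ an entire curve. The case $d\geqslant 14$ is Rousseau's theorem~\cite{Rousseau2009}, so only $d=12,13$ need new input.

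\textbf{Step 1 (a first differential).} A logarithmic Riemann--Roch computation together with Bogomolov vanishing (\cite{Demailly1997, ElGoul2003}) produces, for $d\geqslant 11$ and $m$ large, a nonzero section of $E_{2,m}T^*_{\mathbb{P}^2}(\log\mathcal{C})\otimes\mathcal{O}(-t)$ with $t\geqslant 1$. Passing to an irreducible factor, I obtain an irreducible $\omega_1$ of some weighted degree $m\geqslant 3$ and twist $t\geqslant 1$. Weighted degrees $1,2$ are excluded because generic $\mathcal{C}$ carries no negatively twisted logarithmic symmetric differentials.

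By the logarithmic fundamental vanishing theorem, $f^*\omega_1\equiv 0$. Hence the lift $f_{[2]}$ lies in a component $Z$ of $\{\omega_1=0\}\subset X_2$, the second level of the logarithmic Demailly--Semple tower. I may discard the components lying over the vertical divisor, since they correspond to constant curves.

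\textbf{Step 2 (dichotomy on $t/m$).} Apply the logarithmic analogue of Proposition~\ref{prop:DEG-numerical-criterion}, as in El Goul and Rousseau, with $\bar c_1,\bar c_2$ the logarithmic Chern classes. If $t$ is below the threshold $\theta_d\,m$, then $\mathcal{O}_{X_2}(1)$, suitably twisted, is big on $Z$. This gives a second negatively twisted differential $\omega_2$ on $Z$, independent of $\omega_1$, which $f$ must also satisfy. So $f$ is tangent to a multi-foliation, and McQuillan's Theorem~\ref{thm:McQuillan}, in its logarithmic form for the surface $\mathbb{P}^2\setminus\mathcal{C}$ of log general type, makes $f$ algebraically degenerate.

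In the remaining range $t\geqslant\theta_d m$, I use slanted vector fields of pole order $7$ on the universal family of degree-$d$ curves. These are globally generated off a proper subvariety. Once $t>7$, differentiating a family $\omega_{1,a}$ along them yields a negatively twisted differential independent of $\omega_1$, and the same foliation argument concludes. The pairs $(m,t)$ with $t\geqslant\theta_d m$ but $t\leqslant 7$ are exactly those ruled out by Lemma~\ref{lem-1.2} for generic $\mathcal{C}$. Extending the lemma's list to pairs with $t$ even smaller is harmless, since vanishing at $(m,t)$ implies vanishing at $(m,t')$ for every $t'>t$. For $m$ large, $\theta_d m>7$ automatically, so only finitely many pairs arise. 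Checking that the inequalities match the list for $d=12$ and $d=13$ is routine arithmetic with $\bar c_1^2=(d-3)^2$ and $\bar c_2=d^2-3d+3$.

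\textbf{Step 3 (degenerate curves and embedding).} Once $f$ is algebraically degenerate, its image lies in an algebraic curve $\Gamma\subset\mathbb{P}^2$ with $\Gamma\setminus\mathcal{C}$ containing an entire curve. For generic $\mathcal{C}$ of degree $\geqslant 5$, every curve meets $\mathcal{C}$ in at least three points of its normalization, by the logarithmic Clemens-type results of Pacienza--Rousseau / Xu. This contradicts Picard's theorem, so $\mathbb{P}^2\setminus\mathcal{C}$ is Brody hyperbolic. For hyperbolic embeddedness, the same argument shows that $\mathcal{C}$ itself, of genus $\geqslant 2$, is hyperbolic. It also handles entire curves in $\mathcal{C}$ and Brody limits tangent to $\mathcal{C}$, since the logarithmic jet differentials restrict to the boundary. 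Green's criterion then gives hyperbolic embeddedness, hence Kobayashi hyperbolicity.

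\textbf{Main obstacle.} I expect the hardest step to be the slanted-vector-field step: showing that the differentiated differential is genuinely independent of $\omega_1$ and not a multiple of it, uniformly in $m$. I would follow Rousseau's treatment of pole order $7$. I would use irreducibility of $\omega_1$ and the fact that the vector fields generate the vertical tangent space to force a nonproportional derivative, and invoke Grauert's direct image theorem to make $\omega_{1,a}$ vary algebraically in $a$.
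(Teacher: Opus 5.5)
Your proposal is correct and follows essentially the same route as the paper. The shared steps are: an irreducible $\omega_1$ from Riemann--Roch; El Goul's bigness criterion when $t<\overline{C}(d)\,m$; pole-order-$7$ slanted vector fields once $t\geqslant 8$; the Key Vanishing Lemma~\ref{lem-1.2} with twist monotonicity for the finitely many remaining pairs; McQuillan plus the three-point intersection property for degenerate curves; and Green's criterion for hyperbolic embeddedness. Your extra discussion of Brody limits tangent to $\mathcal{C}$ is unnecessary, since with a single smooth boundary component Green's criterion only needs $\mathcal{C}$ itself, of genus $\geqslant 2$, to be Brody hyperbolic.
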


 \noindent This statement is formulated for ``generic'' rather than ``very generic'' curves, as the algebraic degeneracy of entire curves in $\mathbb{P}^2 \setminus \mathcal{C}$ for a generic $\mathcal{C}$ already implies Kobayashi hyperbolicity for a generic $\mathcal{C}$. 

The case $d \geqslant 14$ was already established by Rousseau in~\cite{Rousseau2009} (see Remark~\ref{rem:vanishing results for log case d >= 14} for an alternative approach).
Consequently, we will
focus on the remaining cases $d=12$ and $13$. 
The proof of Theorem~\ref{thm:log-hyperbolic} will be given in Section~\ref{subsection: 5.4}.

\medskip

Moreover, we strengthen the above hyperbolicity result by establishing a Second Main Theorem in Nevanlinna theory.

\begin{thm}\label{SMT for curves of d geqslant 12}
    For a generic curve \(\mathcal{C}\) of degree \(d \geqslant 12\) in \(\mathbb{P}^2\) and for any entire curve \(f \colon \mathbb{C} \to \mathbb{P}^2\), the following inequality holds:
    \begin{equation}\label{eq:smt for curves of d geqslant 12}
        T_f(r) \leqslant C_d \, N_f^{[1]}(r, \mathcal{C}) + o\bigl(T_f(r)\bigr) \quad\parallel,
    \end{equation}
    where \(C_d\) is a constant depending only on \(d\). Explicitly,
    \begin{itemize}
        \item For $d \geqslant 14$,
            \begin{equation*}
                C_d = \frac{72}{32d - 117 - \sqrt{640d^2 - 1248d + 1017}}.
            \end{equation*}

        \item For \(d = 13\),
            \begin{equation*}
                C_{13} = \frac{18}{113 - \sqrt{12653}} \approx 35.0.
            \end{equation*}
        \item For \(d = 12\),
            \begin{equation*}
                C_{12} = \frac{30}{173 - \sqrt{29809}} \approx 86.4.
            \end{equation*}
    \end{itemize}
\end{thm}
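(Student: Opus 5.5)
The plan is to run the standard logarithmic Nevanlinna scheme for jet differentials, replacing the qualitative vanishing theorem by a quantitative logarithmic-derivative estimate, and then to optimize over the finite list of admissible $(m,t)$.

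\textbf{Step 1 (a first differential).} Fix a generic $\mathcal{C}$ of degree $d$. By the Riemann--Roch computation recalled above, combined with Bogomolov vanishing, there is a nonzero section
\[
\omega_1\in H^0\bigl(\mathbb{P}^2, E_{2,m}T^*_{\mathbb{P}^2}(\log\mathcal{C})\otimes\mathcal{O}(-t)\bigr)
\]
for some $m\geqslant 3$ and $t\geqslant 1$. We may take $\omega_1$ irreducible with $t/m$ maximal.

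\textbf{Step 2 (the quantitative estimate).} For any nonzero $\omega\in H^0(E_{2,m}T^*(\log\mathcal{C})\otimes\mathcal{O}(-t))$ and any entire curve $f$ with $f^*\omega\not\equiv 0$, I would aim for
\[
t\,T_f(r)\leqslant m\,\overline N_f(r,\mathcal{C})+o(T_f(r))\quad\parallel .
\]
This follows from the logarithmic derivative lemma applied to $\log$ of local defining functions of $\mathcal{C}$. The truncated counting function enters because the poles of the logarithmic jets along $\mathcal{C}$ have order at most $m$ at each intersection point, counted without multiplicity. The estimate therefore holds whenever some negatively twisted differential does not annihilate $f$.

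\textbf{Step 3 (dichotomy).} If $f^*\omega_1\not\equiv 0$, Step 2 applies directly. Otherwise $f_{[2]}$ lies in the nontrivial component $Z$ of $\{\omega_1=0\}\subset X_2$, and I would split into two cases.

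\emph{Case $t/m$ above the Demailly--El Goul threshold.} In the logarithmic analogue of Proposition~\ref{prop:DEG-numerical-criterion}, $\mathcal{O}_{X_2}(1)$ is big on $Z$. A Riemann--Roch count on $Z$ then produces $\omega_2$ with explicit ratio $t_2/m_2$ not annihilating $f$, and I would run the Step 2 estimate on $Z$.

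\emph{Case $t/m$ below the threshold.} Here I would differentiate $\omega_1$ by slanted vector fields of pole order $7$ (or pole order $3$ when $d\geqslant 14$), which gives a new differential with twist $t-7$ (or $t-3$). The Key Vanishing Lemma~\ref{lem-1.2} excludes exactly the small pairs $(m,t)$ for $d=12,13$ for which $t-7\leqslant 0$ would fail. It guarantees that $t$ is large enough that the derived differential remains negatively twisted and is independent of $\omega_1$.

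If every such derived differential also annihilates $f$, then $f$ is tangent to a foliation. In that situation I would use McQuillan's tautological inequality for foliations, in its Nevanlinna form, to obtain a bound $T_f\leqslant C\,\overline N_f$ directly, with a constant computed from Chern numbers.

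\textbf{Step 4 (optimization).} Each branch yields a bound of the form $T_f(r)\leqslant \frac{m}{t}\,\overline N_f(r,\mathcal{C})+o(T_f(r))$ for some effective pair $(m,t)$. The constant $C_d$ is the worst such ratio. For $d\geqslant 14$, it comes from balancing the threshold $t/m$ from the Riemann--Roch/Bogomolov existence against the bigness threshold on $Z$. This is a quadratic condition in $t/m$, which explains the square root in the formula. For $d=12,13$, the vanishing lists of Lemma~\ref{lem-1.2} force $t/m$ into specific ranges, and optimizing the same quadratic over those ranges should produce $18/(113-\sqrt{12653})$ and $30/(173-\sqrt{29809})$.

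\textbf{Main obstacle.} I expect the hardest step to be the foliation-tangent branch of Step 3: obtaining an effective Nevanlinna inequality there, rather than mere algebraic degeneracy, while keeping the constants uniform in the generic $\mathcal{C}$. I would first try to make all derived differentials effective via Grauert's direct image theorem over the parameter space, so that $(m,t)$ stays bounded uniformly in $\mathcal{C}$.
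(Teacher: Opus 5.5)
Your overall architecture (an initial $\omega_1$, Theorem~\ref{smt-form-logarithmic-diff-jet} when $f^*\omega_1\not\equiv0$, otherwise a second object or McQuillan) matches the paper. However, in Step~3 the two regimes are interchanged, and as written neither branch works.

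The logarithmic Demailly--El~Goul count gives $(2u_1+u_2)^3\cdot Z=m(13\bar c_1^{2}-9\bar c_2)-12t(d-3)$. This decreases in $t$ and is positive exactly when $t<\overline{C}(d)\,m$. So bigness on $Z$ is a \emph{low}-ratio phenomenon, and above the threshold the Riemann--Roch count on $Z$ produces nothing.

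Conversely, a slanted field of pole order $7$ only changes the twist $-t$ into $-t+7$. It is useful precisely when $t\geqslant8$ in absolute terms (or $t\geqslant4$ with pole order $3$ for $m\leqslant5$). In your below-threshold branch, take $(m,t)=(9,7)$ for $d=13$ or $(15,7)$ for $d=12$. These pairs are not on the vanishing lists, and they give an untwisted differential and no inequality.

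The paper splits the other way. Ratios $t/m\leqslant\gamma<\overline{C}(d)$ go to the bigness branch, and ratios $t/m>\gamma$ go to slanted fields. Lemma~\ref{lem-1.2} plus twist monotonicity removes exactly the pairs with $t/m>\gamma$ but $t$ too small for the slanted fields.

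In the bigness branch, Step~2 also does not apply as stated. The new section $\sigma$ lives only on $Z$, in $\bigl(\mathcal{O}_{X_2}(2,1)\otimes\pi_{2,0}^*\mathcal{O}_{\mathbb{P}^2}(-\tau)\bigr)^{\otimes\ell}|_Z$ with $\tau<\tau_1(t/m)$. It must be evaluated along $f_{[2]}$ via $\mathcal{O}_{X_2}(2p,p)=\mathcal{O}_{X_2}(3p)(-2p\Gamma_2)\hookrightarrow\mathcal{O}_{X_2}(3p)$ (Lemma~\ref{lem:one-component-tautological-evaluation}, Theorem~\ref{pro:SMT on Z}). This gives the constant $3/\tau$. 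If $\sigma$ annihilates $f$, you fall back on McQuillan rather than on an estimate from $\sigma$.

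Step~4 consequently misplaces the source of $C_d$. The Riemann--Roch ratio $(d-3)\delta_1$ only yields the non-binding constant $1/((d-3)\delta_1)$ when $f^*\omega_1\not\equiv0$. The foliation branch is not the obstacle you expect: Theorem~\ref{McQuillan's result} is quoted as is and gives $1/(d-3)$, also non-binding. That theorem is for algebraically nondegenerate $f$; the degenerate case is classical.

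For $d\geqslant14$ the stated value is $6/(6\gamma_0-7)=3/\tau_1(\gamma_0)$. This is the balance between the pole-order-$7$ branch and the restricted-section branch. The worst case of the pole-order-$7$ branch is $m=6$, where the lines $t=\gamma m$ and $t=\frac{6\gamma-7}{6}m+7$ meet. The $117=33+12\cdot7$ in the formula is the trace of the pole order $7$.

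For $d=13$ and $d=12$ the constants are $3/\tau_1(7/9)$ and $3/\tau_1(7/15)$, determined by the surviving pairs $(9,7)$ and $(15,7)$. Finally, the pole-order-$3$ branch is needed for $m\leqslant5$ at $d=13$ too, not only for $d\geqslant14$. The pair $(5,4)$ is not on the vanishing list, and routing it through the bigness branch would give $3/\tau_1(4/5)\approx48>C_{13}$.
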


We recall the standard notation of Nevanlinna theory in Section~\ref{Nevanlinna theory notation}. A related result due to Campana, Darondeau, and Rousseau~\cite[Proposition~4.8]{Campana-Darondeau-Rousseau2020} shows that every entire curve ramifying over a smooth curve of degree $d \geqslant 12$ with sufficiently high order must satisfy an algebraic differential equation of order $2$.  Here Theorem~\ref{SMT for curves of d geqslant 12} provides a  quantitative estimate in Nevanlinna theory, valid for any entire curve. Its proof follows the strategy introduced in our previous work~\cite{Hou-Huynh-Merker-Xie2025} and is given in Section~\ref{subsect:SMT} (see also Remark~\ref{smt for degree 11} for the case $d = 11$).

The numerical coefficient in the restricted zero-locus branch is $3/\tau$:
the residual factor left after removing the double vertical Wronskian factor
can vanish and must not be divided out.  This correction enlarges only the
explicit constants above; the degree bounds and the Key Vanishing Lemmas are
unchanged.

\medskip

\subsection{Overview of the Paper's Approach}

The story begins with Andr\'e Bloch's foundational work a century ago~\cite{Bloch26}, which established that complex hyperbolicity is governed by the positivity of $k$-jet differential bundles. This positivity carries two distinct meanings: abstract existence of negatively twisted $k$-jet differentials (guaranteed by Riemann–Roch) and effective control of their base locus. The latter is far more delicate and lies at the heart of the field. To address it, one adopts a two-pronged strategy: the intrinsic side uses abstract algebro-geometric structures, while the extrinsic side introduces explicit constructions such as Siu's slanted vector fields. Yet even this combination falls short for low-degree cases. The present paper supplies the missing piece: the key vanishing lemmas, which prove that the weighted degree $m$ and negative twist $t$ of certain abstractly obtained jet differentials cannot fall into certain critical ranges, thereby bridging the gap between abstract existence and effective vanishing.

Our proof of the Kobayashi hyperbolicity conjectures in dimension two for degrees $\geqslant 17$ (compact case) and $\geqslant 12$ (logarithmic case) weaves together intrinsic and extrinsic algebro-geometric arguments. On the intrinsic side, we rely on the Demailly–Semple tower, Riemann–Roch computations, and the direct image theorem. On the extrinsic side — for those $(m,t)$ beyond intrinsic reach — we employ Siu's method of slanted vector fields of pole order $7$ and develop a systematic approach that culminates in the key vanishing lemmas. A useful analogy is the Riemann–Roch theorem for curves, which admits both an intrinsic sheaf-theoretic proof and an extrinsic proof based on coordinate changes and pole–zero counting (cf.~\cite{Griffiths1989}); our approach follows a similar extrinsic philosophy, though the algebraic complexity is vastly higher in dimension two.

The linear algebra techniques of Siu and Yeung~\cite{siu_yeung1996, Siu2015} were designed to construct sections when the degree is sufficiently high. In this paper, we develop a method that, in principle, computes all sections for a given $(m,t)$. However, such nonzero sections exist only when $m$ is taken sufficiently large — far beyond current computational reach — leaving explicit constructions in low-degree settings elusive. Nonetheless, by proving vanishing results, we establish hyperbolicity without ever exhibiting an explicit section.

Our first key contribution is to determine the finite algebraic structure of negatively twisted invariant $2$-jet differentials: we prove that any such global section must take a specific finite form (see Proposition~\ref{prop:4.2} together with \eqref{best J_X widehatR}, and Proposition~\ref{Prp-compact-Jyx} together with \eqref{best J}), reducing an infinite-dimensional problem to a finite-dimensional linear system. While a similar local description appeared in our previous work~\cite{Hou-Huynh-Merker-Xie2025}, it was tailored to three conics; the present general form applies to arbitrary curves and surfaces.

Our second contribution is the design of efficient algorithms to solve these linear systems. For the logarithmic case, we adapted the algorithmic framework of~\cite{Hou-Huynh-Merker-Xie2025}. The compact case required a fundamentally new theoretical approach, because the local coordinate ring is no longer $\mathbb{C}[x,y]$ but the quotient ring $\mathbb{C}[x,y,z]/(R)$, necessitating a novel reduction procedure to transform divisibility conditions into concrete algebraic constraints (see Lemmas~\ref{Lemma-divisibility}).

Our third contribution is the implementation of these algorithms in Maple, which systematically verifies the vanishing results for $d = 12,13$ (logarithmic) and $d = 17$ (compact). The scale is substantially larger than in our previous work: the hardest case in~\cite{Hou-Huynh-Merker-Xie2025} involved “only” $12{,}550$ variables, whereas here the largest linear systems involve over $300{,}000$ variables (logarithmic) and over $600{,}000$ variables (compact). Solving systems of this size required careful algorithm design and optimization within Maple's capabilities.

These vanishing lemmas improve the degree bounds for the Kobayashi conjecture, achieving $17$ in the compact case and $12$, $13$ in the logarithmic case.

The Kobayashi hyperbolicity conjectures conceal a vast ocean of complexity.  The degree of the hypersurface serves as a gauge of depth: lower degrees bring us closer to the seabed of hidden structures, where the true nature of the problem reveals itself. In dimension two, reaching the thresholds $d = 15$ (compact case) and $d = 11$ (logarithmic case) would mark the limit of what $2$-jet techniques alone can achieve, and would illuminate the intrinsic complexity beneath — a complexity rooted in deep theoretical structures, yet demanding cross-disciplinary collaboration to uncover.

Some parts of the algebraic reductions and our Maple implementation are designed for a one‑term perturbation of the Fermat equation. This simple choice works remarkably well for the cases we have settled — degree~\(17\) in the compact setting and degrees~\(12\) and~\(13\) in the logarithmic setting — but it is too rigid to reach the ultimate \(2\)-jet thresholds. Indeed, for the small parameters \((m,t)=(3,1)\) both in the compact case with \(d=15\) and in the logarithmic case with \(d=11\), the algorithm produces a non‑zero space of solutions, thereby exhibiting nontrivial negatively twisted jet differentials (see Remarks~\ref{surprise-1} and~\ref{surprise-2}). This outcome is actually reassuring: it confirms that the code correctly detects the existence of such differentials when they are present, and it shows that the hyperelliptic‑type equation we have used is not sufficiently generic for those lowest degrees.

To prove the Key Vanishing Lemma for degree~\(15\) (compact) and degree~\(11\) (logarithmic) one must start from a more involved defining equation, for instance a hypersurface that contains two carefully chosen extra monomials beyond the Fermat equation. The resulting linear systems will be substantially larger --- the number of unknown coefficients easily exceeds \(60\) million in the compact case and \(20\) million in the logarithmic case --- and they lie far beyond the capabilities of the present Maple implementation. Realising such a computation therefore poses a formidable engineering challenge: it will require an efficient C\texttt{++} implementation, massive parallelisation, and likely a more refined choice of the defining equation to keep the systems within manageable bounds. Nevertheless, we are optimistic that with sustained efforts these two bottom cases will eventually be settled.

\medskip

\subsection{Structure of the Paper}
Section~\ref{sect:2} provides the necessary background material.
Sections~\ref{sect:proof of lemma 2} and~\ref{sect:3}  form the core of the paper, where we establish two key vanishing lemmas for  invariant $2$-jet differentials with low vanishing orders.
For completeness, Section~\ref{sect:4} recalls Siu's slanted vector field method, following the constructions of P\u{a}un and Rousseau.
Section~\ref{sect:5} contains the proofs of the main theorems.
Finally, Section~\ref{section:6} outlines several directions for future research.

\section{\bf Preliminaries}
\label{sect:2}

\subsection{Jet Bundles and Jet Differentials}
\label{subsect:jet-definition}

Let $X$ be an $n$-dimensional complex manifold. The $k$-jet bundle $J_k X$ consists of equivalence classes of holomorphic germs $f \colon (\mathbb{C},0) \to (X,x)$, where two germs $f$ and $g$ are equivalent if their Taylor expansions agree up to order $k$ in some local coordinates around $x$. The equivalence class of a germ $f$ is called the \emph{$k$-jet of $f$} and is denoted by $j_k (f)$.

For a point $x \in X$, denote by $J_k(X)_x$ the space of all $k$-jets of holomorphic germs $(\mathbb{C},0) \to (X,x)$. The total $k$-jet bundle is then defined as the disjoint union
\[
J_k(X) := \coprod_{x \in X} J_k(X)_x.
\]

Together with the natural projection
\[
\pi_k \colon J_k(X) \longrightarrow X, \qquad j_k(f) \longmapsto x = f(0),
\]
the set $J_k(X)$ carries the structure of a holomorphic fibre bundle over $X$. 

When $k = 1$, the fibre $J_1(X)_x$ is naturally identified with the tangent space $T_xX$, and $J_1(X)$ is canonically isomorphic to the holomorphic tangent bundle $T_X$. For $k \geqslant 2$, however, the fibres $J_k(X)_x$ no longer carry a uniform vector space structure that varies holomorphically with $x$; consequently, $J_k(X)$ is in general only a (non‑linear) fibre bundle, not a vector bundle.

Let \(U \subset X\) be an open subset. Given a holomorphic \(1\)-form \(\omega \in H^0(U, T_X^*)\) and a \(k\)-jet \(j_k(f) \in J_k(X)|_U\), the pullback \(f^*\omega\) can be written as \(A(z)\,dz\) for some holomorphic function \(A\). Because the derivatives \(A^{(j)}\) (\(0 \leqslant j \leqslant k-1\)) depend only on the equivalence class \(j_k(f)\) and not on the choice of representative \(f\), the form \(\omega\) induces a well-defined holomorphic map
\begin{equation}\label{trivialization-jet}
\tilde{\omega} \colon J_k(X)|_U \longrightarrow \mathbb{C}^k,\qquad
j_k(f) \longmapsto \bigl(A(z), A'(z), \dots, A^{(k-1)}(z)\bigr).
\end{equation}

Now suppose \(\omega_1, \dots, \omega_n\) is a local holomorphic coframe on \(U\), i.e. \(\omega_1 \wedge \cdots \wedge \omega_n \neq 0\). Using the maps \(\tilde{\omega}_i\) defined as in~\eqref{trivialization-jet}, we obtain a trivialisation
\[
H^0(U, J_k(X)) \;\longrightarrow\; U \times (\mathbb{C}^k)^n,
\]
\[
\sigma \;\longmapsto\; \bigl(\pi_k \circ \sigma;\; \tilde{\omega}_1 \circ \sigma, \dots, \tilde{\omega}_n \circ \sigma\bigr).
\]

The components \(x_i^{(j)}\) (\(1 \leqslant i \leqslant n,\; 1 \leqslant j \leqslant k\)) of the vectors \(\tilde{\omega}_i \circ \sigma\) are called \emph{jet-coordinates}.

Now let \(D\) be a normal crossing divisor on \(X\). Following Iitaka~\cite{Iitaka1982}, the \emph{logarithmic cotangent bundle of \(X\) along \(D\)}, denoted by \(T_X^*(\log D)\), is the locally free sheaf that is locally generated by the forms
\[
\frac{\mathrm{d} z_1}{z_1},\dots,\frac{\mathrm{d} z_\ell}{z_\ell}, \mathrm{d} z_{\ell+1},\dots, \mathrm{d} z_n,
\]
where \(z_1,\dots,z_\ell,z_{\ell+1},\dots,z_n\) are local coordinates centred at a point \(x\) in which \(D\) is given by
\[
D = \{z_1 \cdots z_\ell = 0\}.
\]
Here \(\ell = \ell(x)\) denotes the number of local branches of \(D\) passing through \(x\).

A holomorphic section \(s \in H^0(U, J_k(X))\) over an open set \(U \subset X\) is called a \emph{logarithmic \(k\)-jet field} if, for every open subset \(U' \subset U\) and every logarithmic \(1\)-form \(\omega \in H^0\bigl(U', T_X^*(\log D)\bigr)\), the composition \(\tilde{\omega} \circ s\) is holomorphic on \(U'\), where \(\tilde{\omega}\) is the map induced by \(\omega\) as in~\eqref{trivialization-jet}. The collection of all such logarithmic \(k\)-jet fields forms a subsheaf of \(J_k(X)\); this subsheaf is precisely the sheaf of sections of a holomorphic fibre bundle over \(X\), called the \emph{logarithmic \(k\)-jet bundle of \(X\) along \(D\)} and denoted by \(J_k(X,-\log D)\) \cite{Noguchi1986}.


A \emph{jet differential} (resp. \emph{logarithmic jet differential}) of order \(k\) and weighted degree \(m\) at a point \(x \in X\) is a polynomial \(Q(f', \dots, f^{(k)})\) on the fibre of \(J_k(X)\) (resp. \(J_k(X, -\log D)\)) over \(x\) that is homogeneous with respect to the \(\mathbb{C}^*\)-action
\[
Q\bigl( \lambda f', \lambda^2 f'', \dots, \lambda^k f^{(k)} \bigr) = \lambda^m Q\bigl( f', f'', \dots, f^{(k)} \bigr) \qquad (\forall \lambda \in \mathbb{C}^*).
\]

Denote by \(E_{k,m}^{GG}T_X^*\big|_x\) (resp. \(E_{k,m}^{GG}T_X^*(\log D)\big|_x\)) the vector space of such polynomials and set
\[
E_{k,m}^{GG}T_X^* \coloneqq \bigcup_{x \in X} E_{k,m}^{GG}T_X^*\big|_x,
\qquad
E_{k,m}^{GG}T_X^*(\log D) \coloneqq \bigcup_{x \in X} E_{k,m}^{GG}T_X^*(\log D)\big|_x.
\]

By Faà di Bruno’s formula \cite{Constantine1996, Merker2015}, both \(E_{k,m}^{GG}T_X^*\) and \(E_{k,m}^{GG}T_X^*(\log D)\) carry the structure of holomorphic vector bundles over \(X\); they are called the \emph{Green–Griffiths bundle} and the \emph{logarithmic Green–Griffiths bundle}, respectively.

The classical result below states that every (logarithmic) jet differential twisted negatively by an ample line bundle yields an algebraic differential equation satisfied by all entire curves.

\begin{namedthm*}{Fundamental Vanishing Theorem for Entire Curves}
  [{\cite[Section 2]{Green-Griffiths1980}, \cite[Lemma 2.1]{Siu-Yeung1996MathA}, \cite[Corollary 7.9]{Demailly1997}}]

  Let \(X\) be a smooth complex projective variety and \(\mathscr{A}\) an ample line bundle on \(X\).

  \medskip
  \noindent
  \textbf{(1) Compact case.}
  If
  \[
  0 \neq \mathscr{P} \in H^0\bigl( X,\; E_{k,m}^{GG}T_X^* \otimes \mathscr{A}^{-1} \bigr),
  \]
  then every nonconstant holomorphic curve \(f \colon \mathbb{C} \to X\) satisfies
  \[
  \mathscr{P}\bigl(f',\dots,f^{(k)}\bigr) \equiv 0.
  \]

  \medskip
  \noindent
  \textbf{(2) Logarithmic case.}
  Let \(D \subset X\) be a normal crossing divisor. If
  \[
  0 \neq \mathscr{P} \in H^0\bigl( X,\; E_{k,m}^{GG}T_X^*(\log D) \otimes \mathscr{A}^{-1} \bigr),
  \]
  then every nonconstant holomorphic curve \(f \colon \mathbb{C} \to X \setminus D\) satisfies
  \[
  \mathscr{P}\bigl(f',\dots,f^{(k)}\bigr) \equiv 0.
  \]
\end{namedthm*}

\subsection{Invariant Jet Differentials}
\label{subsection:Invariant Jet}

Geometrically, only the image of an entire curve in \(X\) matters, not its specific parametrization. This motivates the notion of \emph{invariant jets differentials}, introduced by Demailly \cite{Demailly1997} in the compact case and later extended to the logarithmic setting by Dethloff--Lu \cite{Dethloff-Lu2001}.

Let \(\mathbb{G}_k\) be the group of germs of \(k\)-jets of biholomorphisms of \((\mathbb{C},0)\):
\[
\mathbb{G}_k \coloneqq 
\Bigl\{\varphi \colon t \longmapsto \sum_{j=1}^{k} c_j t^j \;\Bigm|\; 
c_1 \in \mathbb{C}^*,\; c_j \in \mathbb{C}\;(j \geqslant 2) \Bigr\},
\]
where the group operation is composition modulo \(t^{k+1}\). The group \(\mathbb{G}_k\) acts on \(J_k(X)\) by reparametrization on the right:
\[
\varphi \cdot j_k(f) \coloneqq j_k(f \circ \varphi).
\]

A (logarithmic) jet differential \(P\) over an open set \(U \subset X\) is called \emph{invariant under \(\mathbb{G}_k\)} if for every \(\varphi \in \mathbb{G}_k\) it satisfies
\begin{equation}\label{def-invariant-jet-diff}
P\bigl( (f \circ \varphi)', \dots, (f \circ \varphi)^{(k)} \bigr) 
= (\varphi')^{m} \, P\bigl(f',\dots,f^{(k)}\bigr).
\end{equation}
This invariance under reparametrization is the crucial property that distinguishes (Demailly--Semple) invariant jet differentials from Green--Griffiths jet differentials.

The subbundle
\[
E_{k,m}T_X^* \subset E_{k,m}^{GG}T_X^* \qquad 
\bigl(\text{resp. } E_{k,m}T_X^*(\log D) \subset E_{k,m}^{GG}T_X^*(\log D)\bigr)
\]
whose fibres consist of all such invariant polynomials is called the \emph{Demailly--Semple bundle} (resp. the \emph{logarithmic Demailly--Semple bundle}). A key advantage of these bundles, especially in dimension two, is their favourable positivity properties. For example, they allow one to apply Riemann--Roch techniques at lower degrees, which in turn yields the existence of negatively twisted global sections that are crucial for establishing hyperbolicity.

Throughout this article we work exclusively with jets of order \(k = 2\). When \(\dim_{\mathbb{C}} X = 2\), the bundle of invariant (logarithmic) 2‑jet differentials admits a filtration whose associated graded bundle is given by~\cite[Section~12]{Demailly1997}:
\begin{align}
    \operatorname{Gr}^{\bullet} E_{2,m} T_X^{*}
    &= \bigoplus_{0 \leqslant j \leqslant \lfloor m/3 \rfloor}
       S^{m-3j} T_X^{*} \otimes K_X^{j}, \label{filtration-of-E2m, cpt}
    \\[2mm]
    \operatorname{Gr}^{\bullet} E_{2,m} T_X^{*}(\log D)
    &= \bigoplus_{0 \leqslant j \leqslant \lfloor m/3 \rfloor}
       S^{m-3j} T_X^{*}(\log D) \otimes \overline{K}_X^{\,j}, \label{filtration-of-E2m, log}
\end{align}
where \(K_X\) is the canonical bundle and \(\overline{K}_X := K_X \otimes \mathcal{O}_X(D)\) is the logarithmic canonical bundle. 

This grading reflects the fact that an invariant 2‑jet differential \(P(f',f'')\) (or its logarithmic counterpart outside \(D\)) can be written locally as
\begin{equation}\label{local-expansion}
P(f',f'')
 = \sum_{0 \leqslant j \leqslant \lfloor m/3 \rfloor}\;
   \sum_{\substack{\alpha_1+\alpha_2 = m-3j \\ \alpha_1,\alpha_2 \geqslant 0}}
   R_{\alpha_1,\alpha_2,j}(f_1,f_2)\,
   (f_1')^{\alpha_1}(f_2')^{\alpha_2}
   \bigl(f_1' f_2'' - f_2' f_1''\bigr)^{j},
\end{equation}
where \(f = (f_1,f_2)\) are local coordinates of the holomorphic germ, and the coefficients \(R_{\alpha_1,\alpha_2,j}\) are holomorphic functions.  
 For higher-order invariant jet differentials, analogous local expressions exist, though they are considerably more complicated, and the bases are related by intricate syzygies; see \cite{Merker2008b}.

\subsection{Demailly-Semple Tower}

We now recall the construction of the Demailly–Semple tower \cite[Section 6]{Demailly1997}. 

A \emph{directed manifold} is a pair \((X,V)\) where \(X\) is a complex manifold and \(V \subset T_X\) is a holomorphic sub‑bundle of the tangent bundle.

In the compact two‑dimensional setting, we begin with the directed manifold \((X_0,V_0) \coloneqq (X,T_X)\), where \(X\) is a generic algebraic surface in \(\mathbb{P}^3\). Set
\[
X_1 \coloneqq \mathbb{P}(V_0),
\]
equipped with the natural projection \(\pi_{1,0} \colon X_1 \to X_0\). The sub‑bundle \(V_1 \subset T_{X_1}\) is defined fibre‑wise by
\begin{equation}\label{def-V1}
V_{1,(x,[v])} \coloneqq 
\Bigl\{\, \xi \in T_{X_1,(x,[v])} \;\Bigm|\; 
(\pi_{1,0})_*\xi \in \mathbb{C} \cdot v \,\Bigr\},
\end{equation}
where \(\mathbb{C}\cdot v \subset V_{0,x}=T_{X,x}\). This yields the first level \((X_1,V_1)\) of the Demailly–Semple tower over \((X_0,V_0)\).

Iterating the same construction gives the second level, which fits into the tower
\[
(X_2,V_2) \longrightarrow (X_1,V_1) \longrightarrow (X_0,V_0),
\]
with projections
\(\pi_{2,1} \colon X_2 \to X_1\) and
\(\pi_{2,0} \coloneqq \pi_{1,0} \circ \pi_{2,1} \colon X_2 \to X_0\).
  
Let \(\mathcal{O}_{X_{1}}(-1)\) denote the tautological line bundle of \(X_{1} = \mathbb{P}(V_0)\); thus
\[
\mathcal{O}_{X_{1}}(-1)_{(x,[v])} = \mathbb{C} \cdot v .
\]

From the definition of \(V_1\) we obtain the short exact sequence
\begin{equation}\label{SES:V_1}
    0 \longrightarrow T_{X_{1}/X_{0}}
    \longrightarrow V_{1}
    \xrightarrow{(\pi_{1,0})_{*}}
    \mathcal{O}_{X_{1}}(-1) \longrightarrow 0,
\end{equation}
where \(T_{X_{1}/X_{0}}\) is the relative tangent bundle of the fibration
\(\pi_{1,0}\colon X_{1} \to X_{0}\). Over the fibre \(\mathbb{P}(V_{0,x})\) this construction restricts to the usual Euler sequence, giving the relative Euler sequence
\begin{equation}\label{ses:relative Euler sequence}
    0 \longrightarrow \mathcal{O}_{X_{1}}
    \longrightarrow \pi_{1,0}^{*}V_{0} \otimes \mathcal{O}_{X_{1}}(1)
    \longrightarrow T_{X_{1}/X_{0}}
    \longrightarrow 0.
\end{equation}

At the second level there is a canonical inclusion
\(\mathcal{O}_{X_{2}}(-1) \hookrightarrow \pi_{2,1}^{*}V_{1}\).
Composing it with \(\pi_{2,1}^{*} (\pi_{1,0})_{*}\) yields a morphism of line bundles
\begin{equation}\label{morphism-OX2}
    \mathcal{O}_{X_{2}}(-1)
    \;\longrightarrow\;
    \pi_{2,1}^{*}\mathcal{O}_{X_{1}}(-1),
\end{equation}
whose zero divisor is the hyperplane sub‑bundle
\begin{equation}\label{equ:def of Gamma_2}
    \Gamma_2 \coloneqq \mathbb{P}\bigl(T_{X_{1}/X_{0}}\bigr)
    \subset \mathbb{P}(V_1) = X_{2}.
\end{equation}
Consequently,
\begin{equation}\label{gamma_2=O(-1, 1)}
    \mathcal{O}_{X_{2}}(\Gamma_{2})
    \;\cong\;
    \pi_{2,1}^{*}\mathcal{O}_{X_{1}}(-1) \otimes \mathcal{O}_{X_{2}}(1).
\end{equation}

In the logarithmic case, following Dethloff--Lu \cite{Dethloff-Lu2001}, we consider the notion of a \emph{log‑directed manifold}, a triple \((X,D,V)\) where \(X\) is a complex manifold, \(D\) is a simple normal crossing divisor, and \(V\) is a sub‑bundle of the logarithmic tangent bundle \(T_X(-\log D)\).

We start with the initial log‑directed manifold
\[
(X_0,D_0,V_0) \coloneqq \bigl(X,\;D,\;T_X(-\log D)\bigr).
\]
Set \(X_1 \coloneqq \mathbb{P}(V_0)\) with the natural projection \(\pi_1\colon X_1 \to X_0\),
and define \(D_1 \coloneqq \pi_1^* D_0\), so that \(\pi_1\) becomes a log‑morphism.
The sub‑bundle \(V_1 \subset T_{X_1}(-\log D_1)\) is then given fibre‑wise by
\[
V_{1,(x,[v])} \coloneqq 
\bigl\{\xi \in T_{X_1,(x,[v])}(-\log D_1) \;\big|\;
(\pi_1)_{*}\xi \in \mathbb{C} \cdot v \bigr\},
\]
where \(\mathbb{C}\cdot v \subset V_{0,x}=T_{X,x}(-\log D_0)\). 
Again, let \(\mathcal{O}_{X_1}(-1)\) be the tautological line bundle of \(\mathbb{P}(V_0)\), so that
\(\mathcal{O}_{X_1}(-1)_{(x,[v])} = \mathbb{C}\cdot v\).

Iterating the construction produces the \emph{logarithmic Demailly--Semple tower}
\[
(X_2,D_2,V_2) \longrightarrow (X_1,D_1,V_1) \longrightarrow (X_0,D_0,V_0),
\]
together with projections \(\pi_{k,0} \colon X_k \to X_0\) for \(k = 1,2\).

The same short exact sequences \eqref{SES:V_1} and \eqref{ses:relative Euler sequence} remain valid in the logarithmic setting. Moreover, with a slight abuse of notation, the divisor \(\Gamma_2\) on the second level \(X_2\) of the logarithmic tower can be defined exactly as in \eqref{equ:def of Gamma_2}.

From the construction, the direct image sheaf
\[
(\pi_{2,0})_*\mathcal{O}_{X_2}(m),
\qquad
\mathcal{O}_{X_2}(m) \coloneqq \mathcal{O}_{X_2}(1)^{\otimes m},
\]
on the second level \(X_2\) of the Demailly–Semple tower (resp. logarithmic Demailly–Semple tower) coincides precisely with the bundle \(E_{2,m}T_X^*\) (resp. \(E_{2,m}T_X^*(\log D)\)). Consequently, we obtain the following direct‑image formula.

\begin{namedthm*}{Direct image formula}
  [cf.~{\cite[Theorem 6.8]{Demailly1997}}, {\cite[Proposition 3.9]{Dethloff-Lu2001}}]
  
  Let \(\mathcal{A}\) be an ample line bundle on \(X\).
  \begin{enumerate}
    \item If \(X_2\) denotes the second level of the Demailly–Semple tower for \((X,T_X)\), then
      \begin{equation}\label{direct image formula, cpt}
      H^0\bigl(X,\;E_{2,m}T_{X}^{*}\otimes\mathcal{A}^{-1}\bigr)
      \;\cong\;
      H^0\bigl(X_2,\;\mathcal{O}_{X_2}(m)\otimes\pi_{2,0}^{*}\mathcal{A}^{-1}\bigr).
      \end{equation}
      
    \item If \(X_2\) denotes the second level of the logarithmic Demailly–Semple tower for \((X,D,T_X(-\log D))\), then
      \[
      H^0\bigl(X,\;E_{2,m}T_{X}^{*}(\log D)\otimes\mathcal{A}^{-1}\bigr)
      \;\cong\;
      H^0\bigl(X_2,\;\mathcal{O}_{X_2}(m)\otimes\pi_{2,0}^{*}\mathcal{A}^{-1}\bigr).
      \]
  \end{enumerate}
\end{namedthm*}

\subsection{Existence of Negatively Twisted Invariant \texorpdfstring{$2$}{2}-Jet Differentials}\label{subsec:existence of 2-jet differ.}

Classical vanishing theorems in algebraic geometry imply that, for any smooth hypersurface in \(\mathbb{P}^n\) with \(n\geqslant 3\), there exist no nonzero negatively twisted symmetric differentials (i.e. the case \(k=1\)), regardless of the degree. This forces one to consider jet differentials of order \(k \geqslant 2\).

For smooth hypersurfaces of general type in \(\mathbb{P}^n\) (i.e., degree \( \geqslant n+2\)), the existence of such negatively twisted jet differentials has been established for large orders \(k \gg 1\) by Merker~\cite{Merker2015} (see also \cite{Demailly2011, Cadorel2019}). However, for \(k \geqslant 3\) the control of the base locus remains a serious technical obstacle, rendering these higher-order differentials unsuitable for sharpening the hyperbolicity degree bounds in \(\mathbb{P}^3\).

In this paper we therefore focus on the case \(k=2\), which provides a viable intermediate regime — the first-order theory being obstructed, and higher orders being computationally inaccessible. We now specialize to surfaces in \(\mathbb{P}^3\).

\begin{pro}[{\cite[Corollary 13.9]{Demailly1997}}]\label{existence of 2 jets, compact case}
Let \(X\) be a smooth projective surface of general type and \(A\) an ample line bundle on \(X\). Then
\[
h^0\bigl(X,\;E_{2,m}T_X^*\otimes\mathcal{O}(-A)\bigr)
\;\geqslant\;
\frac{m^{4}}{648}\,(13c_1^{2}-9c_2)-O(m^{3}),
\]
where \(c_i\) are the Chern numbers of \(X\). \qed
\end{pro}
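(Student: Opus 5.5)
The plan is the classical Riemann--Roch plus Bogomolov argument, carried out on the graded pieces of the filtration \eqref{filtration-of-E2m, cpt}. Since the sheaves $E_{2,m}T_X^*\otimes\mathcal{O}(-A)$ and their associated graded have the same Euler characteristic, I would first compute
\[
\chi\bigl(E_{2,m}T_X^*\otimes\mathcal{O}(-A)\bigr)=\sum_{0\leqslant j\leqslant \lfloor m/3\rfloor}\chi\bigl(S^{m-3j}T_X^*\otimes K_X^{j}\otimes\mathcal{O}(-A)\bigr).
\]
The twist by $-A$ only affects terms of order $O(m^3)$, because the $m^4$ part of the total is governed by the leading terms of each summand.

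For each summand I would apply Riemann--Roch on the surface, using the Chern character of $S^{\ell}T_X^*$ with $\ell=m-3j$. The leading term is
\[
\chi\bigl(S^{\ell}T_X^*\otimes K_X^{j}\bigr)=\frac{\ell^3}{6}(c_1^2-c_2)+\frac{\ell^2 j}{2}c_1^2+\frac{\ell j^2}{2}c_1^2+O(m^2).
\]
This uses $c_1(T_X^*)=c_1$ and $c_1(K_X)=c_1$, together with the identities $\mathrm{ch}_2(S^\ell T^*_X)\sim \frac{\ell^3}{6}(c_1^2-2c_2)+\frac{\ell^3}{12}\cdot 2c_2$ and the cross term $\ell\,c_1\cdot j c_1$. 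Summing over $j$ means replacing the sum by an integral in $x=j/m\in[0,1/3]$. This gives $m^4\int_0^{1/3}\bigl[\tfrac{(1-3x)^3}{6}(c_1^2-c_2)+\tfrac{(1-3x)^2x}{2}c_1^2+\tfrac{(1-3x)x^2}{2}c_1^2\bigr]dx$. Evaluating the elementary integrals should yield $\frac{m^4}{648}(13c_1^2-9c_2)$. I would check the coefficient carefully, since this is pure bookkeeping.

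It remains to pass from $\chi$ to $h^0$. By Serre duality, $h^2(E_{2,m}T_X^*\otimes\mathcal{O}(-A))=h^0\bigl((E_{2,m}T_X^*)^*\otimes K_X\otimes A\bigr)$. Since $h^0\geqslant\chi-h^2$, it suffices to show that this $h^2$ is $O(m^3)$, or even zero. Using the filtration again, it is enough to bound $h^0\bigl(S^{\ell}T_X\otimes K_X^{1-j}\otimes A\bigr)$ for each $j$. Here Bogomolov's theorem enters: for a surface of general type, $H^0(X,S^{\ell}T_X\otimes L)=0$ whenever $L^{-1}$ is big, or more precisely when $\ell\gg$ suitably relative to $L$. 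For $j\geqslant 2$ the bundle $K_X^{1-j}\otimes A$ is anti-big up to a bounded twist, so those terms vanish for $m$ large. For the finitely many boundary terms, with $j\in\{0,1\}$ or with $m-3j$ bounded, the groups have dimension $O(m^3)$. To make this uniform in $j$, I would instead use the standard estimate $h^0(S^\ell T_X\otimes K_X^{-p}\otimes A)=0$ for $p\geqslant 1$ and $\ell$ large, which follows from the semistability of $T_X$ (Bogomolov/Tsuji) on surfaces of general type.

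I expect this $h^2$ control to be the main obstacle, specifically making the Bogomolov-type vanishing uniform across all graded pieces. The Riemann--Roch computation itself is routine. Combining the two steps gives $h^0\geqslant\chi-h^2\geqslant\frac{m^4}{648}(13c_1^2-9c_2)-O(m^3)$, as claimed.
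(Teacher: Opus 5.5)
Your proposal is correct and follows the route the paper indicates: Riemann--Roch on the graded pieces of the filtration \eqref{filtration-of-E2m, cpt}, with Bogomolov vanishing used to kill $h^2$. Your integral does evaluate to $\frac{1}{72}(c_1^2-c_2)+\frac{1}{216}c_1^2+\frac{1}{648}c_1^2=\frac{1}{648}(13c_1^2-9c_2)$. The uniformity you worry about is in fact automatic: choose $c$ with $cK_X-A$ effective (Kodaira's lemma); then each $H^0(S^{m-3j}T_X\otimes K_X^{1-j}\otimes A)$ injects into $H^0(S^{m-3j}T_X\otimes K_X^{1-j+c})$, which vanishes by Theorem~\ref{thm:Bogomolov vanishing theorem} as soon as $m-j>2+2c$. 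Since $j\leqslant m/3$, this gives $h^2=0$ for all $m>3(1+c)$, so no boundary terms need separate treatment.
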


The proof of Proposition~\ref{existence of 2 jets, compact case} relies on a Riemann--Roch calculation based on the filtration~\eqref{filtration-of-E2m, cpt} and the following Bogomolov vanishing theorem.

\begin{thm}\label{thm:Bogomolov vanishing theorem}
Let \(X\) be a smooth projective surface. Assume that \(X\) is of general type (i.e. \(K_{X}\) is big) and that \(K_{X}\) is nef. Then
\[
H^0\bigl(X,\;S^{p}T_{X} \otimes \mathcal{O}_X(qK_{X})\bigr)=0,
\]
for all integers $p,q$  satisfying $p>2q$. 
\qed
\end{thm}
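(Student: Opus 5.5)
The plan is to deduce the vanishing from the semistability of the tangent bundle with respect to the canonical polarization, followed by a slope count. Suppose, for contradiction, that there is a nonzero section
\[
0\neq s\in H^0\bigl(X,\;S^{p}T_X\otimes\mathcal{O}_X(qK_X)\bigr),\qquad p>2q .
\]
Then $s$ defines a nonzero sheaf morphism $\mathcal{O}_X(-qK_X)\to S^pT_X$. Its saturation is a rank-one subsheaf $L\subset S^pT_X$ with $L\cdot K_X\geqslant -q\,K_X^2$, because $K_X$ is nef and saturating only adds an effective divisor.

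The key input is that $T_X$ (equivalently $\Omega_X$) is $K_X$-semistable whenever $K_X$ is nef and big. This is the theorem of Bogomolov, Miyaoka and Tsuji–Enoki. Analytically, one can take the Kähler–Einstein orbifold metric on the canonical model $X_{\mathrm{can}}$ (Aubin–Yau), whose pullback has Ricci form equal to $-c_1(K_X)$. Algebraically, one can use Bogomolov's result that every invertible subsheaf $M\subset\Omega_X$ has Kodaira dimension $\leqslant 1$. A destabilizing $M$, with $M\cdot K_X>K_X^2/2$, would then be shown to satisfy $M^2>0$ via the Hodge index theorem applied to $2M-K_X$, and hence be big, which is a contradiction.

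Since we are in characteristic zero, semistability passes to symmetric powers. So $S^pT_X$ is $K_X$-semistable with slope
\[
\mu_{K_X}(S^pT_X)=p\,\mu_{K_X}(T_X)=-\tfrac{p}{2}K_X^2 .
\]
Semistability then gives $-qK_X^2\leqslant L\cdot K_X\leqslant -\tfrac p2K_X^2$. Because $K_X^2>0$ ($K_X$ nef and big), this forces $p\leqslant 2q$, contradicting the assumption $p>2q$.

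The main obstacle is that $K_X$ is only nef and big rather than ample, so semistability must be understood with respect to a degenerate polarization. I would handle this in one of two ways. The first is to work with the ample classes $K_X+\varepsilon A$ and let $\varepsilon\to0$, checking that the slope inequality for the fixed subsheaf $L$ survives in the limit. The second is to pass to the canonical model, where $K$ is ample, and use orbifold Kähler–Einstein semistability. The $(-2)$-curves contracted by the map to the canonical model are exactly where one must check that sections extend and the slope computation is unaffected, which is fine because $K_X$ is trivial on them.

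For the surfaces $X\subset\mathbb{P}^3$ with $d\geqslant5$ actually used in this paper, $K_X=\mathcal{O}_X(d-4)$ is ample, so this subtlety disappears and the argument above applies verbatim.
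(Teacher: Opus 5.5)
Your argument is the one the paper relies on. The paper gives no proof of its own; it cites Bogomolov's slope argument together with the $K_X$-semistability of $T_X$ due to Enoki and Tsuji, via Demailly's Theorem~14.1. Your $K_X+\varepsilon A$ limit also settles the nef-polarization issue, since every saturated rank-one $M\subset T_X$ satisfies $M\cdot A\leqslant\mu_{\max,A}(T_X)$, which gives $\mu_{\max,K_X+\varepsilon A}(S^pT_X)=p\,\mu_{\max,K_X+\varepsilon A}(T_X)\leqslant -\tfrac p2K_X^2+O(\varepsilon)$.

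Drop your ``algebraic'' sketch of semistability, though, because it does not work. For a saturated $M\subset\Omega_X$ with quotient $I_Z(K_X-M)$ one has $(2M-K_X)^2=c_1^2-4c_2+4\ell(Z)$, and this need not be positive: already $c_1^2-4c_2=-d(3d^2-8d+8)<0$ for surfaces in $\mathbb{P}^3$. Hodge index only bounds $(2M-K_X)^2$ from above, so combined with $\kappa(M)\leqslant1$ this reasoning yields Bogomolov's inequality $c_1^2\leqslant4c_2$, not semistability.
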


Bogomolov~\cite{Bogomolov1978} proved the original vanishing theorem under the assumption that the tangent bundle \(T_X\) is semistable. Enoki~\cite{Enoki1987} later showed that for a minimal Kähler space \(X\) with big canonical bundle \(K_X\), its tangent bundle \(T_X\) is \(K_{X}\)-semistable. Independently, Tsuji~\cite{Tsuji1988} established the \(K_{X}\)-semistability of \(T_X\) for smooth minimal algebraic varieties (here ``minimal'' means that \(K_X\) is nef). Combining these results yields a Bogomolov-type vanishing theorem for projective varieties whose canonical bundle \(K_X\) is both big and nef; see \cite[Theorem 14.1]{Demailly1997}.

\medskip

If \(X\) is a smooth surface in \(\mathbb{P}^3\) of degree \(d\), one can compute the Chern numbers:
\begin{equation}\label{Chern numbers, cpt case}
    c_1^{2}=d(d-4)^{2},\qquad c_2=d(d^{2}-4d+6).
\end{equation}
Thus the leading coefficient satisfies
\[
13c_1^{2}-9c_2 = d\bigl(4d^{2}-68d+154\bigr),
\]
which is positive for every \(d \geqslant 15\). Hence Proposition~\ref{existence of 2 jets, compact case} yields:

\begin{cor}[\cite{Demailly1997}]\label{cor:existence of 2 jets, compact case}
For a smooth surface \(X\subset\mathbb{P}^3\) of degree \(d \geqslant 15\) and any ample line bundle \(A\) on \(X\),
\[
H^0\bigl(X,\;E_{2,m}T_X^*\otimes\mathcal{O}(-A)\bigr) \neq 0
\]
for all sufficiently large \(m \gg 1\). \qed
\end{cor}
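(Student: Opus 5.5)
The corollary is a direct consequence of Proposition~\ref{existence of 2 jets, compact case} once we check its hypotheses and the sign of the leading coefficient. The plan has three steps.

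\emph{Step 1: \(X\) satisfies the hypotheses.}
By adjunction, \(K_X = \mathcal{O}_X(d-4)\). For \(d \geqslant 15\) this is ample, so \(X\) is of general type. Ampleness also gives that \(K_X\) is nef, which is what the Bogomolov vanishing Theorem~\ref{thm:Bogomolov vanishing theorem} requires in the Riemann--Roch argument behind Proposition~\ref{existence of 2 jets, compact case}.

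\emph{Step 2: the leading coefficient is positive.}
Using the Chern numbers \eqref{Chern numbers, cpt case}, \(c_1^2 = d(d-4)^2\) and \(c_2 = d(d^2-4d+6)\), expand
\[
13c_1^2 - 9c_2 = d\bigl(13d^2 - 104d + 208 - 9d^2 + 36d - 54\bigr) = d\,(4d^2 - 68d + 154).
\]
The quadratic \(4d^2 - 68d + 154\) has its larger root at
\[
d = \frac{68 + \sqrt{68^2 - 16\cdot 154}}{8} = \frac{68 + \sqrt{2160}}{8} \approx 14.3.
\]
So it is positive for every integer \(d \geqslant 15\). As a check, at \(d=15\) we get \(900 - 1020 + 154 = 34 > 0\), while at \(d=14\) we get \(-14 < 0\). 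This is exactly where the threshold \(15\) comes from.

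\emph{Step 3: conclude nonvanishing.}
Apply Proposition~\ref{existence of 2 jets, compact case} with the given ample \(A\). It gives
\[
h^0\bigl(X, E_{2,m}T_X^*\otimes\mathcal{O}(-A)\bigr) \geqslant \frac{m^4}{648}(13c_1^2 - 9c_2) - O(m^3).
\]
The right-hand side has positive leading coefficient, so it is positive for all \(m \gg 1\), and the space of sections is nonzero. Note that the \(O(m^3)\) constant depends on \(A\), but that does not matter because \(A\) is fixed.

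No step is a real obstacle. The only care needed is that Proposition~\ref{existence of 2 jets, compact case}, as stated, already absorbs the Bogomolov vanishing of the higher cohomology terms coming from the filtration \eqref{filtration-of-E2m, cpt}. That vanishing is where nefness of \(K_X\) enters, and Step~1 supplies it.
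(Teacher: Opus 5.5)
Your proposal is correct and follows the paper's route. Like the paper, you apply Demailly's Riemann--Roch lower bound (Proposition~\ref{existence of 2 jets, compact case}) and use the Chern numbers \(c_1^2=d(d-4)^2\), \(c_2=d(d^2-4d+6)\) to check that \(13c_1^2-9c_2=d(4d^2-68d+154)>0\) for \(d\geqslant 15\). Your arithmetic is right, including the larger root \(\approx 14.3\) that accounts for the threshold \(15\), and so is the observation that \(K_X=\mathcal{O}_X(d-4)\) is ample and hence nef, as the Bogomolov vanishing step requires.
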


Similar statements hold in the logarithmic setting by essentially the same arguments (cf.~\cite{ElGoul2003}).

\begin{pro}[{\cite[Theorem 1.2.1]{ElGoul2003}}]\label{existence of 2 jets, logarithmic case}
Let \(X\) be a smooth projective surface and \(\mathcal{C} \subset X\) a simple normal crossing divisor such that \((X,\mathcal{C})\) is of log‑general type. Let \(A\) be an ample line bundle on \(X\). Then
\[
h^0\bigl(X,\;E_{2,m}T_X^*(\log \mathcal{C})\otimes\mathcal{O}(-A)\bigr)
\;\geqslant\;
\frac{m^{4}}{648}\,(13\bar{c}_1^{2}-9\bar{c}_2)-O(m^{3}),
\]
where \(\bar{c}_i\) are the logarithmic Chern numbers of \((X,\mathcal{C})\). \qed
\end{pro}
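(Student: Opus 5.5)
The plan mirrors the proof of Proposition~\ref{existence of 2 jets, compact case}: a Riemann--Roch count on the graded pieces of the filtration \eqref{filtration-of-E2m, log}, with the higher cohomology controlled by a logarithmic Bogomolov-type vanishing. Since $E_{2,m}T_X^*(\log\mathcal{C})$ is filtered with graded pieces $S^{m-3j}T_X^*(\log\mathcal{C})\otimes\overline{K}_X^{\,j}$ for $0\leqslant j\leqslant\lfloor m/3\rfloor$, we start from the elementary inequality $h^0(F)\geqslant \chi(F)-h^2(F)$ for a vector bundle $F$. Because $h^2$ is subadditive in short exact sequences, it suffices to bound $\sum_j h^2$ of the twisted graded pieces from above and to compute $\chi$ of the whole bundle. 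The twist by $\mathcal{O}(-A)$ only affects terms of order $O(m^3)$, so we may compute $\chi$ without it and absorb $A$ into the error.

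\textbf{Step 1 (Euler characteristic).} Apply Riemann--Roch on the surface to each piece $S^{p}T_X^*(\log\mathcal{C})\otimes\overline{K}_X^{\,j}$ with $p=m-3j$. Only the leading $p^3$- and $p^2j$-type terms contribute at order $m^4$. Using the standard formulas for $c_1$, $c_2$ of symmetric powers of a rank-two bundle with Chern classes $-\bar c_1$ (dual convention), $\bar c_2$, we get
\[
\chi\bigl(S^pT_X^*(\log\mathcal{C})\otimes\overline{K}_X^{\,j}\bigr)=\tfrac{p^3}{6}(\bar c_1^2-\bar c_2)+\tfrac{p^2 j}{2}\bar c_1^2+\tfrac{p j^2}{2}\bar c_1^2+O(m^2).
\]
Summing over $j$, i.e.\ replacing the sum by an integral over $0\leqslant j\leqslant m/3$, produces exactly the Demailly constant $\frac{m^4}{648}(13\bar c_1^2-9\bar c_2)$. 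The computation is formally identical to the compact case, with $c_i$ replaced by $\bar c_i$.

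\textbf{Step 2 (vanishing of $h^2$).} By Serre duality, $h^2\bigl(S^pT_X^*(\log\mathcal{C})\otimes\overline{K}_X^{\,j}\otimes A^{-1}\bigr)=h^0\bigl(S^pT_X(-\log\mathcal{C})\otimes K_X\otimes\overline{K}_X^{\,-j}\otimes A\bigr)$. Since $K_X\subset\overline{K}_X$, this injects into $h^0\bigl(S^pT_X(-\log\mathcal{C})\otimes\overline{K}_X^{\,1-j}\otimes A\bigr)$. We need a logarithmic Bogomolov vanishing: $H^0\bigl(S^pT_X(-\log\mathcal{C})\otimes\overline{K}_X^{\,q}\bigr)=0$ for $p>2q$, which is valid when $\overline{K}_X$ is big and nef. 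This follows from $\overline{K}_X$-semistability of $T_X(-\log\mathcal{C})$ (log version of Enoki--Tsuji, via Kähler--Einstein metrics with cone/cusp singularities, or Kawamata's log-minimal setting). Absorbing the fixed $A$ into a slightly shifted $q$, the vanishing applies for all $j$ except those with $m-3j\lesssim 2(1-j)+O(1)$, i.e.\ only $O(1)$ values of $j$ near $m/3$. Each of those pieces has $h^2=O(m^2)$, so the total contribution is $O(m^3)$, possibly after a finer semistability-based estimate.

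\textbf{Main obstacle.} The hard step is Step 2. The log-general-type hypothesis alone does not give nefness of $\overline{K}_X$, so one must either pass to a log-minimal model or replace nefness by a Zariski decomposition argument. In that case we would bound $h^0$ of the dual pieces by restricting to curves and using the positive part, losing only $O(m^3)$. For the applications in this paper, $\mathcal{C}\subset\mathbb{P}^2$ is a smooth curve of degree $d\geqslant 4$ and $\overline{K}=\mathcal{O}(d-3)$ is ample, so the required semistability is available directly, and in that setting the bound follows cleanly.
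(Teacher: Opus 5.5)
Your route is the one the paper points to (it gives no proof of its own beyond citing El~Goul): Riemann--Roch on the graded pieces of \eqref{filtration-of-E2m, log}, with $h^2$ killed by a logarithmic Bogomolov vanishing, exactly parallel to the compact case. Step~1 is correct. Put $j=ms$ and $p=m(1-3s)$. The integrals $\int_0^{1/3}(1-3s)^3\,ds=\frac1{12}$, $\int_0^{1/3}(1-3s)^2s\,ds=\frac1{108}$ and $\int_0^{1/3}(1-3s)s^2\,ds=\frac1{324}$ then give $\frac{9(\bar c_1^2-\bar c_2)+3\bar c_1^2+\bar c_1^2}{648}\,m^4$. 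The twist by $A$ and all lower-order terms cost only $O(m^3)$.

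In Step~2 your count of ``exceptional'' pieces is wrong, though harmlessly so. Pick $c$ with $H^0(\overline K_X^{\,c}\otimes A^{-1})\neq0$, which Kodaira's lemma provides. The piece of index $j$ then needs $m-3j>2(1-j+c)$, i.e.\ $j<m-2-2c$. Since $j\leqslant m/3$, this holds for \emph{every} $j$ as soon as $m>3+3c$. Near $j=m/3$ the exponent $1-j+c$ is very negative, so that end is the easiest, not the delicate one. Thus all graded pieces have $h^2=0$ for $m\gg1$. Your unproved claim that finitely many pieces contribute $O(m^2)$ each is unnecessary and should be dropped.

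The point that remains open is the one you flagged yourself. The vanishing $H^0\bigl(S^pT_X(-\log\mathcal C)\otimes\overline K_X^{\,q}\bigr)=0$ for $p>2q$ is available when $\overline K_X$ is nef and big. The statement, however, only assumes $\overline K_X$ big, and your ``Zariski decomposition and restriction to curves'' fallback is not an argument.

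A concrete way to close this is a Hartogs reduction to a log-minimal model. In the compact case this works directly: sections on the complement of the contracted curves extend across finitely many points of the smooth minimal model. In the logarithmic case, though, the contracted curves may be boundary components, and the log-minimal model may acquire quotient singularities. You would then need the semistability statement for the reflexive logarithmic tangent sheaf on a singular surface.

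The paper does not address this either: it quotes Bogomolov vanishing only under a nefness hypothesis and refers to El~Goul for the logarithmic statement. In the situation the paper actually uses ($X=\mathbb P^2$, $\mathcal C$ smooth of degree $d\geqslant 11$), $\overline K=\mathcal O(d-3)$ is ample, so your argument is complete there.
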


When \(X = \mathbb{P}^2\) and \(\mathcal{C}\subset\mathbb{P}^2\) is a smooth plane curve of degree \(d\),  one can compute the logarithmic Chern numbers
\begin{equation}\label{Chern numbers, log case}
    \bar{c}_1^{2}=(d-3)^{2},\qquad \bar{c}_2=d^{2}-3d+3.
\end{equation}
Hence the leading coefficient satisfies
\[
13\bar{c}_1^{2}-9\bar{c}_2 = 4d^{2}-51d+90 > 0
\qquad\text{for } d \geqslant 11.
\]

Consequently, we obtain

\begin{cor}
For a smooth algebraic curve \(\mathcal{C}\subset\mathbb{P}^2\) of degree \(d \geqslant 11\) and any ample line bundle \(A\) on \(\mathbb{P}^2\),
\[
H^0\bigl(\mathbb{P}^2,\;E_{2,m}T_{\mathbb{P}^2}^*(\log\mathcal{C})\otimes\mathcal{O}(-A)\bigr) \neq 0
\]
for all sufficiently large \(m \gg 1\). \qed
\end{cor}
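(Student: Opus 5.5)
The statement to prove is the final Corollary: for a smooth plane curve \(\mathcal{C}\) of degree \(d\geqslant 11\), there are nonzero negatively twisted invariant logarithmic \(2\)-jet differentials for \(m\gg1\). The plan is to deduce it directly from Proposition~\ref{existence of 2 jets, logarithmic case} applied to \(X=\mathbb{P}^2\), \(D=\mathcal{C}\) and the given ample \(A\).

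\emph{Verifying the hypotheses.} A smooth curve is in particular a simple normal crossing divisor. The logarithmic canonical bundle is
\[
\overline{K}_{\mathbb{P}^2}=K_{\mathbb{P}^2}\otimes\mathcal{O}(\mathcal{C})=\mathcal{O}(d-3),
\]
which is ample once \(d\geqslant 4\). Hence \((\mathbb{P}^2,\mathcal{C})\) is of log-general type, and the Proposition applies. This step is routine. If one unpacks the Proposition, the only delicate ingredient behind it is the logarithmic Bogomolov-type vanishing for \(S^pT_{\mathbb{P}^2}(-\log\mathcal{C})\otimes\overline{K}^{\,q}\) with \(p>2q\). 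It rests on semistability of the log tangent bundle with respect to the ample class \(\overline{K}\), and we simply take it as built into the cited result.

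\emph{Computing the leading coefficient.} Next I compute the log Chern numbers. From \(\bar c_1=-c_1(\overline{K})=-(d-3)h\) we get \(\bar c_1^2=(d-3)^2\). By the residue sequence \(0\to\Omega^1_{\mathbb{P}^2}\to\Omega^1_{\mathbb{P}^2}(\log\mathcal{C})\to\mathcal{O}_{\mathcal{C}}\to0\), or equivalently \(\bar c_2=e(\mathbb{P}^2\setminus\mathcal{C})=3-(2-2g)\) with \(2g-2=d(d-3)\), we get \(\bar c_2=d^2-3d+3\), matching~\eqref{Chern numbers, log case}. Then
\[
13\bar c_1^2-9\bar c_2=4d^2-51d+90 .
\]
The roots of this quadratic are \((51\pm\sqrt{1161})/8\), approximately \(2.1\) and \(10.63\). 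So the coefficient is positive for every integer \(d\geqslant 11\); for instance at \(d=11\) it equals \(484-561+90=13>0\).

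\emph{Conclusion.} The Proposition gives
\[
h^0\bigl(\mathbb{P}^2,E_{2,m}T^*_{\mathbb{P}^2}(\log\mathcal{C})\otimes\mathcal{O}(-A)\bigr)\geqslant \frac{m^4}{648}(4d^2-51d+90)-O(m^3),
\]
and for \(d\geqslant 11\) the right-hand side is positive once \(m\) is large, so the space of sections is nonzero. The main (indeed only) potential obstacle is making sure the implied constant in \(O(m^3)\) is uniform in \(m\) for fixed \((\mathcal{C},A)\); this is standard from the filtration~\eqref{filtration-of-E2m, log} and Riemann--Roch, and requires no further work here.
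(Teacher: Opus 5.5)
Your proposal is correct and follows the paper's own route: the paper applies Proposition~\ref{existence of 2 jets, logarithmic case} to $(\mathbb{P}^2,\mathcal{C})$ with the logarithmic Chern numbers $\bar c_1^2=(d-3)^2$ and $\bar c_2=d^2-3d+3$, and concludes from $13\bar c_1^2-9\bar c_2=4d^2-51d+90>0$ for $d\geqslant 11$, exactly as you do. Your supplementary checks are also accurate: log-general type follows from the ampleness of $\overline{K}_{\mathbb{P}^2}=\mathcal{O}(d-3)$, $\bar c_2$ agrees with the Euler characteristic of $\mathbb{P}^2\setminus\mathcal{C}$, and the larger root of the quadratic is $(51+\sqrt{1161})/8\approx 10.63$.
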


\subsection{Nevanlinna Theory}\label{Nevanlinna theory notation}

We begin by recalling the standard notation of Nevanlinna theory.  
Let \(\mathcal{C} \subset \mathbb{P}^{2}\) be an algebraic curve and let \(f \colon \mathbb{C} \to \mathbb{P}^{2}\) be an entire curve with \(f(\mathbb{C}) \not\subset \mathcal{C}\). For \(k\in \mathbb{N}\cup \{\infty\}\), the \emph{\(k\)-truncated counting function} of \(f\) with respect to \(\mathcal{C}\) is defined as
\[
N_f^{[k]}(r,\mathcal{C}) \coloneqq \int_{1}^{r} 
\sum_{|z|<t} \min\bigl\{k,\;\operatorname{ord}_{z}f^{*}\mathcal{C}\bigr\}\,
\frac{\mathrm{d}t}{t}\qquad (r>1),
\]
which measures the frequency of intersections \(f(\mathbb{C}) \cap \mathcal{C}\).  
The \emph{order function} of \(f\) is given by
\[
T_{f}(r) \coloneqq \int_{1}^{r}\frac{\mathrm{d}t}{t}
                     \int_{|z|<t} f^{*}\omega_{\mathrm{FS}}\qquad (r>1),
\]
where \(\omega_{\mathrm{FS}}\) denotes the Fubini–Study metric on \(\mathbb{P}^{2}\). This function captures the average growth of the area of the holomorphic discs \(f(\{|z|<t\})\) with respect to \(\omega_{\mathrm{FS}}\) as \(t\) tends to infinity.

\medskip

The pursuit of a Second Main Theorem faces a fundamental obstacle: entire curves exhibit an inherent and wild flexibility in large-scale geometry, as evidenced by the exotic examples constructed in \cite{HX2021, Xie2024, WX2025, CFX2025}. Our strategy to overcome this difficulty is to constrain this flexibility by leveraging additional algebro-geometric structures, thereby extracting the underlying rigidity of entire curves. The proof relies on the following two key results.

The first result can be viewed as a quantitative refinement of the Fundamental Vanishing Theorem for entire curves (cf. \cite[Section 2]{Green-Griffiths1980}, \cite[Lemma 1.2]{Siu-Yeung1996MathA}, and \cite[Corollary 7.9]{Demailly1997}).

\begin{thm}[{\cite[Theorem 3.1]{Huynh-Vu-Xie-2017}}]
\label{smt-form-logarithmic-diff-jet}
Let \(\mathcal{C} = \sum_{i=1}^{q} \mathcal{C}_{i}\) be a simple normal crossing divisor in \(\mathbb{P}^{2}\).  
For any logarithmic jet differential
\[
\omega \in H^{0}\!\bigl(\mathbb{P}^{2},\; 
        E_{k,m}^{GG}T_{\mathbb{P}^{2}}^{*}(\log\mathcal{C}) \otimes \mathcal{O}_{\mathbb{P}^{2}}(-t)\bigr)
\qquad (k,m,t \geqslant 1),
\]
and any entire holomorphic curve \(f \colon \mathbb{C} \to \mathbb{P}^{2}\) with \(f^{*}\omega \not\equiv 0\), the following Second Main Theorem type estimate holds:
\begin{equation}\label{SMT2}
T_{f}(r) \leqslant \frac{m}{t} \sum_{i=1}^{q} N_{f}^{[1]}(r,\mathcal{C}_{i}) 
                + O\bigl(\log T_{f}(r) + \log r\bigr) \quad\parallel.
\end{equation}
\end{thm}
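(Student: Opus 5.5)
The plan is to follow the classical Nevanlinna-theoretic route for logarithmic jet differentials, as in Siu--Yeung and the quantitative form in Huynh--Vu--Xie. I would work with the equivalent statement for a logarithmic $k$-jet differential $\omega$ of weighted degree $m$ with values in $\mathcal{O}(-t)$.

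Take a hermitian metric $h$ on $\mathcal{O}(t)$ induced by Fubini--Study. Let $s_i$ be the canonical sections of $\mathcal{O}(\mathcal{C}_i)$, normalized so that $\|s_i\|\leqslant 1$. Consider the function $\xi(z)=\|\omega(f',\dots,f^{(k)})\|_{h^{-1}}$, which is well defined since $\omega$ pairs with $\mathcal{O}(-t)$. The key pointwise estimate is local. Near $\mathcal{C}$, write $\omega$ in log-frames $dz_j/z_j$ and $dz_j$ and their derivatives. Then
\[
f^*\omega=\sum_{\alpha} a_\alpha(f)\prod_{j,\ell}\Bigl(\bigl(\log z_j(f)\bigr)^{(\ell)}\Bigr)^{\alpha_{j\ell}}
\]
up to bounded coefficients, with terms $(z_j(f))^{(\ell)}$ in place of the logarithmic ones for non-boundary coordinates. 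Using a finite cover, this gives
\[
\log \xi \leqslant \sum_{\text{finitely many }g}\ \sum_{\ell\leqslant k}\log^+ \Bigl|\bigl(\log g\circ f\bigr)^{(\ell)}\Bigr| \;+\; O(1),
\]
where the $g$ are rational functions. This is the standard Siu--Yeung/Noguchi bookkeeping.

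Next, apply Jensen's formula to the subharmonic function $\log\xi$. Since $f^*\omega\not\equiv0$, the function $\xi$ is not identically zero. Computing $dd^c\log\xi$ gives the current
\[
[\text{zeros of } f^*\omega]\;-\;\text{poles}\;+\;t\,f^*\omega_{FS}.
\]
The only poles of the meromorphic function $f^*\omega$ come from the logarithmic poles along $\mathcal{C}$. At a point where $f$ meets $\mathcal{C}_i$ with any multiplicity, each log-derivative $(\log z_j\circ f)^{(\ell)}$ has a pole of order exactly $\ell$. Because $\omega$ has weighted degree $m$, the total pole order at such a point is at most $m$, independently of the multiplicity. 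This is where the truncation at level $1$ enters.

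Jensen then yields
\[
t\,T_f(r)\leqslant m\sum_i N_f^{[1]}(r,\mathcal{C}_i)+\frac{1}{2\pi}\int_0^{2\pi}\log\xi(re^{i\theta})\,d\theta+O(1).
\]
The integral is bounded via the pointwise estimate above and the Logarithmic Derivative Lemma (Nevanlinna/Vitter). Each term $\int\log^+|(\log g\circ f)^{(\ell)}|$ is $O(\log T_{g\circ f}(r)+\log r)$ outside a set of finite measure, and $T_{g\circ f}=O(T_f)$. Dividing by $t$ gives \eqref{SMT2}.

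The main obstacle is making the pointwise estimate uniform and global. The jet differential mixes all orders, the metric twist must be handled simultaneously, and the log-derivative lemma applies to higher derivatives $(\log g)^{(\ell)}$. For the higher derivatives I would write $(\log g)^{(\ell)}$ as a polynomial in $g^{(i)}/g$ and use $g^{(i)}/g=\prod_{p=0}^{i-1} g^{(p+1)}/g^{(p)}$, iterating the lemma. Careful local pole counting, so that the coefficient is truly $m$ and not something larger, is the second delicate point.
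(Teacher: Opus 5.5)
Your proposal is correct and is essentially the standard proof of \cite[Theorem 3.1]{Huynh-Vu-Xie-2017}, which the paper quotes without proof. Its ingredients are: Poincar\'e--Lelong and Jensen applied to the meromorphic section $f^*\omega$ of $f^*\mathcal{O}_{\mathbb{P}^2}(-t)$; weighted homogeneity bounding the pole order by $m$ at each point of $f^{-1}(\mathcal{C})$, which gives the truncation at level $1$; and the logarithmic derivative lemma on a finite cover for the proximity term. The paper itself spells out this same scheme in Lemma~\ref{lem:one-component-tautological-evaluation} and Theorem~\ref{pro:SMT on Z}.

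Two small points to tidy up. First, $\log\xi$ is $\delta$-subharmonic rather than subharmonic, since it tends to $+\infty$ at the poles; Jensen's formula still holds in that setting. Second, for non-boundary coordinates you should write $z_j^{(\ell)}=z_j\cdot z_j^{(\ell)}/z_j$, with $z_j$ bounded on the chart, before invoking the logarithmic derivative lemma.
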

Here, the symbol ``\(\parallel\)'' indicates that the inequality~\eqref{SMT2} holds for all \(r>1\) outside a set of finite Lebesgue measure. This notation will be used consistently throughout this paper.

The second key ingredient is a theorem of McQuillan (see also \cite{Rousseau2010}), which plays a crucial role in the proof of Theorem~\ref{SMT for curves of d geqslant 12}.

\begin{thm}[McQuillan \cite{Mcquillan1998}]
\label{McQuillan's result}
Let \(\mathcal{C}=\sum_{i=1}^{q}\mathcal{C}_{i}\) be a simple normal crossing divisor in \(\mathbb{P}^{2}\), where each \(\mathcal{C}_{i}\) is a smooth algebraic curve of degree \(d_{i}\) and \(d=\sum_{i=1}^{q}d_{i}\geqslant 4\).  
Let \(f \colon \mathbb{C} \to \mathbb{P}^{2}\) be an algebraically nondegenerate entire curve.  
If the lift \(f_{[1]} \colon \mathbb{C} \to X_{1}= \mathbb{P}\,T_{\mathbb{P}^{2}}(-\log\mathcal{C})\) is algebraically degenerate, then
\[
(d-3)\,T_{f}(r) \leqslant N^{[1]}_{f}(r,\mathcal{C}) + o\bigl(T_{f}(r)\bigr) \quad\parallel.
\]
\end{thm}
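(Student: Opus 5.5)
This is a cited theorem of McQuillan, so the plan is to reconstruct the standard argument through the tautological inequality for foliations, rather than to reprove it from scratch. Since the lift $f_{[1]}$ is algebraically degenerate, its Zariski closure $Z\subset X_1=\mathbb{P}\,T_{\mathbb{P}^2}(-\log\mathcal{C})$ is a proper subvariety. Because $f$ is algebraically nondegenerate, $Z$ dominates $\mathbb{P}^2$ and is therefore a surface. If $Z$ has degree $r$ over $\mathbb{P}^2$, it defines an algebraic multi-foliation $\mathcal{F}$ of rank one on $\mathbb{P}^2$, tangent to $f$ and logarithmic along $\mathcal{C}$. We first pass to a generically finite cover, namely the normalization $\nu\colon \widetilde Z\to Z$ followed by a resolution. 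On it, $\mathcal{F}$ lifts to a genuine foliation $\widetilde{\mathcal{F}}$, and $f$ lifts to an entire curve $\tilde f$ tangent to $\widetilde{\mathcal F}$. The tangent line of $\tilde f$ is the tautological line $\mathcal{O}_{X_1}(-1)|_Z$, so $T_{\widetilde{\mathcal F}}$ is identified, up to an effective divisor, with the pullback of $\mathcal{O}_{X_1}(-1)$.

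\textbf{Step 1: the logarithmic tautological inequality.} For the log-directed manifold $(X,\mathcal C, T_X(-\log \mathcal C))$, McQuillan's tautological inequality gives
\[
T_{f_{[1]}}\bigl(r,\mathcal{O}_{X_1}(1)\bigr)\leqslant N^{[1]}_f(r,\mathcal{C})+o\bigl(T_f(r)\bigr)\quad\parallel,
\]
where the truncated counting term comes from the logarithmic poles of $f'$ along $\mathcal C$. This inequality is proved by the lemma on logarithmic derivatives applied to $\log$ of the pointwise norm of $f'$ measured in a log metric.

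\textbf{Step 2: lower bound on $Z$.} This step is the main obstacle. We must show that on the surface $Z$ carrying the curve,
\[
T_{f_{[1]}}\bigl(r,\mathcal{O}_{X_1}(1)\bigr)\geqslant T_f\bigl(r,K_{\mathbb{P}^2}+\mathcal{C}\bigr)-o\bigl(T_f(r)\bigr)=(d-3)\,T_f(r)-o\bigl(T_f(r)\bigr).
\]
To do this we use McQuillan's theorem for foliated surfaces: for a Zariski-dense parabolic leaf, the canonical bundle $K_{\widetilde{\mathcal F}}$ of the foliation, adjusted by its logarithmic boundary, has nonnegative degree along $\tilde f$ after contracting the negative parts of the Zariski decomposition. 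Equivalently, the Nevanlinna characteristic of the "log cotangent of the foliation" dominates that of $K_{\mathbb{P}^2}+\mathcal C$ up to counting terms at singularities. The Baum--Bott index bookkeeping at foliation singularities, together with the ramification of $\nu$, contributes only effective divisors that $f$ meets with controlled multiplicity. Handling the singularities of $\widetilde{\mathcal F}$ is where the real difficulty lies, and it requires McQuillan's reduction via Seidenberg resolution. If this route proves too heavy, the fallback is to follow Rousseau's logarithmic adaptation \cite{Rousseau2010} directly.

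\textbf{Step 3: conclusion.} Combining the two inequalities yields
\[
(d-3)\,T_f(r)\leqslant N^{[1]}_f(r,\mathcal C)+o\bigl(T_f(r)\bigr)\quad\parallel.
\]
The hypothesis $d\geqslant 4$ is what makes the coefficient $d-3$ positive, so that the log canonical bundle $K_{\mathbb{P}^2}+\mathcal C=\mathcal O(d-3)$ is ample.
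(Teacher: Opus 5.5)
The paper gives no proof of this statement; it is quoted from McQuillan (see also Rousseau), so your sketch has to stand on its own. Its shape is the right one: the logarithmic tautological inequality on $X_1$, plus a lower bound coming from the surface $Z$. Step~1 is fine. The gap is Step~2, which carries all the content of the theorem, and the ingredient you invoke there cannot supply it.

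To see what Step~2 asks, use the tautological sequence
\[
0\longrightarrow\mathcal O_{X_1}(-1)\longrightarrow\pi^{*}T_{\mathbb P^2}(-\log\mathcal C)\longrightarrow Q\longrightarrow0,
\qquad \pi\colon X_1\to\mathbb P^2 .
\]
It gives $\pi^{*}\mathcal O_{\mathbb P^2}(d-3)\cong\mathcal O_{X_1}(1)\otimes Q^{-1}$, hence
\[
T_{f_{[1]}}\bigl(r,\mathcal O_{X_1}(1)\bigr)=(d-3)\,T_f(r)+T_{f_{[1]}}(r,Q)+O(1).
\]
So Step~2 is \emph{equivalent} to $T_{f_{[1]}}(r,Q)\geqslant-o(T_f(r))$. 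In words, the logarithmic normal line of the multi-foliation must have asymptotically nonnegative degree along $f$.

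Nonnegativity of $K_{\widetilde{\mathcal F}}$ along $\tilde f$ says nothing about this, for three reasons.
\begin{itemize}
\item It is the easy direction. Once $K_{\widetilde{\mathcal F}}$ is pseudo-effective (or nef after contraction), it follows from the First Main Theorem and Zariski density. McQuillan's hard statement about $K_{\mathcal F}$ is the opposite bound, an upper bound, and that is essentially your Step~1.
\item It carries no information about $\mathcal C$, whereas the target has size $(d-3)T_f$.
\item By your own identification, $K_{\widetilde{\mathcal F}}\cong\nu^{*}\mathcal O_{X_1}(1)\otimes\mathcal O(E)$ with $E\geqslant0$. So a lower bound on $K_{\widetilde{\mathcal F}}$ does not even bound $\mathcal O_{X_1}(1)$ from below unless you also have an upper bound on $T(r,E)$.
\end{itemize}
The ``equivalently'' in Step~2 is therefore false.

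The missing idea is McQuillan's normal-bundle inequality. For a Zariski-dense entire curve tangent to a foliation, $T_f(r,N_{\mathcal F})\geqslant-o(T_f(r))$. In Brunella's treatment one first reduces to reduced singularities. The Bott partial connection on $N_{\mathcal F}$ along leaves then identifies $N_{\mathcal F}\cdot[f]$ with $[f]^2$, up to nonnegative local contributions at singular points. Finally $[f]^2\geqslant0$, because the Ahlfors current of a Zariski-dense curve is nef. Baum--Bott indices compute $N_{\mathcal F}^2$ and are not what enters here. This is exactly how the splitting $K_X=K_{\mathcal F}\otimes N_{\mathcal F}^{*}$ is exploited in McQuillan's general-type theorem.

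You would still have to transport this to your setting.
\begin{itemize}
\item The ramification of $\tilde Z\to\mathbb P^2$ is harmless. The differential $d\nu$ induces a nonzero map of line bundles from the logarithmic normal bundle of $\widetilde{\mathcal F}$ to $\nu^{*}Q$, so $\nu^{*}Q$ is that normal bundle plus an effective divisor.
\item Components of the boundary that are invariant by the multi-foliation are a real issue. Along them the logarithmic normal bundle is $N_{\widetilde{\mathcal F}}$ twisted by minus those components. There $N_{\widetilde{\mathcal F}}\cdot[\tilde f]\geqslant0$ alone does not suffice, and a genuinely logarithmic refinement is needed.
\end{itemize}
Until Step~2 is replaced by an argument of this kind, Step~3 does not follow.
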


\medskip

\section{\bf Proof of Key Vanishing Lemma \ref{lem-1.2}}
\label{sect:proof of lemma 2}
In this section, we prove Lemma~\ref{lem-1.2} by executing the three-step strategy outlined in the introduction.

\subsection{Local Description of Invariant Logarithmic Jet Differentials}
Now, working in affine coordinates $(x,y)$ on $\mathbb{P}^2$ with
\[
[T:X:Y] \in \mathbb{P}^2,\qquad
[1:x:y] := [1:\tfrac{X}{T}:\tfrac{Y}{T}],
\]
consider an algebraic curve $\mathcal{C} \subset \mathbb{P}^2$ defined by
\[
R(x,y) = 0,
\]
where $R \in \mathbb{C}[x,y]$ is a polynomial of degree
\[
\deg R = d \qquad (d \in \mathbb{N}).
\]
We take $R$ to be a perturbation of the Fermat curve:
\[
R(x,y) = 1^{d} + x^{d} + y^{d} + \cdots,
\]
with generic coefficients, so that in particular $\mathcal{C}$ is smooth at every point of $\mathbb{P}^2$. Consequently, in the affine $(x,y)$-chart we have
\[
\emptyset = \{R=0\} \cap \{R_x=0\} \cap \{R_y=0\}.
\]

On the open set $\{R_y \neq 0\}$ (understood to be taken within the affine chart $\mathbb{C}^2$), the implicit function theorem allows us to solve locally for $y$ as a function of $x$. On $\{R_x \neq 0\}$, similarly, $x$ can be solved locally as a function of $y$.

In $\{R_y \neq 0\}$, where $\{R=0\}$ is the divisor that entire curves $f: \mathbb{C}\rightarrow \mathbb{P}^2\setminus\mathcal{C}$ must avoid, the fibers of the logarithmic Green–Griffiths $1$-jet bundle are polynomially generated by
\[
\big\langle x',\; (\log R)' \big\rangle
= \Big\langle x',\; \frac{R'}{R} \Big\rangle,
\]
with
\[
R' := x' R_x + y' R_y.
\]

Likewise, on $\{R_x \neq 0\}$, the logarithmic $1$-jets are generated by
\[
\big\langle (\log R)',\; y' \big\rangle
= \Big\langle \frac{R'}{R},\; y' \Big\rangle.
\]

Concerning logarithmic $2$-jets, the generators in $\{R_y \neq 0\}$ are:
\[
\big\langle x',\; x'',\; (\log R)',\; (\log R)'' \big\rangle
= \Big\langle x',\; x'',\; \frac{R'}{R},\; \frac{R''}{R} - \frac{R' R'}{R R} \Big\rangle,
\]
where
\[
R'' := x'' R_x + y'' R_y + x'x' R_{xx} + 2x'y' R_{xy} + y'y' R_{yy}.
\]

Similarly, the generators in $\{R_x \neq 0\}$ are:
\[
\big\langle (\log R)',\; (\log R)'',\; y',\; y'' \big\rangle
= \Big\langle \frac{R'}{R},\; \frac{R''}{R} - \frac{R' R'}{R R},\; y',\; y'' \Big\rangle.
\]

For the invariant logarithmic $2$-jet bundles, instead of plain $2$-jets, one considers the logarithmic Wronskian. In $\{R_y \neq 0\}$ it is given by
\[
\Delta_{xR}'''
\,:=\,
\begin{vmatrix}
x' & (\log\,R)'
\\
x'' & (\log\,R)''
\end{vmatrix}
\,=\,
\def\arraystretch{1.25}
\left\vert\!
\begin{array}{ll}
x' & \frac{R'}{R}
\\
x'' & \frac{R''}{R}-\frac{{R'}^2}{R^2}
\end{array}
\!\right\vert,
\]
and in $\{R_x \neq 0\}$ by
\[
\Delta_{Ry}'''
\,:=\,
\begin{vmatrix}
(\log\,R)' & y'
\\
(\log\,R)'' & y''
\end{vmatrix}
\,=\,
\def\arraystretch{1.25}
\left\vert\!
\begin{array}{ll}
\frac{R'}{R} & y'
\\
\frac{R''}{R}-\frac{{R'}^2}{R^2} & y''
\end{array}
\!\right\vert.
\]

The transition from $\{R_x \neq 0\}$ to $\{R_y \neq 0\}$ consists in solving for $y'$ and $y''$ from the relations obtained by differentiating the curve equation $R(x,y)=0$ once and twice:
\begin{equation}\label{relations of log jet}
\begin{aligned}
0 &= -R' + x' R_x + y' R_y,\\
0 &= -R'' + x'' R_x + y'' R_y + x'x' R_{xx} + 2x'y' R_{xy} + y'y' R_{yy}.
\end{aligned}
\end{equation}
Solving the first equation yields
\begin{equation}\label{log y'}
    y'=-x'\frac{R_{x}}{R_{y}} +\frac{R'}{R_{y}}.
\end{equation}
Substituting this into the second equation and solving for $y''$ gives
\[
\aligned
y''
&
\,=\,
-x'' \frac{R_{x}}{R_{y}} +\frac{R''}{R_{y}}
\\
&
\ \ \ \ \
-\ x'x'\ \frac{R_{x}^{2} \ R_{yy}}{R_{y}^{3}} +2\ x'\ R'\ \frac{R_{x} \ R_{yy}}{R_{y}^{3}} -R'R'\ \frac{R_{yy}}{R_{y}^{3}} +2\ x'x'\ \frac{R_{x} \ R_{xy}}{R_{y}^{2}} -2\ x'R'\ \frac{R_{xy}}{R_{y}^{2}} -x'x'\ \frac{R_{xx}}{R_{y}}.
\endaligned
\]
The transition of a jet differential is then obtained by replacing $y'$ and $y''$ with these expressions.

In particular, the logarithmic Wronskian $\Delta_{Ry}'''$ transforms under the change of chart as follows:
\begin{equation}\label{log Wronskian Ry}
\aligned
\Delta_{Ry}'''
&
\,=\,
\frac{R_x}{R_y}\,
\Delta_{xR}'''
+
\frac{R}{R_y^3}
(
{R_y^2}
-
{R_{yy}}{R}
)
\Big(\frac{R'}{R}\Big)^3
+
\frac{2 R}{R_y^3}
(
{R_x\,R_{yy}}
-
{R_{xy}\,R_y}
)
x' \Big(\frac{R'}{R}\Big)^2 
\\
&
\ \ \ \ \ \ \ \ \ \ \ \ \ \ \ \ \ \ \ \ \ \ \ \ \ \ \ \ \ \ \ \ \ \ \
\ \ \ \ \ \ \ \ \ \ \ \ \
+
\frac{1}{R_y^3}
(
-
{R_x^2\,R_{yy}}
+
2\,{R_x\,R_{xy}\,R_y}
-
{R_{xx}\,R_y^2}
)
{x'}^2 \frac{R'}{R}
.
\endaligned
\end{equation}

Now consider a global section of the invariant logarithmic $2$-jet differential bundle $E_{2,m}T^*_{\mathbb{P}^2}(\log\mathcal{C})$ of weighted degree $m$. By the GAGA principle, such a section is algebraic, and it is a basic fact of algebraic geometry that on the open set $\{R_y \neq 0\}$ it takes the general form
\begin{equation}\label{J_xR of weighted degree m}
J_{xR} \coloneqq \sum_{\substack{0 \leqslant k \leqslant \lfloor m/3 \rfloor \\ 0 \leqslant j \leqslant m-3k}} 
\frac{F_{j,k}}{R_y^{f_{j,k}}}\, (x')^{m-3k-j}\Bigl(\frac{R'}{R}\Bigr)^{\!j}\,
\bigl(\Delta_{xR}'''\bigr)^k,
\end{equation}
where $F_{j,k} \in \mathbb{C}[x,y]$ are polynomials and $f_{j,k} \in \mathbb{N}$ are nonnegative integers indicating the admissible powers of $R_y$ in the denominator (division by $R_y$ being permissible on $\{R_y \neq 0\}$).

\begin{Question}
{\sl What are the maximal possible values of $f_{j,k} \in \mathbb{N}$?}
\end{Question}

The following proposition answers this question by showing that any global section of the invariant logarithmic $2$-jet differential bundle must take a prescribed finite form, with undetermined polynomial coefficients.

\begin{pro}\label{prop:4.2}
Let $\widehat{R}(T,X,Y)\in \mathbb{C}[T, X, Y]$ be a homogeneous polynomial defining a smooth curve $\mathcal{C} \subset \mathbb{P}^2$, and assume that the divisors in $\mathbb{P}^2$ given by $\widehat{R}$ and its partial derivatives $\widehat{R}_T, \widehat{R}_X, \widehat{R}_Y$ are in general position (i.e., any intersection of them has the expected codimension).  
Dehomogenizing by setting $T = 1$ yields an affine polynomial $R(x,y) \in \mathbb{C}[x,y]$ such that $\mathcal{C} \cap \{T \neq 0\} = \{R=0\}$.  
For any weighted order $m$, every global section of the invariant logarithmic $2$-jet differential bundle $E_{2,m}T^*_{\mathbb{P}^2}(\log\mathcal{C})$ on $\mathbb{P}^2$, when restricted to the affine open set $\{R_y \neq 0\}\subset \{T\neq 0\}$, admits the general expression
\[
J_{xR} = \sum_{\substack{0 \leqslant k \leqslant \lfloor m/3 \rfloor \\ 0 \leqslant j \leqslant m-3k}} 
\frac{F_{j,k}}{R_y^{\,m-2k}}\, (x')^{m-3k-j}\Bigl(\frac{R'}{R}\Bigr)^{\!j}\,
\bigl(\Delta_{xR}'''\bigr)^k,
\]
where $F_{j,k} \in \mathbb{C}[x,y]$ are polynomials.
\end{pro}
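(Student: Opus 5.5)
The bound $m-2k$ should come from counting how many factors of $R_y^{-1}$ each frame change costs. The plan is to show that every coefficient $F_{j,k}:=R_y^{\,m-2k}\cdot(\text{coefficient of }(x')^{m-3k-j}(R'/R)^j(\Delta_{xR}''')^k)$ extends holomorphically across the whole affine chart $\mathbb{C}^2=\{T\neq 0\}$.

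First, on $\{R_y\neq 0\}$ the monomials $(x')^{m-3k-j}(R'/R)^j(\Delta_{xR}''')^k$ form a frame of $E_{2,m}T^*_{\mathbb{P}^2}(\log\mathcal{C})$, by the local description \eqref{local-expansion} applied to the log-coframe $(dx, dR/R)$. So the coefficients of $J_{xR}$ are uniquely determined regular functions there, and the $F_{j,k}$ are well defined. It then suffices to prove that each $F_{j,k}$ is regular near every point $p\in\mathbb{C}^2$ with $R_y(p)=0$. Once that holds, $F_{j,k}$ is regular on all of $\mathbb{C}^2$ and hence a polynomial.

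Near such a point $p$ there are two cases.
\begin{itemize}
\item If $p\notin\mathcal{C}$, use the frame $(x',y')$, the Wronskian $\Delta_{xy}=x'y''-y'x''$, and holomorphic coefficients.
\item If $p\in\mathcal{C}$, smoothness gives $R_x(p)\neq 0$. Use the frame $(R'/R,\,y')$, the Wronskian $\Delta_{Ry}'''$, and holomorphic coefficients.
\end{itemize}

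In both cases I convert to the $(x',R'/R,\Delta_{xR}''')$ frame. I substitute \eqref{log y'}, written as $y'=(R\cdot\tfrac{R'}{R}-R_x x')/R_y$, together with \eqref{log Wronskian Ry}, or its analogue obtained by expanding $\Delta_{xy}$ in terms of $\Delta_{xR}'''$. The accounting is then as follows.
\begin{itemize}
\item Each factor $y'$ produces at most one $R_y^{-1}$.
\item Each Wronskian factor splits into the leading term $(R_x/R_y)\Delta_{xR}'''$, or $(R/R_y)\Delta_{xR}'''$ in the case $p\notin\mathcal{C}$, which costs one $R_y^{-1}$.
\item The remaining cubic terms in $(x',R'/R)$ have coefficients with at most $R_y^{-3}$.
\end{itemize}
A term $y'^{\,m-3k-j}(\cdot)^j\Delta^k$ with all Wronskians kept therefore contributes to level $k$ with pole at most $(m-3k)+k=m-2k$. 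If $s$ of the $k$ Wronskians are traded for cubic terms, the result lands at level $k-s$ with pole at most $(m-3k)+(k-s)+3s=m-2(k-s)$. This matches the claimed bound at the new level. The filtration \eqref{filtration-of-E2m, log} makes this bookkeeping triangular: contributions only go from level $k$ to levels $\leqslant k$. Collecting all terms shows that each coefficient of $(x')^{m-3k-j}(R'/R)^j(\Delta_{xR}''')^k$ near $p$ equals a holomorphic function divided by $R_y^{\,m-2k}$. By uniqueness of the expansion on $\{R_y\neq 0\}$, $F_{j,k}$ is holomorphic near $p$.

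The main obstacle is making the triangular bookkeeping exact. One has to check that the Wronskian transformation \eqref{log Wronskian Ry}, and its non-logarithmic analogue at points off $\mathcal{C}$, really has $R_y$-denominators of order at most $1$ in the leading coefficient and at most $3$ in the cubic terms. One also has to check that the extra factors $R$ and $R_x$ appearing in the numerators never force extra poles. I would verify this directly from the explicit formula already displayed, arguing by descending induction on $k$ so that lower-level contributions are absorbed as they are generated. The general-position hypothesis is needed only to ensure that the set where neither local frame is available, $\{R=R_x=R_y=0\}$, is empty, and that the resulting $F_{j,k}$ are genuinely polynomial rather than merely rational.
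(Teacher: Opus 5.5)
Your proof is correct. It rests on the same pole count as the paper's: a source monomial with $q$ Wronskian factors and at most $m-3q$ factors of $y'$ contributes to level $k\leqslant q$ with $R_y$-exponent at most $(m-3q)+k+3(q-k)=m-2k$.

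The difference is in how the count is globalized.

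\begin{itemize}
\item The paper writes the section once on the Zariski-open set $\{R_x\neq 0\}$, with coefficients in $\mathbb{C}[x,y][1/R_x]$. It transforms to the $\{R_y\neq 0\}$ frame, obtaining coefficients of the form $P/(R_x^{a}R_y^{m-2k})$. It then removes the $R_x$-powers by comparing with the expansion on $\{R_y\neq 0\}$. This is where general position is used, in the form that $R_x$ and $R_y$ have no common factor. It is also how the paper handles the finitely many points of $\{R_x=R_y=0\}\setminus\mathcal{C}$, which neither chart covers.
\item You instead prove holomorphic extension of $R_y^{m-2k}\cdot(\text{coefficient})$ point by point along $\{R_y=0\}$. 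You cover the points off $\mathcal{C}$ with the ordinary frame $(x',y',x'y''-y'x'')$. Your expansion of that Wronskian as $(R/R_y)\,\Delta_{xR}'''$ plus cubic terms in $(x',R'/R)$, with $R_y$-exponent at most $3$, is correct. It follows by differentiating $y'=(R\cdot\frac{R'}{R}-R_x x')/R_y$ once more; the $R_x x'x''$ terms cancel.
\end{itemize}

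Your route therefore uses only two facts. Smoothness of $\mathcal{C}$ gives $R_x(p)\neq 0$ on $\mathcal{C}\cap\{R_y=0\}$. And an element of $\mathbb{C}[x,y][1/R_y]$ that is holomorphic on all of $\mathbb{C}^2$ is a polynomial. Your closing sentence attributes these to general position, but your argument never needs that hypothesis. The emptiness of $\{R=R_x=R_y=0\}$ is just smoothness, and polynomiality needs no genericity at all.

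In exchange, the paper's version stays within polynomial algebra on a single chart. Finally, the descending induction on $k$ that you anticipate is unnecessary: the direct count above already bounds every contribution at every level.
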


\begin{proof}
In the standard affine coordinates \((x,y) = (X/T, Y/T)\) on \(\{T \neq 0\} = \mathbb{C}^2\), the invariant logarithmic $2$-jet differential is algebraic on \(\{R_x \neq 0\}\). By the same reasoning as in~\eqref{J_xR of weighted degree m}, on this open set it admits an expression of the form
\begin{equation}\label{J_Ry of weighted degree m}
J_{Ry} \coloneqq \sum_{\substack{0 \leqslant q \leqslant \lfloor m/3 \rfloor \\ 0 \leqslant p \leqslant m-3q}} 
\overline{F}_{p,q}\, \Bigl(\frac{R'}{R}\Bigr)^{p}\,(y')^{m-3q-p}\,
\bigl(\Delta_{Ry}'''\bigr)^q,
\end{equation}
where the coefficients \(\overline{F}_{p,q}\) are rational functions whose denominators may involve only powers of \(R_x\).

We now examine the compatibility of the two expressions~\eqref{J_xR of weighted degree m} and~\eqref{J_Ry of weighted degree m} for the jet differential. Substituting the transition formulas~\eqref{log y'} and~\eqref{log Wronskian Ry} into~\eqref{J_Ry of weighted degree m} and comparing with~\eqref{J_xR of weighted degree m}, we find that the coefficients in~\eqref{J_xR of weighted degree m} become rational expressions whose denominators are products of powers of \(R_x\) (which must ultimately cancel) and powers of \(R_y\). The exponent of \(R_y\) in the denominator arises from two sources:
\begin{itemize}
    \item Each of the \(m-3q-p\) factors of \(y'\) contributes at most one factor of \(R_y^{-1}\) via \eqref{log y'}.
    \item Each of the \(q\) factors of the Wronskian \(\Delta_{Ry}'''\) contributes at most one factor of \(R_y^{-1}\) from the leading term in \eqref{log Wronskian Ry}, and at most three factors of \(R_y^{-1}\) from the remaining quadratic terms.
\end{itemize}

Moreover, the term \((x')^{m-3k-j}\bigl(\frac{R'}{R}\bigr)^{\!j}\bigl(\Delta_{xR}'''\bigr)^k\) in expression~\eqref{J_xR of weighted degree m} arises from expanding terms of the form
\begin{equation}\label{terms in J_Ry with q>=k, p<=j}
    \Bigl(\frac{R'}{R}\Bigr)^{p}\,(y')^{m-3q-p}\,\bigl(\Delta_{Ry}'''\bigr)^q \quad\text{with } q \geqslant k.
\end{equation}
When expanding such a term, \(\bigl(\Delta_{Ry}'''\bigr)^q\) contributes a factor of \(\bigl(\Delta_{xR}'''\bigr)^k\) together with \(k\) powers of \(R_y^{-1}\) from the leading term in \eqref{log Wronskian Ry} and an additional \(q-k\) powers of \(R_y^{-3}\) from the remaining quadratic terms. The factor \((y')^{m-3q-p}\) contributes \(m-3q-p\) powers of \(R_y^{-1}\) via \eqref{log y'}. Hence, the total exponent of \(R_y\) in the denominator of the coefficient multiplying \((x')^{m-3k-j}\bigl(\frac{R'}{R}\bigr)^{\!j}\bigl(\Delta_{xR}'''\bigr)^k\) is at most
\[
k + 3(q-k) + (m-3q-p) = m - 2k - p \leqslant m - 2k.
\]

This upper bound \(m-2k\) can be achieved by expanding the term in \eqref{terms in J_Ry with q>=k, p<=j} corresponding to \(q = k\) and \(p = 0\), namely \((y')^{m-3k}\,\bigl(\Delta_{Ry}'''\bigr)^k\).

By our assumption on \(\widehat{R}\), the divisors defined by \(R\) and its partial derivatives \(R_x, R_y\) are in general position. Under this genericity condition, after cancelling common factors between numerator and denominator, the maximal possible pole order \(f_{j,k}\) of the coefficients in \eqref{J_xR of weighted degree m} with respect to \(R_y\) satisfies
\[
f_{j,k} \leqslant m-2k.
\]
Moreover, since the expression~\eqref{J_xR of weighted degree m} must be regular on \(\{R_y \neq 0\}\), any factor of \(R_x\) in the denominator must cancel. Consequently, the coefficients \(F_{j,k}\) may be taken as polynomials, and we may set \(f_{j,k} = m-2k\) as the maximal admissible exponent, as claimed.
\end{proof}


\subsection{Denominator Exponent Constraints from Holomorphicity}

On $\mathbb{P}^2$, the affine coordinates $(x,y)$ arise from homogeneous coordinates $[T:X:Y]$ by setting $T=1$:
\[
(1,x,y) = \left(\frac{T}{T},\frac{X}{T},\frac{Y}{T}\right).
\]

Two further affine charts are obtained by normalizing with $X$ or $Y$ instead of $T$. Normalizing by $X$ yields coordinates $(\tilde{t}, \tilde{y})$ via $[\tilde{t}:1:\tilde{y}] = [T:X:Y] = \bigl[\frac{T}{X}:1:\frac{Y}{X}\bigr]$, namely
\begin{equation}\label{to-xy-chart-log}
\tilde{t} := \frac{1}{x},\qquad 
\tilde{y} := \frac{y}{x},
\end{equation}
with inverse transformation
\leqnomode\usetagform{default}
\begin{align}
\label{1-x-chart-log}
x = \frac{1}{\tilde{t}},\qquad 
y = \frac{\tilde{y}}{\tilde{t}}.
\end{align}

To study the behavior along the line at infinity $\mathbb{P}_\infty^1 := \{T=0\}$, we work in this new chart, where $\mathbb{P}_\infty^1$ is locally given by $\{\tilde{t}=0\}$. This chart covers the open subset $\mathbb{P}_\infty^1 \setminus \mathbb{P}_\infty^0$, with $\mathbb{P}_\infty^0 := \{T=0,\; X=0\}$ a single point of $\mathbb{P}_\infty^1$. The remaining part of $\mathbb{P}_\infty^1$ would be covered by normalizing with $Y$; however, since all objects considered are polynomial or rational, their vanishing or pole order along $\mathbb{P}_\infty^1$ is already determined on the dense open set $\mathbb{P}_\infty^1 \setminus \mathbb{P}_\infty^0$. It therefore suffices to examine the chart transitions~\eqref{to-xy-chart-log} and \eqref{1-x-chart-log}.

Under the change $(x,y) \mapsto (\tilde{t},\tilde{y})$ given by \eqref{1-x-chart-log}, the $1$-jets and $2$-jets transform according to
\begin{equation}\label{1 jets from xy to tilde_ty log}
\begin{aligned}
x' &= -\frac{\tilde{t}'}{\tilde{t}^2}, &
y' &= \frac{\tilde{y}'}{\tilde{t}} - \frac{\tilde{y}\,\tilde{t}'}{\tilde{t}^2}, \\[2mm]
x'' &= -\frac{\tilde{t}''}{\tilde{t}^2} + 2\frac{(\tilde{t}')^2}{\tilde{t}^3}, &
y'' &= \frac{\tilde{y}''}{\tilde{t}} - 2\frac{\tilde{t}'\,\tilde{y}'}{\tilde{t}^2} - \frac{\tilde{y}\,\tilde{t}''}{\tilde{t}^2} + 2\frac{\tilde{y}\,(\tilde{t}')^2}{\tilde{t}^3}.
\end{aligned}
\end{equation}

To understand its behavior along the line at infinity $\mathbb{P}_\infty^1$, given locally by $\{\tilde{t}=0\}$, we examine its transformation under~\eqref{1-x-chart-log}. The polynomial $R$ transforms as
\[
R(x,y) = R\!\left(\frac{1}{\tilde{t}},\frac{\tilde{y}}{\tilde{t}}\right) = \frac{1}{\tilde{t}^{\,d}}\, R^\ast(\tilde{t},\tilde{y}),
\]
where $d = \deg R$ and $R^\ast(\tilde{t},\tilde{y}) := \tilde{t}^{\,d} R(1/\tilde{t}, \tilde{y}/\tilde{t})$ is a polynomial with $R^\ast(0,\tilde{y}) \not\equiv 0$ for generic $R$.

Taking the logarithmic derivative of both sides  yields the transformation rules
\begin{equation}\label{log jets from R to tilde_t R^* log}
(\log R)' = \bigl(\log R^{*}\bigr)' - d\,\frac{\tilde{t}'}{\tilde{t}},
\end{equation}
and consequently for the second derivative,
\[
(\log R)'' = \bigl(\log R^{*}\bigr)'' - d\,\frac{\tilde{t}''}{\tilde{t}} + d\,\frac{(\tilde{t}')^{2}}{\tilde{t}^{2}}.
\]

Finally, the logarithmic Wronskian transforms as
\begin{equation}\label{log Wronskian from x R to tilde_t R^*}
\Delta_{xR} =
\begin{vmatrix}
x' & (\log R)' \\
x'' & (\log R)''
\end{vmatrix}
= -\frac{1}{\tilde{t}^{2}}
   \begin{vmatrix}
   \tilde{t}' & \bigl(\log R^{*}\bigr)' \\
   \tilde{t}'' & \bigl(\log R^{*}\bigr)''
   \end{vmatrix}
   + d\frac{(\tilde{t}')^{3}}{\tilde{t}^{4}}
   - 2\frac{(\tilde{t}')^{2}}{\tilde{t}^{3}}\bigl(\log R^{*}\bigr)'.
\end{equation}

Similarly, under the inverse change of chart \((\tilde{t},\tilde{y}) \mapsto (x,y)\) given by \eqref{to-xy-chart-log}, the \(1\)-jets and \(2\)-jets transform according to the following formulas:
\begin{equation}\label{1 jets from tilde_ty to xy log}
\begin{aligned}
\tilde{t}' &= -\frac{x'}{x^2}, &
\tilde{y}' &= \frac{y'}{x} - \frac{y\,x'}{x^2}, \\[2mm]
\tilde{t}'' &= -\frac{x''}{x^2} + 2\frac{(x')^2}{x^3}, &
\tilde{y}'' &= \frac{y''}{x} - 2\frac{x'\,y'}{x^2} - \frac{y\,x''}{x^2} + 2\frac{y\,(x')^2}{x^3}.
\end{aligned}
\end{equation}

The logarithmic derivative transforms as
\begin{equation}\label{log jets from R^* to xR log}
\begin{aligned}
\bigl(\log R^{*}\bigr)' &= (\log R)' - d \frac{x'}{x},\\[2mm]
\bigl(\log R^{*}\bigr)'' &= (\log R)'' - d \frac{x''}{x} + d\frac{(x')^2}{x^2}.
\end{aligned}
\end{equation}

Consequently, the logarithmic Wronskian transforms as
\begin{equation}\label{log Wronskian from tilde_t R^* to x R}
\begin{vmatrix}
\tilde{t}' & \bigl(\log R^{*}\bigr)' \\[2mm]
\tilde{t}'' & \bigl(\log R^{*}\bigr)''
\end{vmatrix}
= -\frac{1}{x^{2}}
   \begin{vmatrix}
   x' & (\log R)' \\[2mm]
   x'' & (\log R)''
   \end{vmatrix}
   + d\frac{(x')^{3}}{x^{4}}
   - 2\frac{(x')^{2}}{x^{3}}(\log R)'.
\end{equation}

To simplify computations, we now make a specific choice for the polynomial defining the curve \(\mathcal{C} \subset \mathbb{P}^2\). We take its homogeneous equation to be a perturbation of the Fermat curve:
\[
\widehat{R} \coloneqq T^{d} + X^{d} + Y^{d} + P(T,X) = 0,
\]
where the perturbation term \(P(T,X) \in \mathbb{C}[T,X]\) is homogeneous of degree \(d\) and involves only the variables \(T\) and \(X\). In the affine chart \(T = 1\), this becomes
\[
R(x,y) = 1 + x^{d} + y^{d} + S(x),
\]
with \(S(x) \in \mathbb{C}[x]\) a polynomial of degree at most \(d-1\), to be chosen later in a suitably generic way.

\begin{Convention}\label{convention log}
In what follows, we work under the following generic assumptions:\footnote{The equation \(\widehat{R}\) to be used later, with \(P(T, X) = X^6 T^{d-6}\) for \(d = 12, 13\), satisfies Convention~\ref{convention log}. The associated Maple code is available at \url{https://github.com/LeiHou-code/Hyperbolicity-Dimension-2-Conventions-Facts-2026}.}
\begin{itemize}
    \item The curve \(\mathcal{C}\) defined by \(\widehat{R} = 0\) is smooth.
    \item The divisors defined by \(\widehat{R}\) and its partial derivatives \(\widehat{R}_T, \widehat{R}_X, \widehat{R}_Y\) are in general position; i.e., any intersection of them has the expected (co)dimension.
    \item The coordinate line \(\{Y = 0\}\) and \(\{T=0\}\) intersect \(\mathcal{C}\) transversely.
\end{itemize}
\end{Convention}

The advantage of this choice is that, up to the nonzero constant factor $\frac{1}{d}$ (which causes no essential complication, especially for computer algebra calculations), we have simply
\[
\tfrac{1}{d} R_y = y^{d-1}.
\]

Recall that for an algebraic invariant logarithmic $2$-jet differential initially defined on $\{R_y \neq 0\}$ to be also holomorphic on $\{R_x \neq 0\}$, certain divisibility conditions by $R_y$ must be satisfied. With the above choice $R_y = d y^{d-1}$, these divisibility conditions translate directly into the requirement that, after expanding with respect to $y$, the polynomial coefficients of $y^0, y^1, \dots, y^{d-2}$ all vanish identically.

In contrast, if one starts with a truly generic polynomial $R(x,y)$ of degree $d$, its partial derivative $R_y$ would contain many monomials involving $y$, and the required divisibility by $R_y$ would involve a Euclidean division — a computationally intensive task even on powerful electronic equipment. It is therefore highly advantageous for us to have made the preliminary choice
\[
 R_y =d y^{d-1}.
\]

On $\{ T \neq 0 \} \cap \{ \widehat{R}_Y \neq 0 \}$, every invariant logarithmic $2$-jet differential of weighted order $m$ on $\mathbb{P}^2$ admits the general expression
\begin{equation}\label{J_X widehatR}
J_{X \widehat{R}} \coloneqq \sum_{\substack{0 \leqslant k \leqslant \lfloor m/3 \rfloor \\ 0 \leqslant j \leqslant m-3k}}
\frac{\widehat{G}_{j,k}}{T^{g_{j,k}} \widehat{R}_Y^{m-2k}}\,
(x')^{m-3k-j}\Bigl(\frac{R'}{R}\Bigr)^{\!j}\,
\bigl(\Delta_{xR}'''\bigr)^k,
\end{equation}
where $\widehat{G}_{j,k}(T,X,Y)$ are homogeneous polynomials in $\mathbb{C}[T,X,Y]$ satisfying
\begin{equation*}
    \deg \widehat{G}_{j,k} = g_{j,k} + (m-2k)(d-1),
\end{equation*}
and $\widehat{R}_Y$ is the partial derivative of the homogeneous polynomial $\widehat{R}$ as in Convention \ref{convention log}. Here the exponent of \(\widehat{R}_Y\) in the denominator is taken to be \(m-2k\), which coincides with the maximal admissible exponent of \(R_y\) in the denominators of the coefficients \(F_{j,k} / R_y^{m-2k}\) from Proposition~\ref{prop:4.2}.

Similarly, on \(\{ X \neq 0 \} \cap \{ \widehat{R}_Y \neq 0 \}\), every invariant logarithmic \(2\)-jet differential of weighted order \(m\) on \(\mathbb{P}^2\) admits the general expression
\begin{equation}\label{J_T widehatR}
J_{T \widehat{R}} \coloneqq 
\sum_{\substack{0 \leqslant q \leqslant \lfloor m/3 \rfloor \\ 0 \leqslant p \leqslant m-3q}} 
\frac{\widehat{E}_{p,q}}{X^{e_{p,q}} \widehat{R}_Y^{m-2q}}
(\tilde{t}')^{m-3q-p}
\bigl((\log R^{*})'\bigr)^{p}
\begin{vmatrix}
\tilde{t}' & (\log R^{*})' \\
\tilde{t}'' & (\log R^{*})''
\end{vmatrix}^q,
\end{equation}
where \(\widehat{E}_{p,q}(T,X,Y)\) are homogeneous polynomials in \(\mathbb{C}[T,X,Y]\) satisfying
\[
\deg \widehat{E}_{p,q} = e_{p,q} + (m-2q)(d-1).
\]

\begin{Question}
{\sl What happens to the general form \(J_{T \widehat{R}}\) given in \eqref{J_T widehatR} after the change of affine chart from \(\{ X \neq 0 \} \cap \{ \widehat{R}_Y \neq 0 \}\) to \(\{ T \neq 0 \} \cap \{ \widehat{R}_Y \neq 0 \}\)?}
\end{Question}

Substituting \eqref{1 jets from tilde_ty to xy log}, \eqref{log jets from R^* to xR log}, and \eqref{log Wronskian from tilde_t R^* to x R} into \eqref{J_T widehatR} yields the following transformation for the term:
\begin{align*}
&
\frac{\widehat{E}_{p,q}}{X^{e_{p,q}} \widehat{R}_{Y}^{m-2q}}
(\tilde{t}')^{m-3q-p}
((\log R^{*})')^{p}
\begin{vmatrix}
\tilde{t} ' & \left(\log R^{*}\right) '\\
\tilde{t} '' & \left(\log R^{*}\right) ''
\end{vmatrix}^q
\\
=&
\frac{\widehat{E}_{p,q}}{X^{e_{p,q}} \widehat{R}_{Y}^{m-2q}}
(-\frac{x'}{x^2})^{m-3q-p}
((\log R)' - d \frac{x'}{x})^{p}
\\
&
\cdot
\left(
-\frac{1}{x^{2}}
\begin{vmatrix}
x' & (\log R) '\\
x'' & (\log R) ''
\end{vmatrix}
+
d\frac{1}{x^{4}} (x')^{3} 
-
2\frac{1}{x^{3}} (x')^{2}\left(\log R\right)'
\right)^q
\\
=&
\frac{( -1)^{m-3q-p}}{x^{2( m-3q-p) +p+4q}}
\frac{\hat{E}_{p,q}}{X^{e_{p,q}}\hat{R}_{Y}^{m-2q}}
(x')^{m-3q-p}
(x(\log R)'-dx')^{p}
\\
&\cdot
\left( 
-x^{2}
\begin{vmatrix}
x' & (\log R)'\\
x'' & (\log R)''
\end{vmatrix} 
+
d(x')^{3} 
-
2x(x')^{2}(\log R)'
\right)^{q},
\end{align*}
where the factor $(x(\log R)'-dx')^{p}$ can be expanded as
\begin{equation*}
    (x(\log R)'-dx')^{p} = \sum _{r=0}^{p}( -1)^{p-r}\binom{p}{r} d^{p-r} x^{r}( x')^{p-r}( (\log R)')^{r}.
\end{equation*}
The last factor expands as
\begin{align*}
&
\left( 
-x^{2}
\begin{vmatrix}
x' & (\log R)'\\
x'' & (\log R)''
\end{vmatrix} 
+
d(x')^{3} 
-
2x(x')^{2}(\log R)'
\right)^{q}
\\
=&
\sum_{\alpha +\beta +\gamma =q} 
\frac{q!}{\alpha ! \beta ! \gamma !}
d^{\alpha } (x')^{3\alpha }( -1)^{\beta } 2^{\beta } x^{\beta }( x')^{2\beta }( (\log R)')^{\beta }( -1)^{\gamma } x^{2\gamma }\begin{vmatrix}
x' & (\log R)'\\
x'' & (\log R)''
\end{vmatrix}^{\gamma}
\\
=&
\sum_{\alpha +\beta +\gamma =q}
(-1)^{\beta + \gamma} 
d^{\alpha} 2^{\beta} \frac{q!}{\alpha ! \beta ! \gamma !}
x^{\beta + 2\gamma}
(x')^{3\alpha +2\beta}
((\log R)')^{\beta}
\begin{vmatrix}
x' & (\log R)'\\
x'' & (\log R)''
\end{vmatrix}^{\gamma}
\\
=&
\sum_{\gamma =0}^{q} \sum_{\beta =0}^{q-\gamma} 
(-1)^{\beta + \gamma} 
d^{q- \beta - \gamma} 2^{\beta} \frac{q!}{\beta ! \gamma ! (q- \beta - \gamma)!}
x^{\beta + 2\gamma}
(x')^{3(q- \beta - \gamma) +2\beta}
((\log R)')^{\beta}
\begin{vmatrix}
x' & (\log R)'\\
x'' & (\log R)''
\end{vmatrix}^{\gamma}.
\end{align*}

Combining the expansions, the term becomes
\begin{align*}
&
\frac{\widehat{E}_{p,q}}{X^{e_{p,q}} \widehat{R}_{Y}^{m-2q}}
(\tilde{t}')^{m-3q-p}
\bigl((\log R^{*})'\bigr)^{p}
\begin{vmatrix}
\tilde{t}' & (\log R^{*})' \\
\tilde{t}'' & (\log R^{*})''
\end{vmatrix}^q
\\
=& \sum_{r=0}^{p} \sum_{\gamma=0}^{q} \sum_{\beta=0}^{q-\gamma}
   (-1)^{m-3q-r+\beta+\gamma} 2^{\beta}
   d^{\,p-r+q-\beta-\gamma}
   \binom{p}{r} \frac{q!}{\beta!\,\gamma!\,(q-\beta-\gamma)!}
\\
&\cdot
   \frac{x^{r+\beta+2\gamma}}{x^{2(m-3q-p)+p+4q}}
   \frac{\widehat{E}_{p,q}}{X^{e_{p,q}} \widehat{R}_{Y}^{m-2q}}
   (x')^{m-3q-r+3(q-\beta-\gamma)+2\beta}
   \bigl((\log R)'\bigr)^{r+\beta}
   \begin{vmatrix}
   x' & (\log R)' \\
   x'' & (\log R)''
   \end{vmatrix}^{\gamma}.
\end{align*}

Introducing new indices \(k := \gamma\) and \(j := r + \beta\), we rewrite the expression as
\begin{align*}
&
\frac{\widehat{E}_{p,q}}{X^{e_{p,q}} \widehat{R}_{Y}^{m-2q}}
(\tilde{t}')^{m-3q-p}
\bigl((\log R^{*})'\bigr)^{p}
\begin{vmatrix}
\tilde{t}' & (\log R^{*})' \\
\tilde{t}'' & (\log R^{*})''
\end{vmatrix}^q
\\
=& \sum_{r=0}^{p} \sum_{k=0}^{q} \sum_{j=r}^{q+r-k}
   (-1)^{m-3q-2r+j+k} 2^{j-r}
   d^{\,p+q-j-k}
   \binom{p}{r} \frac{q!}{(j-r)!\,k!\,(q+r-j-k)!}
\\
&\cdot
   \frac{x^{j+2k}}{x^{2(m-3q-p)+p+4q}}
   \frac{\widehat{E}_{p,q}}{X^{e_{p,q}} \widehat{R}_{Y}^{m-2q}}
   (x')^{m-3k-j}
   \bigl((\log R)'\bigr)^{j}
   \begin{vmatrix}
   x' & (\log R)' \\
   x'' & (\log R)''
   \end{vmatrix}^{k}.
\end{align*}

Consequently, the full expression \(J_{T \widehat{R}}\) transforms as
\begin{align*}
J_{T \widehat{R}}
=&
\sum_{q=0}^{\lfloor m/3 \rfloor} \sum_{p=0}^{m-3q} \sum_{r=0}^{p} \sum_{k=0}^{q} \sum_{j=r}^{q+r-k} 
(-1)^{m-3q-2r+j+k} 2^{j-r} d^{p+q-j-k}
\binom{p}{r} \frac{q!}{(j-r)!k!(q+r-j-k)!}
\\
&\cdot
\frac{x^{j+2k}}{x^{2(m-3q-p)+p+4q}} 
\frac{\hat{E}_{p,q}}{X^{e_{p,q}}\hat{R}_{Y}^{m-2q}}
(x')^{m-3k-j}
((\log R)')^{j}
\begin{vmatrix}
x' & (\log R)'\\
x'' & (\log R)''
\end{vmatrix}^{k}
\\
=&
\sum_{k=0}^{\lfloor m/3\rfloor} \sum_{j=0}^{m-3k} \sum_{q=k}^{\lfloor m/3\rfloor} \sum_{r=\max (0,\ j+k-q)}^{\min (j,\ m-3q)} \sum_{p=r}^{m-3q}
(-1)^{m-3q-2r+j+k} 2^{j-r} d^{p+q-j-k}
\binom{p}{r} \frac{q!}{(j-r)!k!(q+r-j-k)!}
\\
&\cdot
\frac{x^{j+2k+p+2q}}{x^{2m}} 
\frac{\hat{E}_{p,q}}{X^{e_{p,q}}\hat{R}_{Y}^{m-2q}}
(x')^{m-3k-j}
((\log R)')^{j}
\begin{vmatrix}
x' & (\log R)'\\
x'' & (\log R)''
\end{vmatrix}^{k}
\\
=&
\sum_{k=0}^{\lfloor m/3\rfloor} \sum_{j=0}^{m-3k} 
\frac{T^{m-j-2k}}{X^{m-j-2k}}
\sum_{q=k}^{\lfloor m/3\rfloor} \sum_{r=\max (0,\ j+k-q)}^{\min (j,\ m-3q)} \sum_{p=r}^{m-3q}
\\
&\cdot
(-1)^{m-3q-2r+j+k} 2^{j-r} d^{p+q-j-k}
\binom{p}{r} \frac{q!}{(j-r)!k!(q+r-j-k)!}
\frac{T^{m-p-2q}}{X^{m-p-2q}}
\frac{\hat{E}_{p,q}}{X^{e_{p,q}}\hat{R}_{Y}^{m-2q}}
\\
&\cdot
(x')^{m-3k-j}
((\log R)')^{j}
\begin{vmatrix}
x' & (\log R)'\\
x'' & (\log R)''
\end{vmatrix}^{k}.
\end{align*}

Observe that \(p+2q \leqslant m-3q+2q = m-q \leqslant m-k\), which implies \(m-p-2q \geqslant k\). Hence we may write
\begin{align*}
J_{T \widehat{R}}
=&
\sum_{k=0}^{\lfloor m/3\rfloor} \sum_{j=0}^{m-3k} 
\frac{T^{m-j-k}}{X^{m-j-k}}
\sum_{q=k}^{\lfloor m/3\rfloor} \sum_{r=\max (0,\ j+k-q)}^{\min (j,\ m-3q)} \sum_{p=r}^{m-3q}
\\
&\cdot
(-1)^{m-3q-2r+j+k} 2^{j-r} d^{p+q-j-k}
\binom{p}{r} \frac{q!}{(j-r)!k!(q+r-j-k)!}
\frac{T^{m-k-p-2q}}{X^{m-k-p-2q}}
\frac{\hat{E}_{p,q}}{X^{e_{p,q}}\hat{R}_{Y}^{m-2q}}
\\
&\cdot
(x')^{m-3k-j}
((\log R)')^{j}
\begin{vmatrix}
x' & (\log R)'\\
x'' & (\log R)''
\end{vmatrix}^{k}.
\end{align*}

The coefficients
\begin{equation*}
\frac{T^{m-j-k}}{X^{m-j-k}}
(-1)^{m-3q-2r+j+k} 2^{j-r} d^{p+q-j-k}
\binom{p}{r} \frac{q!}{(j-r)!k!(q+r-j-k)!}
\frac{T^{m-k-p-2q}}{X^{m-k-p-2q}}
\frac{\hat{E}_{p,q}}{X^{e_{p,q}}\hat{R}_{Y}^{m-2q}}
\end{equation*}
in the last line, as rational functions, not only have no pole along the divisor $\{ T=0 \}$, but also vanish with multiplicity at least $m-j-k$.

Comparing with the coefficients of \eqref{J_X widehatR}, it follows that, on $\{ T \neq 0 \} \cap \{ \widehat{R}_Y \neq 0 \}$, every invariant logarithmic $2$-jet differential of weighted order $m$ on $\mathbb{P}^2$ admits the general expression
\begin{equation}\label{J_X widehatR with bounded T}
J_{X \widehat{R}} = \sum_{\substack{0 \leqslant k \leqslant \lfloor m/3 \rfloor \\ 0 \leqslant j \leqslant m-3k}}
\frac{T^{m-j-k} \widehat{H}_{j,k}}{\widehat{R}_Y^{m-2k}}\,
(x')^{m-3k-j}\Bigl(\frac{R'}{R}\Bigr)^{\!j}\,
\bigl(\Delta_{xR}'''\bigr)^k,
\end{equation}
where $\widehat{H}_{j,k}(T,X,Y)$ are homogeneous polynomials in $\mathbb{C}[T,X,Y]$ satisfying
\begin{equation*}
    \deg \widehat{H}_{j,k} = (m-2k)(d-1)-m+j+k.
\end{equation*}

If we now require the logarithmic jet differential as~\eqref{J_X widehatR with bounded T} to vanish to order at least $t \geqslant 1$ along the divisor $\{ X=0 \}$, i.e.,
\[
\Twist := -t \leqslant -1,
\]
then this corresponds to considering sections of the jet differential bundle twisted by $\mathcal{O}_{\P^2}(-t)$, i.e.,
\[
(\cdot) \otimes \mathcal{O}_{\P^2}(-t).
\]

Therefore, on \(\{ T \neq 0 \} \cap \{ \widehat{R}_Y \neq 0 \}\), every invariant logarithmic \(2\)-jet differential of weighted order \(m\) and vanishing order at least \(t\) along \(\{ X=0 \}\) on \(\mathbb{P}^2\) admits the general expression
\begin{equation}\label{best J_X widehatR}
J_{X \widehat{R}} = \sum_{\substack{0 \leqslant k \leqslant \lfloor m/3 \rfloor \\ 0 \leqslant j \leqslant m-3k}}
\frac{T^{m-j-k} X^{t} \widehat{K}_{j,k}}{\widehat{R}_Y^{m-2k}}\,
(x')^{m-3k-j}\Bigl(\frac{R'}{R}\Bigr)^{\!j}\,
\bigl(\Delta_{xR}'''\bigr)^k,
\end{equation}
where \(\widehat{K}_{j,k}(T,X,Y)\) are homogeneous polynomials in \(\mathbb{C}[T,X,Y]\) satisfying
\[
\deg \widehat{K}_{j,k} = (m-2k)(d-1) - m + j + k - t.
\]

When the coefficients of the above expression are expressed as regular functions in the affine coordinates \((x,y)\), we obtain
\begin{equation}\label{best J_X widehatR inhomogeneous}
J_{x R} 
\coloneqq 
x^{t} \sum_{\substack{0 \leqslant k \leqslant \lfloor m/3 \rfloor \\ 0 \leqslant j \leqslant m-3k}}
\frac{K_{j,k}}{R_y^{m-2k}}\,
(x')^{m-3k-j}\Bigl(\frac{R'}{R}\Bigr)^{\!j}\,
\bigl(\Delta_{xR}'''\bigr)^k,
\end{equation}
where \(K_{j,k}(x,y)\) are polynomials in \(\mathbb{C}[x,y]\) satisfying
\[
\deg K_{j,k} \leqslant (m-2k)(d-1) - m + j + k - t.
\]

To prove Lemma~\ref{lem-1.2}, we focus on the range \(m \leqslant 14\). For these low weights, the above expression simplifies considerably. Indeed, the exponent \(k\) of the logarithmic Wronskian \(\Delta_{xR}'''\) satisfies \(k \leqslant \lfloor m/3 \rfloor \leqslant 4\). Terms involving a power of the logarithmic Wronskian exceeding \(\lfloor m/3 \rfloor\) are understood to be identically zero.


It remains to check the holomorphicity of the jet differential~\eqref{best J_X widehatR} along the divisor \(\{T = 0\}\) at infinity.

\begin{Question}
{\sl What happens to the general form \(J_{X \widehat{R}}\) given in \eqref{best J_X widehatR} after the change of affine chart~\eqref{1-x-chart-log}?}
\end{Question}

Substituting \eqref{1 jets from xy to tilde_ty log}, \eqref{log jets from R to tilde_t R^* log}, and \eqref{log Wronskian from x R to tilde_t R^*} into \eqref{best J_X widehatR}, the term transforms as follows:
\begin{align*}
&
\frac{T^{m-j-k} X^{t} \widehat{K}_{j,k}}{\widehat{R}_Y^{m-2k}}\,
(x')^{m-3k-j}\Bigl(\frac{R'}{R}\Bigr)^{\!j}\,
\bigl(\Delta_{xR}'''\bigr)^k
\\
=&
\frac{T^{m-j-k} X^{t} \widehat{K}_{j,k}}{\widehat{R}_Y^{m-2k}}
(-\frac{\tilde{t}'}{\tilde{t}^2})^{m-3k-j}
((\log R^{*})' - d \frac{\tilde{t}'}{\tilde{t}})^{j}
\\
&
\cdot
\left(
-\frac{1}{\tilde{t}^{2}}
\begin{vmatrix}
\tilde{t} ' & \left(\log R^{*}\right) '\\
\tilde{t} '' & \left(\log R^{*}\right) ''
\end{vmatrix} 
+
d\frac{1}{\tilde{t}^{4}} (\tilde{t}')^{3} 
-
2\frac{1}{\tilde{t}^{3}} (\tilde{t}')^{2}\left(\log R^{*}\right)'
\right)^k
\\
=&
\frac{(-1)^{m-3k-j}}{\tilde{t}^{2(m-3k-j)+j+4k}}
\frac{T^{m-j-k} X^{t} \widehat{K}_{j,k}}{\widehat{R}_Y^{m-2k}}
(\tilde{t}')^{m-3k-j}
(\tilde{t}(\log R^{*})'-d \tilde{t}')^{j}
\\
&\cdot
\left( 
-{\tilde{t}^{2}}
\begin{vmatrix}
\tilde{t} ' & \left(\log R^{*}\right) '\\
\tilde{t} '' & \left(\log R^{*}\right) ''
\end{vmatrix} 
+
d (\tilde{t}')^{3} 
-
2 \tilde{t} (\tilde{t}')^{2}\left(\log R^{*}\right)'
\right)^{k},
\end{align*}
where the factor $(\tilde{t}(\log R^{*})'-d \tilde{t}')^{j}$ can be expanded as
\begin{equation*}
    (\tilde{t}(\log R^{*})'-d \tilde{t}')^{j} = \sum _{r=0}^{j}(-1)^{j-r}\binom{j}{r} d^{j-r} \tilde{t}^{\,r}(\tilde{t}')^{j-r}( (\log R^{*})')^{r}.
\end{equation*}
The last factor expands as
\begin{align*}
&
\left( 
-{\tilde{t}^{2}}
\begin{vmatrix}
\tilde{t} ' & \left(\log R^{*}\right) '\\
\tilde{t} '' & \left(\log R^{*}\right) ''
\end{vmatrix} 
+
d (\tilde{t}')^{3} 
-
2 \tilde{t} (\tilde{t}')^{2}\left(\log R^{*}\right)'
\right)^{k}
\\
=&
\sum_{\alpha +\beta +\gamma =k} 
\frac{k!}{\alpha ! \beta ! \gamma !}
d^{\alpha } (\tilde{t}')^{3\alpha }( -1)^{\beta } 2^{\beta } \tilde{t}^{\beta }( \tilde{t}')^{2\beta }( (\log R^{*})')^{\beta }( -1)^{\gamma } \tilde{t}^{2\gamma }
\begin{vmatrix}
\tilde{t}' & (\log R^{*})'\\
\tilde{t}'' & (\log R^{*})''
\end{vmatrix}^{\gamma}
\\
=&
\sum_{\alpha +\beta +\gamma =k}
(-1)^{\beta + \gamma} 
d^{\alpha} 2^{\beta} \frac{k!}{\alpha ! \beta ! \gamma !}
\tilde{t}^{\beta + 2\gamma}
(\tilde{t}')^{3\alpha +2\beta}
((\log R^{*})')^{\beta}
\begin{vmatrix}
\tilde{t}' & (\log R^{*})'\\
\tilde{t}'' & (\log R^{*})''
\end{vmatrix}^{\gamma}
\\
=&
\sum_{\gamma =0}^{k} \sum_{\beta =0}^{k-\gamma} 
(-1)^{\beta + \gamma} 
d^{k- \beta - \gamma} 2^{\beta} \frac{k!}{\beta ! \gamma ! (k- \beta - \gamma)!}
\tilde{t}^{\beta + 2\gamma}
(\tilde{t}')^{3(k- \beta - \gamma) +2\beta}
((\log R^{*})')^{\beta}
\begin{vmatrix}
\tilde{t}' & (\log R^{*})'\\
\tilde{t}'' & (\log R^{*})''
\end{vmatrix}^{\gamma}.
\end{align*}

Combining the expansions, the term becomes
\begin{align*}
&
\frac{T^{m-j-k} X^{t} \widehat{K}_{j,k}}{\widehat{R}_Y^{m-2k}}\,
(x')^{m-3k-j}\Bigl(\frac{R'}{R}\Bigr)^{\!j}\,
\bigl(\Delta_{xR}'''\bigr)^k
\\
=&
\sum_{r=0}^{j} \sum_{\gamma =0}^{k} \sum_{\beta =0}^{k-\gamma }
(-1)^{m-3k-r+\beta +\gamma} 2^{\beta} d^{j-r+k-\beta -\gamma}
\binom{j}{r} \frac{k!}{\beta !\gamma !(k-\beta -\gamma)!}
\frac{\tilde{t}^{\, r+\beta +2\gamma }}{\tilde{t}^{\, 2(m-3k-j)+j+4k}}
\frac{T^{m-j-k} X^{t} \widehat{K}_{j,k}}{\widehat{R}_Y^{m-2k}}
\\
&\cdot
(\tilde{t}')^{m-3k-r+3(k-\beta -\gamma )+2\beta}
((\log R^{*})')^{r+\beta}
\begin{vmatrix}
\tilde{t}' & (\log R^{*})'\\
\tilde{t}'' & (\log R^{*})''
\end{vmatrix}^{\gamma}
\\
=&
\sum_{r=0}^{j} \sum_{q=0}^{k} \sum_{p=r}^{k+r-q}
(-1)^{m-3k-2r+p+q} 2^{p-r} d^{j+k-p-q}
\binom{j}{r} \frac{k!}{(p-r)!q!(k+r-p-q)!}
\\
&\cdot
\frac{\tilde{t}^{\, p+2q}}{\tilde{t}^{\, 2(m-3k-j)+j+4k}} 
\frac{T^{m-j-k} X^{t} \widehat{K}_{j,k}}{\widehat{R}_Y^{m-2k}}
(\tilde{t}')^{m-3q-p}
((\log R^{*})')^{p}
\begin{vmatrix}
\tilde{t}' & (\log R^{*})'\\
\tilde{t}'' & (\log R^{*})''
\end{vmatrix}^{q},
\end{align*}
where the last line is obtained by setting $q \coloneqq \gamma$ and $p \coloneqq r + \beta$.

Consequently, the full expression transforms as
\begin{align*}
J_{T \widehat{R}}
=&
\sum_{k=0}^{\lfloor m/3 \rfloor} \sum_{j=0}^{m-3k} \sum_{r=0}^{j} \sum_{q=0}^{k} \sum_{p=r}^{k+r-q} 
(-1)^{m-3k-2r+p+q} 2^{p-r} d^{j+k-p-q}
\binom{j}{r} \frac{k!}{(p-r)!q!(k+r-p-q)!}
\\
&\cdot
\frac{\tilde{t}^{\, p+2q}}{\tilde{t}^{\, 2(m-3k-j)+j+4k}} 
\frac{T^{m-j-k} X^{t} \widehat{K}_{j,k}}{\widehat{R}_Y^{m-2k}}
(\tilde{t}')^{m-3q-p}
((\log R^{*})')^{p}
\begin{vmatrix}
\tilde{t}' & (\log R^{*})'\\
\tilde{t}'' & (\log R^{*})''
\end{vmatrix}^{q}
\\
=&
\sum_{q=0}^{\lfloor m/3\rfloor} \sum_{p=0}^{m-3q} \sum_{k=q}^{\lfloor m/3\rfloor} \sum_{r=\max (0,\ p+q-k)}^{\min (p,\ m-3k)} \sum_{j=r}^{m-3k}
(-1)^{m-3k-2r+p+q} 2^{p-r} d^{j+k-p-q}
\binom{j}{r} \frac{k!}{(p-r)!q!(k+r-p-q)!}
\\
&\cdot
\frac{\tilde{t}^{\, j+2k+p+2q}}{\tilde{t}^{\, 2m}} 
\frac{T^{m-j-k} X^{t} \widehat{K}_{j,k}}{\widehat{R}_Y^{m-2k}}
(\tilde{t}')^{m-3q-p}
((\log R^{*})')^{p}
\begin{vmatrix}
\tilde{t}' & (\log R^{*})'\\
\tilde{t}'' & (\log R^{*})''
\end{vmatrix}^{q}
\\
=&
\sum_{q=0}^{\lfloor m/3\rfloor} \sum_{p=0}^{m-3q} 
\sum_{k=q}^{\lfloor m/3\rfloor} \sum_{r=\max (0,\ p+q-k)}^{\min (p,\ m-3k)} \sum_{j=r}^{m-3k}
\\
&
(-1)^{m-3k-2r+p+q} 2^{p-r} d^{j+k-p-q}
\binom{j}{r} \frac{k!}{(p-r)!q!(k+r-p-q)!}
\\
&\cdot
X^{m-j-2k} \frac{X^{m-p-2q}}{T^{m-p-2q}} T^{k}
\frac{X^{t} \widehat{K}_{j,k}}{\widehat{R}_Y^{m-2k}}
(\tilde{t}')^{m-3q-p}
((\log R^{*})')^{p}
\begin{vmatrix}
\tilde{t}' & (\log R^{*})'\\
\tilde{t}'' & (\log R^{*})''
\end{vmatrix}^{q}
\\
=&
\sum_{q=0}^{\lfloor m/3\rfloor} \sum_{p=0}^{m-3q} 
\frac{X^{m-p-q} X^{t}}{\widehat{R}_Y^{m-2q} T^{m-3q-p}}
\sum_{k=q}^{\lfloor m/3\rfloor} \sum_{r=\max (0,\ p+q-k)}^{\min (p,\ m-3k)} \sum_{j=r}^{m-3k}
\\
&
(-1)^{m-3k-2r+p+q} 2^{p-r} d^{j+k-p-q}
\binom{j}{r} \frac{k!}{(p-r)!q!(k+r-p-q)!}
{X^{m-q-j-2k} T^{k-q} \widehat{R}_Y^{2(k-q)} \widehat{K}_{j,k}} 
\\
&\cdot
(\tilde{t}')^{m-3q-p}
((\log R^{*})')^{p}
\begin{vmatrix}
\tilde{t}' & (\log R^{*})'\\
\tilde{t}'' & (\log R^{*})''
\end{vmatrix}^{q},
\end{align*}
where the exponent \(m - q - j - 2k \geqslant 0\) since \(j + 2k \leqslant (m-3k) + 2k = m - k\) and \(q \leqslant k\).

To guarantee the holomorphicity of the jet differential~\eqref{best J_X widehatR} along the divisor at infinity \(\{T = 0\}\), each coefficient
\begin{align*}
&
\frac{X^{m-p-q} X^{t}}{\widehat{R}_Y^{m-2q} T^{m-3q-p}}
\sum_{k=q}^{\lfloor m/3\rfloor} \sum_{r=\max(0,\,p+q-k)}^{\min(p,\,m-3k)} \sum_{j=r}^{m-3k}
\\
&
(-1)^{m-3k-2r+p+q} 2^{p-r} d^{j+k-p-q}
\binom{j}{r} \frac{k!}{(p-r)!q!(k+r-p-q)!}
{X^{m-q-j-2k} T^{k-q} \widehat{R}_Y^{2(k-q)} \widehat{K}_{j,k}} 
\\
=&
(-1)^{m+p+q} 
\frac{X^{m-p-q} X^{t}}{\widehat{R}_Y^{m-2q} T^{m-3q-p}}
\sum_{k=q}^{\lfloor m/3\rfloor} \sum_{r=\max(0,\,p+q-k)}^{\min(p,\,m-3k)} \sum_{j=r}^{m-3k} 
\\
&
(-1)^{-3k} 2^{p-r} d^{j+k-p-q}
\binom{j}{r} \frac{k!}{(p-r)!\,q!\,(k+r-p-q)!}
\, X^{m-q-j-2k} T^{k-q} \widehat{R}_Y^{2(k-q)} \widehat{K}_{j,k},
\end{align*}
where the last equality uses \((-1)^{2r} = 1\) for any integer \(r\), must be regular on \(\{ X \neq 0 \} \cap \{ \widehat{R}_Y \neq 0 \}\). Consequently, the holomorphicity conditions translate into the following divisibility requirements in the polynomial ring \(\mathbb{C}[T,X,Y]\):
\begin{align}\label{divisibility by the powers of T log}
T^{m-3q-p} 
&
\; \Big\vert \;
\sum_{k=q}^{\lfloor m/3\rfloor} \sum_{r=\max (0,\ p+q-k)}^{\min (p,\ m-3k)} \sum_{j=r}^{m-3k}
\\
&
(-1)^{-3k} 2^{p-r} d^{j+k-p-q}
\binom{j}{r} \frac{k!}{(p-r)!\,q!\,(k+r-p-q)!}
\, X^{m-q-j-2k} T^{k-q} \widehat{R}_Y^{2(k-q)} \widehat{K}_{j,k}, \notag
\end{align}
for every $q=0, \dots , \lfloor \frac{m}{3} \rfloor$ and $p=0, \dots , m-3q$.

\subsection{End of the Proof of the Key Vanishing Lemma~\ref{lem-1.2}}\label{Subsec-logarithmic-vanishing-theorem}

For a smooth curve $\mathcal{C} \subset \mathbb{P}^2$, we consider the bundle $E_{2,m}T_{\mathbb{P}^2}^\ast(\log\mathcal{C})$ of invariant logarithmic $2$-jet differentials and its negative twists:
\[
E_{2,m}T_{\mathbb{P}^2}^\ast(\log\mathcal{C}) \otimes \mathcal{O}_{\mathbb{P}^2}(-t),
\]
with integer $t \geqslant 1$.

We deliberately choose the affine equation of the curve $\mathcal{C}$ to be
\[
R := (x^d + y^d + 1) + x^6 \in \mathbb{C}[x,y],
\]
where the extra monomial $x^6$ in this deformed Fermat-type polynomial is determined experimentally: lower-order perturbations such as $x^1, x^2, \dots$ fail to yield the desired vanishing for certain pairs $(m,t)$ (nontrivial solutions persist). 
Moreover, its projectivization $\widehat{R} \coloneqq T^d + X^d + Y^d + X^6 T^{d-6}$ has degree \(d\).
One can verify the smoothness of $\mathcal{C}$ as well as the general position of the four divisors $R, R_X, R_Y, R_T$; we omit these routine computations.

For instance, when \(m=3\), by Proposition~\ref{prop:4.2}, an invariant logarithmic \(2\)-jet differential of weighted order \(3\) in $E_{2,m}T^*_{\mathbb{P}^2}(\log\mathcal{C})$ takes the following general local form on $\{R_y \neq 0\}$:
\begin{equation*}
J_{xR}
= 
\sum_{0\leqslant j\leqslant 3}
   \frac{A_j}{R_y^3}\, (x')^{3-j}\Big(\frac{R'}{R}\Big)^j 
+ 
\frac{B_0}{R_y}\,\Delta_{xR}''' .
\end{equation*}

The jet bundle transition from $\{R_y \neq 0\}$ to $\{R_x \neq 0\}$ consists in solving for $x'$ and $x''$ from the relations~\eqref{relations of log jet} which gives the solutions:
\begin{equation}\label{equ:x' and x''}
\aligned
x'
&
\,=\,
-\,y'\,\frac{R_y}{R_x}
+
\frac{R'}{R_x},
\\
x''
&
\,=\,
-
y''\,\frac{R_y}{R_x}
+
\frac{R''}{R_x}
\\
&
\ \ \ \ \
-\,
y'y'\,
\frac{R_y^2\,R_{xx}}{R_x^3}
+
2\,y'\,R'\,
\frac{R_y\,R_{xx}}{R_x^3}
-
R'R'\,
\frac{R_{xx}}{R_x^3}
+
2\,y'y'\,
\frac{R_y\,R_{xy}}{R_x^2}
-
2\,y'R'\,
\frac{R_{xy}}{R_x^2}
-
y'y'\,
\frac{R_{yy}}{R_x},
\endaligned
\end{equation}
and in replacing \(x'\) and \(x''\) in the considered jet differential $J_{xR}$.

Performing the transition from 
$\{R_y \neq 0\}$
to
$\{R_x \neq 0\}$, we obtain the transformed expression:
\[
\aligned
J_{xR}
&
\,=\,
\Delta_{Ry}'''\,
\frac{B_0}{R_x}
\\
&
\ \ \ \ \
-\,
{y'}^3\,
\frac{A_0}{R_x^3}
\\
&
\ \ \ \ \
+
R'{y'}^2
\bigg(
\frac{3\,A_0}{R_y R_x^3}
+
\frac{A_1}{R_y R\,R_x^2}
+
B_0
\bigg[
\frac{R_{xx} R_y}{R\,R_x^3}
-
\frac{2\,R_{xy}}{R\,R_x^2}
+
\frac{R_{yy}}{R_y R\,R_x}
\bigg]
\bigg)
\\
&
\ \ \ \ \
+
{R'}^2y'
\bigg(
-\,
\frac{3\,A_0}{R_y^{2}R_x^3}
-
\frac{2\,A_1}{R_y^{2}R\,R_x^2}
-
\frac{A_2}{R_y^{2}R^2\,R_x}
+
B_0
\bigg[
\frac{2\,R_{xy}}{R_y R\,R_x^2}
-
\frac{2\,R_{xx}}{R\,R_x^3}
\bigg]
\bigg)
\\
&
\ \ \ \ \
+
{R'}^3
\bigg(
\frac{A_0}{R_y^{3}R_x^3}
+
\frac{A_1}{R_y^{3}R\,R_x^2}
+
\frac{A_2}{R_y^{3}R^2\,R_x}
+
\frac{A_3}{R_y^{3}R^3}
+
B_0
\bigg[
\frac{R_{xx}}{R_y R\,R_x^3}
-
\frac{1}{R_y R^2\,R_x}
\bigg]
\bigg).
\endaligned
\]

In the first two lines, only \(R_x\) appears in denominators — which is allowed on the open set \(\{R_x \neq 0\}\) — so no constraints are imposed on \(B_0\) or \(A_0\) from these terms.

On the third line, however, \(R_y\) appears to the first power in denominators. More precisely, the coefficient of \(R'{y'}^2\) in \(J_{xR}\), which we denote by \(\big[ R'{y'}^2 \big] \big( J_{xR} \big)\), factors as:
\[
\big[ R'{y'}^2 \big] \big( J_{xR} \big)
= 
\frac{
3\,A_0\,R
+
A_1\,R_x
+
B_0
\big[
R_y^2\,R_{xx}
-
2\,R_y\,R_x\,R_{xy}
-
R_x^2\,R_{yy}
\big]
}
{
R R_y R_x^3
}.
\]

To ensure holomorphicity after transitioning to the chart \(\{R_x \neq 0\}\) and after clearing denominators involving \(R_x\) and $R$, it is necessary that \(R_y\) divides the numerator in \(\mathbb{C}[x,y]\):
\[
R_y \;\Big\vert\; 
3\,A_0\,R
+
A_1\,R_x
+
B_0
\big[
R_y^2\,R_{xx}
-
2\,R_y\,R_x\,R_{xy}
-
R_x^2\,R_{yy}
\big]
\quad \text{in } \mathbb{C}[x,y].
\]

Since the first two terms inside the brackets are already divisible by \(R_y\), this condition reduces to
\[
R_y
\,\,\Big\vert\,\,
3\,A_0\,R
+
A_1\,R_x
+
B_0\,
R_x^2\,R_{yy}
\quad \text{in } \mathbb{C}[x,y].
\]

Next, the coefficients of \({R'}^2 y'\) and \({R'}^3\) in \(J_{xR}\) are respectively:
\[
\begin{aligned}
\big[ {R'}^2 y' \big] \big( J_{xR} \big)
&= 
\frac{
-\,3\,A_0\,R^2
-
2\,A_1\,R\,R_x
-
A_2\,R_x^2
+
2\,B_0
\big[
R_y R R_x R_{xy}
-
R_y^2 R R_{xx}
\big]
}{
R^2 R_y^2 R_x^3
}, \\[4mm]
\big[ {R'}^3 \big] \big( J_{xR} \big)
&= 
\frac{
A_0\,R^3
+
A_1\,R^2 R_x
+
A_2\,R R_x^2
+
A_3\,R_x^3
+
B_0
\big[
R_y^2 R^2 R_{xx}
-
R_y^2 R R_x^2
\big]
}{
R^3 R_y^3 R_x^3
}.
\end{aligned}
\]

After clearing the admissible powers of \(R_x\) and $R$ from the denominators, the holomorphicity conditions on the chart \(\{R_x \neq 0\}\) translate into the following divisibility requirements in \(\mathbb{C}[x,y]\):
\[
\begin{aligned}
R_y^2 &\;\Big\vert\; 
-\,3\,A_0\,R^2
-
2\,A_1\,R\,R_x
-
A_2\,R_x^2
+
2\,B_0
\big[
R_y R R_x R_{xy}
-
R_y^2 R R_{xx}
\big]
\quad \text{in } \mathbb{C}[x,y], \\[4mm]
R_y^3 &\;\Big\vert\; 
A_0\,R^3
+
A_1\,R^2 R_x
+
A_2\,R R_x^2
+
A_3\,R_x^3
+
B_0
\big[
R_y^2 R^2 R_{xx}
-
R_y^2 R R_x^2
\big]
\quad \text{in } \mathbb{C}[x,y].
\end{aligned}
\]

In general, for any \(m \geqslant 3\), returning to the expression of \(J_{xR}\) given in \eqref{best J_X widehatR inhomogeneous} and applying the jet bundle transition — which replaces \(x'\) and $x''$ via formula~\eqref{equ:x' and x''}  — we obtain an expansion of the form
\[
J_{xR}
= x^t \sum_{i+j+3k=m}
    \Bigl(\frac{R'}{R}\Bigr)^{i}\,(y')^{j}\,
    \bigl(\Delta_{Ry}'''\bigr)^k
    \frac{
      \Eq_{i,j,k}\bigl( K_{j,k}; j_{x,y}^m R \bigr)
        }{
        (R_y)^i \; R_x^{\,m-2k}
        },
\]
where \(\Eq_{i,j,k}\) are certain complicated polynomials in the coefficient functions \(K_{j,k}\) appearing in the initial expression of \(J_{xR}\) (see \eqref{best J_X widehatR inhomogeneous}) and in the \(m\)-jet \(j_{x,y}^m R(x,y)\).

Since division by \(R_x^{\,m-2k}\) is allowed on the open set \(\{R_x \neq 0\}\), the only remaining constraints arise from the powers \((R_y)^i\) appearing in the denominator. Holomorphicity on \(\{R_x \neq 0\}\) therefore requires, for every triple \((i,j,k) \in \mathbb{N}^3\) with \(i+j+3k = m\),
\[
(R_y)^i \;\Big\vert\; \Eq_{i,j,k}\bigl( K_{j,k}; j_{x,y}^m R \bigr) \quad \text{in } \mathbb{C}[x,y].
\]

Having chosen the defining polynomial to be of the Fermat-type perturbation
\[
R := x^{d} + y^{d} + x^{6} + 1,
\]
for which \(R_y = d y^{d-1}\), these divisibility conditions become
\[
(y^{d-1})^i \;\Big\vert\; \Eq_{i,j,k}\bigl( K_{j,k}; j_{x,y}^m R \bigr) \quad \text{in } \mathbb{C}[x,y].
\]

We then expand each \(\Eq_{i,j,k}\) as a polynomial in \(y\) and discard all terms of degree at least \((d-1)i\), keeping only the remainder modulo \(y^{(d-1)i}\). This yields the truncated expression
\[
\Eq_{i,j,k}^{\text{(trunc)}} := \Eq_{i,j,k} \pmod{y^{(d-1)i}},
\]
i.e., the polynomial obtained by removing any factor of \(y^{(d-1)i}\) from \(\Eq_{i,j,k}\). All remaining terms must vanish identically for the original divisibility condition to hold, thereby converting the geometric constraints into an explicit linear system.

Next, we proceed inductively with respect to \(i = 1, 2, \dots, m\) (starting naturally from \(i = 1\)). For each fixed \(i\), we collect all divisibility equations sharing the same power of \(R_y\), namely:
\begin{equation}\label{divisibility equations log}
    (y^{d-1})^i \;\Big\vert\; \Eq_{i,j,k}^{\text{(trunc)}} \quad \text{in } \mathbb{C}[x,y], \qquad \text{for all } (j,k) \text{ with } j+3k = m-i.
\end{equation}

The computer then extracts, for each triple \((i,j,k) \in \mathbb{N}^3\) with \(j+3k = m-i\), all coefficients
\[
\bigl[ x^p y^q \bigr] \bigl( \Eq_{i,j,k}^{\text{(trunc)}} \bigr), \qquad q \leqslant (d-1)i-1,
\]
and sets them to zero in order to satisfy the divisibility conditions~\eqref{divisibility equations log}. Here, the notation \([x^p y^q](\cdot)\) denotes the coefficient of the monomial \(x^p y^q\) in the polynomial expansion. These vanishing conditions yield a linear system that must be satisfied by the coefficients of the polynomials \(\{K_{j,k}\}\) appearing in the local expression of the jet differential.

For each fixed \(i\), this linear system is solved symbolically. The solutions are stored in memory before proceeding to the next value of \(i\), up to \(i=m\). At each stage, intermediate feedback — such as the number of nonzero terms remaining in the full expansions of \(\{K_{j,k}\}\) — is monitored to track progress.

It remains to check the holomorphicity of the jet differential~\eqref{best J_X widehatR inhomogeneous} along the divisor \(\{T = 0\}\) at infinity.

To this end, we rewrite the polynomials \(K_{j,k} \in \mathbb{C}[x,y]\) appearing in \eqref{best J_X widehatR inhomogeneous} as homogeneous polynomials $\widehat{K}_{j,k} \in \mathbb{C}[T,X,Y]$, where each \(\widehat{K}_{j,k}(T,X,Y)\) satisfies
\[
\deg \widehat{K}_{j,k} = (m-2k)(d-1) - m + j + k - t,
\]
for \(m \leqslant 14\), \(k = 0, \dots, 4\), and \(j = 0, \dots, m-3k\).

Then, as in \eqref{divisibility by the powers of T log}, we must verify the following divisibility conditions in the polynomial ring \(\mathbb{C}[T,X,Y]\):
\begin{equation*}
T^{m-3q-p} \;\Big\vert\; \HEq_{p,q}(\widehat{K}_{j,k}, T, X)
\quad \text{in } \mathbb{C}[T,X,Y], \qquad \text{for every } q = 0, \dots, 4 \text{ and } p = 0, \dots, m-3q,
\end{equation*}
where \(\HEq_{p,q}(\widehat{K}_{j,k}, T, X)\) are certain complicated polynomials in the coefficient functions \(\widehat{K}_{j,k}\) (from the initial expression of \(J_{X \widehat{R}}\) in \eqref{best J_X widehatR}) and in \(T, X\). Explicitly, as in \eqref{divisibility by the powers of T log},
\begin{align*}
\HEq_{p,q}(\widehat{K}_{j,k}, T, X) 
=&
\sum_{k=q}^{\lfloor m/3\rfloor} \sum_{r=\max (0,\ p+q-k)}^{\min (p,\ m-3k)} \sum_{j=r}^{m-3k}
\\
&
(-1)^{-3k} 2^{p-r} d^{j+k-p-q}
\binom{j}{r} \frac{k!}{(p-r)!\,q!\,(k+r-p-q)!}
\, X^{m-q-j-2k} T^{k-q} \widehat{R}_Y^{2(k-q)} \widehat{K}_{j,k}.
\end{align*}

Proceeding in a completely analogous manner — expanding \(\HEq_{p,q}\) in \(T\) and discarding higher-degree terms — the divisibility conditions in \(T\) translate into a second linear system, which is constructed and solved symbolically by the computer following a similar inductive procedure.

We then expand each \(\HEq_{p,q}\) as a polynomial in \(T\) and discard all terms of degree at least \(m-3q-p\), keeping only the remainder modulo \(T^{m-3q-p}\). This yields the truncated expression
\[
\HEq_{p,q}^{\text{(trunc)}} := \HEq_{p,q} \pmod{T^{m-3q-p}},
\]
i.e., the polynomial obtained from \(\HEq_{p,q}\) by removing any factor of \(T^{m-3q-p}\). For the original divisibility condition to hold, all remaining terms must vanish identically, thereby converting the geometric constraints into another explicit linear system.

Next, we proceed inductively with respect to \(i = 1, 2, \dots, m\) (starting naturally from \(i = 1\)). For each fixed \(i\), we collect all divisibility equations sharing the same power of \(T\), namely:
\begin{equation}\label{divisibility by the powers of T equations log}
T^{i} \;\Big\vert\; \HEq_{p,q}^{\text{(trunc)}} \quad \text{in } \mathbb{C}[T,X,Y], \qquad \text{for all } (p,q) \text{ with } p + 3q = m - i.
\end{equation}

The computer then extracts, for each triple \((i,p,q) \in \mathbb{N}^3\) with \(p+3q = m-i\), all coefficients
\[
\bigl[ T^{\alpha} X^{\beta} Y^{\gamma} \bigr] \bigl( \HEq_{p,q}^{\text{(trunc)}} \bigr), \qquad \alpha \leqslant m-3q-p-1 = i-1,
\]
and sets them to zero in order to satisfy the divisibility conditions~\eqref{divisibility by the powers of T equations log}. Here, the notation \([T^{\alpha} X^{\beta} Y^{\gamma}](\cdot)\) denotes the coefficient of the monomial \(T^{\alpha} X^{\beta} Y^{\gamma}\) in the polynomial expansion. These vanishing conditions yield a linear system that must be satisfied by the coefficients of the polynomials \(\widehat{K}_{j,k}\) in \eqref{best J_X widehatR} (which correspond to the coefficients of \(\{K_{j,k}\}\) in \eqref{best J_X widehatR inhomogeneous}) appearing in the local expression of the jet differential.

Finally, combining the two linear systems obtained from the divisibility conditions in \(y\) and in \(T\), a computer algebra verification shows that all coefficients of \(\{K_{j,k}\}\) are forced to vanish. This confirms that for the specified pairs \((m,t)\) with \(d = 12, 13\), only the trivial solution exists. The Key Vanishing Lemma~\ref{lem-1.2} for a generic curve \(\mathcal{C} \subset \mathbb{P}^2\) then follows by a standard semicontinuity argument.

For execution, a single Maple file handles all cases in the logarithmic setting.\footnote{The Maple code for the logarithmic case is available at \url{https://xiesongyan.github.io/}. The parameters \((m, t)\) and \(d\) at the beginning of the code can be adjusted as needed to verify the required cases.} \qed

\begin{rem}
\label{surprise-1}
\rm
  Our computer-assisted computations yield several nonvanishing examples that further confirm the reliability of the code.
  For the parameter \((m,t)=(3,1)\), the dimension of
  \(H^{0}\bigl(\mathbb{P}^{2},\;E_{2,3}T_{\mathbb{P}^{2}}^{*}(\log \mathcal{C})\otimes\mathcal{O}_{\mathbb{P}^{2}}(-1)\bigr)\) is:
  \begin{itemize}
    \item \(1\) for the degree \(7\) Fermat curve \(X^{7}+Y^{7}+Z^{7}=0\);
    \item \(2\) for the degree \(12\) curve \(X^{12}+Y^{12}+Z^{12}+X^{6}Z^{6}=0\);
    \item \(1\) for the degree \(11\) curve \(X^{11}+Y^{11}+Z^{11}+X^{5}Z^{6}=0\).
  \end{itemize}
  These non‑trivial sections are striking at first sight, but we have inspected them and verified that they satisfy all required conditions without error.
  The existence of such sections supports the correctness of our algorithm: if every test had returned zero, one might have suspected a systematic mistake.
  Instead, the code correctly detects negatively twisted invariant jet differentials whenever they are present.
\end{rem}

\section{\bf Proof of Key Vanishing Lemma \ref{lem-1.1}}
\label{sect:3}

\subsection{Changes of Affine Chart in Jet Bundles of Orders $1$ and $2$}

Let $X \subset \mathbb{P}^3$ be a smooth, generic algebraic surface of degree $d$. 
We fix homogeneous coordinates $[T:X:Y:Z]$ on $\mathbb{P}^3$. 
Although the surface and a coordinate share the symbol $X$, 
this standard convention causes no ambiguity and is maintained throughout. 
In the standard affine chart $\{T \neq 0\} \cong \mathbb{C}^3$ with local coordinates $(x, y, z) = (X/T, Y/T, Z/T)$, 
the surface $X$ is defined by the zero set of a dehomogenized polynomial $R(x, y, z) \in \mathbb{C}[x, y, z]$ of degree $d$.

The complement of this affine chart is the hyperplane at infinity $\mathbb{P}^2_\infty := \{T=0\}$. To understand the behavior of jets on this divisor, one must consider alternative affine charts covering it. These are obtained by dividing by the other homogeneous coordinates; for instance, in the chart where $X \neq 0$, we set $X=1$ and introduce new coordinates
\begin{equation}\label{X=1 chart}
\tilde{t} := \frac{T}{X} = \frac{1}{x},\qquad
\tilde{y} := \frac{Y}{X} = \frac{y}{x},\qquad
\tilde{z} := \frac{Z}{X} = \frac{z}{x},
\end{equation}
so that the transformation from this new chart $(\tilde{t},\tilde{y},\tilde{z})$ to the original chart $(x,y,z)$ is given by
\begin{align}\label{1-x-chart}
x = \frac{1}{\tilde{t}},\qquad 
y = \frac{\tilde{y}}{\tilde{t}},\qquad 
z = \frac{\tilde{z}}{\tilde{t}}.
\end{align}
The charts $Y=1$ and $Z=1$ yield analogous transformations.

In the affine chart $\mathbb{C}^3 \subset \mathbb{P}^3$, the smoothness of $X \subset \mathbb{P}^3$ is equivalent to the condition $$\{R=0\} \cap \{R_x=0\} \cap \{R_y=0\} \cap \{R_z=0\} = \emptyset.$$ In addition to the smoothness condition, we require $R$ to be such that these four divisors are in general position, i.e., any  intersection of them has the expected (co)dimension.

In this work, we will focus on jets of order $2$. For an (entire) holomorphic curve parametrized by
\[
\zeta \longmapsto \big( x(\zeta),\, y(\zeta),\, z(\zeta) \big),
\]
the associated jet coordinates are denoted by
\[
x',\, y',\, z',\qquad x'',\, y'',\, z''.
\]

Since the curve lies in the surface $X$, substituting its parametrization into the defining equation yields the identity
\[
R\big(x(\zeta),\,y(\zeta),\,z(\zeta)\big) \equiv 0.
\]
Differentiating this relation with respect to $\zeta$ provides one constraint among first-order jets:
\begin{equation}
    \label{relation1}
x' R_x + y' R_y + z' R_z = 0,
\end{equation}
and a second differentiation gives one constraint among second-order jets:
\begin{equation}
    \label{relation2}
\begin{split}
0 = &\; x'' R_x + y'' R_y + z'' R_z \\
    &\; + {x'}^2 R_{xx}
     + 2 x' y' R_{xy}
     + 2 x' z' R_{xz}
     + {y'}^2 R_{yy} \\
    &\; + 2 y' z' R_{yz}
     + {z'}^2 R_{zz}.
\end{split}
\end{equation}

Solving the first relation~\eqref{relation1} for $y'$ yields:
\leqnomode\usetagform{default}
\begin{align}
\label{y-prime-formula}
y'
:= -\,\frac{R_x}{R_y}\,x'
   -\,\frac{R_z}{R_y}\,z'.
\end{align}

Solving the second relation~\eqref{relation2} for $y''$ gives:
\[
\begin{split}
y''
&= -\,\frac{R_x}{R_y}\,x''
   -\,\frac{R_z}{R_y}\,z'' \\
  &\quad -\,\frac{R_{xx}}{R_y}\,{x'}^2
     - 2\,\frac{R_{xy}}{R_y}\,x'y'
     - 2\,\frac{R_{xz}}{R_y}\,x'z' \\
  &\quad - \frac{R_{yy}}{R_y}\,{y'}^2
     - 2\,\frac{R_{yz}}{R_y}\,y'z'
     - \frac{R_{zz}}{R_y}\,{z'}^2,
\end{split}
\]
and after substituting the expression~\eqref{y-prime-formula} for $y'$, this becomes:
\leqnomode\usetagform{default}
\begin{align}
\label{y-second-formula}
\begin{split}
y''
&= -\,\frac{R_x}{R_y}\,x''
   -\,\frac{R_z}{R_y}\,z'' \\[2mm]
  &\quad + {x'}^2\,
     \bigg(
        -\,\frac{R_{xx}}{R_y}
        + 2\,\frac{R_x}{R_y}\,\frac{R_{xy}}{R_y}
        - \Bigl(\frac{R_x}{R_y}\Bigr)^{\!2} \frac{R_{yy}}{R_y}
     \bigg) \\[2mm]
  &\quad + 2 x' z'\,
     \bigg(
        -\,\frac{R_{xz}}{R_y}
        + \frac{R_z}{R_y}\,\frac{R_{xy}}{R_y}
        + \frac{R_x}{R_y}\,\frac{R_{yz}}{R_y}
        - \frac{R_x}{R_y}\,\frac{R_z}{R_y}\,\frac{R_{yy}}{R_y}
     \bigg) \\[2mm]
  &\quad + {z'}^2\,
     \bigg(
        -\,\frac{R_{zz}}{R_y}
        + 2\,\frac{R_z}{R_y}\,\frac{R_{yz}}{R_y}
        - \Bigl(\frac{R_z}{R_y}\Bigr)^{\!2} \frac{R_{yy}}{R_y}
     \bigg).
\end{split}
\end{align}

These two formulas for $y'$ and $y''$ admit a geometric interpretation. Denote by $J^2$ the bundle of $2$-jets of local holomorphic curves $\mathbb{C} \to X$. On the open subset of the affine chart where the partial derivative with respect to $y$ does not vanish,
\[
X \cap \mathbb{C}^3 \cap \{R_y \neq 0\},
\]
the holomorphic implicit function theorem allows us to solve locally for $y$ as a function of $(x,z)$. Hence, over this set, the surface $X$ is locally graphed as a holomorphic map $(x,z) \mapsto (x, \function(x,z), z)$ over the $(x,z)$-plane, where we deliberately omit introducing a specific symbol for this resolution $y = \function(x,z)$.

Consequently, in this open set $X \cap \mathbb{C}^3 \cap \{R_y \neq 0\}$, the $2$-jet bundle $J^2$ can be coordinatized by
\[
\big( x,\, z,\; x',\, z',\; x'',\, z'' \big).
\]

Likewise, on the complementary open set $X \cap \mathbb{C}^3 \cap \{R_z \neq 0\}$, one can solve locally for $z$ in terms of $(x,y)$. Accordingly, $J^2$ is coordinatized there by
\[
\big( x,\, y,\; x',\, y',\; x'',\, y'' \big).
\]
In a similar vein, we can also obtain the following formulas:
\begin{equation}\label{cpt z'}
    z'=- \frac{R_{x}}{R_{z}}  x'- \frac{R_{y}}{R_{z}}  y',
\end{equation}
\begin{align}\label{cpt z''}
\begin{split}
z''
&= -\,\frac{R_x}{R_z}\,x''
   -\,\frac{R_y}{R_z}\,y'' \\[2mm]
  &\quad + {x'}^2\,
     \bigg(
        -\,\frac{R_{xx}}{R_z}
        + 2\,\frac{R_x}{R_z}\,\frac{R_{xz}}{R_z}
        - \Bigl(\frac{R_x}{R_z}\Bigr)^{\!2} \frac{R_{zz}}{R_z}
     \bigg) \\[2mm]
  &\quad + 2 x' y'\,
     \bigg(
        -\,\frac{R_{xy}}{R_z}
        + \frac{R_y}{R_z}\,\frac{R_{xz}}{R_z}
        + \frac{R_x}{R_z}\,\frac{R_{yz}}{R_z}
        - \frac{R_x}{R_z}\,\frac{R_y}{R_z}\,\frac{R_{zz}}{R_z}
     \bigg) \\[2mm]
  &\quad + {y'}^2\,
     \bigg(
        -\,\frac{R_{yy}}{R_z}
        + 2\,\frac{R_y}{R_z}\,\frac{R_{yz}}{R_z}
        - \Bigl(\frac{R_y}{R_z}\Bigr)^{\!2} \frac{R_{zz}}{R_z}
     \bigg).
\end{split}
\end{align}

For brevity, in what follows we will omit the explicit intersection with $\mathbb{C}^3$ and simply write $X \cap \{R_y \neq 0\}$ and $X \cap \{R_z \neq 0\}$, it being understood that all considerations are restricted to the affine chart $\mathbb{C}^3 \subset \mathbb{P}^3$.

\begin{Observation}
The two formulas~\eqref{y-prime-formula} and~\eqref{y-second-formula} express the change of coordinates for the $2$-jet bundle $J^2$ between the two charts:
\[
\xymatrix{
J^2\big\vert_{\{R_z\neq 0\}} \ar[rr] & & J^2\big\vert_{\{R_y\neq 0\}}.
}
\]

Conversely, the formulas~\eqref{cpt z'} and~\eqref{cpt z''} give the inverse change of coordinates, expressing the transition from the \(\{R_y \neq 0\}\) chart back to the \(\{R_z \neq 0\}\) chart:
\[
\xymatrix{
J^2\big\vert_{\{R_y\neq 0\}} \ar[rr] & & J^2\big\vert_{\{R_z\neq 0\}}.
}
\]
\end{Observation}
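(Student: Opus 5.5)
The statement to be justified is the Observation: \eqref{y-prime-formula}--\eqref{y-second-formula} give the chart change $J^2|_{\{R_z\neq0\}}\to J^2|_{\{R_y\neq0\}}$, and \eqref{cpt z'}--\eqref{cpt z''} give its inverse. The plan is to work on the overlap $X\cap\{R_y\neq0\}\cap\{R_z\neq0\}$. There both $(x,z,x',z',x'',z'')$ and $(x,y,x',y',x'',y'')$ are coordinates on $J^2$, by the implicit-function description recalled just before the statement. The claim is then that the stated formulas are exactly the coordinate transition maps, and that these two maps are mutually inverse.

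First, take the $2$-jet of any germ $\zeta\mapsto(x,y,z)(\zeta)$ lying in $X$. Differentiating $R(x(\zeta),y(\zeta),z(\zeta))\equiv0$ once and twice gives the constraints \eqref{relation1} and \eqref{relation2}. These are the only relations among the six ambient jet coordinates: since $J^2X\subset J^2\mathbb{C}^3$ has codimension $2$ and $\nabla R\neq0$, the two relations cut out $J^2X$ on the overlap.

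On $\{R_y\neq0\}$, \eqref{relation1} is linear in $y'$ with coefficient $R_y$, so it yields \eqref{y-prime-formula}. Likewise \eqref{relation2} is linear in $y''$ with coefficient $R_y$, which yields the first displayed formula for $y''$; the cross terms there still contain $y'$. Substituting \eqref{y-prime-formula} into ${y'}^2R_{yy}$, $2x'y'R_{xy}$ and $2y'z'R_{yz}$, and collecting the coefficients of ${x'}^2$, $x'z'$ and ${z'}^2$, gives \eqref{y-second-formula}. This is routine bookkeeping. For example, the coefficient of ${x'}^2$ is $-R_{xx}/R_y$, plus $2(R_x/R_y)R_{xy}/R_y$ from the $x'y'$ term, minus $(R_x/R_y)^2R_{yy}/R_y$ from the ${y'}^2$ term.

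Hence, given a point of $J^2|_{\{R_z\neq0\}}$ in coordinates $(x,z,x',z',x'',z'')$ with $y$ determined by the graph, these formulas express the remaining jet coordinates $y',y''$ holomorphically. This is precisely the transition to the chart $(x,y,\dots)$, after also expressing $z$ through the graph over the $(x,y)$-plane.

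The same argument, with the roles of $y$ and $z$ exchanged, solves \eqref{relation1} and \eqref{relation2} for $z'$ and $z''$ on $\{R_z\neq0\}$ and gives \eqref{cpt z'} and \eqref{cpt z''}. To see that the two maps are inverse to each other, I would not compose them symbolically. Both maps are obtained by solving the same pair of relations, and on the overlap each relation has a unique solution for the eliminated variable. So composing the maps returns the original jet: a point satisfying \eqref{relation1} and \eqref{relation2} is determined by either triple of free coordinates.

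The only point requiring care, which I expect to be the main obstacle, is this uniqueness. \eqref{relation2} depends on the first-order jets, so one must substitute the solved first-order value before solving for the second-order one. The triangular structure (first order, then second order) guarantees that the solution is unique and holomorphic wherever the relevant partial derivative is nonzero.
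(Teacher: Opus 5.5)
Your proposal is correct and follows essentially the same route as the paper. You differentiate $R\equiv0$ along the curve to obtain \eqref{relation1}--\eqref{relation2}, then solve this triangular system first for $y'$ and then for $y''$ (respectively $z'$, $z''$), using the implicit-function description of the two charts; your uniqueness argument only makes explicit the mutual inverseness that the paper takes for granted.

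One harmless slip: the coordinates $(x,z,x',z',x'',z'')$, with $y$ given by the graph, belong to the chart $X\cap\{R_y\neq0\}$, not to $X\cap\{R_z\neq0\}$. This does not affect the argument, since you work on the overlap. Accordingly, the arrow in the Observation should be read as substituting these formulas into expressions written in the $\{R_z\neq0\}$ chart.
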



The Wronskian $y'x'' - y''x'$ expressed in the chart $X\cap \{R_z \neq 0\}$ transforms to the chart $X\cap \{R_y \neq 0\}$ by substituting $y'$ from~\eqref{y-prime-formula} and $y''$ from~\eqref{y-second-formula}:
\[
\aligned
\begin{vmatrix}
y' & x'
\\
y'' & x''
\end{vmatrix}
&
\,=\,
-\,
\def\arraystretch{1.25}
\begin{vmatrix}
\frac{R_x}{R_y}x'+\frac{R_z}{R_y}z' & x'
\\
\frac{R_x}{R_y}x''+\frac{R_z}{R_y}z'' & x''
\end{vmatrix}
\\
&
\ \ \ \ \
+
\def\arraystretch{1.25}
\begin{vmatrix}
0
& 
x'
\\
{x'}^2
\big(
-\,\frac{R_{xx}}{R_y}
+
2\,\frac{R_x}{R_y}\,\frac{R_{xy}}{R_y}
-
\frac{R_x}{R_y}\,\frac{R_x}{R_y}\,\frac{R_{yy}}{R_y}
\big)
& 
x''
\end{vmatrix}
\\
&
\ \ \ \ \
+
\def\arraystretch{1.25}
\begin{vmatrix}
0
& 
x'
\\
2x'z'
\big(
-\,\frac{R_{xz}}{R_y}
+
\frac{R_z}{R_y}\,\frac{R_{xy}}{R_y}
+
\frac{R_x}{R_y}\,\frac{R_{yz}}{R_y}
-
\frac{R_x}{R_y}\,\frac{R_z}{R_y}\,\frac{R_{yy}}{R_y}
\big)
& 
x''
\end{vmatrix}
\\
&
\ \ \ \ \
+
\def\arraystretch{1.25}
\begin{vmatrix}
0
& 
x'
\\
z'z'
\big(
-\,\frac{R_{zz}}{R_y}
+
2\,\frac{R_z}{R_y}\,\frac{R_{yz}}{R_y}
-
\frac{R_z}{R_y}\,\frac{R_z}{R_y}\,\frac{R_{yy}}{R_y}
\big)
& 
x''
\end{vmatrix},
\endaligned
\]
by expanding, by reorganizing, and by letting the
new Wronskian $z'x'' - z''x'$ appear:
\leqnomode\usetagform{default}
\begin{align}
\label{Delta-yx-formula}
\begin{vmatrix}
y' & x'
\\
y'' & x''
\end{vmatrix}
&
\,=\,
-\,
\frac{R_z}{R_y}\,
\begin{vmatrix}
z' & x'
\\
z'' & x''
\end{vmatrix}
\\
&
\ \ \ \ \
+
z'z'x'\,
\bigg(
\frac{R_{zz}}{R_y}
-
2\,\frac{R_z}{R_y}\,\frac{R_{yz}}{R_y}
+
\frac{R_z}{R_y}\,\frac{R_z}{R_y}\,\frac{R_{yy}}{R_y}
\bigg)
\notag
\\
&
\ \ \ \ \
+
z'x'x'\,
\bigg(
2\,\frac{R_{xz}}{R_y}
-
2\,\frac{R_z}{R_y}\,\frac{R_{xy}}{R_y}
-
2\,\frac{R_x}{R_y}\,\frac{R_{yz}}{R_y}
+
2\,\frac{R_x}{R_y}\,\frac{R_z}{R_y}\,\frac{R_{yy}}{R_y}
\bigg)
\notag
\\
&
\ \ \ \ \
+
x'x'x'\,
\bigg(
\frac{R_{xx}}{R_y}
-
2\,\frac{R_x}{R_y}\,\frac{R_{xy}}{R_y}
+
\frac{R_x}{R_y}\,\frac{R_x}{R_y}\,\frac{R_{yy}}{R_y}
\bigg).
\notag
\end{align}

This formula expresses the Wronskian $y'x''-y''x'$ in the $X\cap \{R_z \neq 0\}$ chart as a combination of the new Wronskian $z'x''-z''x'$ and purely algebraic terms (involving only first-order jets) that are holomorphic in the $X\cap \{R_y \neq 0\}$ chart. The key observation is that the coefficient of the new Wronskian contains a factor $R_z/R_y$, while the remaining terms have denominators involving only $R_y$.
\subsection{Local Description of Invariant Jet Differentials}\label{subsec:local form of 2-jet differentials}

Recall that \(R(x,y,z)\) is the defining polynomial of a generic smooth surface \(X \subset \mathbb{P}^3\) in the affine chart \(\mathbb{C}^3\). By the GAGA principle, a nonzero global holomorphic section of the bundle of \(2\)-jet differentials of weighted degree \(m\) on the surface \(X\) is algebraic, and it is a basic fact of algebraic geometry that, when restricted to the Zariski open set \(X \cap \{R_z \neq 0\}\), such a section admits a local expression of the form
\begin{equation}\label{J_yx of weighted degree m}
J_{yx} \coloneqq \sum_{\substack{0 \leqslant k \leqslant \lfloor m/3 \rfloor \\ 0 \leqslant j \leqslant m-3k}}
\frac{F_{j,k}}{R_z^{f_{j,k}}}\,
(y')^{j}\,(x')^{m-3k-j}\,
\begin{vmatrix} y' & x' \\ y'' & x'' \end{vmatrix}^k,
\end{equation}
where \(R_z = \partial R/\partial z\) is nonvanishing on this set, and \(F_{j,k} \in \mathbb{C}[x,y,z]\) are polynomials. All denominators and numerators are considered modulo \(R\), i.e., as elements of the quotient ring
\begin{equation}\label{mathfrak R}
\mathfrak{R} \coloneqq \mathbb{C}[x,y,z]/(R),
\end{equation}
and consequently the rational functions are regarded as elements of its fraction field. The nonnegative integers \(f_{j,k}\) indicate the order of the factor \(R_z\) in the denominator. 


\begin{Question}
{\sl Can one obtain upper bounds for the integers \(f_{j,k}\)?}
\end{Question}

The answer is affirmative. The following proposition reveals that every global section of the invariant $2$-jet differential bundle necessarily conforms to a prescribed finite form, with undetermined polynomial coefficients.

\begin{pro}\label{Prp-compact-Jyx}
Let $\widehat{R}(T,X,Y,Z)\in \mathbb{C}[T, X, Y, Z]$ be a homogeneous polynomial defining a smooth surface $X \subset \mathbb{P}^3$, and assume that the divisors in $\mathbb{P}^3$ given by $\widehat{R}$ and its partial derivatives $\widehat{R}_T, \widehat{R}_X, \widehat{R}_Y, \widehat{R}_Z$ are in general position. Dehomogenizing with respect to $T=1$ yields an affine polynomial $R(x,y,z) \in \mathbb{C}[x,y,z]$ such that $X \cap \{T \neq 0\} = \{R=0\}$. For any weighted order $m$, every invariant $2$-jet differential of weighted order $m$ on $X$ admits, on $X \cap \{T \neq 0\} \cap \{R_z \neq 0\}$, the general expression
\[
J_{yx} = \sum_{\substack{0 \leqslant k \leqslant \lfloor m/3 \rfloor \\ 0 \leqslant j \leqslant m-3k}}
\frac{F_{j,k}}{R_z^{m-2k}}\,
(y')^{j}\,(x')^{m-3k-j}\,
\begin{vmatrix} y' & x' \\ y'' & x'' \end{vmatrix}^k,
\]
where $F_{j,k} \in \mathbb{C}[x,y,z]$ are polynomials, viewed as elements of the quotient ring $\mathfrak{R} = \mathbb{C}[x,y,z]/(R)$ (hence as regular functions on $X \cap \{T \neq 0\}$).
\end{pro}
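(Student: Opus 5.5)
I would follow the proof of Proposition~\ref{prop:4.2}, with the $(x,y)$-plane replaced by the surface and the pair of charts $\{R_y\neq 0\}$, $\{R_x\neq0\}$ replaced by $X\cap\{R_z\neq 0\}$ and $X\cap\{R_y\neq 0\}$.

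\textbf{Step 1: local form on $\{R_y\neq 0\}$.} On $X\cap\{R_y\neq0\}$ the jet coordinates are $(x,z,x',z',x'',z'')$. By the filtration \eqref{filtration-of-E2m, cpt} and the local expansion \eqref{local-expansion}, the same global section has an expression
\[
J_{zx}=\sum_{q,p}\overline{F}_{p,q}\,(z')^{p}(x')^{m-3q-p}\begin{vmatrix} z' & x'\\ z'' & x''\end{vmatrix}^{q}.
\]
Here the $\overline{F}_{p,q}$ are regular functions on $X\cap\{R_y\neq0\}$. Their denominators therefore involve only powers of $R_y$, computed modulo $R$.

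\textbf{Step 2: transfer to $\{R_z\neq0\}$ and count powers of $R_z$.} Substitute \eqref{cpt z'} for $z'$ and the analogue of \eqref{Delta-yx-formula} with the roles of $y$ and $z$ swapped. That analogue expresses $\Delta_{zx}=z'x''-z''x'$ as $-\frac{R_y}{R_z}\Delta_{yx}$ plus cubic first-order terms whose coefficients carry $R_z^{-3}$. Each factor $z'$ then contributes at most one power of $R_z^{-1}$. Each factor $\Delta_{zx}$ contributes one power if we keep its Wronskian part, and three powers if we take a cubic part.

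Fix a target monomial $(y')^j(x')^{m-3k-j}\Delta_{yx}^k$. It can come only from source terms with $q\geqslant k$. Keep $k$ Wronskian parts, take $q-k$ cubic parts, and note that all $p$ factors $z'$ count. The total exponent of $R_z$ is then at most
\[
k+3(q-k)+p\;\leqslant\; k+3(q-k)+(m-3q)=m-2k .
\]
The bound is attained at $q=k$, $p=m-3k$.

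\textbf{Step 3: conclude.} Compare this with \eqref{J_yx of weighted degree m}. Each coefficient $F_{j,k}/R_z^{f_{j,k}}$ is a rational function whose denominators are powers of $R_y$ (coming from $\overline{F}_{p,q}$) times powers of $R_z$ with exponent at most $m-2k$. Regularity on $X\cap\{R_z\neq 0\}$ forces the $R_y$-denominators to disappear. The general-position hypothesis makes $R_y$ and $R_z$ coprime in $\mathfrak R$ and rules out common components on $X$. Hence the coefficient can be written as (polynomial)$/R_z^{m-2k}$, and the stated form follows by multiplying numerator and denominator up to exponent $m-2k$.

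\textbf{Main obstacle.} The delicate point is Step 3. The ring $\mathfrak R=\mathbb{C}[x,y,z]/(R)$ need not be a UFD, so "clearing the $R_y$-denominators" is not automatic. I would argue locally instead. At each point of $X\cap\{R_z\neq0\}$ the local ring is regular, hence a UFD. General position says that $\{R_y=0\}\cap X$ and $\{R_z=0\}\cap X$ share no component. So a rational function that is regular off $\{R_z=0\}$, and whose pole divisor has multiplicity at most $m-2k$ along the components of $\{R_z=0\}\cap X$, becomes regular after multiplication by $R_z^{m-2k}$. Affineness of $X\cap\{T\neq0\}$ then yields a global polynomial numerator $F_{j,k}$.

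A secondary check is the triangular structure: the target coefficient must receive its maximal $R_z$-pole only through the counted routes. This holds because the Wronskian exponent can only decrease under the substitution.
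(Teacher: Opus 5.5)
Your proposal is correct and follows the paper's proof step for step. Both start from $J_{zx}$ on $X\cap\{R_y\neq 0\}$, substitute \eqref{cpt z'} and \eqref{Wronskian_zx}, bound the $R_z$-exponent by $k+3(q-k)+p\leqslant m-2k$ with equality at $q=k$, $p=m-3k$, and finally remove the $R_y$-denominators using general position.

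Your normality argument for that last step sharpens the paper's one-line ``cancel common factors'' justification. Two small corrections: the local-ring argument is needed at the points of $X\cap\{R_z=0\}$ (where it holds because $X$ is smooth), not at the points of $X\cap\{R_z\neq 0\}$. Also, the pole bound along each component should read $(m-2k)\,\mathrm{div}(R_z)$ rather than multiplicity $m-2k$, since $R_z$ may vanish to higher order there.
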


\begin{proof}
On the affine part \(X \cap \{R_y \neq 0\}\), by the same reasoning as in~\eqref{J_yx of weighted degree m}, an invariant \(2\)-jet differential of weighted order \(m\) can be expressed as
\begin{equation}\label{J_zx of weighted degree m}
J_{zx} \coloneqq \sum_{\substack{0 \leqslant q \leqslant \lfloor m/3 \rfloor \\ 0 \leqslant p \leqslant m-3q}}
\overline{F}_{p,q}\,
(z')^{p}\,(x')^{m-3q-p}\,
\begin{vmatrix} z' & x' \\ z'' & x'' \end{vmatrix}^q,
\end{equation}
where all coefficients are rational functions whose denominators may contain only powers of \(R_y\). 

We now examine the compatibility of the two expressions~\eqref{J_yx of weighted degree m} and~\eqref{J_zx of weighted degree m} for the jet differential by substituting the transition formulas~\eqref{cpt z'} and~\eqref{cpt z''} into the alternative expression~\eqref{J_zx of weighted degree m}. Under this substitution, the Wronskian \(z'x'' - z''x'\) — initially expressed in the chart \(X \cap \{R_y \neq 0\}\) — transforms to the chart \(X \cap \{R_z \neq 0\}\) after expanding and reorganizing terms so that the Wronskian \(y'x'' - y''x'\) emerges explicitly:
\begin{align}\label{Wronskian_zx}
\begin{vmatrix}
z' & x' \\
z'' & x''
\end{vmatrix}
&= -\frac{R_y}{R_z}\,
   \begin{vmatrix}
   y' & x' \\
   y'' & x''
   \end{vmatrix} \\[2mm]
&\quad + \frac{1}{R_z^3}\,
   \bigl(R_{yy}R_z^2 - 2R_zR_yR_{yz} + R_y^2R_{zz}\bigr)
   y'y'x' \notag \\[2mm]
&\quad + \frac{2}{R_z^3}\,
   \bigl(R_{xy}R_z^2 - R_zR_yR_{xz} - R_xR_zR_{yz} + R_xR_yR_{zz}\bigr)
   y'x'x' \notag \\[2mm]
&\quad + \frac{1}{R_z^3}\,
   \bigl(R_{xx}R_z^2 - 2R_xR_zR_{xz} + R_x^2R_{zz}\bigr)
   x'x'x'. \notag
\end{align}
Comparing corresponding terms, we observe that the coefficients in \eqref{J_yx of weighted degree m} can be expressed as rational functions whose denominators involve powers of \(R_y\) (which must ultimately cancel) and powers of \(R_z\). The exponents of \(R_z\) appearing in the denominators arise from the following sources:
\begin{itemize}
    \item Each of the \(p\) factors of \(z'\) contributes at most one factor of \(R_z^{-1}\) via \eqref{cpt z'}.
    \item Each of the \(q\) factors of the Wronskian \(\big\vert \begin{smallmatrix} z' & x' \\ z'' & x'' \end{smallmatrix} \big\vert\) contributes at most one factor of \(R_z^{-1}\) from the leading term in \eqref{Wronskian_zx} or, when expanded further, contributes at most one factor of \(R_z^{-3}\) from the quadratic terms.
\end{itemize}

Moreover, the term \((y')^{j}\,(x')^{m-3k-j}\,
\big\vert\begin{smallmatrix} y' & x' \\ y'' & x'' \end{smallmatrix}\big\vert^k\) in expression~\eqref{J_yx of weighted degree m} arises from expanding terms of the form
\begin{equation}\label{terms in J_zx with q>=k, m-3q-p<=m-3k-j}
    (z')^{p}\,(x')^{m-3q-p}\,
    \begin{vmatrix} z' & x' \\ z'' & x'' \end{vmatrix}^q \quad\text{with } q \geqslant k.
\end{equation}
When expanding such a term, \(\big\vert \begin{smallmatrix} z' & x' \\ z'' & x'' \end{smallmatrix} \big\vert^q\) contributes a factor of \(\big\vert \begin{smallmatrix} y' & x' \\ y'' & x'' \end{smallmatrix} \big\vert^k\) together with \(k\) powers of \(R_z^{-1}\) from the leading term in \eqref{Wronskian_zx} and an additional \(q-k\) powers of \(R_z^{-3}\) from the remaining quadratic terms. The factor \((z')^{p}\) contributes \(p\) powers of \(R_z^{-1}\) via \eqref{cpt z'}. Hence, the total exponent of \(R_z\) in the denominator of the coefficient multiplying \((y')^{j}\,(x')^{m-3k-j}\,
\big\vert\begin{smallmatrix} y' & x' \\ y'' & x'' \end{smallmatrix}\big\vert^k\) is at most
\[
k + 3(q-k) + p = 3q + p - 2k \leqslant m - 2k.
\]

This upper bound \(m-2k\) can be achieved by expanding the term in \eqref{terms in J_zx with q>=k, m-3q-p<=m-3k-j} corresponding to \(q = k\) and \(p = m-3k\), namely \((z')^{m-3k}\,\big\vert\begin{smallmatrix} z' & x' \\ z'' & x'' \end{smallmatrix}\big\vert^k\).

By our assumption on \(\widehat{R}\), the divisors defined by \(R\) and its partial derivatives \(R_x, R_y, R_z\) are in general position. Under this condition, after cancelling common factors between numerator and denominator, the maximal possible pole order \(f_{j,k}\) of the coefficients in \eqref{J_yx of weighted degree m} with respect to \(R_z\) satisfies
\[
f_{j,k} \leqslant m-2k.
\]
Moreover, since the expression~\eqref{J_yx of weighted degree m} must be regular on \(X \cap \{R_z \neq 0\}\), any factor of \(R_y\) in the denominator must cancel. Consequently, the coefficients \(F_{j,k}\) may be taken as polynomials, and we may set \(f_{j,k} = m-2k\) as the maximal admissible exponent.
\end{proof}

\subsection{Choice of the Surface Equation}\label{subsec:Choice of the Surface Equation}

We now set the degree of the smooth surface \(X \subset \mathbb{P}^3\) to be
\[
\deg X = d,
\]
with \(d \in \mathbb{N}\). Its defining equation is taken as a perturbation of the Fermat hypersurface:
\begin{equation}\label{homogeneous equation}
    \widehat{R} \coloneqq X^d + Y^d + Z^d + T^d + X^a T^b = 0,
\end{equation}
where the positive integers \(a, b\) satisfy \(a + b = d\) and will be chosen later with \(\lvert a-b \rvert\) small (e.g. \(a-b = 0\) or \(1\)). The specific choice of the exponents \(a, b\) will be guided by computer experiments in Maple.

\begin{Convention}\label{convention cpt}
In the remainder of the proof, we work under the following generic assumptions:\footnote{The equation \(\widehat{R}\) to be adopted later has parameters \(a = 8\) and \(d = 17\), satisfying Convention~\ref{convention cpt}. The associated Maple code is available at \url{https://github.com/LeiHou-code/Hyperbolicity-Dimension-2-Conventions-Facts-2026}.}
\begin{itemize}
    \item The surface \(X\) defined by \(\widehat{R} = 0\) is smooth.
    \item The five divisors defined by the homogeneous polynomial $\widehat{R}$ and its partial derivatives \(\widehat{R}_X, \widehat{R}_Y, \widehat{R}_Z, \widehat{R}_T\) are in general position; i.e., any intersection of them has the expected (co)dimension.
    \item The coordinate hyperplanes \(\{Z=0\}\) and \(\{T=0\}\) intersect \(X\) transversely.
\end{itemize}
\end{Convention}

In the affine chart \(T = 1\), this defining equation becomes
\[
R(x,y,z) = 1 + x^{d} + y^{d} + z^{d} + x^a,
\]
where the exponent \(a\) will eventually be chosen as \(\lfloor d/2 \rfloor\). This particular form yields two crucial simplifications. First, the relevant partial derivative simplifies to
\[
R_z = d\,z^{d-1},
\]
which is independent of \(x\) and \(y\) up to the harmless constant factor \(d\). Second, since the extra term \(X^a T^b\) involves no variable \(Y\), we obtain the relation
\[
y^d = -1 - x^d - z^d - x^a,
\]
which allows for an efficient reduction procedure in the quotient ring \(\mathfrak{R} = \mathbb{C}[x,y,z]/(R)\).

Recall that for an algebraic invariant \(2\)-jet differential initially defined on \(X\cap\{R_z \neq 0\}\) to be also holomorphic on \(X\cap\{R_y \neq 0\}\), certain divisibility conditions by \(R_z\) must be satisfied — constraints already encountered in the transition formulas. These conditions are understood in the quotient ring \(\mathfrak{R}\). A polynomial \(F \in \mathbb{C}[x,y,z]\) is said to be divisible by \(R_z\) in this quotient ring if there exists \(G \in \mathbb{C}[x,y,z]\) such that \(F \equiv R_z \cdot G \pmod{R}\), i.e., their difference lies in the ideal generated by \(R\). Geometrically, this means that as a rational function on the surface \(X\), \(F\) vanishes on \(X \cap \{R_z = 0\}\) with at least the same multiplicity as \(R_z\).

To test divisibility by \(R_z = d z^{d-1}\) in \(\mathfrak{R}\), we employ an algorithmic reduction based on the relation \(y^d = -1 - x^d - z^d - x^a\). Every element of \(\mathfrak{R}\) can be uniquely represented by a polynomial in \(\mathbb{C}[x,y,z]\) with \(\deg_y < d\). Indeed, for any monomial \(y^k\) with \(k \geqslant d\), writing \(k = dq + r\) where \(0 \leqslant r < d\) via Euclidean division, we may replace
\[
y^k = (y^d)^q \cdot y^r = (-1 - x^d - z^d - x^a)^q \cdot y^r,
\]
which involves only powers of \(y\) with exponent \(r < d\). Applying this substitution to each monomial yields a reduced representative \(\operatorname{red}(F) \in \mathbb{C}[x,y,z]\) satisfying \(\deg_y \operatorname{red}(F) < d\) and \(\operatorname{red}(F) \equiv F \pmod{R}\).

The uniqueness of this reduced form follows from the observation that if two polynomials \(F_1, F_2\) with \(\deg_y < d\) satisfy \(F_1 \equiv F_2 \pmod{R}\), then their difference is a multiple of \(R\). Any nonzero multiple of \(R\) must have \(y\)-degree at least \(d\), since \(R\) itself contains the monomial \(y^d\) with coefficient \(1\) and no other term can cancel it after multiplication. Hence \(F_1 - F_2 = 0\), establishing uniqueness.

\begin{lem}\label{Lemma-divisibility}
Let $R(x,y,z) = 1 + x^d + y^d + z^d + S(x)$, where $S(x) \in \mathbb{C}[x]$ is a polynomial of degree strictly less than $d$. Assume that the surface $X \subset \mathbb{P}^3$ defined by the homogenization of $R$ is smooth and that the coordinate hyperplane $\{Z=0\}$ intersects $X$ transversely. For any polynomial $F \in \mathbb{C}[x,y,z]$, let $\operatorname{red}(F)$ denote its unique normal form satisfying $\deg_y \operatorname{red}(F) < d$, obtained by Euclidean reduction using the relation $y^d = -1 - x^d - z^d - S(x)$. Then, for any positive integer $i$, the polynomial $F$ is divisible by $(R_z)^i = (d z^{d-1})^i$ in the quotient ring $\mathfrak{R} = \mathbb{C}[x,y,z]/(R)$ if and only if its reduction $\operatorname{red}(F)$ is divisible by $(z^{d-1})^i$ in $\mathbb{C}[x,y,z]$.
\end{lem}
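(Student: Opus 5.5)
The plan is to exploit the fact that \(R\) is monic of degree \(d\) in \(y\) with all other coefficients in \(\mathbb{C}[x,z]\). As already observed above, this makes \(\mathfrak{R}=\mathbb{C}[x,y,z]/(R)\) a free \(\mathbb{C}[x,z]\)-module with basis \(1,y,\dots,y^{d-1}\). The map \(F\mapsto \operatorname{red}(F)\) is exactly the coordinate map onto this basis. By the uniqueness argument given just before the lemma, \(\operatorname{red}\) is well defined on \(\mathfrak{R}\) and injective there: \(\operatorname{red}(F_1)=\operatorname{red}(F_2)\) if and only if \(F_1\equiv F_2 \pmod R\).

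The key step is that \(\operatorname{red}\) is \(\mathbb{C}[x,z]\)-linear. In particular, for \(N:=(d-1)i\) we have
\[
\operatorname{red}(z^{N}G)=z^{N}\operatorname{red}(G)\qquad\text{for all } G\in\mathbb{C}[x,y,z].
\]
To see this, note that \(z^N\operatorname{red}(G)\) still has \(\deg_y<d\) and is congruent to \(z^NG\) modulo \(R\). By uniqueness of the normal form it equals \(\operatorname{red}(z^NG)\). Equivalently, the substitution \(y^d\mapsto -1-x^d-z^d-S(x)\) only rewrites powers of \(y\) and has coefficients in \(\mathbb{C}[x,z]\), so it commutes with multiplication by \(z^N\).

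With this in hand, both directions are short. Since \(d\neq 0\), divisibility by \((R_z)^i=d^i z^{N}\) in \(\mathfrak{R}\) is the same as divisibility by \(z^N\).

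\emph{If} \(F\equiv z^N G \pmod R\), then \(\operatorname{red}(F)=\operatorname{red}(z^NG)=z^N\operatorname{red}(G)\). Hence \(\operatorname{red}(F)\) is divisible by \(z^N\) in \(\mathbb{C}[x,y,z]\).

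\emph{Conversely}, if \(\operatorname{red}(F)=z^N H\) with \(H\in\mathbb{C}[x,y,z]\), then \(F\equiv\operatorname{red}(F)=z^N H\pmod R\), which is divisibility in \(\mathfrak{R}\). No reduction of \(H\) is even needed here, although \(H\) automatically has \(\deg_y<d\).

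I do not expect a genuine obstacle. The only point needing care is the compatibility of reduction with multiplication by \(z^N\). This holds precisely because the reduction variable is \(y\), not \(z\): had we reduced modulo \(z^d\), multiplication by \(z^{(d-1)i}\) would not preserve normal forms, and the criterion would fail. This is the reason for the ``strategic choice of \(y\)'' emphasized earlier. The smoothness and transversality hypotheses are not used in this algebraic equivalence. They serve only to guarantee that divisibility in \(\mathfrak{R}\) has the intended geometric meaning, namely vanishing of the right order along \(X\cap\{z=0\}\), which is reduced because \(\{Z=0\}\) meets \(X\) transversely. I would mention this in a closing remark rather than use it in the proof.
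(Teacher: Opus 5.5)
Your proof is correct and follows essentially the same route as the paper: in both, the key step is that $z^{N}\operatorname{red}(G)$ already has $\deg_y<d$, so uniqueness of the normal form forces $\operatorname{red}(F)=z^{N}\operatorname{red}(G)$ (up to the constant $d^i$), and the converse is immediate from $F\equiv\operatorname{red}(F)\pmod R$. You are also right that the transversality hypothesis plays no role in this algebraic equivalence; the paper invokes it only to note that $z$ is not a zero divisor in $\mathfrak{R}$, and never uses that fact.
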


\begin{proof}
Since $\{Z=0\}$ intersects $X$ transversely, $z$ is not a zero divisor in $\mathfrak{R}$.

($\Leftarrow$) If $\operatorname{red}(F) = (z^{d-1})^i H$ for some $H \in \mathbb{C}[x,y,z]$, then 
$F \equiv \operatorname{red}(F) \equiv (z^{d-1})^i H \pmod{R}$. 
Since $(R_z)^i = d^i (z^{d-1})^i$, we have $(z^{d-1})^i H = (R_z)^i \cdot \bigl(\frac{1}{d^i} H\bigr)$, so 
$F \equiv (R_z)^i \cdot \bigl(\frac{1}{d^i} H\bigr) \pmod{R}$, proving that $F$ is divisible by $(R_z)^i$ in $\mathfrak{R}$.

($\Rightarrow$) Suppose $F \equiv (R_z)^i G \pmod{R}$ for some $G \in \mathbb{C}[x,y,z]$. 
Let $G_0 = \operatorname{red}(G)$ be the normal form of $G$. Then $(R_z)^i G \equiv (R_z)^i G_0 \pmod{R}$, 
and $(R_z)^i G_0 = d^i (z^{d-1})^i G_0$ is already in normal form (since $\deg_y G_0 < d$ and multiplication by powers of $z$ does not affect the $y$-degree). 
By uniqueness of the normal form, we must have $\operatorname{red}(F) = d^i (z^{d-1})^i G_0 = (z^{d-1})^i (d^i G_0)$, 
which is clearly divisible by $(z^{d-1})^i$ in $\mathbb{C}[x,y,z]$.
\end{proof}

Consequently, the divisibility conditions required for holomorphicity on $X\cap\{R_y \neq 0\}$ translate into an explicit algebraic constraint: after reducing each numerator polynomial $F$ to its normal form $\operatorname{red}(F)$, the coefficients of $z^0, z^1, \dots, z^{d-2}$ in $\operatorname{red}(F)$ must all vanish identically.

This reduction crucially relies on our specific choice of the defining equation. By contrast, if one starts with a genuinely generic homogeneous polynomial of degree $d$ — rather than the special form $X^d + Y^d + Z^d + T^d + X^a T^b$ — the quotient ring $\mathfrak{R}$ becomes significantly more complicated. Its structure no longer admits a simple degree reduction in any variable, and checking divisibility by $R_z$ would require computing normal forms with respect to a Gröbner basis. Such computations are notoriously intensive and quickly become infeasible even on powerful computer algebra systems.

\subsection{Denominator Exponent Constraints from Holomorphicity}\label{subsec:Pole Orders at Infinity, cpt}

To analyze the behavior of jets on the plane at infinity $\mathbb{P}_\infty^2 := \{T=0\}$, we will work in the affine chart $\{X\neq 0\}$. Introducing the coordinates $(\tilde{t}, \tilde{y}, \tilde{z})$ as in~\eqref{1-x-chart}, the divisor $\mathbb{P}_\infty^2$ is then locally defined by $\{\tilde{t}=0\}$.

Recall that the jets under consideration are jets of entire holomorphic curves $\mathbb{C} \to \mathbb{P}^3$, or of local holomorphic curves $\mathbb{D} \to \mathbb{C}^3 \subset \mathbb{P}^3$, where $\mathbb{D} \subset \mathbb{C}$ is the unit disc:
\[
\mathbb{D} \ni \zeta \longmapsto \big( x(\zeta),\, y(\zeta),\, z(\zeta) \big) \in \mathbb{C}^3.
\]

Under the change of chart $(\tilde{t},\tilde{y},\tilde{z}) \mapsto (x,y,z)$ given by \eqref{X=1 chart}, 
the $1$-jets and $2$-jets transform according to the following formulas (expressing the original jet coordinates in terms of the new ones):
\begin{equation}\label{jets from tilde(t,y,z) to (x,y,z)}
\begin{aligned}
\tilde{t}' &= -\frac{x'}{x^2}, &
\tilde{y}' &= \frac{y'}{x} - \frac{y\,x'}{x^2}, 
\\
\tilde{t}'' &= -\frac{x''}{x^2} + 2\frac{(x')^2}{x^3}, &
\tilde{y}'' &= \frac{y''}{x} - 2\frac{x'\,y'}{x^2} - \frac{y\,x''}{x^2} + 2\frac{y\,(x')^2}{x^3}.
\end{aligned}
\end{equation}
And the Wronskian transforms as:
\begin{equation}\label{Wronskian from tilde(y,t) to (y,x)}
\begin{vmatrix}
\tilde{y}' & \tilde{t}' \\
\tilde{y}'' & \tilde{t}''
\end{vmatrix}
=
-\frac{1}{x^3}\,
\begin{vmatrix}
y' & x' \\
y'' & x''
\end{vmatrix}.
\end{equation}

Similarly, under the change of chart $(x,y,z) \mapsto (\tilde{t},\tilde{y},\tilde{z})$ given by \eqref{1-x-chart}, 
the $1$-jets and $2$-jets transform according to the following formulas:
\begin{equation}\label{jets from (x,y,z) to tilde(t,y,z)}
\begin{aligned}
x' &= -\frac{\tilde{t}'}{\tilde{t}^2}, &
y' &= \frac{\tilde{y}'}{\tilde{t}} - \frac{\tilde{y}\,\tilde{t}'}{\tilde{t}^2}, 
\\
x'' &= -\frac{\tilde{t}''}{\tilde{t}^2} + 2\frac{(\tilde{t}')^2}{\tilde{t}^3}, &
y'' &= \frac{\tilde{y}''}{\tilde{t}} - 2\frac{\tilde{t}'\,\tilde{y}'}{\tilde{t}^2} - \frac{\tilde{y}\,\tilde{t}''}{\tilde{t}^2} + 2\frac{\tilde{y}\,(\tilde{t}')^2}{\tilde{t}^3}.
\end{aligned}
\end{equation}

Consequently, the Wronskian transforms as:
\begin{equation}\label{Wronskian from (y,x) to tilde(y,t)}
\begin{vmatrix}
y' & x' \\
y'' & x''
\end{vmatrix}
= -\frac{1}{\tilde{t}^3}\,
\begin{vmatrix}
\tilde{y}' & \tilde{t}' \\
\tilde{y}'' & \tilde{t}''
\end{vmatrix}.
\end{equation}

\medskip
Now consider a general term appearing in $J_{yx}$ as described above in Proposition~\ref{Prp-compact-Jyx}.
Since the polynomials $F_{j,k}$ above will be restricted to the surface defined by $R=0$, by the same reduction argument preceding Lemma~\ref{Lemma-divisibility}, we may assume without loss of generality the following:

\begin{Convention}
\label{convention} All polynomials $F_{j,k}$ are assumed to have $y$-degree strictly less than $d$; equivalently, every monomial appearing in them satisfies $\deg_y < d$.
\end{Convention}

Furthermore, on $X \cap \{ T \neq 0 \} \cap \{ \widehat{R}_Z \neq 0 \}$, every invariant $2$-jet differential of weighted order $m$ on $X$ admits the general expression
\begin{equation}\label{J_YX}
J_{YX} \coloneqq \sum_{\substack{0 \leqslant k \leqslant \lfloor m/3 \rfloor \\ 0 \leqslant j \leqslant m-3k}}
\frac{\widehat{G}_{j,k}}{T^{g_{j,k}} \widehat{R}_Z^{m-2k}}\,
(y')^{j}\,(x')^{m-3k-j}\,
\begin{vmatrix} y' & x' \\ y'' & x'' \end{vmatrix}^k,
\end{equation}
where $\widehat{G}_{j,k}(T,X,Y,Z)$ are homogeneous polynomials in $\mathbb{C}[T,X,Y,Z]$ satisfying
\begin{equation*}
    \deg \widehat{G}_{j,k} = g_{j,k} + (m-2k)(d-1)
    \quad
    \text{and}
    \quad
    \deg_Y \widehat{G}_{j,k} < d,
\end{equation*}
and $\widehat{R}_Z$ is the partial derivative of the homogeneous polynomial $\widehat{R}$ as in Convention \ref{convention cpt}. Here the exponent of \(\widehat{R}_Z\) in the denominator is taken to be \(m-2k\), which coincides with the maximal admissible exponent of \(R_z\) in the denominators of the coefficients \(F_{j,k} / R_z^{m-2k}\) from Proposition~\ref{Prp-compact-Jyx}.

Similarly, on $X \cap \{ X \neq 0 \} \cap \{ \widehat{R}_Z \neq 0 \}$,  every invariant $2$-jet differential of weighted order $m$ on $X$ admits the general expression
\begin{equation}\label{J_TY}
J_{TY}
= 
\sum_{\substack{0 \leqslant k \leqslant \lfloor m/3 \rfloor \\ 0 \leqslant \ell \leqslant m-3k}} 
\frac{\widehat{E}_{\ell ,k}}{X^{e_{\ell ,k}} \widehat{R}_{Z}^{m-2k}}
(\tilde{t}')^{m-3k-\ell}
(\tilde{y}')^{\ell}
\begin{vmatrix}
\tilde{y}' & \tilde{t}' \\
\tilde{y}'' & \tilde{t}''
\end{vmatrix}^k,
\end{equation}
where $\widehat{E}_{\ell,k}(T,X,Y,Z)$ are homogeneous polynomials in $\mathbb{C}[T,X,Y,Z]$ satisfying
\begin{equation*}
    \deg \widehat{E}_{\ell,k} = e_{\ell,k} + (m-2k)(d-1)
    \quad
    \text{and}
    \quad
    \deg_Y \widehat{E}_{j,k} < d.
\end{equation*}

\begin{Question}
{\sl What happens to the general form $J_{TY}$ given in \eqref{J_TY} after the change of affine chart from $X \cap \{ X \neq 0 \} \cap \{ \widehat{R}_Z \neq 0 \}$ to $X \cap \{ T \neq 0 \} \cap \{ \widehat{R}_Z \neq 0 \}$?}
\end{Question}

Substituting \eqref{jets from tilde(t,y,z) to (x,y,z)} and \eqref{Wronskian from tilde(y,t) to (y,x)} into \eqref{J_TY}, the term transforms as follows:
\[
\begin{aligned}
&
\frac{\widehat{E}_{\ell ,k}}{X^{e_{\ell ,k}} \widehat{R}_{Z}^{m-2k}}
(\tilde{t}')^{m-3k-\ell}
(\tilde{y}')^{\ell}
\begin{vmatrix}
\tilde{y}' & \tilde{t}' \\
\tilde{y}'' & \tilde{t}''
\end{vmatrix}^k
\\
=
&
\frac{\widehat{E}_{\ell ,k}}{X^{e_{\ell ,k}} \widehat{R}_{Z}^{m-2k}}
\big(-\frac{x'}{x^2}\big)^{m - 3k - \ell}\,
\big(\frac{y'}{x} - \frac{y\,x'}{x^2}\big)^{\ell}
\big(-\frac{1}{x^3}\big)^k
\begin{vmatrix} y' & x' \\ y'' & x'' \end{vmatrix}^k
\\
=
&
\frac{1}{x^{2(m-3k)+3k}} 
\sum_{j=0}^{\ell}
(-1)^{m-2k-j}
\frac{\widehat{E}_{\ell ,k}}{X^{e_{\ell ,k}} \widehat{R}_{Z}^{m-2k}}
\binom{\ell }{j} x^{j} y^{\ell -j}
(y')^{j}\,(x')^{m-3k-j}\,
\begin{vmatrix} y' & x' \\ y'' & x'' \end{vmatrix}^k,
\end{aligned}
\]
where the last equality is obtained by expanding $(xy'-yx')^{\ell}$ 
and simplifying. 

Consequently, the general form transforms as
\begin{align*}
J_{TY}
&= 
\sum_{k=0}^{\lfloor m/3 \rfloor} \sum_{\ell =0}^{m-3k}
\frac{1}{x^{2(m-3k)+3k}} 
\sum_{j=0}^{\ell}
(-1)^{m-2k-j}
\frac{\widehat{E}_{\ell ,k}}{X^{e_{\ell ,k}} \widehat{R}_{Z}^{m-2k}}
\binom{\ell }{j} x^{j} y^{\ell -j}
(y')^{j}\,(x')^{m-3k-j}\,
\begin{vmatrix} y' & x' \\ y'' & x'' \end{vmatrix}^k
\\
&=
\sum_{k=0}^{\lfloor m/3 \rfloor} \sum_{j=0}^{m-3k} 
\frac{(-1)^{m-2k-j}}{x^{2(m-3k)+3k-j}}
\sum_{\ell=j}^{m-3k} 
\frac{\widehat{E}_{\ell ,k}}{X^{e_{\ell ,k}} \widehat{R}_{Z}^{m-2k}}
\binom{\ell }{j} y^{\ell -j}
(y')^{j}\,(x')^{m-3k-j}\,
\begin{vmatrix} y' & x' \\ y'' & x'' \end{vmatrix}^k
\\
&=
\sum_{k=0}^{\lfloor m/3 \rfloor}
\sum_{j=0}^{m-3k} 
(-1)^{m-2k-j} 
\frac{T^{2(m-3k)+3k-j}}{X^{2(m-3k)+3k-j}}
\sum_{\ell=j}^{m-3k} 
\frac{\widehat{E}_{\ell ,k}}{X^{e_{\ell ,k}} \widehat{R}_{Z}^{m-2k}}
\binom{\ell }{j} \frac{Y^{\ell -j}}{T^{\ell -j}}
(y')^{j}\,(x')^{m-3k-j}\,
\begin{vmatrix} y' & x' \\ y'' & x'' \end{vmatrix}^k 
\\
&=
\sum_{k=0}^{\lfloor m/3 \rfloor}
\frac{T^{m}}{X^{m}}
\sum_{j=0}^{m-3k} 
(-1)^{m-2k-j} 
\sum_{\ell=j}^{m-3k} 
\frac{\widehat{E}_{\ell ,k}}{X^{e_{\ell ,k}} \widehat{R}_{Z}^{m-2k}}
\binom{\ell }{j} \frac{T^{m-3k- \ell} Y^{\ell -j}}{X^{m-3k-j}}
(y')^{j}\,(x')^{m-3k-j}\,
\begin{vmatrix} y' & x' \\ y'' & x'' \end{vmatrix}^k.
\end{align*}
Note that the coefficients
\begin{equation*}
\frac{T^{m}}{X^{m}}
(-1)^{m-2k-j} 
\sum_{\ell=j}^{m-3k} 
\frac{\widehat{E}_{\ell ,k}}{X^{e_{\ell ,k}} \widehat{R}_{Z}^{m-2k}}
\binom{\ell }{j} \frac{T^{m-3k- \ell} Y^{\ell -j}}{X^{m-3k-j}},
\end{equation*}
in the last line, as rational functions, not only have no pole along the divisor $X\cap \{ T=0 \}$, but also vanish with multiplicity at least $m$.

Comparing with the coefficients of \eqref{J_YX}, it follows that, on $X \cap \{ T \neq 0 \} \cap \{ \widehat{R}_Z \neq 0 \}$, every invariant $2$-jet differential of weighted order $m$ on $X$ admits the general expression
\begin{equation*}
J_{YX}
=
\sum_{\substack{0 \leqslant k \leqslant \lfloor m/3 \rfloor \\ 0 \leqslant j \leqslant m-3k}}
\frac{T^{m} \widehat{H}_{j,k}}{\widehat{R}_{Z}^{m-2k}}
(y')^{j}\,(x')^{m-3k-j}\,
\begin{vmatrix} y' & x' \\ y'' & x'' \end{vmatrix}^k,
\end{equation*}
where $\widehat{H}_{j,k}(T,X,Y,Z)$ are homogeneous polynomials in $\mathbb{C}[T,X,Y,Z]$ satisfying
\begin{equation*}
    \deg \widehat{H}_{j,k} = (m-2k)(d-1)-m
    \quad
    \text{and}
    \quad
    \deg_Y \widehat{H}_{j,k} < d.
\end{equation*}

To determine the explicit vanishing order of the jet differential as~\eqref{J_YX} along the divisor $\{ x=0 \} \cap X$, we need the following lemma.

\begin{lem}
The rational function 
$\frac{T^{m} \widehat{H}_{j,k}}{\widehat{R}_{Z}^{m-2k}}$ 
vanishes along the divisor $X \cap \{ x = 0 \}$ with multiplicity at least $t$ 
if and only if 
$\widehat{H}_{j,k} \equiv X^{t} \widehat{K}_{j,k} \pmod{\widehat{R}}$
for some homogeneous polynomial $\widehat{K}_{j,k}$ satisfying $\deg_Y \widehat{K}_{j,k} < d$.
\end{lem}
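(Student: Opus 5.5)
The plan is to localize the condition near the divisor $X\cap\{x=0\}$, i.e. $X\cap\{X=0\}$, and then transfer it to the normal-form ring, mirroring the proof of Lemma~\ref{Lemma-divisibility}. First, the factors $T^m$ and $\widehat{R}_Z^{m-2k}$ do not affect the vanishing order along $X\cap\{X=0\}$. Indeed, $\widehat{R}_Z = dZ^{d-1}$, and $Z$ does not vanish identically on any component of $X\cap\{X=0\}$. The reason is that $X\cap\{X=0\}$ is the Fermat-type plane curve $Y^d+Z^d+T^d=0$, since the term $X^aT^b$ drops out. This curve is smooth, hence irreducible, and it is not contained in $\{Z=0\}$ or $\{T=0\}$. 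So the condition is equivalent to $\widehat{H}_{j,k}|_X$ vanishing to order $\geqslant t$ along the reduced irreducible divisor $D:=X\cap\{X=0\}$. By Convention~\ref{convention cpt}, or the direct observation that $\widehat{R}_X$ and $\widehat{R}$ have no common zero on $\{X=0\}$ given smoothness of that Fermat curve, $D$ is a smooth transverse hyperplane section. Hence $X$ cuts out $D$ with multiplicity one, and its local equation is a uniformizer along $D$ at generic points.

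For the ($\Leftarrow$) direction: if $\widehat{H}_{j,k}\equiv X^t\widehat{K}_{j,k}\pmod{\widehat{R}}$, then on $X$ the function $\widehat{H}_{j,k}$ equals $X^t\widehat{K}_{j,k}$. This vanishes to order $\geqslant t$ along $D$ because $\widehat{K}_{j,k}$ is regular there.

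For the ($\Rightarrow$) direction, I would use the homogeneous coordinate ring $\mathfrak{S}:=\mathbb{C}[T,X,Y,Z]/(\widehat{R})$ and induct on $t$. Since $X$ is smooth and irreducible (for $d\geqslant 3$ a smooth surface is a normal complete intersection), $\mathfrak{S}$ is a normal domain. The ideal $(X)\subset\mathfrak{S}$ defines $D$ scheme-theoretically as the reduced curve $\{Y^d+Z^d+T^d=0\}\subset\mathbb{P}^2$. Hence $\mathfrak{S}/(X)\cong \mathbb{C}[T,Y,Z]/(Y^d+Z^d+T^d)$ is a domain, so $(X)$ is a height-one prime of $\mathfrak{S}$. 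Now suppose $\widehat{H}$ vanishes to order $\geqslant t\geqslant 1$ along $D$. Then its image in $\mathfrak{S}/(X)$ vanishes on $D$, so it is zero because $\mathfrak{S}/(X)$ is reduced. This gives $\widehat{H}\equiv X\widehat{H}_1\pmod{\widehat{R}}$ with $\widehat{H}_1$ homogeneous of degree one less. The function $\widehat{H}_1|_X$ then vanishes to order $\geqslant t-1$, since $X$ is a uniformizer along $D$. Iterating $t$ times produces $\widehat{K}$ with $\widehat{H}_{j,k}\equiv X^t\widehat{K}\pmod{\widehat{R}}$.

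Finally, I replace $\widehat{K}$ by its normal form $\operatorname{red}(\widehat{K})$, obtained by homogeneous reduction via $Y^d=-X^d-Z^d-T^d-X^aT^b$. This gives $\deg_Y<d$ without changing the class modulo $\widehat{R}$. Note that $X^t\operatorname{red}(\widehat{K})$ is then automatically the normal form of $\widehat{H}_{j,k}$, exactly as in Lemma~\ref{Lemma-divisibility}. So the condition can equivalently be checked as plain divisibility of $\operatorname{red}(\widehat{H}_{j,k})$ by $X^t$ in $\mathbb{C}[T,X,Y,Z]$, which is the form used by the computer.

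The main obstacle is the step showing that $(X)$ is prime and reduced in $\mathfrak{S}$, i.e. that the hyperplane section $\{X=0\}\cap X$ is reduced and irreducible, so that vanishing order along $D$ matches the $X$-adic order in $\mathfrak{S}$. Here the special Fermat shape helps: the section is literally a smooth Fermat curve. For a general $\widehat{R}$ I would instead invoke the transversality assumptions of Convention~\ref{convention cpt}.
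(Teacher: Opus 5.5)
Your proposal is correct and follows essentially the paper's argument. You discard $T^m$ and $\widehat{R}_Z^{m-2k}$ because they do not vanish identically on $D$, use that $X\cap\{X=0\}$ is the smooth Fermat curve $T^d+Y^d+Z^d=0$ to get $\widehat{H}_{j,k}\in(X,\widehat{R})$, and induct on $t$. The differences are minor: the paper first passes to the normal form with $\deg_Y<d$ and kills $\widehat{H}_{j,k}(T,0,Y,Z)$ by a $Y$-degree count, whereas you use reducedness of $\mathbb{C}[T,Y,Z]/(T^d+Y^d+Z^d)$ and normalize at the end; you also make explicit that $X$ vanishes to order exactly one along $D$, which the paper's induction uses silently.

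One small slip: your parenthetical claim that $\widehat{R}_X$ and $\widehat{R}$ have no common zero on $\{X=0\}$ is false, since $\widehat{R}_X=dX^{d-1}+aX^{a-1}T^b$ vanishes identically there when $a\geqslant 2$. In addition, Convention~\ref{convention cpt} only concerns $\{Z=0\}$ and $\{T=0\}$. Transversality along $D$ should therefore rest solely on the smoothness of the Fermat curve, i.e.\ on $(\widehat{R}_T,\widehat{R}_Y,\widehat{R}_Z)\neq 0$ along $D$, which your argument also supplies.
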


\begin{proof}
($\Leftarrow$) If $\widehat{H}_{j,k} = X^{t} K_{j,k}$ in $\mathbb{C}[T,X,Y,Z] / (\widehat{R})$, then as a rational function, it clearly vanishes to order at least $t$ along $\{x = 0\}$. By our choice of the surface equation~\eqref{homogeneous equation}, $\widehat{R}_Z$ does not vanish identically on $X \cap \{ x = 0 \}$, so the denominator introduces no poles or zero-canceling factors along $X \cap \{ x = 0 \}$, preserving the vanishing order.

(\(\Rightarrow\)) Conversely, suppose the rational function \({T^{m} \widehat{H}_{j,k}}/{\widehat{R}_{Z}^{m-2k}}\) vanishes to order at least \(t\) along \(X \cap \{ x = 0 \}\). The statement admits a proof by induction on \(t\); therefore, it suffices to establish the base case \(t = 1\).

Since \(T^{m}\) and \(\widehat{R}_Z\) do not vanish identically on \(X \cap \{ x = 0 \}\) (by our choice of the surface equation~\eqref{homogeneous equation}), it follows that \(\widehat{H}_{j,k}\) itself vanishes along \(X \cap \{ x = 0 \}\); i.e., \(\widehat{H}_{j,k} \equiv 0 \pmod{(X, \widehat{R})}\). By Convention~\ref{convention}, \(\widehat{H}_{j,k}\) has a unique representative with \(\deg_Y < d\); we may therefore assume that \(\widehat{H}_{j,k}\) is already in this normal form, satisfying \(\deg_Y \widehat{H}_{j,k} < d\). Consider its restriction \(\widehat{H}_{j,k}(T,0,Y,Z) \in \mathbb{C}[T,Y,Z]\) to the hyperplane \(\{ X = 0 \}\). This restriction satisfies:
\begin{itemize}
    \item \(\widehat{H}_{j,k}(T,0,Y,Z) \equiv \widehat{H}_{j,k} \equiv 0 \pmod{(X, \widehat{R})}\);
    \item \(\deg_Y \widehat{H}_{j,k}(T,0,Y,Z) \leqslant \deg_Y \widehat{H}_{j,k} < d\).
\end{itemize}
By our choice of the surface equation~\eqref{homogeneous equation}, the first condition implies \(\widehat{H}_{j,k}(T,0,Y,Z) \equiv 0 \pmod{(T^d + Y^d + Z^d)}\) in \(\mathbb{C}[T,Y,Z]\). However, the second condition — namely \(\deg_Y < d\) — prevents \(\widehat{H}_{j,k}(T,0,Y,Z)\) from being a nonzero multiple of \(T^d + Y^d + Z^d\), since the latter has \(Y\)-degree exactly \(d\). Hence the restriction must be identically zero: \(\widehat{H}_{j,k}(T,0,Y,Z) = 0\) in \(\mathbb{C}[T,Y,Z]\). It follows that \(\widehat{H}_{j,k} \equiv X \widehat{K}_{j,k} \pmod{\widehat{R}}\) for some homogeneous polynomial \(\widehat{K}_{j,k}\) satisfying \(\deg_Y \widehat{K}_{j,k} < d\).
\end{proof}

If we now require the jet differential as~\eqref{J_YX} to vanish to order at least $t \geqslant 1$ along the divisor $\{ x=0 \} \cap X$ on the surface $X$, i.e.,
\[
\Twist := -t \leqslant -1,
\]
then this corresponds to considering sections of the jet differential bundle twisted by $\mathcal{O}_{X}(-t)$, i.e.,
\[
(\cdot) \otimes \mathcal{O}_{X}(-t).
\]

Therefore, on $X \cap \{ T \neq 0 \} \cap \{ \widehat{R}_Z \neq 0 \}$, every invariant $2$-jet differential of weighted order $m$ and vanishing order at least $t$ along $\{ x=0 \} \cap X$ on the surface $X$ admits the general expression
\begin{equation}\label{best J}
J_{YX}
=
\sum_{\substack{0 \leqslant k \leqslant \lfloor m/3 \rfloor \\ 0 \leqslant j \leqslant m-3k}}
\frac{T^{m} X^{t} \widehat{K}_{j,k}}{\widehat{R}_{Z}^{m-2k}}
(y')^{j}\,(x')^{m-3k-j}\,
\begin{vmatrix} y' & x' \\ y'' & x'' \end{vmatrix}^k,
\end{equation}
where $\widehat{K}_{j,k}(T,X,Y,Z)$ are homogeneous polynomials in $\mathbb{C}[T,X,Y,Z]$ satisfying
\begin{equation*}
    \deg \widehat{K}_{j,k} = (m-2k)(d-1)-m-t
    \quad
    \text{and}
    \quad
    \deg_Y \widehat{K}_{j,k} < d.
\end{equation*}

When the coefficients of the above expression are expressed as regular functions in the affine coordinates \((x,y,z)\), we obtain
\begin{equation}\label{best J_yx}
J_{yx}
\coloneqq
x^{t} 
\sum_{\substack{0 \leqslant k \leqslant \lfloor m/3 \rfloor \\ 0 \leqslant j \leqslant m-3k}}
\frac{K_{j,k}}{R_{z}^{m-2k}}
(y')^{j}\,(x')^{m-3k-j}\,
\begin{vmatrix} y' & x' \\ y'' & x'' \end{vmatrix}^k,
\end{equation}
where $K_{j,k}(x,y,z)$ are polynomials in $\mathbb{C}[x,y,z]$ satisfying
\begin{equation*}
    \deg K_{j,k} \leqslant (m-2k)(d-1)-m-t
    \quad
    \text{and}
    \quad
    \deg_y K_{j,k} < d.
\end{equation*}

For our later applications, we only need weighted orders up to \(m = 11\). For these low weights, the above expression simplifies considerably. Indeed, the exponent \(k\) of the Wronskian \(\big\vert \begin{smallmatrix} y' & x' \\ y'' & x'' \end{smallmatrix} \big\vert\) satisfies \(k \leqslant \lfloor m/3 \rfloor \leqslant 3\). Terms involving a power of the Wronskian greater than \(\lfloor m/3 \rfloor\) are understood to be identically zero. 


It remains to check the holomorphicity of our jet differential~\eqref{best J} along the divisor at infinity $X \cap \{T = 0\}$.

\begin{Question}
{\sl What happens to the general form $J_{YX}$ given in \eqref{best J} after the change of affine chart~\eqref{1-x-chart}?}
\end{Question}

Substituting \eqref{jets from (x,y,z) to tilde(t,y,z)} and \eqref{Wronskian from (y,x) to tilde(y,t)} into \eqref{best J}, the term transforms as follows:
\[
\begin{aligned}
&
\frac{T^{m} X^{t} \widehat{K}_{j,k}}{\widehat{R}_Z^{m-2k}}
(y')^{j}\,(x')^{m-3k-j}\,
\begin{vmatrix} y' & x' \\ y'' & x'' \end{vmatrix}^k
\\
=
&
\frac{T^{m} X^{t} \widehat{K}_{j,k}}{\widehat{R}_Z^{m-2k}}
\,
\big(\frac{\tilde{y}'}{\tilde{t}} - \frac{\tilde{y}\,\tilde{t}'}{\tilde{t}^2}\big)^j\,
\big(-\frac{\tilde{t}'}{\tilde{t}^2}\big)^{m - 3k -j}
\begin{vmatrix} y' & x' \\ y'' & x'' \end{vmatrix}^k 
\\
=
&
\frac{(-1)^k}{\tilde{t}^{2(m-3k) + 3k}}
\frac{T^{m} X^{t} \widehat{K}_{j,k}}{\widehat{R}_Z^{m-2k}}
\,(\tilde{t}\,\tilde{y}' - \tilde{t}'\,\tilde{y})^j\,
(-\tilde{t}')^{m-3k-j}
\begin{vmatrix}
\tilde{y}' & \tilde{t}' \\
\tilde{y}'' & \tilde{t}''
\end{vmatrix}^k
\\
=
&
\frac{1}{\tilde{t}^{2(m-3k) + 3k}}
\sum_{\ell=0}^{j}
(-1)^{m-2k-\ell}
\frac{T^{m} X^{t} \widehat{K}_{j,k}}{\widehat{R}_Z^{m-2k}}
\binom{j}{\ell}
\tilde{t}^{\ell}
\tilde{y}^{j-\ell}
(\tilde{t}')^{m-3k-\ell}
(\tilde{y}')^{\ell}
\begin{vmatrix}
\tilde{y}' & \tilde{t}' \\
\tilde{y}'' & \tilde{t}''
\end{vmatrix}^k,
\end{aligned}
\]
where the last equality is obtained by expanding $(\tilde{t}\,\tilde{y}' - \tilde{t}'\,\tilde{y})^j$ and simplifying. 

Consequently, the general form transforms as
\begin{align*}
J_{yx} 
&= 
\sum_{k=0}^{\lfloor m/3 \rfloor} \sum_{j=0}^{m-3k} 
\frac{1}{\tilde{t}^{2(m-3k) + 3k}}
\sum_{\ell=0}^{j}
(-1)^{m-2k-\ell}
\frac{T^{m} X^{t} \widehat{K}_{j,k}}{\widehat{R}_Z^{m-2k}}
\binom{j}{\ell}
\tilde{t}^{\ell}
\tilde{y}^{j-\ell}
(\tilde{t}')^{m-3k-\ell}
(\tilde{y}')^{\ell}
\begin{vmatrix}
\tilde{y}' & \tilde{t}' \\
\tilde{y}'' & \tilde{t}''
\end{vmatrix}^k
\\
&=
\sum_{k=0}^{\lfloor m/3 \rfloor} \sum_{\ell=0}^{m-3k}  
\frac{(-1)^{m-2k-\ell}}{\tilde{t}^{2(m-3k) + 3k-\ell}}
\sum_{j=\ell}^{m-3k}
\frac{T^{m} X^{t} \widehat{K}_{j,k}}{\widehat{R}_Z^{m-2k}}
\binom{j}{\ell}
\tilde{y}^{j-\ell}
(\tilde{t}')^{m-3k-\ell}
(\tilde{y}')^{\ell}
\begin{vmatrix}
\tilde{y}' & \tilde{t}' \\
\tilde{y}'' & \tilde{t}''
\end{vmatrix}^k
\\
&=
\sum_{k=0}^{\lfloor m/3 \rfloor} \sum_{\ell=0}^{m-3k}  
(-1)^{m-2k-\ell}
\frac{X^{2(m-3k) + 3k-\ell}}{T^{2(m-3k) + 3k-\ell}}
\sum_{j=\ell}^{m-3k}
\frac{T^{m} X^{t} \widehat{K}_{j,k}}{\widehat{R}_Z^{m-2k}}
\binom{j}{\ell}
\frac{Y^{j-\ell}}{X^{j-\ell}}
(\tilde{t}')^{m-3k-\ell}
(\tilde{y}')^{\ell}
\begin{vmatrix}
\tilde{y}' & \tilde{t}' \\
\tilde{y}'' & \tilde{t}''
\end{vmatrix}^k
\\
&=
\sum_{k=0}^{\lfloor m/3 \rfloor}
\sum_{\ell=0}^{m-3k} 
(-1)^{m-2k-\ell} 
\sum_{j=\ell}^{m-3k} 
\frac{X^{m} X^{t} \widehat{K}_{j,k}}{\widehat{R}_Z^{m-2k}}
\binom{j}{\ell} \frac{X^{m-3k-j} Y^{j-\ell}}{T^{m-3k-\ell}}
(\tilde{t}')^{m-3k-\ell}
(\tilde{y}')^{\ell}
\begin{vmatrix}
\tilde{y}' & \tilde{t}' \\
\tilde{y}'' & \tilde{t}''
\end{vmatrix}^k.
\end{align*}

To guarantee the holomorphicity of our jet differential~\eqref{best J} along the divisor at infinity $X \cap \{T = 0\}$, each coefficient
\begin{align*}
&
(-1)^{m-2k-\ell} 
\sum_{j=\ell}^{m-3k} 
\frac{X^{m} X^{t} \widehat{K}_{j,k}}{\widehat{R}_Z^{m-2k}}
\binom{j}{\ell} \frac{X^{m-3k-j} Y^{j-\ell}}{T^{m-3k-\ell}}
\\
=&
(-1)^{m-2k-\ell} 
\frac{X^m X^t}{\widehat{R}_Z^{m-2k}}
\frac{\sum_{j=\ell}^{m-3k} \widehat{K}_{j,k} \binom{j}{\ell} X^{m-3k-j} Y^{j-\ell}}{T^{m-3k-\ell}},
\end{align*}
must be regular on $X \cap \{ X \neq 0 \} \cap \{ \widehat{R}_Z \neq 0 \}$. Then the holomorphicity conditions translate into the following divisibility requirements in the homogeneous quotient ring $\mathbb{C}[T,X,Y,Z] / (\widehat{R})$:
\begin{equation}\label{divisibility by the powers of T cpt}
T^{m-3k-\ell} \; \Big\vert \; \sum_{j=\ell}^{m-3k} \widehat{K}_{j,k} \binom{j}{\ell} X^{m-3k-j} Y^{j-\ell} 
\quad \text{ for each } k=0, \dots , \lfloor \tfrac{m}{3} \rfloor \text{ and } \ell=0, \dots , m-3k.
\end{equation}

\subsection{End of the Proof of the Key Vanishing Lemma~\ref{lem-1.1}}

For a smooth surface $X \subset \mathbb{P}^3$, we consider the bundle $E_{2,m}T_X^\ast$ of invariant $2$-jet differentials and its negative twists:
\[
E_{2,m}T_X^\ast \otimes \mathcal{O}_{X}(-t),
\]
with integer $t \geqslant 1$.

Take the affine equation of the surface $X$ to be a Fermat-type polynomial with a single additional monomial $x^a$:
\[
\begin{aligned}
0 &= z^{d} + y^{d} + x^{d} +1 + x^a
  &=: R(x,y,z),
\end{aligned}
\]
with
\[
d := 17 
, \qquad a = \left\lfloor \tfrac{d}{2} \right\rfloor,
\]
so that its projectivization $\widehat{R}(T,X,Y,Z) \coloneqq X^d + Y^d + Z^d + T^d + X^a T^b$ has degree $d = 17$. One can verify the smoothness of $X$ (including along $X \cap \{T=0\}$ and $X \cap \{X=0\}$) as well as the general position of the five divisors $\widehat{R}, \widehat{R}_X, \widehat{R}_Y, \widehat{R}_Z, \widehat{R}_T$; we omit these routine computations.

For instance, when $m=3$, by Proposition~\ref{Prp-compact-Jyx}, an invariant $2$-jet differential of weighted order $3$ on $X$ takes the following most general local form on \(X \cap \{R_z \neq 0\}\):
\begin{equation*}
J_{yx}
= 
\sum_{0\leqslant j\leqslant 3} \frac{A_j}{R_z^3}\,(y')^{j}\,(x')^{3-j}
+ 
\frac{B_0}{R_z}\,
\begin{vmatrix} y' & x' \\ y'' & x'' \end{vmatrix}.
\end{equation*}
After applying the jet bundle transition — which consists in replacing $y'x'' - y''x'$ by formula~\eqref{Delta-yx-formula} and $y'$ by formula~\eqref{y-prime-formula} — the transition formula becomes:
\[
\begin{aligned}
J_{yx}
&= -\frac{B_0}{R_y}\,
   \begin{vmatrix}
   z' & x' \\
   z'' & x''
   \end{vmatrix} \\
  &\quad + z'z'z'
     \bigg( -\frac{A_3}{R_y^3} \bigg) \\
  &\quad + z'z'x'
     \bigg( -3\,\frac{A_3 R_x}{R_z^1 R_y^3}
           + \frac{A_2}{R_z^1 R_y^2}
           + B_0\,
             \Big[ \frac{R_{zz}}{R_z R_y}
                   - 2\frac{R_z}{R_y^2}\frac{R_{yz}}{R_z}
                   + \frac{R_z}{R_y}\frac{R_z}{R_y}\frac{R_{yy}}{R_z R_y}
             \Big]
     \bigg) \\
  &\quad + z'x'x'
     \bigg( -3\,\frac{A_3 R_x^2}{R_z^2 R_y^3}
           + 2\frac{A_2 R_x}{R_z^2 R_y^2}
           - \frac{A_1}{R_z^2 R_y} \\
  &\qquad\qquad
           + B_0\,
             \Big[ 2\frac{R_{xz}}{R_z R_y}
                   - 2\frac{R_x}{R_y}\frac{R_{yz}}{R_z R_y}
                   - 2\frac{R_z}{R_y}\frac{R_{xy}}{R_z R_y}
                   + 2\frac{R_x}{R_y}\frac{R_z}{R_y}\frac{R_{yy}}{R_z R_y}
             \Big]
     \bigg) \\
  &\quad + x'x'x'
     \bigg( -\frac{A_3 R_x^3}{R_z^3 R_y^3}
           + \frac{A_2 R_x^2}{R_z^3 R_y^2}
           - \frac{A_1 R_x}{R_z^3 R_y}
           + \frac{A_0}{R_z^3} \\
  &\qquad\qquad
           + B_0\,
             \Big[ \frac{R_{xx}}{R_z R_y}
                   - 2\frac{R_x}{R_y}\frac{R_{xy}}{R_z R_y}
                   + \frac{R_x}{R_y}\frac{R_x}{R_y}\frac{R_{yy}}{R_z R_y}
             \Big]
     \bigg).
\end{aligned}
\]

In the first two lines, only $R_y$ appears in denominators — which is allowed on the open set $\{R_y \neq 0\}$ — so no constraints are imposed on $B_0$ or $A_3$ from these terms.

On the third line, however, $R_z$ appears to the first power in denominators. More precisely, the coefficient of $z'z'x'$ in $J_{yx}$, which we denote by $\big[ z'z'x' \big] \big( J_{yx} \big)$, factors as:
\[
\big[ z'z'x' \big] \big( J_{yx} \big)
= \frac{
   -3A_3 R_x + A_2 R_y + B_0\big[ R_y^2 R_{zz} - 2R_y R_z R_{yz} + R_z^2 R_{yy} \big]
   }{
   R_z R_y^3
   }.
\]

To ensure the holomorphicity after transitioning to the set $\{R_y \neq 0\}$ and after clearing out $R_y$, it is necessary that $R_z$ divides the numerator:
\[
R_z \;\Big|\; -3A_3 R_x + A_2 R_y + B_0\bigl[ R_y^2 R_{zz} - 2R_y R_z R_{yz} + R_z^2 R_{yy} \bigr] \quad \text{in}\quad \mathfrak{R}=\mathbb{C}[x,y,z]/(R).
\]

Since the last two terms inside the brackets are already divisible by \(R_z\), this condition reduces to
\[
R_z \;\Big|\; -3A_3 R_x + A_2 R_y + B_0 R_y^2 R_{zz} \quad \text{in}\quad \mathfrak{R}.
\]

Next, the coefficients of $z'x'x'$ and $x'x'x'$ in $J_{yx}$ are respectively:
\[
\begin{aligned}
\big[ z'x'x' \big] \big( J_{yx} \big)
&= \frac{
   -3A_3 R_x^2 + 2A_2 R_x R_y - A_1 R_y^2
   + 2B_0\big[ R_y^2 R_z R_{xz} - R_x R_y R_z R_{yz} - R_y R_z^2 R_{xy} + R_x R_z^2 R_{yy} \big]
   }{
   R_z^2 R_y^3
   }, \\[4mm]
\big[ x'x'x' \big] \big( J_{yx} \big)
&= \frac{
   -A_3 R_x^3 + A_2 R_x^2 R_y - A_1 R_x R_y^2 + A_0 R_y^3
   + B_0\big[ R_y^2 R_z^2 R_{xx} - 2R_x R_y R_z^2 R_{xy} + R_x^2 R_z^2 R_{yy} \big]
   }{
   R_z^3 R_y^3
   }.
\end{aligned}
\]

After clearing the admissible powers of \(R_y\) from the denominators, the holomorphicity conditions on the chart \(\{R_y \neq 0\}\) translate into the following divisibility requirements in the quotient ring \(\mathfrak{R}\):
\[
\begin{aligned}
R_z^2 &\;\Big\vert\; -3A_3 R_x^2 + 2A_2 R_x R_y - A_1 R_y^2 \\
      &\qquad\; +\; 2B_0\bigl[ R_y^2 R_z R_{xz} - R_x R_y R_z R_{yz} - R_y R_z^2 R_{xy} + R_x R_z^2 R_{yy} \bigr] \quad \text{in}\quad \mathfrak{R}, \\[4mm]
R_z^3 &\;\Big\vert\; -A_3 R_x^3 + A_2 R_x^2 R_y - A_1 R_x R_y^2 + A_0 R_y^3 \\
      &\qquad\; +\; B_0\bigl[ R_y^2 R_z^2 R_{xx} - 2R_x R_y R_z^2 R_{xy} + R_x^2 R_z^2 R_{yy} \bigr] \quad \text{in}\quad \mathfrak{R}.
\end{aligned}
\]

In general, for any $m \geqslant 3$, returning to the expression of $J_{yx}$ given as~\eqref{best J_yx} and applying the jet bundle transition — which replaces $y'x'' - y''x'$ via formula~\eqref{Delta-yx-formula} and $y'$ via formula~\eqref{y-prime-formula} — we obtain an expansion of the form
\[
J_{yx}
= x^t \sum_{i+j+3k=m}
   {x'}^i {z'}^j
   \begin{vmatrix}
   z' & x' \\
   z'' & x''
   \end{vmatrix}^k
   \frac{
      \Eq_{i,j,k}\big( K_{j,k}; j_{x,y,z}^m R \big)
   }{
      (R_z)^i \; R_y^{\,m-2k}
   },
\]
where $\Eq_{i,j,k}$ are certain complicated polynomials in the coefficient functions $K_{j,k}$ appearing in the initial expression of $J_{yx}$ (see~\eqref{best J_yx}) and in the $m$-jet $j_{x,y,z}^m R(x,y,z)$.

Since division by \(R_y^{\,m-2k}\) is allowed on the open set \(\{R_y \neq 0\}\), the only remaining constraints arise from the powers \((R_z)^i\) appearing in the denominator. Holomorphicity on \(\{R_y \neq 0\}\) therefore requires, for every triple \((i,j,k) \in \mathbb{N}^3\) with \(i+j+3k = m\),
\[
(R_z)^i \;\Big\vert\; \Eq_{i,j,k}\bigl( K_{j,k}; j_{x,y,z}^m R \bigr) \quad \text{in}\quad \mathfrak{R}.
\]

Having chosen the defining polynomial to be of the Fermat-type perturbation
\[
R := z^{d} + y^{d} + x^{d} + x^{a} + 1,
\]
for which \(R_z = d z^{d-1}\), these divisibility conditions become
\[
(z^{d-1})^i \;\Big\vert\; \Eq_{i,j,k}\bigl( K_{j,k}; j_{x,y,z}^m R \bigr) \quad \text{in}\quad \mathfrak{R}.
\]

For computational implementation, Lemma~\ref{Lemma-divisibility} guarantees that we need only work with the unique normal form $\operatorname{red}(\Eq_{i,j,k})$ of each $\Eq_{i,j,k}$. This normal form is obtained by repeatedly replacing any factor $y^i$ with $i \geqslant d$ in $\Eq_{i,j,k}$ using the relation
\begin{equation*}
    y^{d} = - (z^{d} + x^{d} + x^{a} + 1),
\end{equation*}
until the resulting polynomial in $\mathbb{C}[x,y,z]$ has degree in $y$ strictly less than $d$.

We then expand each $\operatorname{red}(\Eq_{i,j,k})$ as a polynomial in $z$ and discard all terms of degree $\geqslant (d-1)i$, keeping only the remainder modulo $z^{(d-1)i}$:
\[
\Eq_{i,j,k}^{\text{(trunc)}} := \operatorname{red}(\Eq_{i,j,k}) \pmod{z^{(d-1)i}}.
\]
All remaining terms must vanish identically for the original divisibility condition to hold, thereby converting the geometric constraints into an explicit linear system.

Next, we proceed inductively with respect to \(i = 1, 2, \dots, m\) (starting naturally from \(i = 1\)). For each fixed \(i\), we collect all divisibility equations sharing the same power of \(R_z\), namely:
\begin{equation}\label{divisibility equations in mathbbC[x,y,z]}
    (z^{(d-1)})^i \;\Big\vert\; \Eq_{i,j,k}^{\text{(trunc)}} \quad \text{in } \mathbb{C}[x,y,z], \qquad \text{for all } (j,k) \text{ with } j+3k = m-i.
\end{equation}

The computer then extracts, for each triple \((i,j,k) \in \mathbb{N}^3\) with \(j+3k = m-i\), all coefficients
\[
\bigl[ x^p y^q z^r \bigr] \bigl( \Eq_{i,j,k}^{\text{(trunc)}} \bigr), \qquad r \leqslant (d-1)i-1,
\]
and sets them to zero in order to satisfy the divisibility conditions~\eqref{divisibility equations in mathbbC[x,y,z]}. Here, the notation \([x^p y^q z^r](\cdot)\) denotes the coefficient of the monomial \(x^p y^q z^r\) in the polynomial expansion. These vanishing conditions yield a linear system that must be satisfied by the coefficients of the polynomials \(\{K_{j,k}\}\) appearing in the local expression of the jet differential.

For each fixed $i$, this linear system is solved symbolically. The solutions are stored in memory before proceeding to the next value of $i$, up to $i=m$. At each stage, intermediate feedback — such as the number of nonzero terms remaining in the full expansions of $\{K_{j,k}\}$ — is monitored to track progress.

It remains to check the holomorphicity of the jet differential~\eqref{best J_yx} along the divisor \(\{T = 0\}\) at infinity.

To this end, we rewrite the polynomials \(K_{j,k} \in \mathbb{C}[x,y,z]\) appearing in \eqref{best J_yx} as homogeneous polynomials $\widehat{K}_{j,k} \in \mathbb{C}[T,X,Y,Z]$, where each \(\widehat{K}_{j,k}(T,X,Y,Z)\) satisfies
\[
\deg \widehat{K}_{j,k} = (m-2k)(d-1) - m + j + k - t
\quad\text{and}\quad
\deg_Y \widehat{K}_{j,k} < d,
\]
for \(m \leqslant 11\), \(k = 0, \dots, 3\), and \(j = 0, \dots, m-3k\).

Then, as in \eqref{divisibility by the powers of T cpt}, we must verify the following divisibility conditions in the homogeneous quotient ring \(\mathbb{C}[T,X,Y,Z] / (\widehat{R})\):
\begin{equation*}
T^{m-3k-\ell} \;\Big\vert\; \HEq_{\ell,k}(\widehat{K}_{j,k}, X, Y)
\quad \text{in } \mathbb{C}[T,X,Y,Z] / (\widehat{R}), \qquad \text{for every } k = 0, \dots, 3 \text{ and } \ell = 0, \dots, m-3k,
\end{equation*}
where \(\HEq_{\ell,k}(\widehat{K}_{j,k}, X, Y)\) are certain polynomials in the coefficient functions \(\widehat{K}_{j,k}\) (from the initial expression of \(J_{YX}\) in \eqref{best J}) and in \(X, Y\). Explicitly, as in \eqref{divisibility by the powers of T cpt},
\begin{equation*}
\HEq_{\ell,k}(\widehat{K}_{j,k}, X, Y)
=
\sum_{j=\ell}^{m-3k} \widehat{K}_{j,k} \binom{j}{\ell} X^{m-3k-j} Y^{j-\ell}.
\end{equation*}

Proceeding in a completely analogous manner — expanding \(\HEq_{\ell,k}\) in \(T\) and discarding higher-degree terms — the divisibility conditions in \(T\) translate into a second linear system, which is constructed and solved symbolically by the computer following the same inductive procedure.

For computational implementation, Lemma~\ref{Lemma-divisibility} guarantees that we need only work with the unique homogeneous normal form \(\operatorname{red}(\HEq_{\ell,k})\) of each \(\HEq_{\ell,k}\). This homogeneous normal form is obtained by repeatedly replacing any factor \(Y^i\) with \(i \geqslant d\) in \(\HEq_{\ell,k}\) using the relation
\[
Y^{d} = -\bigl(Z^{d} + X^{d} + T^{d-a} X^{a} + T^{d}\bigr),
\]
until the resulting polynomial in \(\mathbb{C}[T,X,Y,Z]\) has degree in \(Y\) strictly less than \(d\).

We then expand each \(\operatorname{red}(\HEq_{\ell,k})\) as a polynomial in \(T\) and discard all terms of degree at least \(m-3\ell-k\), keeping only the remainder modulo \(T^{m-3\ell-k}\). This yields the truncated expression
\[
\HEq_{\ell,k}^{\text{(trunc)}} := \operatorname{red}(\HEq_{\ell,k}) \pmod{T^{m-3\ell-k}},
\]
i.e., the polynomial obtained from \(\operatorname{red}(\HEq_{\ell,k})\) by removing any factor of \(T^{m-3\ell-k}\). For the original divisibility condition to hold, all remaining terms must vanish identically, thereby converting the geometric constraints into another explicit linear system.

Next, we proceed inductively with respect to \(i = 1, 2, \dots, m\) (starting naturally from \(i = 1\)). For each fixed \(i\), we collect all divisibility equations sharing the same power of \(T\), namely:
\begin{equation}\label{divisibility by the powers of T equations cpt}
T^{i} \;\Big\vert\; \HEq_{\ell,k}^{\text{(trunc)}} \quad \text{in } \mathbb{C}[T,X,Y,Z], \qquad \text{for all } (\ell,k) \text{ with } \ell + 3k = m - i.
\end{equation}

The computer then extracts, for each triple \((i,\ell,k) \in \mathbb{N}^3\) with \(\ell+3k = m-i\), all coefficients
\[
\bigl[ T^{s} X^{p} Y^{q} Z^{r} \bigr] \bigl( \HEq_{\ell,k}^{\text{(trunc)}} \bigr), \qquad s \leqslant m-3\ell-k-1 = i-1,
\]
and sets them to zero in order to satisfy the divisibility conditions~\eqref{divisibility by the powers of T equations cpt}. Here, the notation \([T^{s} X^{p} Y^{q} Z^{r}](\cdot)\) denotes the coefficient of the monomial \(T^{s} X^{p} Y^{q} Z^{r}\) in the homogeneous polynomial expansion. These vanishing conditions yield a linear system that must be satisfied by the coefficients of the polynomials \(\widehat{K}_{j,k}\) in \eqref{best J} (which correspond to the coefficients of \(\{K_{j,k}\}\) in \eqref{best J_yx}) appearing in the local expression of the jet differential.

Finally, combining the two linear systems obtained from the divisibility conditions in \(z\) and in \(T\), a computer algebra verification shows that all coefficients of \(\{K_{j,k}\}\) are forced to vanish. This confirms that for the specified pairs \((m,t)\) with \(d = 17\), only the trivial solution exists. The Key Vanishing Lemma~\ref{lem-1.1} for a generic surface \(X \subset \mathbb{P}^3\) of degree \(d = 17\) then follows by a standard semicontinuity argument.

For execution, a single Maple file handles all cases in the compact setting.\footnote{The Maple code for the compact case is available at \url{https://xiesongyan.github.io/}. The parameters \((m, t)\) and \(d\) at the beginning of the code can be adjusted as needed to verify the required cases.} \qed

\begin{rem}
\label{surprise-2}
\rm
  Our computer-assisted computations for surfaces in \(\mathbb{P}^{3}\) produce several nonvanishing examples that further confirm the reliability of the code.
  For the parameter \((m,t)=(3,1)\), the dimension of
  \(H^{0}\bigl(X,\;E_{2,3}T^{*}_{X}\otimes\mathcal{O}_{X}(-1)\bigr)\) is:
  \begin{itemize}
    \item \(1\) for the degree \(9\) Fermat surface
      \(X^{9}+Y^{9}+Z^{9}+T^{9}=0\);
    \item \(1\) for the degree \(15\) surface
      \(X^{15}+Y^{15}+Z^{15}+T^{15}+X^{7}T^{8}=0\);
    \item \(5\) for the degree \(17\) surface
      \(X^{17}+Y^{17}+Z^{17}+T^{17}+X^{8}T^{9}=0\).
  \end{itemize}
  The presence of these nontrivial sections supports the correctness of our algorithm: if every test had returned only zero, one might have suspected a systematic mistake.
  Instead, the code accurately detects negatively twisted invariant jet differentials whenever they are present.
\end{rem}

\medskip

\section{\bf Applications of Siu's Slanted Vector Fields}
\label{sect:4}

To obtain an additional family of independent negatively twisted jet differentials, we employ Siu's method of slanted vector fields~\cite{Siu2004}. This technique generalizes to $k$-jet spaces the original construction for $1$-jet spaces introduced by Clemens~\cite{Clemens1986}, Ein~\cite{Ein1988}, and Voisin~\cite{Voisin1996} in their study of rational curves.

The method was first implemented in the compact surface case by P\u{a}un~\cite{mihaipaun2008} for surfaces in $\mathbb{P}^3$. Later, Rousseau~\cite{Rousseau2009} established the logarithmic counterpart for pairs $(\mathbb{P}^2, D)$ with curve configurations $D$, and further extended both P\u{a}un's setting and the logarithmic framework to threefolds in~\cite{Rousseau2007cpt,Rousseau2007log}. For hypersurfaces $H \subset \mathbb{P}^n$ of arbitrary dimension, Merker~\cite{Merker2009} achieved the full generalization, thereby realizing Siu's original expectation on generic global generation. The logarithmic counterpart for pairs $(\mathbb{P}^n, H)$ was subsequently established by Darondeau~\cite{Darondeau_vectorfield}.

This section reviews the two-dimensional case of Siu's slanted vector field construction. The presentation follows the classical treatment~\cite{mihaipaun2008, Rousseau2009} and claims no originality.

\subsection{Compact Case}\label{subsec:pre to SVF in cpt case}

Let \(\mathcal{X} \subset \mathbb{P}^3 \times \mathbb{P}^{N_d^3}\) denote the universal family of algebraic surfaces of degree \(d\) in \(\mathbb{P}^3\), where \(N_d^3 \coloneqq \binom{d+3}{3}-1\) is the dimension of the parameter space. It is defined by the homogeneous equation
\[
\sum_{|\alpha| = d} A_{\alpha} Z^{\alpha}=0.
\]
Here \([A] = [A_{\alpha}]_{|\alpha|=d} \in \mathbb{P}^{N_d^3}\) are the coefficients and \([Z] = [Z_0:Z_1:Z_2:Z_3] \in \mathbb{P}^3\) are the homogeneous coordinates. For a multi‑index \(\alpha = (\alpha_0,\alpha_1,\alpha_2,\alpha_3) \in \mathbb{N}^4\) with length \(|\alpha| \coloneqq \alpha_0+\alpha_1+\alpha_2+\alpha_3\), we write \(Z^{\alpha} \coloneqq Z_0^{\alpha_0}Z_1^{\alpha_1}Z_2^{\alpha_2}Z_3^{\alpha_3}\).

The \emph{vertical \(2\)-jet bundle} of the universal family is defined as
\[
J_2^v(\mathcal{X}) \coloneqq \ker\bigl((\mathrm{pr}_2)_*\bigr) \;\subset\; J_2(\mathcal{X}),
\]
where \(\mathrm{pr}_2 \colon \mathcal{X} \to \mathbb{P}^{N_d^3}\) is the projection onto the parameter space and \(J_2(\mathcal{X})\) is the full \(2\)-jet bundle of \(\mathcal{X}\).

On the affine chart \(\{Z_0 \neq 0\} \times \{A_{0d00} \neq 0\}\) of \(\mathbb{P}^3 \times \mathbb{P}^{N_d^3}\), we use the affine coordinates
\[
z_i \coloneqq \frac{Z_i}{Z_0}\;(i=1,2,3),\qquad
a_{\alpha} \coloneqq \frac{A_{\alpha_0\alpha_1\alpha_2\alpha_3}}{A_{0d00}},
\]
where \(\alpha_0 = d - \alpha_1-\alpha_2-\alpha_3\). Set
\[
\mathcal{X}_0 \coloneqq \mathcal{X} \cap \bigl(\{Z_0 \neq 0\} \times \{A_{0d00} \neq 0\}\bigr).
\]
In these coordinates \(\mathcal{X}_0\) is defined by the equation
\[
z_1^{d} + \sum_{\substack{\alpha_1+\alpha_2+\alpha_3 \leqslant d \\ \alpha_1 < d}} a_{\alpha} z^{\alpha}=0.
\]

For jets of order \(j = 1,2\) we write \(\xi_i^{(j)} \coloneqq z_i^{(j)}\) for \(i=1,2\). 
A local coordinate system on
\[
J_2\bigl(\{Z_0 \neq 0\} \times \{A_{0d00} \neq 0\}\bigr) \;\subset\; J_2\bigl(\mathbb{P}^3 \times \mathbb{P}^{N_d^3}\bigr)
\]
is then given by the tuple
\[
\bigl(z,\; a,\; \xi',\; \xi''\bigr),
\]
where
\begin{itemize}
    \item \(z = (z_1,z_2,z_3)\) are the affine coordinates on \(\{Z_0 \neq 0\}\),
    \item \(a = (a_{\alpha})\) with \(\alpha_1+\alpha_2+\alpha_3 \leqslant d,\ \alpha_1 < d\) are the normalized coefficients,
    \item \(\xi' = (\xi_1',\xi_2',\xi_3')\) and \(\xi'' = (\xi_1'',\xi_2'',\xi_3'')\) are the first and second jet coordinates.
\end{itemize}

 Following Siu’s strategy~\cite{Siu2004} as implemented by Păun~\cite{mihaipaun2008}, we work with the family of slanted vector fields of pole order \(7\).

\begin{thm}[{\cite[Proposition 1.3]{mihaipaun2008}}]\label{thm:globally generated vector field of pole order 7 cpt case}
Let \(\Sigma_0\) be the common zero locus of the three \(2\times 2\) Wronskians
\[
\det\begin{pmatrix}\xi_i' & \xi_j' \\ \xi_i'' & \xi_j''\end{pmatrix}
\qquad (1 \leqslant i < j \leqslant 3),
\]
that is,
\[
\Sigma_0 \coloneqq 
\Bigl\{\bigl(z,\;a,\;\xi',\;\xi''\bigr) \in J_2^v(\mathcal{X}_0)
\;\Bigm|\;
\xi'\wedge \xi'' = 0 \Bigr\}.
\]
Let \(\Sigma\) be the closure of \(\Sigma_0\) in \(J_2^v(\mathcal{X})\). Denote by
\[
p \colon J_2^v(\mathcal{X}) \longrightarrow \mathcal{X}
\]
the natural projection, and by \(\mathrm{pr}_1\colon\mathcal{X}\to\mathbb{P}^3\) and \(\mathrm{pr}_2\colon\mathcal{X}\to\mathbb{P}^{N_d^3}\) the two projections of the universal family. 
 Then the twisted tangent bundle
\[
T_{J_2^v(\mathcal{X})}
\otimes p^*\mathrm{pr}_1^*\mathcal{O}_{\mathbb{P}^3}(7)
\otimes p^*\mathrm{pr}_2^*\mathcal{O}_{\mathbb{P}^{N_d^3}}(1)
\]
is generated by its global sections at every point of \(J_2^v(\mathcal{X})\setminus\Sigma\).
\end{thm}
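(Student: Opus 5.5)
The plan is to follow Siu's construction as implemented by P\u{a}un~\cite{mihaipaun2008}. We write down explicit meromorphic vector fields on the affine chart of $J_2^v(\mathcal{X}_0)$, check that they are tangent to $J_2^v(\mathcal{X})$, bound their pole orders along $\{Z_0=0\}$ by $7$ (and in $a$ by $1$), and show that they span the tangent space at every point outside $\Sigma$.

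In the coordinates $(z,a,\xi',\xi'')$, the subvariety $J_2^v(\mathcal{X}_0)$ is cut out by three equations:
\[
f(z,a)=0,\qquad \textstyle\sum_i f_{z_i}\xi_i'=0,\qquad \sum_i f_{z_i}\xi_i''+\sum_{i,j}f_{z_iz_j}\xi_i'\xi_j'=0,
\]
where $f=z_1^d+\sum a_\alpha z^\alpha$. A vector field $V=\sum v_\alpha\partial_{a_\alpha}+\sum w_i\partial_{z_i}+\sum w_i'\partial_{\xi_i'}+\sum w_i''\partial_{\xi_i''}$ is tangent to $J_2^v(\mathcal{X}_0)$ exactly when it annihilates these three functions modulo the ideal they generate. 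We will produce three families of such fields.

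\emph{First family (parameter directions).} Fix a multi-index $\alpha$ with $|\alpha|\leqslant d-3$. Set
\[
V_\alpha=\sum_{|\beta|\leqslant 3}v_\beta\,\partial_{a_{\alpha+\beta}},
\]
with constant coefficients $v_\beta$ subject to the linear conditions
\[
\sum_\beta v_\beta z^\beta=0,\qquad \sum_\beta v_\beta(z^\beta)'=0,\qquad \sum_\beta v_\beta(z^\beta)''=0 .
\]
These are $3$ linear conditions on $20$ unknowns, with coefficients polynomial in $(z,\xi',\xi'')$ of weighted degree $\leqslant 3$. The fields $V_\alpha$ kill all three equations and have no pole in $z$. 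Together with the analogous fields where more of the monomial is absorbed, they generate the full $a$-directions of the kernel of $d(\mathrm{pr}_2)$ restricted to $J_2^v$.

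\emph{Second family (base and jet directions).} For each $j$, lift $\partial_{z_j}$ and add an $a$-correction $-\sum_\alpha(\partial_{z_j}z^\alpha)\ldots$ that cancels the $z$-derivative of $f$, $f'$ and $f''$. Because $a_\alpha$ enters linearly, this correction is polynomial in $z$ of degree $\leqslant 1$ on the three equations. Its pole order is $1$ in the $a$-variables, which is the source of the twist $\mathcal{O}_{\mathbb{P}^{N_d^3}}(1)$.

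\emph{Third family (jet-space directions).} The directions in $\xi',\xi''$ come from the action of matrices $B\in\mathfrak{gl}_3$ acting on $(z,\xi',\xi'')$ by $(Bz,B\xi',B\xi'')$, compensated by $a$-vector fields that restore $f$. These compensating fields have coefficients of degree at most $1$ in $z$.

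For the remaining directions (moving $\xi''$ independently of $\xi'$) we use the $3\times 3$ matrix with rows $z,\xi',\xi''$, or more precisely the Wronskian matrix. Off $\Sigma$ the vectors $\xi',\xi''$ are independent, so we can invert a suitable Wronskian minor. Cramer's rule then expresses the needed correction coefficients as polynomials of weighted degree $\leqslant 3$ in $(\xi',\xi'')$ divided by a $2\times 2$ Wronskian. The division is harmless off $\Sigma$ because the minors generate the unit ideal there.

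\emph{Pole orders at infinity.} Pass to the chart $\{Z_1\neq0\}$, or $\{Z_i\neq0\}$, using the formulas analogous to \eqref{jets from (x,y,z) to tilde(t,y,z)}: $z_i=\tilde z_i/\tilde t$, $z_i'=\tilde z_i'/\tilde t-\tilde z_i\tilde t'/\tilde t^2$, and similarly for $z_i''$. Count the exponent of $\tilde t$ in each coefficient. The monomials $z^\beta$ of degree $\leqslant 3$, together with their second derivatives, contribute at most $\tilde t^{-3}$ from $z^\beta$ and up to two more from each jet derivative. The Cramer denominators and the $\partial_{z}$ and $\partial_{\xi}$ transformations contribute the rest. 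Tracking these carefully for each family should give total pole order $\leqslant 7$, and this is where the exponent in $\mathrm{pr}_1^*\mathcal{O}_{\mathbb{P}^3}(7)$ comes from. A symmetric argument handles the other charts of $\mathbb{P}^{N_d^3}$, yielding the single twist in $a$.

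\emph{Spanning and the main obstacle.} Finally, at a point of $J_2^v(\mathcal{X})\setminus\Sigma$ we check that the values of these twisted global sections span the tangent space. The $a$-fields fill the fibre directions. The $\mathfrak{gl}_3$-type fields together with the $\partial_{z_j}$-lifts fill the $z$ and jet directions. By a dimension count this reduces to the invertibility of the Wronskian, which holds exactly off $\Sigma$.

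I expect the main obstacle to be the pole-order bookkeeping at infinity for the jet-direction fields. A naive construction gives pole order $8$ or $9$. To reach $7$ one must choose the compensating degree-$\leqslant 3$ monomials adapted to the chart, following P\u{a}un's explicit normalisation, and verify that leading $\tilde t$-powers cancel using the relation $\sum f_{z_i}\xi_i'=0$. My first attempt would be to write these fields in homogeneous coordinates $[Z_0:\dots:Z_3]$ from the start, so that the degree of homogeneity directly reads off the twist.
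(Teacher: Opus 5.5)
The paper gives no proof of this statement: it is quoted from P\u{a}un, whose argument writes down explicit tangent vector fields on $J_2^v(\mathcal{X}_0)$ and computes their pole orders in a chart at infinity. Your outline has the right architecture. It uses vertical $\partial_a$-fields, lifts of $\partial_{z_j}$ with a correction linear in $a$, and $\mathfrak{gl}_3$-type fields. But it is not a proof. The essential quantitative content of the statement is that all fields can be taken with pole order $\leqslant 7$ along $\{Z_0=0\}$; this is what makes $t\geqslant 8$ the threshold in Section~5. You leave that as ``should give total pole order $\leqslant 7$'', and you yourself expect a naive construction to give $8$ or $9$. So the exponent $7$ is exactly what remains unproved.

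Several steps are also wrong as written.

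\emph{First family.} The $v_\beta$ cannot be constants. The conditions vary with $(z,\xi',\xi'')$, and a constant $v$ with $\sum_\beta v_\beta z^\beta=0$ at every point is zero. The $v_\beta$ must therefore be nonconstant polynomials in $(z,\xi',\xi'')$, so your claim of ``no pole in $z$'' is false. Let $I_j$ be the ideal of polynomials whose $2$-jet along $j=(z,\xi',\xi'')$ vanishes. The coefficients of $(Z-z)^\beta$ with $|\beta|=3$ already have pole order $3$. Generators of the remaining $7$-dimensional quotient $I_j/\mathfrak{m}_z^3$ must depend on $\xi',\xi''$. These pick up poles at infinity: $\xi'$ of order $2$ and $\xi''$ of order $3$, by the analogues of \eqref{jets from (x,y,z) to tilde(t,y,z)}. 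So the vertical directions are not controlled.

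\emph{``Remaining directions'' step.} Dividing by a $2\times 2$ Wronskian creates poles along $\{W_{ij}=0\}$. This hypersurface is not contained in $\Sigma$, so the resulting fields are not global sections. Multiplying by the Wronskian instead adds pole order $3$ (cf.\ \eqref{Wronskian from (y,x) to tilde(y,t)}), which feeds into the very count you do not carry out. The step is in any case unnecessary. If $\xi'\wedge\xi''\neq 0$, the map $B\mapsto (B\xi',B\xi'')$ from $\mathfrak{gl}_3$ to $\mathbb{C}^3\times\mathbb{C}^3$ is already surjective. That surjectivity, not a Cramer inversion, is where $\Sigma$ enters.

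What is missing is concrete. You need an explicit finite family of tangent fields whose values span $T_{J_2^v(\mathcal{X})}$ off $\Sigma$, including explicit polynomial generators of the vertical space $\{v : \sum_\alpha v_\alpha Z^\alpha \in I_j\}$. You then need to check, in the chart $\{X\neq 0\}$, that each field has pole order $\leqslant 7$ in $\tilde t$ and degree $\leqslant 1$ in $a$.
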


\medskip
\subsection{Logarithmic Case}\label{subsec:pre to SVF in log case}

Let \(\mathcal{C} \subset \mathbb{P}^2 \times \mathbb{P}^{N_d^2}\) denote the universal family of algebraic curves of degree \(d\) in \(\mathbb{P}^2\), where \(N_d^2 \coloneqq \binom{d+2}{2}-1\) is the dimension of the parameter space. It is defined by the homogeneous equation
\[
\sum_{|\alpha| = d} A_{\alpha} Z^{\alpha}=0.
\]
Here \([A] = [A_{\alpha}]_{|\alpha|=d} \in \mathbb{P}^{N_d^2}\) are the coefficients and \([Z] = [Z_0:Z_1:Z_2] \in \mathbb{P}^2\) the homogeneous coordinates. For a multi‑index \(\alpha = (\alpha_0,\alpha_1,\alpha_2) \in \mathbb{N}^3\) with length \(|\alpha| \coloneqq \alpha_0+\alpha_1+\alpha_2\), we write \(Z^{\alpha} \coloneqq Z_0^{\alpha_0}Z_1^{\alpha_1}Z_2^{\alpha_2}\).

The \emph{vertical logarithmic \(2\)-jet space} associated with this family is defined as
\[
\overline{J_2^v}\bigl(\mathbb{P}^2 \times \mathbb{P}^{N_d^2}\bigr)
\coloneqq \ker\bigl((\mathrm{pr}_2)_*\bigr)
\;\subset\; J_2\bigl(\mathbb{P}^2 \times \mathbb{P}^{N_d^2},\, -\log\mathcal{C}\bigr),
\]
where \(\mathrm{pr}_2 \colon \mathbb{P}^2 \times \mathbb{P}^{N_d^2} \to \mathbb{P}^{N_d^2}\) is the projection onto the parameter space, and \(J_2\bigl(\mathbb{P}^2 \times \mathbb{P}^{N_d^2},\, -\log\mathcal{C}\bigr)\) is the logarithmic \(2\)-jet bundle along the universal curve \(\mathcal{C}\).

We define the universal family \(\mathcal{Y} \subset \mathbb{P}^3 \times \mathbb{P}^{N_d^2+1}\) of degree-\(d\) hypersurfaces in \(\mathbb{P}^3\) by
\[
\mathcal{Y} \coloneqq 
\Bigl\{ A_{\mathbf{0}} Z_3^{d} + \sum_{|\alpha| = d} A_{\alpha} Z^{\alpha} = 0 \Bigr\}
\subset \mathbb{P}^3 \times \mathbb{P}^{N_d^2+1}.
\]
Here \(\mathbb{P}^3\) has homogeneous coordinates \([Z:Z_3]=[Z_0:Z_1:Z_2:Z_3]\), and the coefficient space \(\mathbb{P}^{N_d^2+1}\) is equipped with homogeneous coordinates \([A:A_{\mathbf{0}}]=[\dots:A_{\alpha}:\dots:A_{\mathbf{0}}]_{|\alpha|=d}\).

For notational convenience, let
\[
\{Z=\mathbf{0}\} \coloneqq \{Z_0=Z_1=Z_2=0\}
=
[0: 0: 0: 1]\times \mathbb{P}^{N_d^2+1}
\subset \mathbb{P}^3 \times \mathbb{P}^{N_d^2+1},
\]
\[
\{A=\mathbf{0}\} \coloneqq \{ A_{\alpha}=0 \text{ for all } |\alpha|=d\}
=
\mathbb{P}^3 \times
[0: \cdots : 0: 1]
\subset \mathbb{P}^3 \times \mathbb{P}^{N_d^2+1}.
\]
There is a natural forgetful map
\[
\pi \colon \mathbb{P}^3 \times \mathbb{P}^{N_d^2+1} \setminus \bigl(\{Z=\mathbf{0}\} \cup \{A=\mathbf{0}\}\bigr)
\longrightarrow \mathbb{P}^2 \times \mathbb{P}^{N_d^2}
\]
defined by discarding the auxiliary coordinates:
\[
\pi\bigl([Z:Z_3],\,[A:A_{\mathbf{0}}]\bigr) \mapsto \bigl([Z_0:Z_1:Z_2],\,[A]\bigr).
\]

Observe first the containment
\begin{equation}\label{containment relation}
\mathcal{Y} \cap \bigl(\{Z = \mathbf{0}\} \cup \{A = \mathbf{0}\}\bigr) 
\subset \mathcal{Y} \cap \bigl(\{A_{\mathbf{0}}=0\} \cup \{A=\mathbf{0}\}\bigr),
\end{equation}
which follows from the inclusion $\mathcal{Y} \cap \{Z=\mathbf{0}\} \subset \mathcal{Y} \cap \{A_{\mathbf{0}}=0\}$, a direct consequence of the defining equation of $\mathcal{Y}$.

Now set
\[
\mathcal{Y}^* \coloneqq \mathcal{Y} \setminus \bigl(\{A_{\mathbf{0}}=0\} \cup \{A=\mathbf{0}\}\bigr).
\]
By~\eqref{containment relation}, the restricted forgetful map
\[
\pi|_{\mathcal{Y}^*} \colon \mathcal{Y}^* \longrightarrow \mathbb{P}^2 \times \mathbb{P}^{N_d^2}
\]
is everywhere defined. Moreover, one checks that
\[
\bigl(\pi|_{\mathcal{Y}^*}\bigr)^{-1}(\mathcal{C}) = \{Z_3=0\} \eqqcolon \mathcal{D}.
\]
Thus $\pi$ induces a logarithmic morphism from the pair $(\mathcal{Y}^*,\mathcal{D})$ to $(\mathbb{P}^2\times\mathbb{P}^{N_d^2},\mathcal{C})$, and consequently a dominant map between the corresponding vertical logarithmic jet spaces:
\begin{equation}\label{pi_[2]}
    \pi_{[2]} \colon \overline{J_2^v}(\mathcal{Y}^*) \longrightarrow \overline{J_2^v}\!\bigl(\mathbb{P}^2\times\mathbb{P}^{N_d^2}\bigr).
\end{equation}
Here $\overline{J_2^v}(\mathcal{Y}^*)$ denotes the space of vertical logarithmic $2$-jets with respect to the hyperplane divisor $\mathcal{D}=\{Z_3=0\}$. This construction allows us to carry out all local computations on the more explicit space $\mathcal{Y}^*$ by pulling back via $\pi$, where coordinate expressions are readily available.

On the affine chart \(\{Z_0 \neq 0\} \times \{A_{\mathbf{0}} \neq 0\}\) we introduce the affine coordinates
\[
z_i \coloneqq \frac{Z_i}{Z_0}\;(i=1,2,3),\qquad
a_{\alpha} \coloneqq \frac{A_{\alpha_0\alpha_1\alpha_2}}{A_{\mathbf{0}}},
\]
where \(\alpha_0 = d - \alpha_1 - \alpha_2\). Set
\[
\mathcal{Y}_0 \coloneqq \mathcal{Y} \cap \bigl(\{Z_0 \neq 0\} \times \{A_{\mathbf{0}} \neq 0\}\bigr).
\]
In these coordinates \(\mathcal{Y}_0\) is defined by
\[
z_3^{d} + \sum_{|\alpha|=\alpha_1+\alpha_2 \leqslant d} a_{\alpha} z^{\alpha}=0.
\]

To streamline notation, for jets of order \(j = 1,2\) we define
\[
\xi_i^{(j)} \coloneqq 
\begin{cases}
z_i^{(j)} & (i=1,2),\\[2mm]
(\log z_3)^{(j)} & (i=3).
\end{cases}
\]
A local coordinate system for
\[
\overline{J_2}\bigl(\{Z_0 \neq 0\} \times \{A_{\mathbf{0}} \neq 0\}\bigr)
\subset J_2\bigl(\mathbb{P}^3 \times \mathbb{P}^{N_d^2+1},\, -\log\mathcal{D}\bigr)
\]
is then given by the abbreviated tuple
\[
\bigl(z,\; a,\; \xi',\; \xi''\bigr),
\]
where
\begin{itemize}
    \item \(z = (z_1,z_2,z_3)\) are the affine coordinates on \(\{Z_0 \neq 0\}\subset\mathbb{P}^3\);
    \item \(a = (a_{\alpha})\) with \(\alpha_1+\alpha_2 \leqslant d\) are the normalized coefficients in the parameter direction;
    \item \(\xi' = (\xi_1',\xi_2',\xi_3')\) and \(\xi'' = (\xi_1'',\xi_2'',\xi_3'')\) are the logarithmic jet coordinates.
\end{itemize}

Following Siu’s strategy~\cite{Siu2004} and Rousseau’s implementation~\cite{Rousseau2009}, we work with the family of slanted vector fields of pole order \(7\).

\begin{thm}[{\cite[Corollary 8]{Rousseau2009}}]\label{thm:globally generated vector field of pole order 7 log case}
Let \(\Sigma_0\) be the zero locus of the \(2\times 2\) Wronskian \(\det 
\bigl(\xi_i^{(j)}\bigr)_{1\leqslant i,j\leqslant 2}\) inside the vertical logarithmic jet space:
\[
\Sigma_0 \coloneqq 
\Bigl\{\bigl(z_i,\;a_{\alpha},\;\xi_i',\;\xi_i''\bigr)\in \overline{J_2^v}(\mathcal{Y}_0)
\;\Bigm|\;
\det \bigl(\xi_i^{(j)}\bigr)_{1\leqslant i,j\leqslant 2}=0\Bigr\},
\]
and let \(\Sigma\) be the closure of \(\Sigma_0\) in \(\overline{J_2^v}(\mathcal{Y}^*)\). Denote by
\[
p \colon \overline{J_2^v}\!\bigl(\mathbb{P}^2\times\mathbb{P}^{N_d^2}\bigr) \longrightarrow \mathbb{P}^2\times\mathbb{P}^{N_d^2}
\]
the natural projection, and by \(\mathrm{pr}_1\colon\mathbb{P}^2\times\mathbb{P}^{N_d^2}\to\mathbb{P}^2\), \(\mathrm{pr}_2\colon\mathbb{P}^2\times\mathbb{P}^{N_d^2}\to\mathbb{P}^{N_d^2}\) the two projections.
 Then the twisted tangent bundle
\[
T_{\overline{J_2^v}(\mathbb{P}^2\times\mathbb{P}^{N_d^2})}
\otimes p^*\mathrm{pr}_1^*\mathcal{O}_{\mathbb{P}^2}(7)
\otimes p^*\mathrm{pr}_2^*\mathcal{O}_{\mathbb{P}^{N_d^2}}(1)
\]
is generated by its global sections at every point of 
\[
\overline{J_2^v}\bigl(\mathbb{P}^2\times\mathbb{P}^{N_d^2}\bigr) 
\setminus \Big( \overline{\pi_{[2]}(\Sigma)} \cup p^{-1}(\mathcal{C}) \Big)\qquad
\text{[see~\eqref{pi_[2]}]}. 
\]
\qed
\end{thm}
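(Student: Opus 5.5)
The plan is to prove this by the explicit Siu--P\u{a}un construction, carried out on the auxiliary family $\mathcal{Y}$ and then pushed down along $\pi_{[2]}$. On the affine chart $\{Z_0\neq 0\}\times\{A_{\mathbf 0}\neq 0\}$, I write $P(z,a)\coloneqq z_3^{d}+\sum_{\alpha}a_\alpha z^\alpha$. The space $\overline{J_2^v}(\mathcal{Y}_0)$ is then cut out in the coordinates $(z,a,\xi',\xi'')$ by
\[
P=0,\qquad \textstyle\sum_i \partial_{z_i}P\,z_i'=0,\qquad \textstyle\sum_i \partial_{z_i}P\,z_i''+\sum_{i,j}\partial^2_{z_iz_j}P\,z_i'z_j'=0.
\]
Here $z_3'=z_3\xi_3'$ and $z_3''=z_3(\xi_3''+(\xi_3')^2)$, so after dividing by $z_3$ everything is polynomial in the logarithmic coordinates. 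A vector field on $\mathbb{C}^3\times\mathbb{C}^{N}\times\mathbb{C}^{6}$ is tangent to $\overline{J_2^v}(\mathcal{Y}_0)$ if and only if it annihilates these three equations. I will produce three families of such fields, with polynomial coefficients of controlled $z$-degree.

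\emph{(a) Parameter fields.} These are fields $\sum_\alpha v_\alpha(z,\xi',\xi'')\,\partial_{a_\alpha}$ whose first three Taylor variations cancel. As in P\u{a}un, one uses monomials $z^\alpha$ with $|\alpha|\geqslant 3$ and shifts among them. These fields have $z$-degree at most $3$ and already span the parameter directions.

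\emph{(b) Base fields.} These are fields $\partial_{z_j}+\sum_\alpha(\ast)\,\partial_{a_\alpha}$, where the $a$-part is chosen to absorb the variation of $P$ and of its first two derivatives. They have degree at most $1$ in $z$.

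\emph{(c) Fibre fields.} These realise the action of a $3\times3$ matrix with entries of degree $\leqslant 1$ in $z$ on $(\xi',\xi'')$, combined with compensating $a$-parts. This is the step where the Wronskian $\det(\xi_i^{(j)})_{1\leqslant i,j\leqslant2}$ enters. To move $\xi''$ independently of $\xi'$ one must invert the $2\times 2$ matrix $(\xi_i^{(j)})$, which is possible precisely off $\Sigma_0$. Together, (a)--(c) span $T\,\overline{J_2^v}(\mathcal{Y}_0)$ at every point of $\overline{J_2^v}(\mathcal{Y}_0)\setminus\Sigma_0$; this is a linear-algebra check on the span.

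Next I have to control poles along the hyperplane at infinity $\{Z_0=0\}$ and along $\{A_{\mathbf 0}=0\}$. Using the chart change $z_i=\tilde z_i/\tilde z_0$ and the induced jet formulas (the analogues of \eqref{jets from (x,y,z) to tilde(t,y,z)} and \eqref{Wronskian from (y,x) to tilde(y,t)}), each field becomes meromorphic. Its pole order along $\{\tilde z_0=0\}$ is bounded by the $z$-degree of its coefficients plus the weight picked up by $\partial_{\xi^{(j)}}$ under rescaling, which is $+2$ for the second jets. Adding these contributions yields the bound $7$, and this is what forces the twist by $\mathrm{pr}_1^*\mathcal{O}(7)$. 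Similarly, homogeneity in the $a$-variables gives poles of order at most $1$ along $\{A_{\mathbf 0}=0\}$, which produces the twist $\mathrm{pr}_2^*\mathcal{O}(1)$. Thus each constructed field extends to a global section of $T_{\overline{J_2^v}(\mathcal{Y}^*)}\otimes p^*\mathrm{pr}_1^*\mathcal{O}(7)\otimes p^*\mathrm{pr}_2^*\mathcal{O}(1)$. Global generation away from $\Sigma$ then follows, using the $PGL$-symmetry of the family to move the affine chart.

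Finally I descend along $\pi_{[2]}$. The map $\pi|_{\mathcal{Y}^*}$ is a logarithmic submersion onto $(\mathbb{P}^2\times\mathbb{P}^{N_d^2},\mathcal{C})$ away from $\mathcal{D}$, and $d\pi_{[2]}$ is surjective there. The twisting line bundles are pullbacks: $\pi^*\mathrm{pr}_1^*\mathcal{O}_{\mathbb{P}^2}(1)$ agrees with $\mathcal{O}_{\mathbb{P}^3}(1)$ on $\mathcal{Y}^*$. Hence pushing forward the fields of the previous two steps, after averaging over the fibre variable $z_3$ (a $d$-th root of $-\sum a_\alpha z^\alpha/a_{\mathbf 0}$), gives sections downstairs that generate at every point outside $\overline{\pi_{[2]}(\Sigma)}\cup p^{-1}(\mathcal{C})$. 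The locus $p^{-1}(\mathcal{C})$ is excluded because there the logarithmic coordinate $\xi_3=\log z_3$ degenerates.

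I expect the main obstacle to be the pole-order bookkeeping for the fibre fields (c). In their construction, the components along $\partial_{\xi_3^{(j)}}$ involve both the logarithmic derivatives and the Wronskian inversion. Keeping the total order at $7$, rather than something larger, requires choosing the compensating $a$-parts carefully, following the degree analysis of Rousseau and Merker. I would first verify this bound on the chart $\{Z_1\neq0\}$, where the most terms degenerate.
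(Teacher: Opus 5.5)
The paper gives no proof of this statement; it quotes Rousseau's Corollary~8. Your outline matches the setup the paper recalls for it: slanted fields on the logarithmic vertical jets of the cyclic-cover family $\mathcal{Y}$, then descent along $\pi_{[2]}$. As written, however, the descent step fails.

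\textbf{The descent.} The fibres of $\pi_{[2]}$ are not finite sets of $d$-th roots.
\begin{itemize}
\item Over a fixed $([Z],[A])$, the point $([Z:Z_3],[A:A_{\mathbf 0}])$ runs through the orbit $(Z_3,A_{\mathbf 0})\mapsto(\lambda Z_3,\lambda^{-d}A_{\mathbf 0})$, $\lambda\in\mathbb{C}^*$. In your chart this is $(z_3,a)\mapsto(\lambda z_3,\lambda^d a)$, with $z_1,z_2$ and all $\xi^{(j)}$ fixed. Consistently, $\dim\overline{J_2^v}(\mathcal{Y}^*)=N_d^2+7$, one more than the target.
\item Averaging over the $d$-th roots of unity handles only the finite part of this orbit. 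A field descends along a submersion with connected one-dimensional fibres only if it is projectable, i.e.\ $[\Phi,V]\in\mathcal{O}\cdot\Phi$ for the fibre generator $\Phi:=z_3\partial_{z_3}+d\sum_\alpha a_\alpha\partial_{a_\alpha}$.
\item Your parameter fields (a) have coefficients not involving $a$, and for them $[\Phi,V]=-d\,V$. So they do not descend.
\item To repair this, first eliminate $z_3$ from the jet equations via $z_3^d=-F$, $F:=\sum_\alpha a_\alpha z^\alpha$. They then read $F'=d\,\xi_3'F$ and $F''=(d\,\xi_3''+d^2(\xi_3')^2)F$, which are linear homogeneous in $a$ and free of $z_3$. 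Then use only weight-zero fields: $\partial_{a_\alpha}$-coefficients of degree one in $a$ (e.g.\ multiply (a) by some $a_\beta$), no dependence on $z_3$, and motion in $z_3$ only through $\Phi$.
\item A bare $\partial_{z_3}$ in (b) is unavailable anyway. The coefficient of $z_3^d$ is frozen and no other monomial contains $z_3$, so its compensation costs a factor $z_3^{d-1}$.
\item The $\mathrm{pr}_2^*\mathcal{O}_{\mathbb{P}^{N_d^2}}(1)$ twist must be checked on $\mathbb{P}^{N_d^2}$ after descent. Poles along $\{A_{\mathbf 0}=0\}$ are beside the point, since that locus is not in $\mathcal{Y}^*$.
\item The locus $p^{-1}(\mathcal{C})$ is excluded because $\pi$ ramifies along $\mathcal{D}$ (locally $z_3\mapsto z_3^d$). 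There $d\pi_{[2]}$ lands in the tangent space of $p^{-1}(\mathcal{C})$. The reason is not a degeneration of $\xi_3$.
\end{itemize}

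\textbf{The pole order.} The quantitative core is also unproved.
\begin{itemize}
\item The claim in (a) that the order-three shift fields span the parameter directions is false. In the two base variables, polynomials vanishing to order $3$ at $z$ have codimension $6$.
\item By contrast, the tangent $a$-directions $w=(w_\alpha)$, with $W:=\sum_\alpha w_\alpha z^\alpha$ and $\xi'_{12}:=(\xi_1',\xi_2')$, are cut out by only three conditions: $W(z)=0$, $\mathrm{d}W_z(\xi'_{12})=0$, and $\mathrm{d}W_z(\xi''_{12})+\mathrm{d}^2W_z(\xi'_{12},\xi'_{12})=0$.
\item The three missing directions, and the compensations for moving $\xi',\xi''$ off $\Sigma_0$, require coefficients depending on the jet variables. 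These carry their own poles at infinity. In the chart $\tilde t=1/z_1$ one has $z_1'=-\tilde t'/\tilde t^2$; the second jets and the Wronskian of $(\xi_1,\xi_2)$ acquire $\tilde t^{-3}$; and $\xi_3'=\tilde\xi_3'-\tilde t'/\tilde t$.
\item Your bookkeeping counts only the $z$-degree of the coefficients plus a fixed shift. This does not account for those poles, and it produces $7$ for no single field; bounds for different families do not add.
\end{itemize}
So the pole order $7$, which is the real content of the cited result, is asserted rather than derived.
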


\subsection{Extension of $2$-Jet Differentials}

To implement Siu's strategy of slanted vector fields, we first extend a given negatively twisted invariant (logarithmic) $2$-jet differential to a holomorphic family. The argument follows Siu~\cite[pp.\,558--559]{Siu2004} and relies on a standard extension theorem in algebraic geometry:

\begin{thm}[{\cite[p.\,288]{HartshorneGTM52}}]\label{thm:extension of sections}
Let \(\tau \colon \mathcal{Z} \to S\) be a flat holomorphic family of compact complex spaces and \(\mathcal{L} \to \mathcal{Z}\) a holomorphic vector bundle. Then there exists a proper subvariety \(Z \subset S\) such that for every \(s_0 \in S \setminus Z\), the restriction map
\[
H^0\bigl(\tau^{-1}(U_{s_0}),\,\mathcal{L}\bigr)
\longrightarrow
H^0\bigl(\tau^{-1}(s_0),\,\mathcal{L}|_{\tau^{-1}(s_0)}\bigr)
\]
is surjective for some Zariski‑dense open neighbourhood \(U_{s_0} \subset S\) of \(s_0\).
\end{thm}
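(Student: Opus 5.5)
The plan is to deduce the statement from Grauert's direct image theory and the theorem on cohomology and base change. It is cleanest to prove the algebraic version: $\tau$ is a flat projective morphism of varieties with $S$ irreducible and reduced. The holomorphic case then follows by GAGA applied fibrewise together with the algebraicity of the direct image.

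The first step is to consider the coherent sheaf $\tau_*\mathcal{L}$ on $S$, which is coherent by Grauert's direct image theorem (or by properness in the algebraic category). The second step is to use flatness: since $\mathcal{L}$ is locally free and $\tau$ is flat, $\mathcal{L}$ is flat over $S$. By the semicontinuity theorem, the function $s\mapsto h^0(\tau^{-1}(s),\mathcal{L}|_{\tau^{-1}(s)})$ is upper semicontinuous. It is therefore constant, equal to its minimum value $r$, on a nonempty Zariski-open subset $S^\circ\subset S$. Shrinking $S^\circ$ further, generic flatness guarantees that the finitely many coherent sheaves $R^i\tau_*\mathcal{L}$ are locally free on $S^\circ$.

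The third step is to apply Grauert's theorem (Hartshorne III.12.9) on the reduced base $S^\circ$. Constancy of $h^0$ gives that $\tau_*\mathcal{L}|_{S^\circ}$ is locally free of rank $r$, and that the base change map
\[
\tau_*\mathcal{L}\otimes k(s_0)\longrightarrow H^0\bigl(\tau^{-1}(s_0),\mathcal{L}|_{\tau^{-1}(s_0)}\bigr)
\]
is an isomorphism for every $s_0\in S^\circ$. We then set $Z:=S\setminus S^\circ$, which is a proper subvariety.

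The fourth step is to fix $s_0\in S^\circ$ and pick a Zariski-open affine neighbourhood $U_{s_0}\subset S^\circ$ of $s_0$ on which $\tau_*\mathcal{L}$ is trivial. Since $U_{s_0}$ is affine, we have $H^0(\tau^{-1}(U_{s_0}),\mathcal{L})=H^0(U_{s_0},\tau_*\mathcal{L})\cong\mathcal{O}(U_{s_0})^{r}$. This surjects onto the fibre $\tau_*\mathcal{L}\otimes k(s_0)$, and hence, by the base change isomorphism, onto $H^0(\tau^{-1}(s_0),\mathcal{L}|_{\tau^{-1}(s_0)})$. Because $U_{s_0}$ is a nonempty Zariski-open subset of the irreducible $S$, it is Zariski-dense, as required.

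The main obstacle is the base change step. Surjectivity onto the fibre cohomology is exactly what can fail at jumping points of $h^0$, and it is to exclude these that we remove $Z$. Some care is also needed in passing from the holomorphic setting to the algebraic one. In the application, the families of jet-differential bundles over the parameter space of surfaces or curves are algebraic, so this causes no difficulty.
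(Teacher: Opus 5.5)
Your argument is correct and is essentially what the paper's citation of Hartshorne amounts to; the paper gives no proof of its own. It consists of upper semicontinuity of $h^0$ to find the dense open set $S^\circ$ where $h^0$ is minimal, then Grauert's Corollary III.12.9 on $S^\circ$ for local freeness of $\tau_*\mathcal{L}$ and the base-change isomorphism, and finally local triviality of $\tau_*\mathcal{L}$ near $s_0$. The generic-freeness step for the higher $R^i\tau_*\mathcal{L}$ is unnecessary.

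The one thing to retract is your aside that the genuinely holomorphic case follows ``by GAGA''. For a non-algebraic base the statement is false. Take $\tau=\mathrm{id}$ on a $2$-dimensional complex torus $S$ containing no curves, and $\mathcal{L}$ a nontrivial line bundle in $\mathrm{Pic}^0(S)$. Every Zariski-open $U\ni s_0$ is then $S$ minus finitely many points, so by Hartogs $H^0(U,\mathcal{L})=H^0(S,\mathcal{L})=0$, and the restriction to the fibre $\{s_0\}$ is not surjective. The statement should therefore be read for algebraic (projective, flat) families, as in the application over $\mathbb{P}^{N_d}$, and that is exactly the case you prove.
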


Moreover, the argument of Demailly--El~Goul (cf.~\cite[p.~532]{Demailly-Elgoul2000}, \cite[Lemma 1.3.2]{ElGoul2003}) guarantees the existence of \emph{irreducible} negatively twisted invariant (logarithmic) $2$-jet differentials. These facts are summarised in the following statement.
 
\begin{pro}\label{prop:extension and irred. component cpt case}
For some positive integer $m, t \geqslant 1$, there exists a Zariski-open subset $U^{m,t}$ in the parameter space $\mathbb{P}^{N_d^3}$ of the algebraic surface $X$ of degree $d \geqslant 15$ in $\mathbb{P}^3$, such that:
\begin{enumerate}
    \item For each parameter $a \in U^{m,t}$, the corresponding  \(X_a\) is a smooth surface.
    \item There is a holomorphic family
    \begin{equation*}
        \left\{ \omega_{1,a} \; \middle| \; 0 \not\equiv \omega_{1,a} \in H^0\left(X_a , E_{2,m}T^*_{X_a} \otimes \mathcal{O}_{X_a}(-t)\right) , \; a \in U^{m,t} \right\},
    \end{equation*}
    which varies holomorphically with respect to the variable $a \in U^{m,t}$. Moreover, each zero locus $\{\omega_{1,a} = 0\}$ on the second level $X_{a,2}$ of the Demailly-Semple tower for $X_a$ is irreducible and reduced modulo $\Gamma_{a,2}$ (see \eqref{equ:def of Gamma_2}).
\end{enumerate}
\end{pro}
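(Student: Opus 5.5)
We combine three ingredients in order: existence of a nonzero negatively twisted section on one surface, spreading it over the parameter space, and passing to an irreducible factor. First, for $d \geqslant 15$ Corollary~\ref{cor:existence of 2 jets, compact case} with $A=\mathcal{O}_X(1)$ shows that for every smooth $X_a$ and all $m \gg 1$ one has $H^0(X_a, E_{2,m}T^*_{X_a}\otimes\mathcal{O}_{X_a}(-1)) \neq 0$. Since $13c_1^2-9c_2>0$ is fixed, the Riemann--Roch lower bound of Proposition~\ref{existence of 2 jets, compact case} gives some $(m,t)$ with $t\geqslant 1$ that works uniformly for all smooth members. Let $U_0 \subset \mathbb{P}^{N_d^3}$ be the Zariski-open locus of smooth surfaces. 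This already gives item (1).

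Second, we spread the section out. Consider the universal family $\mathcal{X}_{U_0} \to U_0$ and its relative second Demailly--Semple level $\mathcal{X}_{2} \to U_0$, which is flat and proper with fibres $X_{a,2}$. Take the line bundle $\mathcal{L} := \mathcal{O}_{\mathcal{X}_2}(m)\otimes \pi_{2,0}^*\mathrm{pr}_1^*\mathcal{O}_{\mathbb{P}^3}(-t)$. By the direct image formula~\eqref{direct image formula, cpt}, its fibrewise sections are exactly $H^0(X_a,E_{2,m}T^*_{X_a}\otimes\mathcal{O}_{X_a}(-t))$. Theorem~\ref{thm:extension of sections} then yields a proper subvariety $Z$ and, for $a_0\notin Z$, a Zariski-dense open $U_{a_0}$ over which a nonzero section $\omega_{1,a_0}$ extends to a section $\omega_1$ of $\mathcal{L}$. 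Equivalently, by Grauert's semicontinuity the function $a\mapsto h^0$ is generically constant, so the direct image sheaf is locally free on an open set and any local frame element gives the family. Shrinking to the open set where the restriction $\omega_{1,a}$ is not identically zero gives a holomorphic, in fact algebraic, family $\{\omega_{1,a}\}$.

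Third, we make the zero loci irreducible and reduced modulo $\Gamma_{a,2}$. Following Demailly--El Goul, on the generic fibre write the divisor $\{\omega_1=0\}\subset\mathcal{X}_2$ as $\sum_i n_i \mathcal{Z}_i + p\,\Gamma_2 + \pi_{2,0}^*(\text{effective})$. Here each $\mathcal{Z}_i$ dominates $U_0$ and has class $\mathcal{O}(m_i)\otimes\pi_{2,1}^*\mathcal{O}_{X_1}(b_i)\otimes\pi_{2,0}^*\mathcal{O}(-t_i)$. Using~\eqref{gamma_2=O(-1, 1)} to absorb $\Gamma_2$, and the fact that $\mathrm{Pic}(X_2)$ is generated by $\mathcal{O}(1)$, $\pi_{2,1}^*\mathcal{O}_{X_1}(1)$ and $\pi^*\mathcal{O}(1)$ (Lefschetz: $\mathrm{Pic}\,X_a=\mathbb{Z}$ for very general $a$; for generic $a$ we work on the family), the proportion $t/m$ is a weighted average of the ratios $t_i/m_i$. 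We pick one component $\mathcal{Z}_{i_0}$ with $t_{i_0}/m_{i_0}\geqslant t/m>0$, and must ensure that its class is that of an element of $E_{2,m_{i_0}}T^*\otimes\mathcal{O}(-t_{i_0})$, i.e. that $b_{i_0}\ge 0$. This is handled by twisting with $\Gamma_2$ as in \cite[Lemma 1.3.2]{ElGoul2003}. The reduced irreducible divisor $\mathcal{Z}_{i_0}$ is then the zero set of a section $\omega_1'$ of the corresponding bundle on $\mathcal{X}_2$. Generic irreducibility and reducedness of the fibres of the irreducible, dominant $\mathcal{Z}_{i_0}\to U_0$ holds on a Zariski-open set; for reducedness use generic smoothness, and for irreducibility use Stein factorization after restricting to the open set where the finite part is trivial. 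We rename $(m_{i_0},t_{i_0})$ as $(m,t)$ and intersect all the open sets to get $U^{m,t}$.

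The main obstacle is this last step. One must control the Picard group of $X_{a,2}$ relative to the family, to guarantee that an irreducible component still corresponds to a \emph{negatively} twisted invariant jet differential with $t\geqslant1$. One must also guarantee that fibrewise irreducibility holds on a Zariski-open set rather than only at the generic point. Because global monodromy could split components over special $a$, we would first try to handle this by passing to the open set where the Stein factorization degree is $1$.
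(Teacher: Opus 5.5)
Your first two steps are sound. The Riemann--Roch lower bound is indeed uniform in $a$, and spreading a section over a Zariski-open set by semicontinuity is standard. The Picard bookkeeping on the total space is also harmless. But the third step has a genuine gap, exactly where you flag it.

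An irreducible component $\mathcal{Z}_{i_0}$ of $\{\omega_1=0\}$ in the total space $\mathcal{X}_2$ need not have geometrically irreducible generic fibre. Over a \emph{generic} $a$ (not a special one), its fibre may split into $r\geqslant 2$ components $W_1(a),\dots,W_r(a)$ that are permuted by monodromy. Nothing in your construction rules this out, since $\omega_1$ is an arbitrary element of the direct image.

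Your proposed remedy does not work, for two reasons.
\begin{enumerate}
\item Stein factorization of $\mathcal{Z}_{i_0}\to U_0$ counts connected components of fibres, not irreducible ones.
\item Even after normalizing, the degree $r$ of the finite part equals the degree of the algebraic closure of $\mathbb{C}(U_0)$ in $\mathbb{C}(\mathcal{Z}_{i_0})$. This degree is the same over every nonempty open subset, so if $r\geqslant 2$ there is no Zariski-open set where it equals $1$.
\end{enumerate}
In that case the irreducible pieces $W_k(a)$ exist only over a finite cover, or locally in the analytic topology. Your construction then yields no single-valued family over a Zariski-open subset of $\mathbb{P}^{N_d^3}$ with irreducible zero loci.

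The missing idea is to reverse the order: decompose on one fibre first, then extend. The paper proceeds as follows.
\begin{enumerate}
\item It picks $a_0$ very generic: outside the countable union $\bigcup_{M,T\geqslant 1} Z_{M,T}$ of the exceptional sets of Theorem~\ref{thm:extension of sections}, and with $\operatorname{Pic}(X_{a_0})\cong\mathbb{Z}$. Then every nonzero negatively twisted section on $X_{a_0}$, for every pair $(M,T)$ simultaneously, extends to a family over a Zariski-open neighbourhood.
\item It decomposes $\{\omega_0=0\}\subset X_{a_0,2}$ into irreducible components and selects the component $Z$ with maximal $t_j/m_j$. The pair $(m,t)$ is only determined at this moment, which is why extendability must be secured for all pairs in advance.
\item It converts $Z$ into $\omega_1$ via $\mathcal{O}_{X_2}(b_1,b_2)\cong\mathcal{O}_{X_2}(b_1+b_2)(-b_1\Gamma_2)$, extends $\omega_1$, and only then shrinks the neighbourhood, using that the residual zero locus is irreducible and reduced over $a_0$ itself.
\end{enumerate}
Because the family is built from a component chosen at a single point, not from a component of the total space, monodromy never enters.
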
 

The proof follows the same line as that of \cite[Proposition 4.4]{Hou-Huynh-Merker-Xie2025} and is included here for the reader’s convenience.

\begin{proof}
For any integers $M,T \geqslant 1$, Theorem~\ref{thm:extension of sections} guarantees that there exists a proper Zariski-closed subset $Z_{M,T}$ in the parameter space $\mathbb{P}^{N_d^3}$, such that for any parameter $a$ outside $Z_{M,T}$, every 
nonzero global section of
$E_{2,M}T^*_{X_a} \otimes \mathcal{O}_{X_a}(-T)$  extends to a holomorphic family parameterized by some Zariski open neighborhood of $a$. Thus, to ensure the local extendability of any nonzero negatively twisted $2$-jet differential on $X_{a_0}$, we choose a  point $a_0$ in $\mathbb{P}^{N_d^3} \setminus \cup_{M,T \geqslant 1} Z_{M,T}$ such that $X_{a_0}$ is a smooth surface and the Picard group $\Pic(X_{a_0})$ of $X_{a_0}$ is isomorphic to $\mathbb{Z}$ (by Noether-Lefschetz Theorem).

By Corollary \ref{cor:existence of 2 jets, compact case}, for some integers $M, T \geqslant 1$, there exists a nonzero negatively twisted invariant $2$-jet differential
\begin{equation*}
    \omega_0 \in H^0\big(X_{a_0}, E_{2,M}T^*_{X_{a_0}} \otimes \mathcal{O}_{X_{a_0}}(-T)\big). 
\end{equation*}

Next, we  decompose the zero locus \(\{\omega_{0}=0\}\)  in the second level $X_{a_0 ,2}$ of the Demailly-Semple tower of $X_{a_0}$ into irreducible components. By the same argument as Demailly and El Goul~\cite[{p.~532}]{Demailly-Elgoul2000}, we can extract an irreducible and reduced component $Z$, which in fact corresponds to some irreducible negatively twisted invariant $2$-jet differential \begin{equation}\label{define omega_1}
    \omega_1 \in H^0\big(X_{a_0}, E_{2,m}T^*_{X_{a_0}} \otimes \mathcal{O}_{X_{a_0}}(-t)\big),\quad \text{with}\ 
    \tfrac{t}{m} \geqslant \tfrac{T}{M},
\end{equation}
in the sense that $\{\omega_1=0\}=Z+b\,\Gamma_{a_0 ,2}$  (see \eqref{equ:def of Gamma_2} and \eqref{gamma_2=O(-1, 1)}) for some integer $b>0$. The construction proceeds as follows.

Let $u_1 = \pi_{2,1}^* \mathcal{O}_{X_{a_0 ,1}}(1)$, $u_2 = \mathcal{O}_{X_{2,a_0}}(1)$ and $h = \mathcal{O}_{X_{a_0}}(1)$. Then the zero divisor $Z_{\omega_{0}}=\{\omega_{0}=0\}\subset X_{a_0 ,2}$ is linearly equivalent to 
$
    M u_2 - T \pi_{2,0}^* h$ in the Picard group \[
    \operatorname{Pic}(X_{a_0 ,2}) \cong \operatorname{Pic}(X_{a_0}) \oplus \mathbb{Z} u_1 \oplus \mathbb{Z} u_2
    \cong \mathbb{Z}^3.\]
Let $Z_{\omega_{0}} = \sum p_j Z_j$ be the irreducible decomposition of the divisor $Z_{\omega_{0}}$, with each $Z_j$ irreducible and reduced.  Write the class of each component $Z_j$ linear equivalently in the Picard group $\operatorname{Pic}(X_{a_0 ,2})$ as
\begin{equation*}
    Z_j \sim b_{1,j} u_1 + b_{2,j} u_2 - t_j \pi_{2,0}^* h , \qquad b_{1,j} \; , \; b_{2,j} \; , \; t_j \in \mathbb{Z}.
\end{equation*}
By \cite[Lemma 3.3]{Demailly-Elgoul2000}, the effectiveness of $Z_j$ implies that
exactly one of the following mutually exclusive cases holds: 
\begin{itemize}
    \item $\left( b_{1,j} , b_{2,j} \right) = (0,0)$ and $Z_j \in \pi_{2,0}^* \pic \left( X_{a_0} \right)$, $-t_j > 0$;
    \item $\left( b_{1,j} , b_{2,j} \right) = (-1,1)$, then $Z_j$ contains $\Gamma_{a_0 ,2}$, where $\Gamma_{a_0 ,2}$ is an irreducible divisor on $X_{a_0 ,2}$. Thus $Z_j = \Gamma_{a_0 ,2}$ and $t_j = 0$;
    \item $b_{1,j} \geqslant 2 b_{2,j} \geqslant 0$ and $m_j \coloneqq b_{1,j} + b_{2,j} > 0$.
\end{itemize}
In the third case, the irreducible and reduced divisor $Z_j$ corresponds to a section
\begin{equation*}
    \omega_{j} \in H^0\big(
    X_{a_0 ,2}, \pi_{2,1}^{*} \mathcal{O}_{X_{a_0 ,1}}(b_{1,j}) \otimes \mathcal{O}_{X_{a_0 ,2}}(b_{2,j})\otimes\pi_{2,0}^*\mathcal{O}_{X_{a_0}}(-t_j)\big).
\end{equation*}
Noting that $\pi_{2,1}^{*} \mathcal{O}_{X_{a_0 ,1}}(b_1) \otimes \mathcal{O}_{X_{a_0 ,2}}(b_2) \cong \mathcal{O}_{X_{a_0 ,2}}\left(b_1 + b_2\right) \otimes \mathcal{O}_{X_{a_0 ,2}}\left(-b_1 \Gamma_{a_0 ,2} \right)$  (see \eqref{gamma_2=O(-1, 1)}), we may regard 
\begin{equation*}
    \omega_{j} \in H^0\big(
    X_{a_0 ,2}, \mathcal{O}_{X_{a_0 ,2}}(m_{j})\otimes\pi_{2,0}^*\mathcal{O}_{X_{a_0}}(-t_j)\big),
\end{equation*}
whose zero divisor is $Z_j + b_{1,j} \Gamma_{a_0 ,2}$. Since both $Z_j$ and $\Gamma_{a_0 ,2}$ are irreducible and reduced, 
the residual zero divisor of $\omega_j$ after removing its fixed $\Gamma_2$-component is the irreducible reduced divisor $Z_j$. Thus $\omega_j$ is irreducible modulo the fixed vertical divisor $\Gamma_2$.

Let $\max \{ {t_j}/{m_j} \}$ denote the maximum ratio among those sections obtained from the third case. Since $M = \sum p_j m_j$ and $T = \sum p_j t_j$, we have $\max \{ {t_j}/{m_j} \} \geqslant {T}/{M}$. Without loss of generality, we may assume that $t_1 / m_1 = \max \{ t_j / m_j \}$, and denote $m \coloneqq m_1$ and $t \coloneqq t_1$. By applying the Direct Image Formula \eqref{direct image formula, cpt} and abusing the notation for $\omega_1$, we thus obtain \eqref{define omega_1}.

By our choice of $a_0\notin \cup_{M,T \geqslant 1} Z_{M,T}$, automatically $\omega_1$ extends to a holomorphic family
\begin{equation*}
    \left\{ \omega_{1,a} \; \middle| \; 0 \not\equiv \omega_{1,a} \in H^0\left(X_{a}, E_{2,m}T^*_{X_{a}} \otimes \mathcal{O}_{X_{a}}(-t)\right) , \; a \in U_{a_0}^{m,t} \right\},
\end{equation*}
which varies holomorphically with respect to the parameter $a$ in some Zariski-open neighborhood  $U_{a_0}^{m,t}$ of $a_0$, with $\omega_{1,a_0} = \omega_1$. By shrinking to a smaller Zariski-open subset $U^{m,t} \subset U_{a_0}^{m,t}$ if necessary, we can ensure that each $a \in U^{m,t}$ defines a smooth  surface of degree $d$ in $\mathbb{P}^3$ while the zero locus $\{\omega_{1,a} = 0\}$ on the second level $X_{a,2}$ of the Demailly-Semple tower for $X_{a}$ is irreducible and reduced modulo $\Gamma_{a,2}$.
\end{proof}

By the same reasoning, the analogous statement holds in the logarithmic setting:

\begin{pro}\label{prop:extension and irred. component log case}
For some positive integer $m, t \geqslant 1$, there exists a Zariski-open subset $U^{m,t}$ in the parameter space $\mathbb{P}^{N_d^2}$ of algebraic curve $\mathcal{C}$ of degree $d \geqslant 11$ in $\mathbb{P}^2$, such that:
\begin{enumerate}
    \item For each parameter $a \in U^{m,t}$, the corresponding  \(\mathcal{C}_a\) forms a smooth  curve of degree $d$ in $\mathbb{P}^2$;
    \item There is a holomorphic family
    \begin{equation*}
        \left\{ \omega_{1,a} \; \middle| \; 0 \not\equiv \omega_{1,a} \in H^0\left(\mathbb{P}^2, E_{2,m}T^*_{\mathbb{P}^2}(\log \mathcal{C}_a) \otimes \mathcal{O}_{\mathbb{P}^2}(-t)\right) , \; a \in U^{m,t} \right\},
    \end{equation*}
    which varies holomorphically with respect to the variable $a \in U^{m,t}$. Moreover, each zero locus $\{\omega_{1,a} = 0\}$ on the second level $X_{a,2}$ of the logarithmic Demailly-Semple tower for $(\mathbb{P}^2, \mathcal{C}_{a})$ is irreducible and reduced modulo $\Gamma_{a,2}$ (see \eqref{equ:def of Gamma_2}). \qed
\end{enumerate}
\end{pro}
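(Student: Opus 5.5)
The plan is to run the proof of Proposition~\ref{prop:extension and irred. component cpt case} again in the pair setting $(\mathbb{P}^2,\mathcal{C})$, using the logarithmic Demailly--Semple tower in place of the compact one. First, I apply Theorem~\ref{thm:extension of sections} to the flat family over $\mathbb{P}^{N_d^2}$ whose fibres are the second levels $X_{a,2}$ of the logarithmic towers of $(\mathbb{P}^2,\mathcal{C}_a)$, with line bundles $\mathcal{O}_{X_{a,2}}(M)\otimes\pi_{2,0}^*\mathcal{O}_{\mathbb{P}^2}(-T)$. This family is flat after restricting to the open set of smooth curves. For each pair $(M,T)$ the theorem gives a proper Zariski-closed subset $Z_{M,T}$. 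I then choose $a_0$ outside the countable union $\bigcup_{M,T}Z_{M,T}$ with $\mathcal{C}_{a_0}$ smooth.

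Next, since $d\geqslant 11$, the logarithmic existence corollary following Proposition~\ref{existence of 2 jets, logarithmic case} gives a nonzero $\omega_0\in H^0(\mathbb{P}^2,E_{2,M}T^*_{\mathbb{P}^2}(\log\mathcal{C}_{a_0})\otimes\mathcal{O}(-T))$. The direct image formula turns $\omega_0$ into a section on $X_{a_0,2}$ with divisor class $Mu_2-T\pi_{2,0}^*h$.

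The Picard group is simpler here than in the compact case. We have $\operatorname{Pic}(\mathbb{P}^2)=\mathbb{Z}$ automatically, so Noether--Lefschetz is not needed, and $\operatorname{Pic}(X_{a_0,2})\cong\mathbb{Z}h\oplus\mathbb{Z}u_1\oplus\mathbb{Z}u_2$, because each level of the tower is a projective bundle. I decompose $Z_{\omega_0}=\sum p_jZ_j$ and write $Z_j\sim b_{1,j}u_1+b_{2,j}u_2-t_j\pi_{2,0}^*h$. I then invoke the logarithmic analogue of \cite[Lemma 3.3]{Demailly-Elgoul2000}, as in \cite[Lemma 1.3.2]{ElGoul2003}, which gives the same trichotomy as in the compact proof:
\begin{itemize}
\item a vertical component coming from $\operatorname{Pic}(\mathbb{P}^2)$, which has $t_j<0$;
\item the component $\Gamma_{a_0,2}$ itself;
\item a component with $b_{1,j}\geqslant 2b_{2,j}\geqslant 0$ and $m_j>0$.
\end{itemize}
For a component of the third kind, the relation \eqref{gamma_2=O(-1, 1)}, which still holds in the logarithmic tower, turns it into a section of $\mathcal{O}_{X_{a_0,2}}(m_j)\otimes\pi_{2,0}^*\mathcal{O}(-t_j)$ that is irreducible modulo $\Gamma_{a_0,2}$. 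Since $M=\sum p_jm_j$, the first two kinds contribute $t_j\leqslant 0$, and $T=\sum p_jt_j>0$, the maximal ratio $t_j/m_j$ over the third kind is at least $T/M>0$. This gives $m,t\geqslant 1$.

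Finally, because $a_0\notin Z_{m,t}$, this $\omega_1$ extends to a holomorphic family $\omega_{1,a}$ over a Zariski-open neighbourhood of $a_0$. I then shrink that neighbourhood so that $\mathcal{C}_a$ stays smooth and the divisor of $\omega_{1,a}$ stays irreducible and reduced modulo $\Gamma_{a,2}$. Both are Zariski-open conditions: irreducibility of the generic fibre of a flat family of divisors spreads to an open set.

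The main obstacle is to make sure the logarithmic tower really has the structure the compact argument uses. Three facts must be confirmed:
\begin{itemize}
\item flatness and properness of the relative logarithmic tower over the open set of smooth curves, which is needed to apply Theorem~\ref{thm:extension of sections};
\item the Demailly--El Goul effectivity lemma in the logarithmic setting;
\item the openness of the condition "irreducible modulo $\Gamma_2$".
\end{itemize}
I expect each of these to follow from \cite{ElGoul2003} and \cite{Dethloff-Lu2001}.
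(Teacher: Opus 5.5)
Your proposal is correct and is essentially the paper's proof: the paper obtains this proposition ``by the same reasoning'' as Proposition~\ref{prop:extension and irred. component cpt case}, namely a very generic $a_0$ avoiding every $Z_{M,T}$, the Demailly--El~Goul trichotomy in $\operatorname{Pic}(X_{a_0,2})\cong\mathbb{Z}^3$, the component of maximal slope $t_j/m_j\geqslant T/M$, then extension and shrinking; your remark that Noether--Lefschetz is unnecessary because $\operatorname{Pic}(\mathbb{P}^2)\cong\mathbb{Z}$ is exactly the intended simplification. One small precision for the last step: you know $Z_{a_0}$ is irreducible and reduced only at the closed point $a_0$, not on the generic fibre, and irreducibility alone is not Zariski-open in flat families whereas geometric integrality is (EGA~IV, \S12.2), so you should propagate integrality of the residual divisor from $a_0$ after making the $\Gamma$-order constant near $a_0$ --- e.g.\ by extending the section of $\mathcal{O}_{X_2}(b_1,b_2)\otimes\pi_{2,0}^*\mathcal{O}_{\mathbb{P}^2}(-t)$ cutting out $Z_{a_0}$ (which your very generic choice of $a_0$ also permits) and multiplying by the $b_1$-th power of the canonical section of $\mathcal{O}(\Gamma_2)$.
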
 

\subsection{Producing an Independent $2$-Jet Differential via Siu's Slanted Vector Fields}

Following P\u{a}un's argument \cite[p.~889]{mihaipaun2008}, we apply the slanted vector fields of pole order $7$ in the compact case.

In both the compact and logarithmic universal families below, the initial
jet differential is an equivariant polynomial on the vertical jet space.
An invariant slanted vector field therefore acts on it naturally by Lie
derivation, producing a genuine global invariant jet differential on each
fibre; the pole order of the field is added to the base twist.

\begin{pro}\label{vector field, compact case}
	Given a holomorphic family $\{ \omega_{1, a} \}_{a\in U}$ of negatively twisted invariant $2$-jet differentials as in Proposition \ref{prop:extension and irred. component cpt case} with vanishing order $t \geqslant 8$,
    one can find  some slanted vector field $V$ of pole order $7$ in Theorem \ref{thm:globally generated vector field of pole order 7 cpt case}, such that
    the derivative of $\{ \omega_{1, a} \}$ along $V$ gives a new family of holomorphic invariant $2$-jet differentials $\omega_{2, a}$ with vanishing order $t-7$. Furthermore, the common zero set $\{\omega_{1,a} = 0\} \cap \{\omega_{2,a} = 0\}$ in the second level $\left( X_{a} \right)_2$ of the Demailly–Semple tower of $X_a$ has codimension at least $2$ modulo $\Gamma_{a,2}$ for a generic parameter $a \in U$. \qed
\end{pro}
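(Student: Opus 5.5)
We plan to follow P\u{a}un's argument \cite[p.~889]{mihaipaun2008}, working on the vertical jet space $J_2^v(\mathcal{X})$ of the universal family restricted over $U$. First, we regard the holomorphic family $\{\omega_{1,a}\}_{a\in U}$ from Proposition~\ref{prop:extension and irred. component cpt case} as a single function $\omega_1$ on $J_2^v(\mathcal{X})|_U$. It is polynomial and weighted homogeneous of degree $m$ in the vertical jet coordinates $(\xi',\xi'')$, invariant under $\mathbb{G}_2$, and is a section of $p^*\mathrm{pr}_1^*\mathcal{O}_{\mathbb{P}^3}(-t)$. For a global section $V$ of $T_{J_2^v(\mathcal{X})}\otimes p^*\mathrm{pr}_1^*\mathcal{O}_{\mathbb{P}^3}(7)\otimes p^*\mathrm{pr}_2^*\mathcal{O}(1)$ furnished by Theorem~\ref{thm:globally generated vector field of pole order 7 cpt case}, the Lie derivative $V\cdot\omega_1$ is again weighted homogeneous of degree $m$. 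Since the slanted fields are built compatibly with the reparametrization action (invariant fields), it remains $\mathbb{G}_2$-invariant. It takes values in $\mathcal{O}_{\mathbb{P}^3}(7-t)$, the $\mathcal{O}(1)$ factor on the parameter space being harmless after restricting to an affine neighbourhood in $U$. Restricting to the fibre over $a$ gives $\omega_{2,a}\in H^0\bigl(X_a,E_{2,m}T^*_{X_a}\otimes\mathcal{O}_{X_a}(-(t-7))\bigr)$. The hypothesis $t\geqslant 8$ guarantees that this twist is still negative.

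Next we show that $V$ can be chosen so that $\omega_{2,a}$ is not divisible by $\omega_{1,a}$, modulo $\Gamma_{a,2}$, on the generic fibre. Pick a point $w\in J_2^v(\mathcal{X})\setminus\Sigma$ lying over a regular point of the divisor $\{\omega_{1,a}=0\}\subset X_{a,2}$, away from $\Gamma_{a,2}$. Nonconstant invariant jet differentials cannot vanish identically on the non-Wronskian locus, so $\{\omega_1=0\}\not\subset\Sigma$ and such points exist. Since $\omega_1$ is not locally constant along the vertical directions there, its differential is nonzero at a generic such point. Global generation of the twisted tangent bundle at $w$ then yields a $V$ with $(V\cdot\omega_1)(w)\neq 0$ while $\omega_1(w)=0$. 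Hence $\omega_2$ does not vanish identically on the irreducible reduced component $Z_a$ of $\{\omega_{1,a}=0\}$ modulo $\Gamma_{a,2}$, for generic $a$, by semicontinuity in $a$.

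Finally, because $Z_a$ is irreducible (Proposition~\ref{prop:extension and irred. component cpt case}), the restriction of $\omega_{2,a}$ to $Z_a$ is a nonzero section of a line bundle. Its zero set is therefore a proper subvariety of $Z_a$, of codimension $2$ in $X_{a,2}$. This gives the codimension statement for $\{\omega_{1,a}=0\}\cap\{\omega_{2,a}=0\}$ modulo $\Gamma_{a,2}$.

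The main obstacle is the second step. We must ensure that the derivative of $\omega_1$ is nonzero at some point of its zero locus outside $\Sigma$, and that this nonvanishing passes to the irreducible component in the second Demailly--Semple level rather than only on the Green--Griffiths jet space. We would handle it by using the $\mathbb{G}_2$-equivariance to descend from $J_2^v$ to $X_{a,2}$, and the reducedness of $Z_a$ to guarantee a nonzero normal derivative.
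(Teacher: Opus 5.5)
Your argument is essentially the one the paper relies on. The paper states the proposition without proof and defers to P\u{a}un's argument, together with its standing remark that Lie derivation along invariant slanted fields preserves invariance. Like that argument, you form $\omega_2=V\cdot\omega_1$ with twist $t-7$, use global generation off $\Sigma$ and the reducedness of $Z_a$ to find $V$ with $V\cdot\omega_1\not\equiv 0$ on $Z_a$, and conclude from the irreducibility of $Z_a$.

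Two justifications need a small repair. First, $\{\omega_1=0\}\not\subset\Sigma$ does not follow from $\omega_1\not\equiv 0$ off $\Sigma$; it follows from a dimension count. $\Sigma$ has codimension $2$ in $J_2^v(\mathcal{X})$, since its regular part maps to a surface in $X_{a,2}$, so it cannot contain the hypersurface lying over $Z_a$. Second, $d\omega_1\neq 0$ at generic points of that hypersurface comes from the reducedness of $Z_a$, not from $\omega_1$ being nonconstant.
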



Similarly, following Rousseau's argument \cite[p.~38]{Rousseau2009}, we apply the slanted vector fields of pole order $7$ in the logarithmic case.

\begin{pro}\label{vector field, logarithmic case}
    Given a holomorphic family $\{ \omega_{1, a} \}_{a\in U}$ of negatively twisted invariant logarithmic $2$-jet differentials as in Proposition \ref{prop:extension and irred. component log case} with vanishing order $t \geqslant 8$,
    one can find  some slanted vector field $V$ of pole order $7$ in Theorem \ref{thm:globally generated vector field of pole order 7 log case}, such that
    the derivative of $\{ \omega_{1, a} \}$ along $V$ gives a new family of holomorphic invariant logarithmic $2$-jet differentials $\omega_{2, a}$ with vanishing order $t-7$. Furthermore, the common zero set $\{\omega_{1,a} = 0\} \cap \{\omega_{2,a} = 0\}$ in the second level $\left( \mathbb{P}^2_{a} \right)_2$ of the logarithmic Demailly–Semple tower of  $\left( \mathbb{P}^2 , \mathcal{C}_a \right)$ has codimension at least $2$ modulo $\Gamma_{a,2}$ for a generic parameter $a \in U$. \qed
\end{pro}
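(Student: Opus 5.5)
The statement to prove is Proposition~\ref{vector field, logarithmic case}, the logarithmic analogue of Proposition~\ref{vector field, compact case}. I would follow Rousseau's argument, in three steps.

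\textbf{Step 1: lift $\omega_1$ to the universal family.} Regard the holomorphic family $\{\omega_{1,a}\}_{a\in U}$ of Proposition~\ref{prop:extension and irred. component log case} as a single section $\omega_1$ on the vertical logarithmic jet space $\overline{J_2^v}(\mathbb{P}^2\times\mathbb{P}^{N_d^2})$ over $U$. On the fibres of this space, $\omega_1$ is a polynomial that is invariant under reparametrization. As a function it has weighted degree $m$, vanishes to order $t$ along $\mathrm{pr}_1^*\mathcal{O}_{\mathbb{P}^2}(1)$, and is polynomial in the parameter $a$. After clearing the parameter denominators (by Zariski-shrinking $U$ or multiplying by a power of a coordinate), it becomes a section of $E_{2,m}\otimes\mathrm{pr}_1^*\mathcal{O}(-t)\otimes\mathrm{pr}_2^*\mathcal{O}(c)$ for some $c$. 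To do local computations, pull it back via $\pi_{[2]}$ to $\overline{J_2^v}(\mathcal{Y}^*)$, where the coordinates $(z,a,\xi',\xi'')$ are explicit.

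\textbf{Step 2: differentiate along a slanted field.} Take a global section $V$ of $T_{\overline{J_2^v}}\otimes p^*\mathrm{pr}_1^*\mathcal{O}(7)\otimes p^*\mathrm{pr}_2^*\mathcal{O}(1)$ from Theorem~\ref{thm:globally generated vector field of pole order 7 log case}. Because the slanted fields are built equivariantly for the $\mathbb{G}_2$-action and are vertical, the Lie derivative $V\cdot\omega_1$ is again an invariant vertical logarithmic jet differential of weighted degree $m$. Its twist is $\mathcal{O}(-t+7)$ on $\mathbb{P}^2$, and the extra factor $\mathcal{O}(1)$ in the parameter direction does not matter fibrewise. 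Since $t\geqslant 8$, restricting to a generic fibre $a$ gives
\[
\omega_{2,a}\in H^0\bigl(\mathbb{P}^2,E_{2,m}T^*_{\mathbb{P}^2}(\log\mathcal{C}_a)\otimes\mathcal{O}(-(t-7))\bigr),
\]
which is still negatively twisted. This step is mostly bookkeeping of weights, exactly as in Rousseau's and Păun's treatment.

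\textbf{Step 3: independence via global generation.} This is the main obstacle: showing that $\{\omega_{1,a}=0\}\cap\{\omega_{2,a}=0\}$ has codimension at least $2$ modulo $\Gamma_{a,2}$ for some choice of $V$. I would argue by contradiction. Suppose that for every $V$ the divisor $Z_a:=\{\omega_{1,a}=0\}$ (irreducible and reduced modulo $\Gamma_{a,2}$ by Proposition~\ref{prop:extension and irred. component log case}) is contained in $\{V\omega_1=0\}$. Consider the total space $\mathcal{Z}=\{\omega_1=0\}$ in the universal second-level tower. Then every global slanted field would be tangent to $\mathcal{Z}$ at its regular points outside the bad set
\[
\overline{\pi_{[2]}(\Sigma)}\cup p^{-1}(\mathcal{C})\cup\Gamma_2.
\]
By global generation, the fields span the full vertical tangent space there, in particular the jet directions. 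Hence $d\omega_1$ would vanish along $\mathcal{Z}$ in all directions, so $\omega_1$ would vanish to order $\geqslant 2$ on $\mathcal{Z}$. This contradicts reducedness, provided the regular part of $\mathcal{Z}$ is not swallowed by the bad set.

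This is the point needing care. I would check that a component of $Z_a$ other than $\Gamma_{a,2}$ cannot lie in the Wronskian locus $\Sigma$ or over $\mathcal{C}$. This holds for dimension reasons, and by the irreducibility statement of Proposition~\ref{prop:extension and irred. component log case} after enlarging by $\Gamma_{a,2}$. Once that is settled, choosing one $V$ that works on an open dense set of parameters gives the codimension-$2$ statement for generic $a\in U$.
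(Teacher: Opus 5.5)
Your three steps are the standard Siu--P\u{a}un--Rousseau argument, and this is all the paper invokes here. It cites Rousseau and adds only that the Lie derivative of the equivariant $\omega_1$ along an invariant pole-order-$7$ field is again an invariant jet differential, with the pole order added to the twist. Your Steps~1--2 and the reducedness contradiction in Step~3 are fine.

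The step that fails as justified is the claim that the generic point of $Z_a$ avoids the bad locus ``for dimension reasons''. That argument works in P\u{a}un's compact setting. There $\Sigma=\{\xi'\wedge\xi''=0\}$ is cut out by three Wronskians and has codimension $2$ in the vertical jet fibres. In the logarithmic setting, however, both pieces of the bad locus are hypersurfaces. One is $p^{-1}(\mathcal{C})$. The other is $\Sigma_0$, the zero set of the single determinant $z_1'z_2''-z_1''z_2'$. Modulo $\Gamma_{a,2}$, $\Sigma_0$ corresponds in $X_{a,2}$ to the irreducible divisor $D_W$ of $2$-jets osculating a line. Irreducibility of $Z_a$ does not by itself rule out $Z_a=D_W$.

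Both containments are excluded by a Picard-class argument in $\operatorname{Pic}(X_{a,2})\cong\mathbb{Z}\,\pi_{2,0}^*h\oplus\mathbb{Z}u_1\oplus\mathbb{Z}u_2$.

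\textbf{The curve $\mathcal{C}$.} From $\{\omega_{1,a}=0\}=Z_a+b\,\Gamma_{a,2}$ and $\Gamma_2\sim u_2-u_1$ you get $Z_a\sim b\,u_1+(m-b)u_2-t\,\pi_{2,0}^*h$ with $m>0$. So $Z_a$ is not the pullback divisor $\pi_{2,0}^{-1}(\mathcal{C})\sim d\,\pi_{2,0}^*h$.

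\textbf{The Wronskian locus.} Under the chart change $x=1/\tilde t$, $y=\tilde y/\tilde t$, the affine Wronskian is multiplied by exactly $-\tilde t^{-3}$. Hence it extends to a section of $\mathcal{O}_{X_2}(3)\otimes\pi_{2,0}^*\mathcal{O}_{\mathbb{P}^2}(3)$. This section does not vanish identically over the line at infinity. Nor does it vanish identically over $\mathcal{C}$: in local coordinates with $\mathcal{C}=\{x=0\}$, its restriction to logarithmic jets over $\mathcal{C}$ is ${y'}^3$ times the curvature of $\mathcal{C}$. Therefore its divisor is $D_W+c\,\Gamma_2$, and $[D_W]$ has $\pi_{2,0}^*h$-coefficient $+3$. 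The corresponding coefficient of $[Z_a]$ is $-t\leqslant-8$, so $Z_a\neq D_W$.

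With this in place, the generic point of $Z_a$ lies where the slanted fields generate, and the rest of your argument goes through.
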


\section{\bf Proof of the Main Results}\label{sect:5}

\subsection{Constructing a Second Independent Invariant $2$-Jet Differential}
We now give the detailed proof of Theorem~\ref{thm:GGL-bound}. Let \(X\) be a generic surface of degree \(d \geqslant 17\) in \(\mathbb{P}^3\). By Proposition~\ref{existence of 2 jets, compact case}, for suitable positive integers \(M\) and \(T\) there exists a nonzero section
\[
\omega_0 \in H^0\!\bigl(X,\;E_{2,M}T_X^*\otimes\mathcal{O}(K_X^{-T})\bigr)
      = H^0\!\bigl(X,\;E_{2,M}T_X^*\otimes\mathcal{O}_{X}(-T(d-4))\bigr).
\]

Assume that there exists an entire curve \(f \colon \mathbb{C} \to X\). Consider the Demailly--Semple tower
\[
X_2 \longrightarrow X_1 \longrightarrow X_0 \coloneqq X,
\]
and let \(f_{[2]}\) and \(f_{[1]}\) denote the lifts of \(f\) to \(X_2\) and \(X_1\), respectively.


\medskip

Following the argument in the proof of Proposition~\ref{prop:extension and irred. component cpt case}, we select a very generic parameter \(a_0\) in the moduli space \(\mathbb{P}^{N_d^3}\) of degree‑\(d\) surfaces in \(\mathbb{P}^3\) such that:

\begin{itemize}
    \item \(X_{a_0}\) is a smooth algebraic surface of general type;
    
    \item  the Picard group satisfies \(\operatorname{Pic}(X_{a_0}) \cong \mathbb{Z}\) (by the Noether--Lefschetz theorem);
    
    
    \item for any positive integers \(M\) and \(T\), every nonzero global section of
          \(E_{2,M}T^*_{X_{a_0}} \otimes \mathcal{O}_{X_{a_0}}(-T)\)
          extends to a holomorphic family over a Zariski‑open neighbourhood of \(a_0\).
\end{itemize}

Using Demailly--El~Goul’s construction~\cite[p.\,532]{Demailly-Elgoul2000} (see also the proof of Proposition~\ref{prop:extension and irred. component cpt case}), we obtain an irreducible invariant \(2\)-jet differential
\begin{equation}
    \label{omega-1}
\omega_{1,a_0} \in  H^0\!\bigl(X_{a_0},\;E_{2,m}T_{X_{a_0}}^*\otimes\mathcal{O}_{X_{a_0}}(-t)\bigr),
\end{equation}
whose zero divisor in \(X_{2,a_0}\) consists of an irreducible divisor \(Z_{a_0}\) modulo \(\Gamma_{a_0 ,2}\).

The choice of \(a_0\) guarantees that \(\omega_{1,a_0}\) extends to a holomorphic family
\begin{equation}\label{family of omega_1}
\bigl\{\omega_{1,a} \mid 0 \not\equiv \omega_{1,a} \in H^0\!\bigl(X_a,\;E_{2,m}T_{X_a}^*\otimes\mathcal{O}_{X_a}(-t)\bigr),\; a \in U_{a_0}^{m,t}\bigr\},
\end{equation}
varying holomorphically with the parameter \(a\) in a Zariski‑open neighbourhood \(U_{a_0}^{m,t}\) of \(a_0\). By \cite[Theorem 12.2.1]{EGA4.2_1966}, we may shrink to a smaller Zariski‑open subset \(U^{m,t}\subset U_{a_0}^{m,t}\) on which every \(a\in U^{m,t}\) still defines a smooth surface \(X_a\) of general type and the zero locus satisfies
\begin{equation}\label{eq:irred-divisor-mod-Gamma}
\{\omega_{1,a}=0\}\ \text{corresponds to an irreducible divisor }Z_a\text{ on }X_{a,2}\text{ modulo }\Gamma_{a,2}.
\end{equation}

We now recall the following numerical criterion of Demailly--El~Goul.

\begin{pro}[{\cite[Proposition 3.4]{Demailly-Elgoul2000}}]\label{prop:DEG-numerical-criterion}
Let \(X\) be a smooth surface of degree‑\(d\) of general type in \(\mathbb{P}^3\) with \(\operatorname{Pic}(X)\cong\mathbb{Z}\). Suppose that there exists a nonzero irreducible negatively twisted \(2\)-jet differential
\[
\omega_1 \in 
H^0\!\bigl(X,\;E_{2,m}T_X^*\otimes\mathcal{O}_{X}(-t)\bigr)
\]
satisfying
\begin{equation}\label{ineq:Demailly-El Goul condition}
t < (d-4)\frac{13c_1^{2}-9c_2}{12c_1^{2}}\,m .
\end{equation}
Then the line bundle \(\mathcal{O}_{X_2}(1)\) restricted to the irreducible \(3\)-dimensional component \(Z\)  of the zero locus \(\{\omega_1=0\}\) in \(X_2\) is big. Here $Z$ is the component that projects onto \(X_1\) and is distinct from \(\Gamma_2\).
\end{pro}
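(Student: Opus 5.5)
The plan is to run the Riemann--Roch computation of Demailly--El~Goul on the divisor $Z$, using $\operatorname{Pic}(X)\cong\mathbb{Z}$ to reduce every class on $X_2$ to the three generators $u_1=\pi_{2,1}^*\mathcal{O}_{X_1}(1)$, $u_2=\mathcal{O}_{X_2}(1)$ and $h=\pi_{2,0}^*\mathcal{O}_X(1)$, with $c_1(K_X)=(d-4)h$.

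\textbf{Step 1: the class of $Z$.} By irreducibility of $\omega_1$ modulo $\Gamma_2$, we have $\{\omega_1=0\}=Z+b\,\Gamma_2$. Combined with \eqref{gamma_2=O(-1, 1)} and the trichotomy of \cite[Lemma 3.3]{Demailly-Elgoul2000} recalled in the proof of Proposition~\ref{prop:extension and irred. component cpt case}, this gives
\[
Z\sim b_1u_1+b_2u_2-t\,h,\qquad b_1\geqslant 2b_2\geqslant 0,\quad b_1+b_2=m .
\]

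\textbf{Step 2: bigness criterion.} Bigness of $u_2|_Z$ follows from $h^0(Z,\mathcal{O}_Z(k u_2))\geqslant c\,k^3$ for $k\gg1$. I would obtain this from the exact sequence
\[
0\to\mathcal{O}_{X_2}(ku_2-Z)\to\mathcal{O}_{X_2}(ku_2)\to\mathcal{O}_Z(ku_2)\to0 .
\]
Rather than controlling $H^1$ of the left term directly, it is cleaner to apply holomorphic Morse inequalities, or the algebraic criterion ``$(A-B)^3>0$ on $Z$ with $A,B$ nef''. Concretely, I would write $u_2$ restricted to a suitable twist as a difference of nef classes: for instance $u_2+2u_1$ and $u_1+2h$-type combinations, which are nef (relatively nef by Demailly's positivity of $\mathcal{O}_{X_2}(a_1,a_2)$ for $a_1\geqslant 2a_2\geqslant0$, and $K_X$-twisted to be globally nef). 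Then the condition reduces to a cubic intersection inequality $(A|_Z)^3-3(A|_Z)^2\cdot B|_Z>0$.

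\textbf{Step 3: intersection computation.} The numbers $u_1^a u_2^b h^c\cdot Z$ are computed on $X_2$ through the Chern relations of the two projective bundles:
\[
u_1^2+c_1(T_X)u_1+c_2(T_X)=0 \ \text{ on } X_1,
\]
and the analogous relation for $V_1$ on $X_2$, with $c_1(V_1)=c_1(T_X)-u_1$ and $c_2(V_1)=c_2(T_X)-c_1(T_X)u_1+\dots$ obtained from \eqref{SES:V_1} and \eqref{ses:relative Euler sequence}. This expresses everything in terms of $c_1^2$ and $c_2$. Substituting the class of $Z$ from Step 1 gives a polynomial expression that is linear in $t$ and in $(b_1,b_2)$.

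\textbf{Step 4: sign condition.} After optimizing the nef decomposition, this expression should be positive exactly when
\[
t<(d-4)\frac{13c_1^2-9c_2}{12c_1^2}\,m
\]
holds uniformly over all $b_1\geqslant 2b_2$ with $b_1+b_2=m$. The worst case should be the extremal ratio $b_1=2b_2$, or $b_2=0$, and it must be checked that the resulting bound coincides with \eqref{ineq:Demailly-El Goul condition}. I expect this step to be the main obstacle: the choice of the nef splitting $A-B$ determines whether the threshold comes out exactly as $\frac{13c_1^2-9c_2}{12c_1^2}$. I would first try the splitting Demailly--El~Goul use, namely $u_2=(u_2+2u_1+\lambda h)-(2u_1+\lambda h)$ with $\lambda$ proportional to $d-4$, and optimize $\lambda$.

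Finally, restricting to $Z$ requires $Z\neq\Gamma_2$ and $Z$ dominating $X_1$; both are guaranteed by the third case of the trichotomy.
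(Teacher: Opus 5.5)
\textbf{There is a genuine gap in Steps 2--4.} Your Step~1 is fine. But the nef-splitting/Morse route cannot reach the sharp constant, and the point you flag as ``the main obstacle'' is exactly where the argument fails.

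\emph{What the threshold really is.} The threshold in \eqref{ineq:Demailly-El Goul condition} is the positivity of a \emph{full} top self-intersection. Use the correct Chern classes $c_1(V_1)=c_1+u_1$ (your sign is off) and $c_2(V_1)=c_1u_1+2c_2$. Then
\[
(2u_1+u_2)^3\cdot u_1=(2u_1+u_2)^3\cdot u_2=13c_1^2-9c_2,
\qquad
(2u_1+u_2)^3\cdot h=-12\,c_1\cdot h ,
\]
hence
\[
(2u_1+u_2)^3\cdot Z=m\,(13c_1^2-9c_2)+12t\,(c_1\cdot h).
\]
This depends only on $(m,t)$, so there is no worst case over $(b_1,b_2)$ to optimize. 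Since $c_1\cdot h=-c_1^2/(d-4)$, it is positive exactly under \eqref{ineq:Demailly-El Goul condition}.

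\emph{Why Morse loses.} A splitting $L=A-B$ with $A,B$ nef certifies bigness only when $A^3-3A^2B>0$. This falls short of $(A-B)^3$ by $B^2\cdot(3A-B)$. Demailly's positivity makes $\mathcal{O}_{X_2}(2,1)$ nef only relative to $X$, so the natural choice is $A=(2u_1+u_2)+\lambda h$, $B=\lambda h$ with $\lambda>0$. On a fibre of $\pi_{2,0}$, which is $\mathbb{F}_3$ with $u_1^2=0$, $u_1u_2=1$, $u_2^2=-1$, one finds $(2u_1+u_2)\cdot h^2\cdot Z=dm$. So the loss is $3\lambda^2dm>0$, and you only obtain
\[
t<(d-4)\,\frac{13c_1^2-9c_2-3\lambda^2 d}{12c_1^2}\,m ,
\]
which is strictly weaker than the claim. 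Nothing in your plan shows $\mathcal{O}_{X_2}(2,1)|_Z$ is nef, so some such positive loss remains. The number $13c_1^2-9c_2$ is a Riemann--Roch leading coefficient. Turning it into bigness needs a vanishing theorem (Bogomolov's, i.e.\ semistability of $T_X$), which a nef/Morse argument does not see.

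Targeting $u_2$ directly is worse. $u_2$ is not even relatively nef: it has degree $-2$ on $\Gamma_2\cap F$. Moreover $u_2^3\cdot Z=b_1(c_1^2-3c_2)+b_2(5c_2-c_1^2)<0$ for all admissible $(b_1,b_2)$.

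\textbf{The missing idea.} Keep the Riemann--Roch leading term and kill the only obstructing cohomology.

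\begin{itemize}
\item For $\mathcal{O}_{X_2}(2p,p)|_Z$ one has $h^0\geqslant\chi-h^2$, with $\chi=\frac{p^3}{6}(2u_1+u_2)^3\cdot Z+O(p^2)$. It is $H^2$ on $Z$ that must be controlled, not $H^1$.
\item The restriction sequence gives
\[
h^2\bigl(Z,\mathcal{O}_{X_2}(2p,p)|_Z\bigr)\leqslant h^2\bigl(X_2,\mathcal{O}_{X_2}(2p,p)\bigr)+h^3\bigl(X_2,\mathcal{O}_{X_2}(2p,p)(-Z)\bigr).
\]
\item The first term vanishes by Leray (Proposition~\ref{pro:the higher direct images vanishing}) and $(\pi_{2,0})_*\mathcal{O}_{X_2}(2p,p)\cong E_{2,3p}T_X^*$. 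Then use Serre duality, and Bogomolov vanishing (Theorem~\ref{thm:Bogomolov vanishing theorem}) on the graded pieces $S^{3p-3j}T_X\otimes K_X^{1-j}$ of $E_{2,3p}T_X\otimes K_X$.
\item The second term vanishes for $p\gg1$. This uses the filtration of $(\pi_{2,1})_*$, relative duality on $X_1\to X$, and Serre vanishing for the ample $K_X$.
\end{itemize}

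This gives bigness of $\mathcal{O}_{X_2}(2,1)|_Z$. Finally, \eqref{gamma_2=O(-1, 1)} gives $\mathcal{O}_{X_2}(3)|_Z\cong\mathcal{O}_{X_2}(2,1)|_Z\otimes\mathcal{O}_Z(2\Gamma_2)$, and $\Gamma_2|_Z$ is effective because $Z\neq\Gamma_2$. Hence $\mathcal{O}_{X_2}(1)|_Z$ is big. This last passage is also what replaces your attempt to work with $u_2$ directly.
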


For the reader’s convenience, we provide a self-contained proof. Our argument follows a slightly different route from that of~\cite[Proposition 3.4]{Demailly-Elgoul2000}, as we were unable to fully verify one of the technical steps in the original presentation.

The key ingredient is the following statement. A complete proof is given in our earlier work~\cite[Proposition 6.1]{Hou-Huynh-Merker-Xie2025}.

\begin{pro}\label{pro:the higher direct images vanishing}
Let $a_1,a_2\in\mathbb Z$. Denote
\begin{equation}\label{eq:O(a_1,a_2)}
    \mathcal{O}_{X_{2}}(a_1,a_2)\coloneqq\pi_{2,1}^{*}\mathcal{O}_{X_1}(a_1)\otimes\mathcal{O}_{X_2}(a_2), \qquad a_1 , a_2 \in \mathbb{Z}.
\end{equation}
\begin{enumerate}
    \item If $a_2=-1$, then
    \[
    R^q(\pi_{2,0})_*\mathcal O_{X_2}(a_1,-1)=0
    \qquad\text{for every }q\geqslant0.
    \]

    \item If $a_2\geqslant0$, then
    \[
    R^q(\pi_{2,0})_*\mathcal O_{X_2}(a_1,a_2)=0
    \qquad\text{for every }q\geqslant2,
    \]
    and
    \[
    R^1(\pi_{2,0})_*\mathcal O_{X_2}(a_1,a_2)=0
    \quad\Longleftrightarrow\quad
    a_1-2a_2\geqslant-1.
    \]
\end{enumerate}
In particular, for every $p\geqslant0$,
\[
R^q(\pi_{2,0})_*\mathcal O_{X_2}(2p,p)=0
\qquad\text{for every }q\geqslant1. 
\]
\qed
\end{pro}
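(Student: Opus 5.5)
We compute the direct images in two stages, using $\pi_{2,0}=\pi_{1,0}\circ\pi_{2,1}$ and the Leray spectral sequence. Each stage is a $\mathbb{P}^1$-bundle: $X_2=\mathbb{P}(V_1)\to X_1$ with $V_1$ of rank $2$, and $X_1=\mathbb{P}(V_0)\to X$. By the projection formula,
\[
R^q(\pi_{2,1})_*\mathcal{O}_{X_2}(a_1,a_2)=\mathcal{O}_{X_1}(a_1)\otimes R^q(\pi_{2,1})_*\mathcal{O}_{X_2}(a_2).
\]
For the relative $\mathcal{O}(a_2)$ on a $\mathbb{P}^1$-bundle, the standard facts give:
\begin{itemize}
\item if $a_2=-1$, all $R^q$ vanish, and Leray immediately yields part (1);
\item if $a_2\geqslant0$, then $R^1$ vanishes and $R^0=S^{a_2}V_1^*$ (with the convention $\mathbb{P}(V)$ = lines, so that $\pi_*\mathcal{O}(1)=V^*$).
\end{itemize}
Hence, for $a_2\geqslant 0$,
\[
R^q(\pi_{2,0})_*\mathcal{O}_{X_2}(a_1,a_2)=R^q(\pi_{1,0})_*\bigl(S^{a_2}V_1^*\otimes\mathcal{O}_{X_1}(a_1)\bigr).
\]
Since $\pi_{1,0}$ has one-dimensional fibres, this vanishes for $q\geqslant2$.

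To analyse $R^1$, we filter $S^{a_2}V_1^*$ using the dual of \eqref{SES:V_1}:
\[
0\to\mathcal{O}_{X_1}(1)\to V_1^*\to T_{X_1/X_0}^*\to0.
\]
On each fibre $F\cong\mathbb{P}^1$ of $\pi_{1,0}$ we have $\mathcal{O}_{X_1}(1)|_F=\mathcal{O}(1)$ and $T^*_{X_1/X_0}|_F=\mathcal{O}(-2)$. So $S^{a_2}V_1^*$ has a filtration whose graded pieces are $\mathcal{O}_{X_1}(i)\otimes (T^*_{X_1/X_0})^{a_2-i}$ for $0\leqslant i\leqslant a_2$. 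After twisting by $\mathcal{O}_{X_1}(a_1)$, these restrict on $F$ to $\mathcal{O}(a_1+i-2(a_2-i))=\mathcal{O}(a_1-2a_2+3i)$.

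The fibre degree is smallest at $i=0$, where it equals $a_1-2a_2$. Every graded piece therefore has vanishing $H^1$ on the fibres exactly when $a_1-2a_2\geqslant-1$. In that case $R^1$ vanishes by cohomology and base change together with the long exact sequences of the filtration, which proves the implication $\Leftarrow$. The special case $(a_1,a_2)=(2p,p)$ follows at once, since $a_1-2a_2=0\geqslant-1$.

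The main obstacle is the converse implication $\Rightarrow$. When $a_1-2a_2\leqslant-2$, the fibrewise restriction of $V_1^*$ does not split, so nonvanishing of $H^1$ of a graded piece does not automatically give nonvanishing of $H^1$ of the whole bundle. The plan is as follows.
\begin{itemize}
\item Use that $\pi_{1,0}$ is flat with one-dimensional fibres, so $R^1$ commutes with base change to a fibre $F$. It then suffices to show $H^1(F,S^{a_2}V_1^*|_F\otimes\mathcal{O}(a_1))\neq0$.
\item Identify $V_1|_F$: it is an extension of $\mathcal{O}(-1)$ by $T_F=\mathcal{O}(2)$, which forces $V_1|_F\cong\mathcal{O}(1)\oplus\mathcal{O}(1)$ or $\mathcal{O}(2)\oplus\mathcal{O}(-1)$.
\item Compute $\deg V_1|_F=1$ and decide the splitting type from the Euler sequence \eqref{ses:relative Euler sequence}, expecting $V_1|_F\cong \mathcal{O}(1)^{\oplus 2}$, hence $V_1^*|_F\cong\mathcal{O}(-1)^{\oplus2}$.
\end{itemize}
With this splitting, $S^{a_2}V_1^*|_F\otimes\mathcal{O}(a_1)\cong\mathcal{O}(a_1-a_2)^{\oplus(a_2+1)}$. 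But this bundle has $H^1\neq0$ iff $a_1-a_2\leqslant-2$, which does not match the criterion $a_1-2a_2\geqslant-1$. So this step needs care: the splitting type of $V_1|_F$ must be verified precisely, and the stated criterion may depend on the exact twisting convention for $\mathcal{O}_{X_1}(1)$. Here the proposal defers to the careful fibre computation in \cite[Proposition 6.1]{Hou-Huynh-Merker-Xie2025}.
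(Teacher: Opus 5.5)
The reduction by Leray and the projection formula, part (1), the vanishing for $q\geqslant 2$, and the implication $a_1-2a_2\geqslant-1\Rightarrow R^1(\pi_{2,0})_*\mathcal{O}_{X_2}(a_1,a_2)=0$ are all correct. So is the case $(2p,p)$, which is the case the paper actually uses later. This is the same filtration of $V_1^*$ that the paper runs in Step~2 of the proof of Proposition~\ref{prop:DEG-numerical-criterion}; the paper itself gives no proof of the statement and only cites the earlier work.

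\textbf{The gap is the converse implication.} You do not prove it, and the route you sketch rests on a false claim. On a fibre $F=\mathbb{P}(T_{X,x})\cong\mathbb{P}^1$ of $\pi_{1,0}$, the bundle $V_1|_F$ is an extension of $\mathcal{O}(-1)$ by $T_F\cong\mathcal{O}(2)$. Since $\mathrm{Ext}^1(\mathcal{O}(-1),\mathcal{O}(2))\cong H^1(\mathbb{P}^1,\mathcal{O}(3))=0$, this extension always splits, so $V_1|_F\cong\mathcal{O}(2)\oplus\mathcal{O}(-1)$. The type $\mathcal{O}(1)^{\oplus 2}$ you expect cannot occur, because $\mathrm{Hom}(\mathcal{O}(2),\mathcal{O}(1)^{\oplus 2})=0$. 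You appear to have confused $V_1|_F$ with the middle term $\pi_{1,0}^*V_0\otimes\mathcal{O}_{X_1}(1)|_F\cong\mathcal{O}(1)^{\oplus 2}$ of the Euler sequence \eqref{ses:relative Euler sequence}. So your assertion that $V_1^*|_F$ does not split is wrong. The mismatch with the stated criterion comes from this error, not from a twisting convention.

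\textbf{How to close it.} With the correct splitting, your graded pieces split on each fibre:
$S^{a_2}V_1^*|_F\otimes\mathcal{O}(a_1)\cong\bigoplus_{i=0}^{a_2}\mathcal{O}(a_1-2a_2+3i)$.
Its $H^1$ is nonzero exactly when $a_1-2a_2\leqslant-2$. Since $R^2(\pi_{1,0})_*=0$, the top-degree base change from your first bullet then gives $R^1\neq 0$.

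You can also avoid the fibre splitting altogether:
\begin{itemize}
\item Because $R^2(\pi_{1,0})_*=0$, the functor $R^1(\pi_{1,0})_*$ is right exact.
\item Hence $R^1(\pi_{1,0})_*\bigl(S^{a_2}V_1^*\otimes\mathcal{O}_{X_1}(a_1)\bigr)$ surjects onto $R^1(\pi_{1,0})_*$ of the top graded quotient $(T^*_{X_1/X_0})^{a_2}\otimes\mathcal{O}_{X_1}(a_1)\cong\pi_{1,0}^*K_X^{a_2}\otimes\mathcal{O}_{X_1}(a_1-2a_2)$.
\item That target is $K_X^{a_2}\otimes R^1(\pi_{1,0})_*\mathcal{O}_{X_1}(a_1-2a_2)$, which is locally free of rank $2a_2-a_1-1\geqslant 1$ when $a_1-2a_2\leqslant -2$.
\end{itemize}
Either argument closes the gap without deferring to the earlier paper.
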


\begin{proof}[Proof of Proposition~\ref{prop:DEG-numerical-criterion}]
As in the proof of Proposition~\ref{prop:extension and irred. component cpt case}, one can show that the zero divisor of \(\omega_1\) on \(X_2\) decomposes as
\[
\{\omega_1=0\} = Z + b_1 \Gamma_2,
\]
where \(Z\subset X_2\) is irreducible and reduced.  
Write \(u_1 = \pi_{2,1}^*\mathcal{O}_{X_1}(1)\) and \(u_2 = \mathcal{O}_{X_2}(1)\). Since \(\operatorname{Pic}(X_2)\) decomposes as \(\operatorname{Pic}(X)\oplus\mathbb{Z}u_1\oplus\mathbb{Z}u_2\), the divisor \(Z\) is linearly equivalent to
\begin{equation}\label{equivalent class of Z}
Z \sim b_1 u_1 + b_2 u_2 - t\pi_{2,0}^*h ,\qquad 
b_1,b_2\in\mathbb{Z},\; b_1\geqslant 2b_2>0,\; b_1+b_2=m,\; t\in\mathbb{Z}_{+},
\end{equation}
where \(h=\mathcal{O}_X(1)\). 

We shall now prove that \(\mathcal{O}_{X_2}(2,1)|_{Z}\) is big whenever~\eqref{ineq:Demailly-El Goul condition} holds, using a Riemann--Roch computation.  
With the intersection formulas on the Demailly--Semple tower given in~\cite[Lemma 3.3 (e)]{Demailly-Elgoul2000}, a direct calculation yields
\[
(2u_1+u_2)^{3}\cdot Z
= m\bigl(13c_1^{2}-9c_2\bigr) + 12t(c_1\cdot h)
\qquad\text{[cf.~\cite[p.\,530 (\dag\dag\dag)]{Demailly-Elgoul2000}]},
\]
where \(c_1=(4-d)h,\; c_2=(d^{2}-4d+6)h^{2}\) and \(h^{2}=d\). Under condition~\eqref{ineq:Demailly-El Goul condition} we therefore obtain
\[
(2u_1+u_2)^{3}\cdot Z
= m\bigl(13c_1^{2}-9c_2\bigr) - \frac{12}{d-4}\,t\,c_1^{2} \; > \; 0 .
\]

Applying the Riemann--Roch theorem to the divisor \(Z\), we have for sufficiently divisible \(p\gg 1\)
\begin{align*}
h^{0}\!\bigl(Z,\mathcal{O}_{X_2}(2p,p)|_{Z}\bigr)
&+h^{2}\!\bigl(Z,\mathcal{O}_{X_2}(2p,p)|_{Z}\bigr) \\
&\geqslant \chi\!\bigl(Z,\mathcal{O}_{X_2}(2p,p)|_{Z}\bigr) \\
&=\frac{(2u_1+u_2)^{3}\cdot Z}{3!}\,p^{3}+O(p^{2})\gg 1 .
\end{align*}

We next show that
\[
h^{2}\!\bigl(Z,\mathcal{O}_{X_2}(2p,p)|_{Z}\bigr)=0
\qquad\text{for all sufficiently divisible }p\gg 1 .
\]

From the short exact sequence
\[
0\longrightarrow \mathcal{O}_{X_2}(-Z)\otimes\mathcal{O}_{X_2}(2p,p)
\longrightarrow \mathcal{O}_{X_2}(2p,p)
\longrightarrow \mathcal{O}_{Z}\otimes\mathcal{O}_{X_2}(2p,p)
\longrightarrow 0,
\]
the associated long exact cohomology sequence gives the estimate
\[
h^{2}\!\bigl(Z,\mathcal{O}_{X_2}(2p,p)|_{Z}\bigr)
\leqslant h^{2}\!\bigl(X_2,\mathcal{O}_{X_2}(2p,p)\bigr)
      +h^{3}\!\bigl(X_2,\mathcal{O}_{X_2}(-Z)\otimes\mathcal{O}_{X_2}(2p,p)\bigr).
\]

\noindent\textbf{Step 1. Vanishing of \(h^{2}(X_2,\mathcal{O}_{X_2}(2p,p))\).}
Consider the direct image sheaf \((\pi_{2,0})_*\mathcal{O}_{X_2}(2p,p)\) on \(X\). By Proposition~\ref{pro:the higher direct images vanishing} we have
\(R^{q}(\pi_{2,0})_*\mathcal{O}_{X_2}(2p,p)=0\) for all \(q\geqslant 1\). Hence Leray's spectral sequence yields
\[
H^{2}\!\bigl(X_2,\mathcal{O}_{X_2}(2p,p)\bigr)
\cong H^{2}\!\bigl(X,(\pi_{2,0})_*\mathcal{O}_{X_2}(2p,p)\bigr).
\]

Using the isomorphism (cf.~\cite[Lemma 3.3 (c)]{Demailly-Elgoul2000})
\[
(\pi_{2,0})_*\mathcal{O}_{X_2}(2p,p)\cong E_{2,3p}T_{X}^{*},
\]
and Serre duality, we obtain
\begin{equation}\label{use Serre duality}
H^{2}\!\bigl(X,(\pi_{2,0})_*\mathcal{O}_{X_2}(2p,p)\bigr)
\cong H^{2}\!\bigl(X,E_{2,3p}T_{X}^{*}\bigr)
\cong H^{0}\!\bigl(X,E_{2,3p}T_{X}\otimes K_{X}\bigr)^*.
\end{equation}
Here we use Serre duality $H^2(X,\mathcal{F}) \cong H^0(X,\mathcal{F}^* \otimes K_X)^*$, together with the fact that $(E_{2,3p}T_X^*)^* \cong E_{2,3p}T_X$.

The bundle \(E_{2,3p}T_{X}\otimes K_{X}\) inherits a filtration from the dual of~\eqref{filtration-of-E2m, cpt} twisted by \(K_{X}\); its graded pieces are
\[
S^{3p-3j}T_{X}\otimes K_{X}^{1-j}\qquad(j=0,1,\dots ,p).
\]

For all sufficiently divisible \(p\gg 1\), one has \(3p-3j>2(1-j)\) for every \(j=0,1,\dots ,p\). By the Bogomolov vanishing theorem (Theorem~\ref{thm:Bogomolov vanishing theorem}) we therefore get
\[
H^{0}\!\bigl(X,S^{3p-3j}T_{X}\otimes K_{X}^{1-j}\bigr)=0\qquad(j=0,1,\dots ,p),
\]
which forces \(H^{0}(X,E_{2,3p}T_{X}\otimes K_{X})=0\). Combining this with~\eqref{use Serre duality}, we obtain
\begin{equation}\label{H^2 of the direct image of O(2p,p) tensoring O(-p tau)}
h^{2}\!\bigl(X_2,\mathcal{O}_{X_2}(2p,p)\bigr)=0 .
\end{equation}

\noindent\textbf{Step 2. Vanishing of \(h^{3}(X_2,\mathcal{O}_{X_2}(-Z)\otimes\mathcal{O}_{X_2}(2p,p))\).}
By~\eqref{equivalent class of Z}, set
\[
\mathcal M_p:= \mathcal{O}_{X_2}(2p,p)\otimes\mathcal O_{X_2}(-Z)
=\mathcal O_{X_2}(2p-b_1,p-b_2)
\otimes\pi_{2,0}^*\mathcal O_{X}(t).
\]
Write
\[
r:=b_1-2b_2\geqslant0.
\]
Choose $p$ so large that $p-b_2\geqslant0$ and, when $r\geqslant2$,
\[
p-b_2\geqslant\left\lfloor\frac{r-2}{3}\right\rfloor.
\]
Let $\rho=\pi_{2,1}$ and $\pi=\pi_{1,0}$.  Since $p-b_2\geqslant0$,
\[
R^q\rho_*\mathcal M_p=0\quad(q\geqslant1),
\]
whereas $\rho_*\mathcal M_p$ has a filtration induced by
\[
0\longrightarrow\mathcal O_{X_1}(1)
\longrightarrow V_1^*
\longrightarrow\pi^*K_{X}
       \otimes\mathcal O_{X_1}(-2)
\longrightarrow0.
\]
After reindexing by $j=p-b_2-k$, the corresponding graded terms are
\begin{equation}\label{eq:bad-piece-on-x1}
\mathcal Q_{p,k}=
\pi^*\!\left(
K_{X}^{\,p-b_2-k}
\otimes\mathcal O_{X}(t)
\right)
\otimes\mathcal O_{X_1}(-r+3k),
\qquad 0\leqslant k\leqslant p-b_2.
\end{equation}
Set
\[
A_{p,k}:= 
K_{X}^{\,p-b_2-k}
\otimes\mathcal O_{\mathbb P^2}(t),
\qquad
d_k:=-r+3k,
\]
so that
\[
\mathcal Q_{p,k}
=
\pi^*A_{p,k}\otimes\mathcal O_{X_1}(d_k).
\]
If $d_k\geqslant-1$, then the projective bundle formula and the
projection formula give
\[
R^1\pi_*\mathcal Q_{p,k}
\simeq
A_{p,k}\otimes
R^1\pi_*\mathcal O_{X_1}(d_k)
=0.
\]
Since $\pi$ has one-dimensional fibers and
$\dim X=2$, the Leray spectral sequence yields
\[
H^3(X_1,\mathcal Q_{p,k})
\simeq
H^2\bigl(X,R^1\pi_*\mathcal Q_{p,k}\bigr)
=0.
\]
Consequently, only the indices satisfying $d_k\leqslant-2$ may
contribute; equivalently,
\[
0\leqslant k\leqslant
\left\lfloor\frac{r-2}{3}\right\rfloor.
\]

The only possible nonvanishing terms are therefore indexed by the fixed
finite set
\begin{equation}\label{eq:finite-bad-index-set}
0\leqslant k\leqslant
\left\lfloor\frac{r-2}{3}\right\rfloor.
\end{equation}
For such $k$, relative Serre duality for
$\pi\colon X_1=\mathbb P(T_{X})\to\mathbb P^2$
gives
\begin{align}\label{eq:r1-bad-piece-corrected}
R^1\pi_*\mathcal Q_{p,k}
\simeq 
S^{r-3k-2}T_{X}
\otimes K_X^{\,p-b_2-k-1}
\otimes\mathcal{O}_X(t).
\end{align}
For each $k$ in~\eqref{eq:finite-bad-index-set}, the symmetric power in
\eqref{eq:r1-bad-piece-corrected} is independent of $p$, whereas the power of the
ample line bundle $K_X\simeq\mathcal{O}_X(d-4)$ tends to infinity with $p$.  Serre vanishing therefore gives
\[
H^2\bigl(X,R^1\pi_*\mathcal Q_{p,k}\bigr)=0
\qquad(p\gg1).
\]
The Leray spectral sequence for $\pi$ now yields
$H^3(X_1,\mathcal Q_{p,k})=0$ for every graded term of the filtration.
Induction along the filtration gives
\[
H^3(X_1,\rho_*\mathcal M_p)=0.
\]
Using $R^q\rho_*\mathcal M_p=0$ once more, we obtain
\begin{equation}\label{H^3 of the left term}
H^3(X_2,\mathcal M_p)=0.
\end{equation}

Combining~\eqref{H^2 of the direct image of O(2p,p) tensoring O(-p tau)} and~\eqref{H^3 of the left term} we obtain
\(h^{2}(Z,\mathcal{O}_{X_2}(2p,p)|_{Z})=0\) for all sufficiently divisible \(p\gg 1\). Consequently,
\[
h^{0}\!\bigl(Z,\mathcal{O}_{X_2}(2p,p)|_{Z}\bigr)\geqslant\frac{(2u_1+u_2)^{3}\cdot Z}{3!}\,p^{3}+O(p^{2})\longrightarrow\infty
\quad\text{as }p\to\infty,
\]
which proves that \(\mathcal{O}_{X_2}(2,1)|_{Z}\) is big. 

Finally, by
$\mathcal{O}_{X_2}(\Gamma_2)\simeq\mathcal{O}_{X_2}(-1,1)$,
\[
\mathcal{O}_{X_2}(3)|_Z
\simeq
\mathcal{O}_{X_2}(2,1)|_Z\otimes\mathcal{O}_Z(2\Gamma_2).
\]
Since $\Gamma_2$ is effective, $\mathcal{O}_{X_2}(3)|_Z$ is big, and therefore so
is $\mathcal{O}_{X_2}(1)|_Z$. This completes the proof of the proposition.
\end{proof}

\subsection{Proof of Theorem~\ref{thm:GGL-bound} under Assumption~\eqref{ineq:Demailly-El Goul condition}}
\label{subsection: 5.2}
Assume that the pair \((m,t)\) in~\eqref{omega-1} satisfies inequality~\eqref{ineq:Demailly-El Goul condition}.  
By Proposition~\ref{prop:DEG-numerical-criterion}, for every sufficiently large \(\ell\gg1\) the twisted bundle  
\begin{equation}
    \label{omega_2,a_0}
\mathcal{O}_{X_{a_0,2}}(\ell) \otimes \pi_{2,0}^{*}\mathcal{O}_{X_{a_0}}(-1)\big|_{Z_{a_0}}\,\text{admits a nonzero section \(\omega_{2,a_0}\).}
\end{equation}

We shall extend \(\omega_{2,a_0}\) to a holomorphic family over a Zariski‑open subset of \(\mathbb{P}^{N_d^{3}}\).  
First we need a flat family whose fibre over a generic parameter \(a\) is the divisor \(Z_a\) described in~\eqref{eq:irred-divisor-mod-Gamma}.

Let
\[
U \coloneqq \bigl\{\,a\in U^{m,t}\mid X_a \text{ smooth},\ \omega_{1,a}\not\equiv0\,\bigr\},
\]
and consider the total space \(\mathcal{X}_{2,U}\coloneqq\bigcup_{a\in U}X_{a,2}\).  
The family \(\{\omega_{1,a}\}_{a\in U}\) gives a global section
\[
\omega_1 \in H^0\!\Bigl(\mathcal{X}_{2,U},\;
\bigcup_{a\in U}\bigl(\mathcal{O}_{X_{a,2}}(m)\otimes\pi_{2,0}^{*}\mathcal{O}_{X_a}(-t)\bigr)\Bigr),
\]
restricting to \(\omega_{1,a}\) on each fibre (see~\eqref{family of omega_1}).  
Let \(\Gamma\subset\mathcal{X}_{2,U}\) be the relative divisor with fibres \(\Gamma_{2,a}\) (see~\eqref{equ:def of Gamma_2}).

On \(\mathcal{X}_{2,U}\) we have the decomposition
\[
\{\omega_1=0\}= \mathcal{Z}+b\,\Gamma,
\]
where \(\mathcal{Z}\) is effective and \(b\) is the vanishing order of \(\omega_1\) along \(\Gamma\).  
After shrinking \(U\) we may assume that every \(\omega_{1,a}\) vanishes along \(\Gamma\) with order \(b\), then the restriction of \(\mathcal{Z}\) to the fibre over \(a\) coincides with the irreducible divisor \(Z_a\) from~\eqref{eq:irred-divisor-mod-Gamma}.

The projection $\pi: \mathcal{Z} \to U$ is proper. 
After shrinking $U$, the residual divisor $\mathcal{Z}\subset X_{2,U}$ is a relative effective Cartier divisor. In particular, $\mathcal{Z}\to U$ is flat. Put
\[
\mathcal L_\ell
:=
\left(
\mathcal{O}_{X_{2,U}}(\ell)
\otimes\pi_{2,0}^*\mathcal{O}_{X_U}(-1)
\right)|_{\mathcal{Z}}.
\]
By generic freeness and cohomology and base change, after shrinking to a
nonempty Zariski-open subset $U'\subset U$, the sheaf
\[
\mathcal E_\ell:=\pi_*\mathcal L_\ell
\]
is locally free and the natural maps
\[
\mathcal E_\ell\otimes\CC(a)
\longrightarrow
H^0(Z_a,\mathcal L_\ell|_{Z_a})
\]
are isomorphisms for all $a\in U'$. Propostion~\ref{prop:DEG-numerical-criterion} yields, after increasing $\ell$ if necessary, that these spaces are nonzero for a very generic $a_0\in U'$ and admits a nonzero section $\omega_{2,a_0}$ in the fibre of $\mathcal E_\ell$ at $a_0$. Since
$\mathcal E_\ell$ is locally free, \(\omega_{2,a_0}\) in~\eqref{omega_2,a_0} extends to an algebraic family of sections
\begin{equation}\label{family of omega_2}
\bigl\{\omega_{2,a}\mid 0\not\equiv\omega_{2,a}\in H^0\!\bigl(X_{a,2},\;\mathcal{O}_{X_{a,2}}(\ell)\otimes\pi_{2,0}^{*}\mathcal{O}_{X_a}(-1)\big|_{Z_a}\bigr),\; a\in U_{a_0}^{\ell}\bigr\},
\end{equation}
defined on a Zariski‑open neighbourhood \(U_{a_0}^{\ell}\subset U'\) of \(a_0\).

Consequently, \(\{(\omega_{2,a}=0)\cap Z_a\}_{a\in U_{a_0}^{\ell}}\) is an algebraic family of \(2\)-dimensional subvarieties. For any \(a\in U_{a_0}^{\ell}\) and any entire curve \(f\colon\mathbb{C}\to X_a\), we have \(f^*\omega_{1,a}\equiv0\). Hence the image of the lifting \(f_{[2]}\) lies inside \(Z_a\).  
The condition \(f^*\omega_{2,a}\equiv0\) further forces \(f_{[2]}\) to be contained in
\[
Z_a'\coloneqq(\omega_{2,a}=0)\cap Z_a,
\]
a \(2\)-dimensional subvariety of \(X_{a,2}\). Therefore the image of \(f_{[1]}=\pi_{2,1}\circ f_{[2]}\) lies in the proper subvariety \[\pi_{2,1}(Z_a')\subsetneqq X_{a,1}.\]
This completes the proof.
\qed


\subsection{Proof of  Theorem~\ref{thm:GGL-bound}  when~\eqref{ineq:Demailly-El Goul condition} Fails}\label{subsection: 5.3}
We now work on the complementary side of inequality~\eqref{ineq:Demailly-El Goul condition}, i.e.\ we assume
\begin{equation}\label{numerical condition, compact case}
t \geqslant (d-4)\frac{13c_1^{2}-9c_2}{12c_1^{2}}\,m
      = \frac{4d^{2}-68d+154}{12(d-4)}\,m \; \eqqcolon \; C(d)\,m  \qquad \text{[see \eqref{Chern numbers, cpt case}]}.
\end{equation}

By the fundamental vanishing theorem for entire curves, it suffices to produce two algebraically independent negatively twisted $2$-jet differentials $\omega_1$ and $\omega_2$. Given an initial family $\omega_1$, if its vanishing order $t$ satisfies $t \geqslant 8$, then differentiating $\omega_1$ along a suitably chosen slanted vector field of pole order $7$ (Proposition~\ref{vector field, compact case}) yields a second independent family $\omega_2$.

For \(d=17\) we have \(C(d)=77/78\). By Key Vanishing Lemma~\ref{lem-1.1}, there is no non-zero negatively twisted holomorphic \(2\)-jet differential satisfying condition~\eqref{numerical condition, compact case} for \(3 \leqslant m \leqslant 7\). Consequently, we must have \(m \geqslant 8\). In this range, condition~\eqref{numerical condition, compact case} implies \(t \geqslant 8\), so Proposition~\ref{vector field, compact case} supplies the required \(\omega_2\).
This completes the proof.
\qed

\begin{rem}\label{rem:vanishing results for cpt case d >= 18}\rm
As an alternative to P\u{a}un's approach using slanted vector fields of pole order $3$ in~\cite[pp. 886-889]{mihaipaun2008} for the proof of Theorem~\ref{thm:GGL-bound} when $d \geqslant 18$, one may instead employ directly the slanted vector fields of pole order $7$.

For $d \geqslant 21$, this alternative approach follows directly from a combination of the slanted vector fields technique of Siu and the zero-locus technique of Demailly--El Goul, as employed in this paper. 
For degrees $18 \leqslant d \leqslant 20$, the proof can be completed by establishing the following vanishing results, which can be verified using the supplementary Maple code mentioned in Section~\ref{sect:3}:

\smallskip \noindent 
{\bf Fact.}\footnote{The associated Maple exports are available at \url{https://github.com/LeiHou-code/Hyperbolicity-Dimension-2-Conventions-Facts-2026}.}\ \ 
{\it Let $X \subset \mathbb{P}^3$ be a generic smooth surface of degree $d$. Then:
\begin{itemize}
    \item For $d = 20$, $H^{0}\bigl(X,\;E_{2,3}T_{X}^{*} \otimes \mathcal{O}(-7)\bigr) = 0$.
    \item For $d = 19$, $H^{0}\bigl(X,\;E_{2,m}T_{X}^{*} \otimes \mathcal{O}(-t)\bigr) = 0$ for all $(m,t)$ in
    \(
    \{(3,6),\, (4,7)\}.
    \)
    \item For $d = 18$, $H^{0}\bigl(X,\;E_{2,m}T_{X}^{*} \otimes \mathcal{O}(-t)\bigr) = 0$ for all $(m,t)$ in
    \(
    \{(3,5),\,(4,6),\,(5,7)\}.
    \)
\end{itemize}}
\end{rem}

\subsection{Proof of Theorem~\ref{thm:log-hyperbolic}}\label{subsection: 5.4}
Let \(\mathcal{C}\) be a generic algebraic curve of degree \(d \geqslant 12\) in \(\mathbb{P}^2\). 
McQuillan's theorem (Theorem~\ref{thm:McQuillan}) and the construction of slanted vector fields extend directly to the logarithmic setting; see \cite[Section 2]{ElGoul2003}  and \cite[Section 3]{Rousseau2009}.

First, by Proposition~\ref{existence of 2 jets, logarithmic case}, there exist positive integers \(m,t\) and a nonzero section
\[
\omega_1 \in H^0\!\bigl(\mathbb{P}^2,\;E_{2,m}T_{\mathbb{P}^2}^*(\log\mathcal{C})\otimes\mathcal{O}_{\mathbb{P}^2}(-t)\bigr).
\]

McQuillan's result~\cite{Mcquillan1998} shows that if the lift \(f_{[1]}\) of an entire curve \(f\) to the first level \(X_1\) of the Demailly--Semple tower of \((\mathbb{P}^2,\mathcal{C})\) is algebraically degenerate, then the image \(f(\mathbb{C})\) is algebraically degenerate in \(\mathbb{P}^2\). Let \(C' \subset \mathbb{P}^2\) denote the Zariski closure of \(f(\mathbb{C})\). For a generic curve \(\mathcal{C}\) of degree \(d \geqslant 11\), classical results in plane algebraic geometry guarantee that \(C' \cap \mathcal{C}\) contains at least three distinct points. Consequently, the complement \(C' \setminus \mathcal{C}\) is hyperbolic, contradicting the fact that it contains the image of the entire curve \(f\). Hence, to derive a contradiction from the existence of such an \(f\), it suffices to prove that \(f_{[1]}\) is algebraically degenerate.

A numerical criterion analogous to Proposition~\ref{prop:DEG-numerical-criterion} is provided by El~Goul~\cite[Theorem 1.3.3]{ElGoul2003}. It allows us to assume
\begin{equation}\label{numerical condition, logarithmic case}
t \geqslant (d-3)\frac{13\bar{c}_1^{2}-9\bar{c}_2}{12\bar{c}_1^{2}}\,m 
      = \frac{4d^{2}-51d+90}{12(d-3)}\,m \; \eqqcolon \; \overline{C}(d)\,m  \qquad \text{[by \eqref{Chern numbers, log case}]}.
\end{equation}

Since \(\operatorname{Pic}(\mathbb{P}^2)\cong\mathbb{Z}\), we can mimic and simplify the arguments of the compact case. For brevity, we work directly with a given \(\omega_1\) on a generic \(\mathcal{C}\) and omit the step of extending it to a holomorphic family.

Under the condition~\eqref{numerical condition, logarithmic case} we now examine the remaining possibilities for \((m,t)\) when \(d=13\) and \(d=12\).

For \(d=13\) we have \(\overline{C}(d)=\frac{103}{120}\). By Key Vanishing Lemma~\ref{lem-1.2}, there is no non-zero negatively twisted logarithmic holomorphic \(2\)-jet differential satisfying condition~\eqref{numerical condition, logarithmic case} for \(3 \leqslant m \leqslant 8\). Consequently, we may assume \(m \geqslant 9\). In this range, condition~\eqref{numerical condition, logarithmic case} implies \(t \geqslant 8\), so Proposition~\ref{vector field, logarithmic case} supplies the required second differential \(\omega_2\).

For \(d=12\) we have \(\overline{C}(d)=\frac{1}{2}\). Key Vanishing Lemma~\ref{lem-1.2} ensures that no non-zero negatively twisted logarithmic \(2\)-jet differential satisfying condition~\eqref{numerical condition, logarithmic case} exists for \(3 \leqslant m \leqslant 14\). Hence, we may restrict to \(m \geqslant 15\). For such \(m\), condition~\eqref{numerical condition, logarithmic case} yields \(t \geqslant 8\); therefore Proposition~\ref{vector field, logarithmic case} again provides the desired \(\omega_2\).

The preceding argument excludes every nonconstant entire curve in
$\mathbb P^2\setminus\mathcal C$, so the complement is Brody hyperbolic.
Moreover, the smooth plane curve $\mathcal C$ has genus
$(d-1)(d-2)/2\geqslant2$.  Green's criterion now applies: if a smooth
projective variety $X$, the complement of a divisor $D$, and every open
boundary stratum are Brody hyperbolic, then $X\setminus D$ is hyperbolically
embedded in $X$; see \cite{Green1977} and the precise restatement
\cite[Theorem~4.1]{JavanpeykarLevin2022}.  For the present one-component
divisor, the only boundary stratum is $\mathcal C$ itself.  Hence
$\mathbb P^2\setminus\mathcal C$ is hyperbolically embedded and therefore
Kobayashi hyperbolic.  This completes the proof.
\qed

\begin{rem}\label{rem:vanishing results for log case d >= 14}\rm
As an alternative to Rousseau's approach using slanted vector fields of pole order $3$ in~\cite[Proposition 17]{Rousseau2009} for the proof of the Kobayashi hyperbolicity of the complement $\mathbb{P}^2 \setminus \mathcal{C}$ of a generic algebraic curve $\mathcal{C} \subset \mathbb{P}^2$ of degree $d \geqslant 14$, one may instead employ directly the slanted vector fields of pole order $7$.

For $d \geqslant 17$, this alternative approach follows directly from a combination of the slanted vector fields technique of Siu and the zero-locus technique of Demailly--El Goul, as employed in this paper. For degrees $14 \leqslant d \leqslant 16$, the proof can be completed by establishing the following vanishing results, which can be verified using the supplementary Maple code mentioned in Section~\ref{sect:proof of lemma 2}:

\smallskip \noindent 
{\bf Fact.}\footnote{The associated Maple exports are available at \url{https://github.com/LeiHou-code/Hyperbolicity-Dimension-2-Conventions-Facts-2026}.}\ \ 
{\it Let $\mathcal{C} \subset \mathbb{P}^2$ be a generic smooth curve of degree $d$. Then:
\begin{itemize}
    \item For $d = 17$, $H^{0}\bigl(\mathbb{P}^{2},\;E_{2,3}T_{\mathbb{P}^{2}}^{*}(\log \mathcal{C}) \otimes \mathcal{O}(-7)\bigr) = 0$.
    \item For $d = 16$, $H^{0}\bigl(\mathbb{P}^{2},\;E_{2,3}T_{\mathbb{P}^{2}}^{*}(\log \mathcal{C}) \otimes \mathcal{O}(-6)\bigr) = 0$.
    \item For $d = 15$, $H^{0}\bigl(\mathbb{P}^{2},\;E_{2,m}T_{\mathbb{P}^{2}}^{*}(\log \mathcal{C}) \otimes \mathcal{O}(-t)\bigr) = 0$ for all $(m,t)$ in
    \(
    \{(3,5),\,(4,7)\}.
    \)
    \item For $d = 14$, $H^{0}\bigl(\mathbb{P}^{2},\;E_{2,m}T_{\mathbb{P}^{2}}^{*}(\log \mathcal{C}) \otimes \mathcal{O}(-t)\bigr) = 0$ for all $(m,t)$ in
    \(
    \{(3,4),\,(4,5),\,(5,7)\}.
    \)
\end{itemize}}
\end{rem}

\medskip

\subsection{Proof of Theorem~\ref{SMT for curves of d geqslant 12}}\label{subsect:SMT}

We now prove Theorem~\ref{SMT for curves of d geqslant 12}, following the strategy developed in our earlier work~\cite{Hou-Huynh-Merker-Xie2025}.

First, if the image of \(f\) is contained in an irreducible curve \(\mathcal{D}\), this special case is already covered by stronger results in the literature; see e.g.\ \cite[Chapter 4]{Ru21}.

We now assume that \(f\) is algebraically nondegenerate.
Propositions~\ref{existence of 2 jets, logarithmic case} and~\ref{prop:extension and irred. component log case} guarantee the existence of an irreducible, negatively twisted invariant logarithmic \(2\)-jet differential
\begin{equation}\label{omega1, m, t}
\omega_1 \in H^{0}\!\bigl(\mathbb{P}^{2},\; E_{2,m}T^{*}_{\mathbb{P}^{2}}(\log\mathcal{C}) \otimes \mathcal{O}_{\mathbb{P}^{2}}(-t)\bigr),
\end{equation}
with weighted degree \(m \geqslant 1\) and vanishing order \(t \geqslant 1\).

If \(f^{*}\omega_1 \not\equiv 0\),  Theorem~\ref{smt-form-logarithmic-diff-jet} immediately gives the desired Second Main Theorem with a finite constant \(C_d\).

In particular, a Riemann–Roch computation based on the filtration~\eqref{filtration-of-E2m, cpt} and a Bogomolov-type vanishing theorem show that for any rational number \(\delta \in (0,\frac{1}{3})\) and sufficiently divisible \(m \gg 1\),
\begin{multline*}
h^{0}\!\bigl(E_{2,m}T_{\mathbb{P}^{2}}^{*}(\log\mathcal{C}) \otimes \overline{K}_{\mathbb{P}^{2}}^{\,-\delta m}\bigr) \geqslant \frac{m^{4}}{648}\bigl((54\delta^{2}-48\delta+13)\bar{c}_{1}^{2} - 9\bar{c}_{2}\bigr) + O(m^{3})
\quad \text{[cf.\ \cite[Proposition 4.2]{Hou-Huynh-Merker-Xie2025}]}.
\end{multline*}
The leading coefficient (after removing the common factor \(\frac{1}{648}\)) is
\[
\varphi(\delta) \coloneqq (54\delta^{2} - 48\delta + 13)\bar{c}_{1}^{2} - 9\bar{c}_{2}
        = 54\bar{c}_{1}^{2}\,\delta^{2} - 48\bar{c}_{1}^{2}\,\delta + 13\bar{c}_{1}^{2} - 9\bar{c}_{2}.
\]
Note that \(13\bar{c}_{1}^{2} - 9\bar{c}_{2} > 0\) for \(d \geqslant 11\). The two roots of the equation $\varphi(\delta)=0$ are
\[
\delta_{1}= \frac{8 - \sqrt{54\bar{c}_{2}/\bar{c}_{1}^{2} - 14}}{18}>0,
\qquad 
\delta_{2}= \frac{8 + \sqrt{54\bar{c}_{2}/\bar{c}_{1}^{2} - 14}}{18}>0.
\]

Hence \(\varphi(\delta) > 0\) for all \(\delta \in (0,\delta_{1})\). Consequently, in the existence result for \(\omega_{1}\) stated in~\eqref{omega1, m, t}, the ratio \(t/m\) can be taken arbitrarily close to
\begin{equation}\label{ratio of omega_1}
(d-3)\delta_{1} = \frac{8(d-3) - \sqrt{2(4d-3)(5d-6)}}{18}
\qquad\text{[use \eqref{Chern numbers, log case}]}.
\end{equation}
We may therefore assume that the ratio \(t/m\) of \(\omega_{1}\) in~\eqref{omega1, m, t} satisfies
\[
(d-3)\delta_{1} - \varepsilon' \;<\; \frac{t}{m} \;\leqslant\; (d-3)\delta_{1},
\]
where \(\varepsilon' > 0\) is a fixed arbitrarily  small constant. If \(f^{*}\omega_{1} \not\equiv 0\), Theorem~\ref{smt-form-logarithmic-diff-jet} yields the estimate~\eqref{eq:smt for curves of d geqslant 12} with constant \(C_{d}=m/t\) satisfying
\begin{equation}\label{C_d from omega_1}
\frac{18}{8(d-3) - \sqrt{2(4d-3)(5d-6)}} 
      = \frac{1}{(d-3)\delta_{1}} 
      \;
      \leqslant\; C_{d} = \frac{m}{t} 
      \;<\; \frac{1}{(d-3)\delta_{1} - \varepsilon'}.
\end{equation}

Thus we may assume \(f^{*}\omega_{1} \equiv 0\) in the remainder of the proof. The subsequent argument splits according to the values of \((m,t)\).

\medskip
\noindent\textbf{Case 1: \(\omega_{1}\) has high vanishing order.} When \(m = 3,4,5\) and \(t \geqslant 4\), we apply Siu’s strategy of slanted vector fields \cite{Siu2004}, which in the two‑dimensional logarithmic setting was carried out by Rousseau \cite[Proposition 17]{Rousseau2009}. Differentiating \(\omega_{1}\) by a slanted vector field twisted by \(\mathcal{O}_{\mathbb{P}^{2}}(3)\) produces an ``independent'' nonzero invariant logarithmic \(2\)-jet differential
\[
\omega_{2} \in H^{0}\!\bigl(\mathbb{P}^{2},\; E_{2,m}T^{*}_{\mathbb{P}^{2}}(\log\mathcal{C}) \otimes \mathcal{O}_{\mathbb{P}^{2}}(-t+3)\bigr).
\]
When $m\geqslant 6$ and \(t \geqslant 8\), we may instead differentiate \(\omega_{1}\) by a slanted vector field twisted by \(\mathcal{O}_{\mathbb{P}^{2}}(7)\) (see Proposition~\ref{vector field, logarithmic case}), obtaining an ``independent'' nonzero invariant logarithmic \(2\)-jet differential
\[
\omega_{2} \in H^{0}\!\bigl(\mathbb{P}^{2},\; E_{2,m}T^{*}_{\mathbb{P}^{2}}(\log\mathcal{C}) \otimes \mathcal{O}_{\mathbb{P}^{2}}(-t+7)\bigr).
\]

We then distinguish two subcases:
\begin{itemize}
    \item If \(f^{*}\omega_{2} \not\equiv 0\), Theorem~\ref{smt-form-logarithmic-diff-jet} directly gives a Second Main Theorem type estimate.
    \item If \(f^{*}\omega_{1} \equiv 0\) and \(f^{*}\omega_{2} \equiv 0\), the algebraic independence of \(\omega_{1}\) and \(\omega_{2}\) implies that the lift \(f_{[1]}(\mathbb{C}) \subset X_{1}\) is algebraically degenerate. Hence McQuillan's Theorem~\ref{McQuillan's result} applies.
\end{itemize}

\medskip
\noindent\textbf{Case 2: \(\omega_{1}\) has low vanishing order.}  
Assume
\begin{equation}\label{low vanishing cond}
0 < t < (d-3)\,\frac{13\bar{c}_{1}^{2}-9\bar{c}_{2}}{12\bar{c}_{1}^{2}}\,m
      = \frac{4d^{2}-51d+90}{12(d-3)}\,m \;\eqqcolon\; \overline{C}(d)\,m. \qquad \text{[see \eqref{numerical condition, logarithmic case}]}
\end{equation}
Following a strategy of Demailly–El~Goul \cite[Proposition 3.4]{Demailly-Elgoul2000} (see also \cite[Theorem 1.3.3]{ElGoul2003}), a Riemann–Roch computation yields the existence of a nonzero invariant logarithmic \(2\)-jet differential \(\omega_{2}\) with negative twist. The definition of \(\omega_{2}\) is subtle: it is not defined on the whole of \(\mathbb{P}^{2}\), but only on the zero locus \(\{\omega_{1}=0\}\) inside the second level \(X_{2}\) of the Demailly–Semple tower for \((\mathbb{P}^{2},\mathcal{C})\). We again distinguish two subcases:

\begin{itemize}
    \item If \(f^{*}\omega_{2} \equiv 0\), the simultaneous vanishing of \(f^{*}\omega_{1}\) and \(f^{*}\omega_{2}\) forces the lift \(f_{[1]}(\mathbb{C})\) to be algebraically degenerate in the first level \(X_{1}\) of the logarithmic Demailly–Semple tower. McQuillan's Theorem~\ref{McQuillan's result} then applies.
    
    \item If \(f^{*}\omega_{2} \not\equiv 0\), a Second Main Theorem type estimate can be obtained by an argument analogous to that in our earlier work \cite[Section 6]{Hou-Huynh-Merker-Xie2025}. The reasoning proceeds as follows.
\end{itemize}

As in the proof of Proposition~\ref{prop:DEG-numerical-criterion}, one can show that the zero divisor of \(\omega_{1}\) on \(X_{2}\) decomposes as
\[
\{\omega_{1}=0\} = Z + b_{1}\Gamma_{2},
\]
where \(Z\subset X_{2}\) is irreducible and reduced.  
We now  determine an effective small rational number \(\tau > 0\) such that the \(\mathbb{Q}\)-line bundle
\[
\bigl( \mathcal{O}_{X_{2}}(2,1) \otimes \pi_{2,0}^{*}\mathcal{O}_{\mathbb{P}^{2}}(-\tau) \bigr)\big|_{Z}
\]
is big. (Here we use the same notation as in \eqref{eq:O(a_1,a_2)} for the logarithmic case). This is achieved by a similar Riemann–Roch computation and the use of Proposition~\ref{pro:the higher direct images vanishing} as in \cite[Proposition 6.2]{Hou-Huynh-Merker-Xie2025}.

\begin{pro}\label{prop: bigness on Z}
If \(0 < \frac{t}{m} < \overline{C}(d)\) and \(0 \leqslant \tau < \tau_{1}\!\bigl(\frac{t}{m}\bigr)\) with \(\tau\in\mathbb{Q}\), where
{\small
\begin{align}\label{what is tau}
\tau_{1}\!\Bigl(\frac{t}{m}\Bigr)
&= -2\bar{c}_{1} - \frac{3}{2}\,\frac{t}{m}
   - \frac{\sqrt{3}}{6}
     \sqrt{\,27\Bigl(\frac{t}{m}\Bigr)^{2} + 24\bar{c}_{1}\frac{t}{m} + 36\bar{c}_{2} - 4\bar{c}_{1}^{2}} \\
&= 2(d-3) - \frac{3}{2}\,\frac{t}{m}
   - \frac{\sqrt{3}}{6}
     \sqrt{\,27\Bigl(\frac{t}{m}\Bigr)^{2} - 24(d-3)\frac{t}{m} + 4\bigl(8d^{2}-21d+18\bigr)}, \notag
\end{align}}
then the restricted \(\mathbb{Q}\)-line bundle
\[
\bigl( \mathcal{O}_{X_{2}}(2,1) \otimes \pi_{2,0}^{*}\mathcal{O}_{\mathbb{P}^{2}}(-\tau) \bigr)\big|_{Z}
\]
is big on \(Z\). \qed
\end{pro}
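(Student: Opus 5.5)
The plan is to mirror the proof of Proposition~\ref{prop:DEG-numerical-criterion} in the logarithmic setting, now with the extra negative twist \(\pi_{2,0}^{*}\mathcal{O}_{\mathbb{P}^{2}}(-\tau)\). Write \(u_1=\pi_{2,1}^{*}\mathcal{O}_{X_1}(1)\), \(u_2=\mathcal{O}_{X_2}(1)\) and \(h=\mathcal{O}_{\mathbb{P}^2}(1)\). As in~\eqref{equivalent class of Z} (with \(\operatorname{Pic}(\mathbb{P}^2)=\mathbb{Z}h\)), we have
\[
Z\sim b_1u_1+b_2u_2-t\,\pi_{2,0}^{*}h,\qquad b_1\geqslant 2b_2>0,\quad b_1+b_2=m.
\]
After clearing denominators in \(\tau\), for sufficiently divisible \(p\) we apply Riemann--Roch on the threefold \(Z\) to the line bundle \(L_p:=\mathcal{O}_{X_2}(2p,p)\otimes\pi_{2,0}^{*}\mathcal{O}(-\tau p)|_Z\). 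This gives
\[
h^0(Z,L_p)+h^2(Z,L_p)\geqslant \frac{p^3}{6}\,(2u_1+u_2-\tau h)^3\cdot Z+O(p^2).
\]

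First, compute the intersection number. Expand \((2u_1+u_2-\tau h)^3\cdot(b_1u_1+b_2u_2-th)\) using the logarithmic analogue of the intersection formulas of~\cite[Lemma 3.3]{Demailly-Elgoul2000}, with \(\bar c_1=(3-d)h\), \(\bar c_2=(d^2-3d+3)h^2\) and \(h^2=1\). Terms of the form \(h^2\cdot u_1^{a}u_2^{b}\) and \(h\cdot(\cdots)\) contribute expressions linear in \(\bar c_1\). After dividing by \(m\) and using that \(\{\omega_1=0\}=Z+b_1\Gamma_2\) is irreducible modulo \(\Gamma_2\), the dependence on \((b_1,b_2)\) should reduce to \(m\) alone, exactly as in the \(\tau=0\) identity \((2u_1+u_2)^3\cdot Z=m(13\bar c_1^2-9\bar c_2)+12t\,\bar c_1\cdot h\). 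The result is a polynomial \(P(\tau,t/m)\) that is cubic in \(\tau\). I expect the \(\tau\)-dependence to organize as \(m\cdot Q(\tau)-t\cdot Q'(\tau)\), with \(Q\) a quadratic coming from the \(h\)-terms. One then checks that \(\tau_1(t/m)\) in~\eqref{what is tau} is the smallest positive root of \(P(\cdot,t/m)=0\). Since \(P(0,t/m)>0\) precisely when \(t/m<\overline{C}(d)\), it follows that \(P>0\) on \([0,\tau_1)\).

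Second, show \(h^2(Z,L_p)=0\) for \(p\gg1\) via the exact sequence \(0\to L_p(-Z)\to L_p\to L_p|_Z\to0\) on \(X_2\). This requires \(H^2(X_2,\mathcal{O}(2p,p)\otimes\pi^*\mathcal{O}(-\tau p))=0\) and \(H^3(X_2,\mathcal{O}(2p-b_1,p-b_2)\otimes\pi^*\mathcal{O}(t-\tau p))=0\).

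For the \(H^2\) term, Proposition~\ref{pro:the higher direct images vanishing} and Leray reduce it to \(H^2(\mathbb{P}^2,E_{2,3p}T^*(\log\mathcal{C})\otimes\mathcal{O}(-\tau p))\). By Serre duality this becomes \(H^0\) of \(E_{2,3p}T(-\log\mathcal{C})\otimes K_{\mathbb{P}^2}\otimes\mathcal{O}(\tau p)\), whose graded pieces are \(S^{3p-3j}T(-\log\mathcal{C})\otimes \overline{K}^{-j}\otimes K\otimes\mathcal{O}(\tau p)\). Rewriting \(\mathcal{O}(\tau p)\) as a fractional power of \(\overline{K}=\mathcal{O}(d-3)\), the logarithmic Bogomolov vanishing theorem (used by El Goul) applies as long as \(3p-3j>2(\tau p/(d-3)-j)+O(1)\). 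This holds for small \(\tau\), and in particular it holds for \(\tau<\tau_1\) because \(\tau_1\leqslant\tfrac{3}{2}(d-3)\). I need to verify this bound, and I expect it follows from the formula.

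For the \(H^3\) term, the filtration argument of Step 2 carries over verbatim. The twist \(\overline{K}^{\,p-b_2-k}\otimes\mathcal{O}(t-\tau p)\) still has degree \(\to\infty\) with \(p\), since \(\tau<d-3\). Serre vanishing on the finitely many bad graded pieces then finishes the argument.

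The main obstacle is the first step: computing the cubic intersection number exactly and identifying its root with the closed form~\eqref{what is tau}. This includes confirming that the \(b_1,b_2\)-dependence cancels (or has a favourable sign) and that the relevant root is the one with the minus sign. I would first redo the \(\tau=0\) computation symbolically, then track the \(\tau h\) terms via \(h^2\cdot u_1^2=1\)-type relations.
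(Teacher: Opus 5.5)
Your route is the one the paper uses; the paper defers the details to the earlier three-conics paper. It consists of asymptotic Riemann--Roch for $2u_1+u_2-\tau h$ on $Z$, together with $h^2=0$ obtained from the exact sequence, Proposition~\ref{pro:the higher direct images vanishing}, Serre duality with Bogomolov, and the Step~2 filtration.

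Your first step closes, though not in the shape you predict. Since $h^3=0$ on the base surface, the polynomial is quadratic in $\tau$, not cubic. The Demailly--El~Goul intersection numbers give $h(2u_1+u_2)^2\cdot u_1=h(2u_1+u_2)^2\cdot u_2=-4\bar c_1$, $h^2(2u_1+u_2)^2=3$, and $h^2(2u_1+u_2)\cdot u_1=h^2(2u_1+u_2)\cdot u_2=1$. Hence, writing $s=t/m$ and $\bar c_1=3-d$,
\[
(2u_1+u_2-\tau h)^3\cdot Z=m\bigl(3\tau^2+(12\bar c_1+9s)\tau+13\bar c_1^2-9\bar c_2+12\bar c_1 s\bigr).
\]
The $(b_1,b_2)$-dependence disappears because $u_1$ and $u_2$ pair identically with $(2u_1+u_2)^3$, $h(2u_1+u_2)^2$ and $h^2(2u_1+u_2)$, and $b_1+b_2=m$. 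Irreducibility of $Z$ plays no role in this cancellation. The roots are $-2\bar c_1-\tfrac32 s\pm\tfrac{\sqrt3}{6}\sqrt{27s^2+24\bar c_1 s+36\bar c_2-4\bar c_1^2}$, and both are positive when $s<\overline{C}(d)$. So $\tau_1$ is exactly the smaller root, as you hoped.

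The step that is wrong as written is the $H^2$ vanishing. In your own inequality $3p-3j>2(\tau p/(d-3)-j)+O(1)$, the binding index is $j=p$, not $j=0$. The graded piece there is $\overline{K}^{-p}\otimes K_{\mathbb{P}^2}\otimes\mathcal{O}(\tau p)=\mathcal{O}\bigl((\tau-d+3)p-3\bigr)$, which acquires sections once $\tau>d-3$ and $p\gg1$. So the argument needs $\tau<d-3$, and the bound $\tau_1\leqslant\tfrac32(d-3)$ you propose to check does not suffice. Your $H^3$ step also assumes $\tau<d-3$ without proof.

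The bound does hold. Evaluating the quadratic bracket above at $\tau=d-3$ gives $-(5d^2-3d-9)-3(d-3)s<0$. Hence $d-3$ lies strictly between the two roots, and $\tau_1<d-3$. With this inserted, both vanishing steps go through for $p\gg1$ and your proof is complete.
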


We first make precise the evaluation of a section that is defined only on
$Z$.

\begin{lem}[Tautological evaluation of a restricted section]
\label{lem:one-component-tautological-evaluation}
Let $Z\subset X_2$ be an effective Cartier divisor and
\[
 \sigma\in H^0\!\left(
 Z,\mathcal O_{X_2}(2p,p)\otimes
 \pi_{2,0}^*\mathcal O_{\mathbb P^2}(-\widetilde t)\big|_Z
 \right),
 \qquad p,\widetilde t\in\mathbb Z_{>0}.
\]
Suppose that $g\colon\mathbb C\to\mathbb P^2$ is nonconstant,
$g(\mathbb C)\not\subset\mathcal C$, and its logarithmic lift extends and
satisfies $g_{[2]}(\mathbb C)\subset Z$.  On
$\mathbb C\setminus g^{-1}(\mathcal C)$, regard
$dg_{[1]}\colon T_{\mathbb C}\to g_{[1]}^*V_1$ as a differential in the
directed structure.  Wherever it is nonzero, its image is the tautological
line $g_{[2]}^*\mathcal O_{X_2}(-1)$.  It therefore defines, by meromorphic
continuation across the remaining discrete set, the intrinsic tautological
derivative
\[
 \mathcal Dg_{[1]}\in H^0_{\mathrm{mer}}\!\left(
 \mathbb C,K_{\mathbb C}\otimes
 g_{[2]}^*\mathcal O_{X_2}(-1)\right).
\]
The inclusion
\[
 \iota_p\colon
 \mathcal O_{X_2}(2p,p)
 =\mathcal O_{X_2}(3p)(-2p\Gamma_2)
 \hookrightarrow\mathcal O_{X_2}(3p)
\]
together with this derivative defines canonically
\begin{equation}\label{eq:one-component-canonical-evaluation}
 s_{\sigma,g}:=
 \left\langle g_{[2]}^*(\iota_p\sigma),
 (\mathcal Dg_{[1]})^{\otimes3p}\right\rangle
 \in H^0_{\mathrm{mer}}\!\left(
 \mathbb C,K_{\mathbb C}^{\otimes3p}\otimes
 g^*\mathcal O_{\mathbb P^2}(-\widetilde t)\right).
\end{equation}
It is independent of every ambient local lifting of $\sigma$.  If it is not
identically zero, then
\begin{equation}\label{eq:one-component-canonical-pole-bound}
 \operatorname{div}_{\infty}(s_{\sigma,g})
 \leqslant3p(g^*\mathcal C)_{\mathrm{red}}
\end{equation}
and
\begin{equation}\label{eq:one-component-canonical-proximity}
 \int_0^{2\pi}\log^+
 \|s_{\sigma,g}(re^{\sqrt{-1}\theta})\|\frac{d\theta}{2\pi}
 =O\!\left(\log T_g(r)+\log r\right)\ \|.
\end{equation}
\end{lem}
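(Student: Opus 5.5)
The plan is to work entirely in local coordinates on the logarithmic Demailly--Semple tower and reduce every claim to the structure of $\mathcal{O}_{X_2}(-1)$ as the tautological line of $V_1$. First we check that $\mathcal Dg_{[1]}$ is well defined. On $\mathbb C\setminus g^{-1}(\mathcal C)$ the lift $g_{[1]}$ is tangent to $V_1$ by construction. Where $dg_{[1]}\neq0$, its image is the line $[g_{[1]}']=g_{[2]}$, that is, the fibre of $\mathcal O_{X_2}(-1)$ at $g_{[2]}$, so $dg_{[1]}$ is a holomorphic section of $K_{\mathbb C}\otimes g_{[2]}^*\mathcal O_{X_2}(-1)$ there. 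At the isolated zeros of $dg_{[1]}$, and at points of $g^{-1}(\mathcal C)$, we use the local frame of the logarithmic tower. In coordinates $(x,\log R)$ near $\mathcal C$, the lift $g_{[2]}$ is given by ratios of the logarithmic $1$- and $2$-jets of $g$, and $dg_{[1]}$ is expressed by the logarithmic jet $(x',(\log R)',\dots)$ of $g$. These jets are meromorphic with poles of order at most one per derivative along $g^*\mathcal C$. This gives the meromorphic continuation of $\mathcal Dg_{[1]}$, with pole order at most $1$ at each point of $(g^*\mathcal C)_{\mathrm{red}}$.

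For the evaluation, we pick a local lift $\widetilde\sigma$ of $\sigma$ to $\mathcal O_{X_2}(2p,p)\otimes\pi_{2,0}^*\mathcal O(-\widetilde t)$ near points of $Z$. Two lifts differ by $h\cdot s_Z$, where $s_Z$ is a local equation of $Z$. Because $g_{[2]}(\mathbb C)\subset Z$, we get $g_{[2]}^*(h s_Z)=0$, so the pullback $g_{[2]}^*(\iota_p\widetilde\sigma)$ is independent of the lift. Pairing it with $(\mathcal Dg_{[1]})^{\otimes 3p}$ uses the duality $\mathcal O_{X_2}(3p)\otimes\mathcal O_{X_2}(-3p)\to\mathcal O$ and yields $s_{\sigma,g}$ as a meromorphic section of $K_{\mathbb C}^{3p}\otimes g^*\mathcal O(-\widetilde t)$. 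In the chart we then recognise $s_{\sigma,g}$ as $P(g',g'')\,d\zeta^{3p}$, where $P$ is a polynomial in the logarithmic jets of weighted degree $3p$ with holomorphic coefficients. This is the direct-image dictionary behind $(\pi_{2,0})_*\mathcal O_{X_2}(m)=E_{2,m}$, applied only locally and only along $Z$. Applying $\iota_p$ is harmless: it multiplies by the equation of $2p\Gamma_2$, which at worst adds zeros.

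The pole bound \eqref{eq:one-component-canonical-pole-bound} then follows from the local formula. Each logarithmic derivative $(\log R\circ g)^{(j)}$ has a pole of order at most $j$ at a point of $g^{-1}(\mathcal C)$, and $x^{(j)}$ is holomorphic. A weighted-degree-$3p$ monomial therefore has a pole of order at most $3p$. We expect the main obstacle to be making this rigorous uniformly near points of $\mathcal C$ where the chosen frame of $V_1$ degenerates, and near $\Gamma_2$. This is why we route through $\iota_p$ into $\mathcal O_{X_2}(3p)$: that way only the standard frame of $E_{2,3p}T^*(\log\mathcal C)$, whose elements are polynomials in logarithmic jets, is needed.

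For the proximity estimate \eqref{eq:one-component-canonical-proximity}, we cover $\mathbb P^2$ by finitely many charts. In each chart $\|s_{\sigma,g}\|$ is bounded by a constant times a sum of monomials in $|x^{(j)}\circ g|/(1+|x\circ g|^2)$-type quotients and in $|(\log R\circ g)^{(j)}|$, where the quotients measure the metric size of the coordinate functions. The logarithmic derivative lemma, in the form used in Theorem~\ref{smt-form-logarithmic-diff-jet}, gives $m(r,(\log u)^{(j)})=O(\log T_g(r)+\log r)\ \|$ for rational functions $u$. Concavity of $\log^+$ then yields the claim, with the $\log^+$ of bounded coefficients contributing only $O(1)$.
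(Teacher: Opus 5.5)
There is a genuine gap at the step where you recognise $s_{\sigma,g}$ locally as $P(g',g'')\,d\zeta^{3p}$, with $P$ a polynomial in the logarithmic jets with holomorphic coefficients.

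The dictionary $(\pi_{2,0})_*\mathcal O_{X_2}(m)=E_{2,m}T^*_{\mathbb P^2}(\log\mathcal C)$ only concerns sections defined on whole fibres $\pi_{2,0}^{-1}(U)$. Your $\sigma$ lives only on $Z$, and a local lift of it lives on a small open set $W\subset X_2$. There, a section of $\mathcal O_{X_2}(3p)$ is an arbitrary holomorphic function of the Semple coordinates times a frame. For instance, $e^{u}(e^*)^{3p}$, with $u=y'/x'$ the fibre coordinate of $X_1\to\mathbb P^2$, evaluates to $e^{y'/x'}(x')^{3p}$, which is not polynomial in the jets.

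Nor can you first extend $\sigma|_{Z\cap\pi_{2,0}^{-1}(U)}$ to $\pi_{2,0}^{-1}(U)$. The obstruction lies in $R^1(\pi_{2,0})_*\bigl(\mathcal O_{X_2}(2p,p)\otimes\mathcal O_{X_2}(-Z)\bigr)$. For $Z$ of class $b_1u_1+b_2u_2-t\,\pi_{2,0}^*h$, as in the proof of Proposition~\ref{prop:DEG-numerical-criterion}, this is $R^1(\pi_{2,0})_*\mathcal O_{X_2}(2p-b_1,p-b_2)$ up to a base twist. By Proposition~\ref{pro:the higher direct images vanishing} it is nonzero as soon as $b_1-2b_2\geqslant 2$.

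Sections on $Z$ are exactly the objects that need not come from $E_{2,3p}$; otherwise the lemma would reduce to Theorem~\ref{smt-form-logarithmic-diff-jet}. Your pole count (weighted degree of monomials) and your proximity bound (a sum of jet monomials with bounded coefficients) both rest on this representation. So neither \eqref{eq:one-component-canonical-pole-bound} nor \eqref{eq:one-component-canonical-proximity} is established.

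The way around this, which is what the paper does, is never to express $\sigma$ in jets. Since $Z$ is compact, $\|\iota_p\sigma\|$ is bounded on $Z$ for smooth metrics. Hence $\|s_{\sigma,g}\|\leqslant C\,\|\mathcal Dg_{[1]}\|^{3p}$, and everything reduces to $\mathcal Dg_{[1]}$.

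The pole bound then follows from the simple pole of $\mathcal Dg_{[1]}$ together with holomorphy of the coefficient along the holomorphic map $g_{[2]}$. You assert the simple pole in your first paragraph. The clean reason is that, in a frame of $V_1$, its horizontal component is $(\log R\circ g)'$ or $(x\circ g)'$, with at most a simple pole. Its vertical component is $(u\circ g_{[1]})'$, which is holomorphic because $g_{[1]}$ extends.

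For the proximity estimate, the ingredient missing from your sketch is control of that vertical component. It is not a polynomial in jets, since it carries $(x')^2$ or $\bigl((\log R)'\bigr)^2$ in the denominator. Because $u$ is a ratio of first logarithmic derivatives, $T_{u\circ g_{[1]}}(r)=O(T_g(r)+\log r)$. On a partition piece where $u$ omits a value $a$, write $(u\circ g_{[1]})'=(u\circ g_{[1]}-a)\,\frac{(u\circ g_{[1]})'}{u\circ g_{[1]}-a}$ and apply the logarithmic derivative lemma. The horizontal components are then handled as in your sketch.
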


\begin{proof}
The factorization of $g_{[2]}$ through $Z$ makes
$g_{[2]}^*(\iota_p\sigma)$ intrinsic.  Pairing it with the indicated power
of the tautological derivative gives
\eqref{eq:one-component-canonical-evaluation}.  If ambient local liftings are
used to represent $\sigma$, their differences lie in the ideal of $Z$ and
therefore pull back to zero.

Equip the line bundles with smooth Hermitian metrics.  Since $Z$ is
projective, the norm of the holomorphic section $\iota_p\sigma$ is bounded
on $Z$.  Hence
\[
 \|s_{\sigma,g}\|
 \leqslant C\,\|\mathcal Dg_{[1]}\|^{3p}
\]
for a uniform constant $C$.

Near $g^{-1}(\mathcal C)$, take a logarithmic coordinate $x$ with
$\mathcal C=\{x=0\}$.  The boundary component of the derivative of the
regular first lift is $(x\circ g)'/(x\circ g)$, while its vertical
projective-coordinate components are holomorphic.  Thus
$\mathcal Dg_{[1]}$ has at most a simple pole along
$(g^*\mathcal C)_{\mathrm{red}}$, which proves
\eqref{eq:one-component-canonical-pole-bound} after taking the $3p$-th
tensor power.  At an isolated zero of $g'$ outside
$g^{-1}(\mathcal C)$, the regularized Semple lift gives only a removable
singularity or a zero.  If the critical point lies over $\mathcal C$, the
logarithmic component $(x\circ g)'/(x\circ g)$ may still have a simple pole;
that pole is already counted by
\eqref{eq:one-component-canonical-pole-bound}, and no additional Semple-type
pole occurs.

Choose a finite logarithmic base cover and, above it, finitely many
relatively compact Semple charts.  A partition of unity is used only to
estimate the norm of the already-defined global section.  On each chart,
the components of $\mathcal Dg_{[1]}$ are logarithmic derivatives of local
coordinate functions.  A projective Semple coordinate $u$ is a rational
function on $X_1$ and, in a logarithmic frame, a ratio of first logarithmic
derivatives; therefore
\[
 T_{u\circ g_{[1]}}(r)=O\bigl(T_g(r)+\log r\bigr).
\]
For a partition function $\chi$ supported in a relatively compact chart,
choose $a\notin\overline{u(\operatorname{supp}\chi)}$.  On that support,
$u-a$ is bounded above and away from zero, so
\[
 (u\circ g_{[1]})'
 =(u\circ g_{[1]}-a)
 \frac{(u\circ g_{[1]})'}{u\circ g_{[1]}-a}.
\]
The logarithmic-derivative lemma applied to $u\circ g_{[1]}-a$ controls the
vertical component by
\[
 O\bigl(\log^+T_g(r)+\log r\bigr).
\]
Summing over the finite
cover, together with the analogous horizontal estimates, gives
\eqref{eq:one-component-canonical-proximity}.  This is precisely the
partition-of-unity argument of
\cite[Thm.~3.1, (3.3)--(3.6)]{Huynh-Vu-Xie-2017}, with its
Semple-coordinate adaptation made explicit.
\end{proof}

The preceding lemma gives the following restricted-section form of the
Second Main Theorem.

\begin{thm}\label{pro:SMT on Z}
Let \(g \colon \mathbb{C} \to \mathbb{P}^{2}\) be an entire curve whose lift \(g_{[2]} \colon \mathbb{C} \to X_{2}\) satisfies \(g_{[2]}(\mathbb{C}) \subset Z\). Suppose there exists a section
\begin{equation*}\label{sigma of thm 5.3}
    \sigma \in H^0\Big(Z, \big( \mathcal{O}_{X_2}(2,1)^{\tilde{m}} \otimes \pi_{2,0}^*\mathcal{O}_X(-\tilde{t}) \big)|_Z \Big) \cong H^0\Big(Z, \big( \mathcal{O}_{X_2}(3) \otimes \mathcal{O}(- 2 \Gamma_2) \big)^{\tilde{m}} \otimes \pi_{2,0}^*\mathcal{O}_X(-\tilde{t}) \big|_Z \Big),
\end{equation*}
such that the canonical evaluation $s_{\sigma,g}$ of
Lemma~\ref{lem:one-component-tautological-evaluation}, with
$p=\tilde m$, is not identically zero.
Then the following Second Main Theorem type estimate holds:
\[
T_{g}(r) \leqslant \frac{3\tilde{m}}{\tilde{t}}\, N_{g}^{(1)}(r,\mathcal{C}) + o\bigl(T_{g}(r)\bigr) \qquad\|. 
\]
\end{thm}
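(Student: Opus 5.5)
The plan is to run the standard logarithmic-derivative argument of Theorem~\ref{smt-form-logarithmic-diff-jet} on the meromorphic section $s:=s_{\sigma,g}$ of $K_{\mathbb C}^{\otimes 3\tilde m}\otimes g^*\mathcal O_{\mathbb P^2}(-\tilde t)$ supplied by Lemma~\ref{lem:one-component-tautological-evaluation} with $p=\tilde m$. By hypothesis $s\not\equiv0$. Trivialize $K_{\mathbb C}$ by $dz$ and fix a Hermitian metric on $\mathcal O_{\mathbb P^2}(1)$ with curvature $\omega_{\mathrm{FS}}$. Then $\log\|s\|$ is a function on $\mathbb C$ whose Laplacian in the sense of currents is
\[
dd^c\log\|s\|^2=\tilde t\,g^*\omega_{\mathrm{FS}}+[\operatorname{div}_0 s]-[\operatorname{div}_\infty s].
\]

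Next we apply Jensen's formula on the disc of radius $r$ and integrate in $dt/t$ from $1$ to $r$. This gives
\[
\tilde t\,T_g(r)+N(r,\operatorname{div}_0 s)-N(r,\operatorname{div}_\infty s)=\int_0^{2\pi}\log\|s(re^{\sqrt{-1}\theta})\|\frac{d\theta}{2\pi}+O(1).
\]
The term $N(r,\operatorname{div}_0 s)$ is nonnegative, so we discard it. For the pole term, the bound \eqref{eq:one-component-canonical-pole-bound} gives
\[
N(r,\operatorname{div}_\infty s)\leqslant 3\tilde m\,N^{[1]}_g(r,\mathcal C),
\]
because the reduced divisor $(g^*\mathcal C)_{\mathrm{red}}$ counts each point of $g^{-1}(\mathcal C)$ exactly once. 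For the right-hand side, $\log\leqslant\log^+$ together with the proximity estimate \eqref{eq:one-component-canonical-proximity} bounds it by $O(\log T_g(r)+\log r)\ \|$.

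Combining these steps yields
\[
\tilde t\,T_g(r)\leqslant 3\tilde m\,N^{[1]}_g(r,\mathcal C)+O(\log T_g(r)+\log r)\ \|.
\]
Dividing by $\tilde t$ gives the claim once $\log r=o(T_g(r))$. That holds if $g$ is transcendental. If $g$ is rational, then $T_g(r)=\deg g\cdot\log r+O(1)$, so the error is only $O(T_g)$ and the reduction does not go through directly. I expect this degenerate case to be the main obstacle. I would handle it the way it is handled in Theorem~\ref{smt-form-logarithmic-diff-jet}: for rational $g$ the logarithmic-derivative lemma improves to $O(1)$, since $(u\circ g_{[1]})'/(u\circ g_{[1]}-a)$ is then a rational function vanishing at infinity. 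Hence the proximity term is bounded, and the $\log r$ term does not appear. Alternatively, one can note that in the application $g$ is algebraically nondegenerate and hence transcendental.

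The substantive analytic input is the lemma already established. The only delicate bookkeeping is checking two points. First, the weight $3\tilde m$ (rather than $\tilde m$) comes from the inclusion $\iota_p$ into $\mathcal O_{X_2}(3\tilde m)$, as the lemma builds it in. Second, zeros of $g'$ off $\mathcal C$ create no extra poles, which the lemma also asserts.
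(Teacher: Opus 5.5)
Your proposal is correct and is essentially the paper's proof: apply Poincar\'e--Lelong and Jensen to $s_{\sigma,g}$, using the flat $dz$-metric on $K_{\mathbb C}$. Then discard the zero divisor and invoke the pole bound \eqref{eq:one-component-canonical-pole-bound} and the proximity estimate \eqref{eq:one-component-canonical-proximity} with $p=\tilde m$. Your extra step converting the error $O(\log T_g(r)+\log r)$ into $o(T_g(r))$ (automatic for transcendental $g$, and via a bounded proximity term for rational $g$) is a point the paper passes over silently, and your treatment of it is sound.
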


\begin{proof}
Give $K_{\mathbb C}$ the flat metric defined by $dz$ and
$\mathcal O_{\mathbb P^2}(-1)$ the metric dual to the Fubini--Study metric.
Poincar\'e--Lelong and Jensen's formula applied to the intrinsic section
$s_{\sigma,g}$ give
\[
 \widetilde t\,T_g(r)
 \leqslant N\!\left(r,\operatorname{div}_{\infty}(s_{\sigma,g})\right)
 +\int_0^{2\pi}\log^+
 \|s_{\sigma,g}(re^{\sqrt{-1}\theta})\|\frac{d\theta}{2\pi}+O(1).
\]
Equations~\eqref{eq:one-component-canonical-pole-bound} and
\eqref{eq:one-component-canonical-proximity}, with $p=\tilde m$, yield the
stated coefficient $3\tilde m/\tilde t$.
\end{proof}

Summarizing, Proposition~\ref{prop: bigness on Z} yields, for any rational \(\tau \in (0,\tau_{1}(t/m))\), a nontrivial global section  
\[
\sigma \in H^{0}\!\Bigl(Z,\; 
        \bigl( \mathcal{O}_{X_{2}}(2,1) \otimes \pi_{2,0}^{*}\mathcal{O}_{\mathbb{P}^{2}}(-\tau) \bigr)^{\otimes\ell} \Bigr)
\]
for a sufficiently divisible integer \(\ell \gg 1\) (depending on \(\tau\)).

Recall that the assumption of the present subcase is
\(f^{*}\omega_{1}\equiv0\), while the canonical evaluation of \(\sigma\)
along \(f_{[2]}\) is nonzero.  Thus Theorem~\ref{pro:SMT on Z} gives the
estimate~\eqref{eq:smt for curves of d geqslant 12} with constant
(see~\eqref{what is tau})
\[
C_{d} = \frac{3}{\tau} \;>\; \frac{3}{\tau_{1}(t/m)}.
\]
Since \(\tau\) can be chosen arbitrarily close to \(\tau_{1}(t/m)\), the estimate~\eqref{eq:smt for curves of d geqslant 12} holds for any constant \(C_{d} > \frac{3}{\tau_{1}(t/m)}\). 

\medskip

Now we compute the constant \(C_d\) appearing in the estimate~\eqref{eq:smt for curves of d geqslant 12} for \(d \geqslant 14\).

A direct computation gives
\[
\overline{C}(d)=\frac{4d^{2}-51d+90}{12(d-3)}
               =\frac{d-3}{3}-\frac{9}{4(d-3)}-\frac{9}{4}
         > \frac{7}{6},
\]
where the last inequality follows from the assumption \(d \geqslant 14\).

Let \(\frac{7}{6} < \gamma < \overline{C}(d)\) be a parameter that will be chosen later.

If \(t \leqslant \gamma m\) (which falls into \textbf{Case 2}), we obtain the estimate~\eqref{eq:smt for curves of d geqslant 12} with constant
\[
C_d = \frac{3}{\tau_1(\gamma)}
    = \frac{3}{2\left(d-3-\frac{3}{4}\gamma
                 -\frac{\sqrt{3}}{12}
                   \sqrt{27\gamma^{2}-24(d-3)\gamma+4\bigl(8d^{2}-21d+18\bigr)}\right)}
    > \frac{1}{d-3},
\]
where the lower bound \(\frac{1}{d-3}\) is precisely the constant that would follow from Theorem~\ref{McQuillan's result}.

If \(t \geqslant \frac{6\gamma-7}{6}\,m+7\) and \(m \geqslant 6\) (or if \(m = 3, 4, 5\) and \(t \geqslant \frac{3\gamma-3}{3}\,m+3\)), then we are in \textbf{Case 1} and the estimate~\eqref{eq:smt for curves of d geqslant 12} holds with constant 
\(C_d = \frac{6}{6\gamma-7}\) (respectively \(C_d = \frac{3}{3\gamma-3}\)). A straightforward computation shows that for \(\frac{7}{6} < \gamma < \overline{C}(d)\) and \(d \geqslant 14\),
\begin{equation}\label{constants of Case I, d geqslant 14}
    \frac{6}{6\gamma-7} \;>\; \frac{3}{3\gamma-3} \;>\; \frac{1}{d-3}.
\end{equation}

Observe the following:
\begin{itemize}
    \item The pairs \((m,t)=(6,\lfloor 6\gamma\rfloor)\) and \((m,t)=(3,\lfloor 3\gamma\rfloor)\) satisfy \(t \leqslant \gamma m\).
    \item The pairs \((m,t)=(6,\lceil 6\gamma\rceil)\) and \((m,t)=(3,\lceil 3\gamma\rceil)\) satisfy respectively  
          \(t \geqslant \frac{6\gamma-7}{6}\,m+7\) and \(t \geqslant \frac{3\gamma-3}{3}\,m+3\).
    \item The lines \(t = \gamma m\) and \(t = \frac{6\gamma-7}{6}m+7\) intersect at \((6,6\gamma)\);  
          the lines \(t = \gamma m\) and \(t = \frac{3\gamma-3}{3}m+3\) intersect at \((3,3\gamma)\).
\end{itemize}
Consequently, every possible value of \((m,t)\) is covered by the above discussion.  
For a convenient uniform bound, let \(\gamma_0\) be the unique solution of
\[
\frac{6}{6\gamma-7}= \frac{3}{\tau_1(\gamma)} .
\]
A straightforward calculation yields
\begin{equation}\label{gamma0}
\gamma_0(d)=\frac{32d-33-\sqrt{640d^{2}-1248d+1017}}{72},
\end{equation}
which lies in the interval $ (\frac{7}{6}, \overline{C}(d))$. 
At this admissible split,
\[
\frac{6}{6\gamma_0(d)-7}= \frac{3}{\tau_1(\gamma_0(d))}
    =\frac{72}{32d-117-\sqrt{640d^{2}-1248d+1017}} .
\]
A direct comparison shows that this quantity is strictly larger, for every
$d\geqslant14$, than the other high-slope constant in
\eqref{constants of Case I, d geqslant 14} and the limiting initial-section
constant in \eqref{C_d from omega_1}.  Choosing the Riemann--Roch parameter
sufficiently close to its endpoint therefore gives the uniform value
\[
 C_d=\frac{72}{32d-117-\sqrt{640d^{2}-1248d+1017}}
 \qquad(d\geqslant14).
\]

\medskip
\noindent
\textbf{Case 3: Remaining possibilities.}
For \(d = 12,13\), there remain a few pairs \((m,t)\) not covered by the previous two cases.  
 Key Vanishing Lemma~\ref{lem-1.2} excludes the existence of any nonzero \(\omega_{1}\) for those pairs.  
Note that vanishing for \((m,t)\) implies vanishing for all \((m,t')\) with
\(t'>t\).  In enumerating the residual pairs we combine this monotonicity
with the pole-order-three branch for \(m=3,4,5\); in particular, that branch,
not the Key Vanishing list, handles \((m,t)=(5,4)\) when \(d=13\).

We now compute the constant \(C_d\) in the estimate~\eqref{eq:smt for curves of d geqslant 12} for \(d=12\) and \(13\).

\medskip\noindent
\textbf{Case \(d=13\).} 
A direct computation gives
\begin{equation*}
    \frac{1}{(d-3)\delta_{1}} =  \frac{18}{80 - 7 \sqrt{118}} \approx 4.545 \qquad (d=13).
\end{equation*}
Moreover, \(\overline{C}(13)=\frac{103}{120} > \frac{7}{9}\).  
If \(t \leqslant \frac{7}{9}m\) (which falls into \textbf{Case 2}), we obtain the estimate~\eqref{eq:smt for curves of d geqslant 12} with constant
\[
C_{13}= \frac{3}{\tau_{1}\!\bigl(\frac{7}{9}\bigr)}
      = \frac{3}{2\left(d-3-\frac{7}{12}
                     -\frac{1}{12}\sqrt{96d^{2}-308d+433}\right)}
      = \frac{18}{113-\sqrt{12653}}
      > \frac{1}{d-3}\qquad(d=13).
\]

If \(t \geqslant \frac{1}{10}m+7\) (which belongs to \textbf{Case 1}), the same estimate holds with constant \(C_{13}=10\).

Observe that
\begin{itemize}
    \item The pairs \((m,t)=(10,7)\) and \((11,8)\) satisfy \(t \leqslant \frac{7}{9}m\).
    \item The pair \((m,t)=(10,8)\) satisfies \(t \geqslant \frac{1}{10}m+7\).
\end{itemize}
All other values of \((m,t)\) are either covered by the pole-order-three
subcase for \(m\leqslant5\), or ruled out by Key Vanishing
Lemma~\ref{lem-1.2} and twist monotonicity.
Together with~\eqref{C_d from omega_1} applied to \(d=13\), we conclude that
\[
C_{13}= \frac{18}{113-\sqrt{12653}} \approx 34.989 .
\]

\medskip\noindent
\textbf{Case \(d=12\).}  
A direct computation yields
\begin{equation*}
    \frac{1}{(d-3)\delta_{1}} =  \frac{1}{4 - \sqrt{15}} \approx 7.873 \qquad (d=12).
\end{equation*}
Now \(\overline{C}(12)=\frac{1}{2} > \frac{7}{15}\).  
If \(t \leqslant \frac{7}{15}m\) (again \textbf{Case 2}), the estimate~\eqref{eq:smt for curves of d geqslant 12} holds with constant
\[
C_{12}= \frac{3}{\tau_{1}\!\bigl(\frac{7}{15}\bigr)}
      = \frac{3}{2\left(d-3-\frac{7}{20}
                     -\frac{\sqrt{3}}{60}\sqrt{800d^{2}-2380d+2787}\right)}
      = \frac{30}{173-\sqrt{29809}}
      > \frac{1}{d-3}\qquad(d=12).
\]

If \(t \geqslant \frac{1}{17}m+7\) (which belongs to \textbf{Case 1}), the constant becomes \(C_{12}=17\).

Notice that
\begin{itemize}
    \item The pairs \((m,t)=(17,7)\) and \((18,8)\) satisfy \(t \leqslant \frac{7}{15}m\).
    \item The pair \((m,t)=(17,8)\) satisfies \(t \geqslant \frac{1}{17}m+7\).
\end{itemize}
All remaining values of \((m,t)\) are either covered by the pole-order-three
subcase for \(m\leqslant5\), or excluded by Key Vanishing
Lemma~\ref{lem-1.2} and twist monotonicity.
Combined with~\eqref{C_d from omega_1} for \(d=12\), we obtain
\[
C_{12}= \frac{30}{173-\sqrt{29809}} \approx 86.413 .
\]

Thus we finish the proof of Theorem~\ref{SMT for curves of d geqslant 12}. \qed

\section{\bf Further Perspectives}
\label{section:6}

\subsection{A Computational Challenge}

To reach the degree bounds $15$ in the compact case and $11$ in the logarithmic case — the current thresholds down to which the existence of nonzero negatively twisted invariant $2$-jet differentials is guaranteed by Riemann–Roch computations — it would suffice to establish the following vanishing statements.

\begin{ques}\rm\label{ques-remaining-cases}
Are the following vanishing statements true?

\begin{enumerate}

\smallskip
    \item Let \(\mathcal{C} \subset \mathbb{P}^2\) be a generic curve of degree \(11\). Then
      \[
      H^0\!\bigl(\mathbb{P}^2,\;E_{2,m}T_{\mathbb{P}^2}^{*}(\log \mathcal{C}) \otimes \mathcal{O}(-t)\bigr) = 0
      \]
      for all pairs \((m,t) \in \mathbb{N}^2\) such that
      \[
      t = \Big\lceil \frac{13 m}{96} \Big\rceil , \qquad 3 \leqslant m \leqslant 51.
      \]

\item Let \(X \subset \mathbb{P}^3\) be a generic surface of degree \(d\). Then:
    
    \begin{itemize}
    \smallskip
        \item For \(d = 16\), \(H^{0}\!\bigl(X,\;E_{2,m}T_{X}^{*} \otimes \mathcal{O}_X (-t)\bigr) = 0\) for all \((m,t)\) in
        \[
        \{(3,3),\,(4,3),\,(5,4),\,(6,4),\,(7,5),\,(8,5),\,(9,6),\,(10,7),\,(11,7)\}.
        \]
        
        \smallskip
        \item For \(d = 15\),
        \(H^0\!\bigl(X,\;E_{2,m}T_X^{*} \otimes \mathcal{O}_X (-t)\bigr) = 0\) for all pairs \((m,t) \in \mathbb{N}^2\) such that
        \[
        t = \Big\lceil \frac{17 m}{66} \Big\rceil , \qquad 3 \leqslant m \leqslant 27.
        \]
    \end{itemize}
 \end{enumerate}
\end{ques}

\smallskip

\begin{rem}\rm
\label{smt for degree 11}
 If the vanishing statements listed in
 Question~\ref{ques-remaining-cases}--(1) for generic curves
 $\mathcal{C} \subset \mathbb{P}^2$ of degree $11$ were to hold, then the
 argument presented in Subsection~\ref{subsect:SMT} would yield a Second Main
 Theorem type estimate valid for every entire curve
 $f \colon \mathbb{C} \to \mathbb{P}^2$:
\[
T_f(r) \leqslant C_{11} \, N_f^{[1]}(r, \mathcal{C}) + o\bigl(T_f(r)\bigr) \quad\parallel,
\]
After the proposed finite vanishing range is removed, the largest surviving
ratio below $13/96$ is $5/37$.  Hence the argument gives every constant
\[
C_{11}>\frac{3}{\tau_1(5/37)}
       =\frac{3}{2}\left(3507+\sqrt{12298605}\right)
       \approx 10520.905.
\]
In particular, one may take the explicit integer constant $C_{11}=10521$.
\end{rem}

\begin{rem}\label{rmk:computational-limits}\rm
  The hyperelliptic‑type ansatz \(y^{d}=P(x)\) is insufficient to prove the conjectured Key Vanishing Lemma for degree~\(11\) (see Question~\ref{ques-remaining-cases}--(1)): in all our Maple experiments, nontrivial sections invariably occur for \((m,t)=(3,1)\).
  To settle degree~\(11\) one must use more involved defining equations, which dramatically raise the computational cost — the resulting linear systems would involve far more than twenty million variables, well beyond Maple’s reach.

  For surfaces in \(\mathbb{P}^{3}\), the situation is mixed.
  With the one‑term Fermat perturbation, degree~\(16\) likely works (no obstruction at \((3,1)\)), but the linear system already exceeds two million variables.
  A C\texttt{++} implementation with parallel computing could plausibly handle it; however, the main purpose of this paper is to introduce a usable algebraic reduction framework and test its viability on concrete non‑trivial examples (degree~\(17\) compact, degrees~\(12,13\) logarithmic), so we do not pursue degree~\(16\) here.

  Degree~\(15\) is the hardest case: the same ansatz \emph{fails}, since our Maple experiments already exhibit nontrivial sections for \((m,t)=(3,1)\) on
  \(X^{15}+Y^{15}+Z^{15}+T^{15}+X^{7}T^{8}=0\).
  Resolving degree~\(15\) would demand more sophisticated equations and lead to linear systems with well over sixty million variables — an immense computational challenge that will require new algorithmic ideas and a substantial engineering effort.
\end{rem}

\begin{ques}\rm
\label{ques:geometric-interpretation}
Is there a natural algebro‑geometric construction that explains some of the nonvanishing sections for \((m,t)=(3,1)\) reported in Remark~\ref{rmk:computational-limits}?  
\end{ques}

\subsection{Relaxing ``Very Generic'' to ``Generic'' in the Compact Case}\label{Demailly-ElGoul-question}

The issue of replacing ``very generic'' by ``generic'' in the compact case (for low degrees) was noted earlier by Demailly--El Goul~\cite[Remark 7.2]{Demailly-Elgoul2000} and remains unresolved. The obstacle lies in the hypothesis of Theorem~\ref{thm:Clemens-Xu}, which itself demands that the surface be very generic.

\begin{ques}\label{ques:generic-absence}\rm
For every integer \(d \geqslant 17\), does a \emph{generic} surface \(X \subset \mathbb{P}^3\) of degree \(d\) contain no rational or elliptic curves?
Note that Theorem~\ref{thm:GGL-bound} gives a strong constraint: if a rational or elliptic curve \(C\) were to exist on a generic surface \(X_a\) of degree \(d \geqslant 17\), then \(C\) must necessarily be an \emph{invariant curve} of some multi-foliation \(\mathcal{F}_a\).
\end{ques}

\begin{rem}\rm  \label{rem 7.5}
If the answer to Question~\ref{ques:generic-absence} is affirmative, then Theorem~\ref{thm:compact-hyperbolic} can be strengthened to hold for {\em generic} surfaces. 
\end{rem}

\begin{rem}\rm \label{rem 7.4}
Question~\ref{ques:generic-absence} is related to the classical Poincar\'e problem~\cite{Poincare1891}, which seeks to bound the degree of an algebraic solution of a differential equation in terms of the degree of the equation itself. Lins Neto~\cite{LinsNeto2002} showed that no such universal bound exists in general. Thus, to answer Question~\ref{ques:generic-absence} affirmatively, one must exploit additional geometric structure. Theorem~\ref{thm:GGL-bound} provides such structure by forcing any rational or elliptic curve to be invariant under a multi-foliation arising from jet differentials.
\end{rem}

\bigskip

\noindent{\bf Acknowledgements} \ 
S.-Y. Xie would like to thank Daniel Barlet for inspiring discussions on cycle spaces and Yum-Tong Siu for suggesting the use of the direct image theorem.  
S.-Y. Xie and L. Hou thank the Tianyuan Mathematics Research Center for its support and excellent research environment. 
We thank Junjiro Noguchi for pointing us to the reference~\cite{Masuda_Noguchi_1996}.   
We thank Yi C. Huang and Xuanyu Pan for helpful suggestions concerning the exposition.

We are grateful to Jianjun Liu and Tao Cui for designing a C\texttt{++} implementation with parallel computing, which independently verified the key vanishing lemmas established in this paper. Their work provides a double assurance of the computational results presented here.

The results of this article were first presented by S.-Y. Xie at \textit{The Seminar on Several Complex Variables and Complex Geometry} (July 27--August 2, 2025), organized by Xiangyu Zhou, Yum-Tong Siu, and John Erik Forn\ae ss.

\bigskip

\noindent{\bf Author contributions and AI-assisted preparation} \ 
Building on the exponent-bound method introduced in the preceding three-conics
paper~\cite{Hou-Huynh-Merker-Xie2025}, Lei Hou and Song-Yan Xie recognized
that the present one-component problem should admit a finite exact treatment.
They developed the theoretical framework for the Key Vanishing Lemmas and
selected the defining equations used in their finite verification.  Starting
from this framework, Jo\"el Merker implemented the verification in Maple and
carried out the computations using high-performance computing resources in
Orsay.  Dinh Tuan Huynh carried out the Riemann--Roch computations.  Hou and
Xie assembled these ingredients into the final mathematical argument and
prepared the manuscript.

Generative AI was used only for language polishing.  The mathematical ideas,
arguments, and computational specifications are due to the authors.

\bigskip

\noindent{\bf Funding} \  
S.-Y. Xie acknowledges partial support from the National Key R\&D Program of China under Grants No. 2023YFA1010500 and No. 2021YFA1003100, and from the National Natural Science Foundation of China under Grants No. 12288201 and No. 12471081, as well as support from the  Xiaomi Young Talents Program. D.T. Huynh is supported by the Vietnam National Foundation for Science and Technology Development
(NAFOSTED) under the grant number 101.04-2025.19.

\medskip

\def\cprime{$'$}

\end{document}